\newtheorem{teor}{Theorem}[section]
\newtheorem{lemma}[teor]{Lemma}
\newtheorem{corol}[teor]{Corollary}
\newtheorem{prop}[teor]{Proposition}
\theoremstyle{definition}
\newtheorem{defin}{Definition}[section]
\theoremstyle{remark}
\newtheorem{rem}[teor]{Remark}
\newtheorem{Levi-Civita}[teor]{On the Levi-Civita transform}
\newcommand{\R}{\mathbb{R}}
\newcommand{\Z}{\mathbb{Z}}
\def\mfu{\mathfrak{u}}
\def\mfv{\mathfrak{v}}
\def\beq{\begin{equation}}
\def\eeq{\end{equation}}
\def\pa{\partial}
\def\t{\theta}
\def\d{\delta}
\def\g{\gamma}
\def\wt{\widetilde}
\def\wh{\widehat}
\def\f{\varphi}
\def\l{\lambda}
\def\a{\alpha}
\def\n{\nabla}
\def\o{\omega}
\def\eps{\epsilon}
\def\r{\rho}
\def\wc{\rightharpoonup}
\title{\sc Symbolic dynamics for the $N$-centre problem at negative energies}
\author{%
Nicola Soave
\and
Susanna Terracini}
\address{Dipartimento di Matematica e Applicazioni, Universit\`a degli Studi
di Milano-Bicocca, Via Bicocca degli Arcimboldi, 8, 20126 Milano,
Italy}
\email{n.soave@campus.unimib.it}
\email{susanna.terracini@unimib.it}
\thanks{Work partially supported by the
italian PRIN2009 grant ``Critical Point Theory and Perturbative
Methods for Nonlinear Differential Equations"}
\begin{document}
\numberwithin{equation}{section}

\begin{abstract}
We consider the planar $N$-centre problem, with homogeneous potentials of degree $-\a<0$, $\a \in [1,2)$. We prove the existence of infinitely many collisions-free periodic solutions with negative and small energy, for any distribution of the centres inside a compact set.
The proof is based upon topological, variational and geometric arguments. The existence result allows to characterize the associated  dynamical system  with a symbolic dynamics, where the  symbols are the partitions of the $N$ centres in two non-empty sets.
\end{abstract}

\keywords{$N$-centre problem, chaotic motions, symbolic dynemics, Levi-Civita regularization}
\subjclass{Primary: 70F10, 37N05; Secondary: 70F15, 37J30}

\maketitle


\section{Introduction}
The $N$-centre problem consists in the study of the motion of a  test particle moving under the action of the gravitational force fields of $N$ fixed heavy bodies (the centres of the problem). In this paper we deal with the more general case of a Newtonian-like potential with homogeneity degree $-\a<0$, with $\a \in [1,2)$. The motion equation is
\beq\label{1}
\ddot{x}(t)=-\sum_{j=1}^N \frac{m_j}{\left|x(t)-c_j\right|^{\a+2}} \left(x(t)-c_j\right),
\eeq
where $x=x(t)$ denotes the position of the particle at the instant $t \in \R$, and $c_j$ ($j=1,\ldots,N$) is the position of the $j$-th centre. Introduced the potential
\[
V(x)=\sum_{j=1}^N \frac{m_j}{\a \left|x-c_j\right|^\a}, \qquad x \in \R^2 \setminus \{c_1,\ldots,c_N\},
\]
we can rewrite equation \eqref{1} as a Newton equation
\[
\ddot{x}(t)=\nabla V(x(t)),
\]
which possesses a hamiltonian structure, of Hamiltonian
\[
\frac{1}{2}|p|^2-V(q)=h(p,q).
\]

This paper concerns the existence of infinitely many collision-free periodic solutions with  \emph{negative energies}, for the planar planar $N$-centre problem; as a by-product of our construction, we will prove the occurrence of symbolic dynamics. The presence of chaotic trajectories has been established for positive energies in \cite{Bo1} and \cite{Kn}, while for small negative energies only perturbations of the two-centre cases have been treated (\cite{BoNe} and \cite{Di}).

\medskip

 Here we consider the case  of small negative energies $h<0$, and an arbitrary (even infinite) number of centres arbitrarily located inside a compact subset of the plane. A major difficulty of the negative energy case is that the  energy shells
\[
 \mathcal{U}_h:= \left\{ (x,v)\in \mathbb{R}^2\setminus \{c_1,\ldots,c_N\} \times \R^2: \frac{1}{2}|v|^2-V(x)=h\right\}
\]
 have a non empty boundary where the Jacobi metric degenerates. We shall seek trajectories as (non minimal) geodesics for the Jacobi metric and we shall exploit a broken geodesics method.  Our method allows the simultaneous treatise of singularity of degree $\a=1$ (Coulombic) and degree $\a>1$  and provides collision-free trajectories.

\medskip

To describe our main result, we need some notation. Let us consider the possible partitions of the set of centres $\{c_1,\ldots,c_N\}$ in two disjoint non-empty sets (non ordered:\(
\{\{c_1\},\{c_2,\ldots,c_N\}\}\simeq\{\{c_2,\ldots,c_N\},\{c_1\}\}
\)). There are exactly
\[
\frac{1}{2}\left(\binom{N}{1}+ \ldots+\binom{N}{N-1}\right)=\frac{1}{2}\left(\sum_{k=0}^N \binom{N}{k}-2\right)=2^{N-1}-1
\]
such partitions. Each partition will be labeled within the set of the labels
\[
\mathcal{P}:=\{P_j: j=1,\ldots, 2^{N-1}-1\}.
\]
It is convenient to distinguish those partitions which separate a single $c_j$ from the others:
\[
Q_j:=\{\{c_j\},\{c_1,\ldots,c_N\}\setminus \{c_j\}\} \qquad j=1,\ldots,N.
\]
This special kind of partitions  define a subset of labels
\[
\mathcal{P}_1:=\{Q_j \in \mathcal{P}: j=1,\ldots,N\} \subset \mathcal{P}.
\]
\subsection{Periodic solutions}\label{subsection:periodic}
To any finite sequence of $n$  symbols we associate an $n$-periodic bi-infinite sequence. We define the \emph{right shift} in $\mathcal{P}^n$ as
\begin{equation*}
T_r\colon  \mathcal{P}^n \to \mathcal{P}^n
 \colon (P_{j_1},P_{j_2},\ldots,P_{j_n}) \mapsto (P_{j_n},P_{j_1},\ldots, P_{j_{n-1}}),
\end{equation*}
and we say that \emph{$(P_{j_1},\ldots,P_{j_n}) \in \mathcal{P}^n$ is equivalent to $(P_{j_1}',\ldots, P_{j_n}') \in \mathcal{P}^n$} if there exists $m \in \mathbb{N}$ such that
\[
(P_{j_1}',\ldots, P_{j_n}')=T_r^m \left((P_{j_1},\ldots,P_{j_n}) \right).
\]
Our first goal consists in proving the existence of infinitely many periodic solutions at negative energies:

\begin{teor}\label{esistenza di soluzioni periodiche}
Let $\a \in [1,2)$, $c_1, \ldots, c_N \in \mathbb{R}^2$, $m_1,\ldots,m_N \in \R^+$.
There exists $\bar{h}<0$ such that for every $h \in (\bar{h},0)$, $n \in \mathbb{N}$ and $(P_{j_1},\ldots,P_{j_n}) \in \mathcal{P}^n$ there exists a periodic solution $x_{(P_{j_1},\ldots,P_{j_n})}$ of the $N$-centre problem \eqref{1} with energy $h$, which depends on $(P_{j_1},\ldots,P_{j_n})$ in the following way.
There exist $\bar{R},\bar{\d}>0$ (independent of $(P_{j_1},\ldots,P_{j_n})$) such that $x_{(P_{j_1},\ldots,P_{j_n})}$ passes alternatively outside and inside $B_{\bar{R}}(0)$, and
\begin{itemize}
\item if in $(t_1,t_2)$ the solution stays outside $B_{\bar{R}}(0)$ and \\ $x_{(P_{j_1},\ldots,P_{j_n})}(t_1), x_{(P_{j_1},\ldots,P_{j_n})}(t_2) \in \pa B_{\bar{R}}(0)$, then
$$
|x_{(P_{j_1},\ldots,P_{j_n})}(t_1)-x_{(P_{j_1},\ldots,P_{j_n})}(t_2)    |<\bar{\d}.
$$
\item in its $k$-th passage inside $B_{\bar{R}}(0)$, if $x_{(P_{j_1},\ldots,P_{j_n})}$ does not collide in any centre, then it separates the centres according to the partition $P_{j_k}$.
\end{itemize}
To be precise:
\begin{itemize}
\item[($i$)] if $\alpha \in (1,2)$ then $x_{(P_{j_1},\ldots,P_{j_n})}$ is collisions-free.
\item[($ii$)] if $\alpha=1$ there are three possibilities:
\begin{itemize}
\item[$a$)] either $x_{(P_{j_1},\ldots,P_{j_n})}$ is collisions-free.
\item[$b$)] or  $x_{(P_{j_1},\ldots,P_{j_n})}$ has a collision with one centre $c_j$, covers a certain trajectory, rebounds against a second  centre $c_k$ (it may happen that $c_j=c_k$) and comes back along the same trajectory.  Note that this is possible just if $n$ is even and $(P_{j_1},\ldots, P_{j_n})$ is equivalent to $(P_{j_1}',\ldots, P_{j_n}')$ such that
\begin{gather*}
P_{j_1}'=Q_j \in \mathcal{P}_1, \quad  P_{j_{n/2+1}}' =Q_k \in \mathcal{P}_1 \quad \text{and (if $n > 2$)}\\ P_{j_{n}}'=P_{j_2}',\ P_{j_{n-1}}'=P_{j_3}',\ \ldots, \ P_{j_{n/2+2}}'=P_{j_{n/2}}'.
\end{gather*}
\item[$c$)] or else $x_{(P_{j_1},\ldots,P_{j_n})}$ has a collision with one centre $c_j$, covers a certain trajectory, "rebounds" against the surface $\left\{x \in \R^2: V(x)=-h\right\}$ with null velocity and comes back along the same trajectory. This is possible just if $n$ is odd and $(P_{j_1},\ldots, P_{j_n})$ is equivalent to $(P_{j_1}',\ldots, P_{j_n}')$ such that
\begin{gather*}
P_{j_1}'=Q_j \in \mathcal{P}_1 \quad \text{and (if $n>1$)} \\
P_{j_n}'=P_{j_2}', \ P_{j_{n-1}}'=P_{j_3}',\  \ldots, \ P_{j_{(n+1)/2+1}}'=P_{j_{(n+1)/2}}.
\end{gather*}
\end{itemize}
\end{itemize}
\end{teor}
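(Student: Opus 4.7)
The plan is a broken-geodesics scheme for the Jacobi metric $g_h=2(h+V)|dx|^2$, whose geodesics, reparametrised by arc length, coincide with the solutions of \eqref{1} at energy $h$. Since $V(x)\to 0$ at infinity, for $|h|$ small the Hill region $\{V>-h\}$ is a bounded domain containing all centres whose outer boundary $\{V=-h\}$ lies at distance of order $|h|^{-1/\a}\gg 1$. I would fix $\bar R>0$ so that $B_{\bar R}(0)$ contains all the $c_j$ while $\{V=-h\}$ lies far outside $B_{\bar R}(0)$, and construct each periodic solution by concatenating \emph{inner arcs} inside $\overline{B_{\bar R}(0)}$ with \emph{outer arcs} in the Hill region minus $B_{\bar R}(0)$, joined at pairs of points on $\partial B_{\bar R}(0)$.

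For the inner problem, given a partition $P=\{K_1,K_2\}\in\mathcal{P}$ and an admissible pair of endpoints on $\partial B_{\bar R}(0)$, I would minimise the $g_h$-length over the class of paths in $\overline{B_{\bar R}(0)}\setminus\{c_1,\ldots,c_N\}$ whose closure (together with a chord joining the endpoints) separates $K_1$ from $K_2$ in $\R^2$. Once collisions are ruled out, compactness and lower semicontinuity of the length functional are standard. For $\a\in(1,2)$ the strong-force condition forces the Jacobi action to diverge along any colliding sequence, yielding a collisions-free minimiser; for $\a=1$ it fails and I would perform the Levi--Civita regularisation centre by centre, obtaining a regular minimisation problem whose minimisers may degenerate, in the symmetric borderline cases, into the ejection--collision or ejection--rebound trajectories of items $(b)$ and $(c)$. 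For the outer problem, at large $|x|$ the potential is approximately a central mass $\sum_j m_j$ placed at the origin, so for $|h|$ small the length-minimising geodesic joining two close enough points of $\partial B_{\bar R}(0)$ is a small perturbation of a Keplerian arc at energy $h$ reaching the Hill boundary; an inverse function theorem argument then yields existence, uniqueness and smooth dependence on the endpoints, with the bound $\bar\d$ of the statement arising from the domain of validity of this perturbation.

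The final step is to glue the arcs: for a symbolic word $(P_{j_1},\ldots,P_{j_n})$ I would consider the total $g_h$-length of the broken closed curve obtained by alternating the $n$ inner arcs for $P_{j_1},\ldots,P_{j_n}$ with outer arcs between them, viewed as a function of the $2n$ junction points on $\partial B_{\bar R}(0)$. Critical points of this finite-dimensional functional correspond to broken geodesics that are $C^1$ at the junctions, hence to genuine periodic solutions of \eqref{1} realising the prescribed symbol sequence. I expect the main obstacle to lie exactly here: one must establish the existence of a critical point (through minimisation, or a min--max / mountain-pass scheme on the junction parameters when minimisation degenerates), prevent the junction points from collapsing or from becoming tangent to $\partial B_{\bar R}(0)$, and finally interpret the collisions that survive the Levi--Civita regularisation when $\a=1$, which rigidifies the admissible symbolic words into the symmetric arrangements of $(b)$ and $(c)$. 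Interior collisions along the outer arcs are automatically excluded by the Kepler approximation, and the geometric information on $\bar R$ and $\bar\d$ is inherited directly from the construction.
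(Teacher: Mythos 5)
Your overall architecture---normalisation of the energy, perturbative outer arcs near the brake orbit, topologically constrained minimisation for the inner arcs, and a finite-dimensional reduction over the $2n$ junction points on $\partial B_{\bar R}(0)$---coincides with the paper's. The genuine gap is in your mechanism for excluding collisions of the inner minimisers when $\a\in(1,2)$. You invoke ``the strong-force condition'' to claim that the Jacobi action diverges along colliding sequences; but the strong-force (Gordon) condition requires the potential to dominate $|x-c_j|^{-2}$ near the singularity, i.e.\ $\a\ge 2$, and it fails for every $\a\in[1,2)$ considered here. For these exponents colliding paths have \emph{finite} Maupertuis/Jacobi action, so a minimiser over a weakly closed class may a priori collide. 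The paper's proof of this step is the technical core of Section \ref{dinamica interna}: a blow-up analysis at a hypothetical collision, showing that suitably rescaled arcs converge to a zero-energy ejection--collision solution of the $\a$-Kepler problem whose total angular variation is at least $2\pi/(2-\a)$, which exceeds $2\pi$ when $\a>1$ and contradicts the bound $<2\pi$ forced by the absence of self-intersections of the minimiser (Proposition \ref{variazione dell'angolo limitata}); for $\a=1$ the two bounds coincide, no contradiction arises, and one must pass to the Levi--Civita variable to classify the surviving ejection--collision minimisers. Without this (or an equivalent local deformation argument) item $(i)$ of the theorem is unproved.

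Two secondary points. First, the class of paths ``separating $K_1$ from $K_2$'' is not weakly closed in $H^1$, so your inner minimisation is not well posed as stated; the paper replaces it by the classes $K_l$ defined through the parity of the winding numbers about each centre, augmented with the explicit collision strata $\mathfrak{Coll}_l$, and only a posteriori identifies self-intersection-free minimisers with separating paths. Second, the outer arcs in the paper are produced by a shooting/implicit-function argument rather than by length minimisation; this is not cosmetic, because in Step 4 of the gluing the explicit sign of the angular velocity of the outer arc at the junction, which comes from the perturbative construction, is exactly what produces a decreasing variation of $F$ and rules out the degenerate configuration $|p_{2j}-p_{2j+1}|=\d$, i.e.\ it guarantees that the minimiser of the finite-dimensional functional is an interior point of $D$ and hence a true critical point.
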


Of course, by varying both the number $n \in \mathbb{N}$ and the choice of the partitions $(P_{j_1},\ldots,P_{j_n}) \in \mathcal{P}^n$,  we find infinitely many periodic solutions. Let us also note that the symbol sequences of the collision solutions have a reflectional symmetry: by choosing non symmetric sequences we can rule out the occurrence of collisions.

\medskip

The following pictures represent the case ($i$) or ($ii-a$), ($ii-b$), ($ii-c$) respectively.

\begin{center}
\begin{tikzpicture}[>=stealth]
\draw[dashed] (0,0) circle (1.5cm);
\draw[font=\footnotesize] (130:1.5cm) node[anchor=east]{$\bar{R}$};
\filldraw (0.3,-0.3) circle (1pt)
          (0.5,0.3) circle (1pt)
          (0,0.5) circle (1pt)
          (-0.3,-0.3) circle (1pt)
          (-0.5,0.3) circle (1pt) ;
\draw[font=\footnotesize] (90:1.5cm) node[anchor=south west]{$x_0$};
\draw[->] (90:1.5cm)  .. controls (93:3cm) and (100:3cm)..(100:1.5cm);
\draw[font=\footnotesize] (100:1.5cm) node[anchor=south east]{$x_1$};
\draw[->] (100:1.5cm).. controls (-0.1,0.6) and (0.1,-0.3)..(242:1.5cm);
\draw[font=\footnotesize] (242:1.5cm) node[anchor=east]{$x_2$};
\draw[->] (242:1.5cm).. controls (238:3cm) and (239:3.4cm)..(247:1.5cm);
\draw[font=\footnotesize] (247:1.5cm) node[anchor=north west]{$x_3$};
\draw[->] (247:1.5cm).. controls (0.2,-0.3) and (0.2,0.2)..(4:1.5cm);
\draw[font=\footnotesize] (4:1.5cm) node[anchor=south west]{$x_4$};
\draw[->] (4:1.5cm).. controls (2:2.8cm) and (355:3cm)..(355:1.5cm);
\draw[font=\footnotesize] (355:1.5cm) node[anchor=north west]{$x_5$};
\draw[->] (355:1.5cm).. controls (0.2,-0.1) and (0.1,0.7)..(90:1.5cm);
\end{tikzpicture}
$\qquad$
\begin{tikzpicture}[>=stealth]
\draw[dashed] (0,0) circle (1.5cm);
\draw[font=\footnotesize] (130:1.5cm) node[anchor=east]{$\bar{R}$};
\filldraw (0.3,-0.3) circle (1pt)
          (0.5,0.3) circle (1pt)
          (0,0.5) circle (1pt)
          (-0.3,-0.3) circle (1pt)
          (-0.5,0.3) circle (1pt) ;
\draw[font=\footnotesize] (100:1.5cm) node[anchor=south east]{$x_0=x_3$};
\draw[<->>] (0.5,0.3)..controls (0.2,0.7) and (0.05,1)..(90:1.5cm);
\draw[font=\footnotesize] (90:1.5cm) node[anchor=south west]{$x_1=x_2$};
\draw[<->>] (90:1.5cm).. controls (93:3cm) and (100:3cm)..(100:1.5cm);
\draw[font=\footnotesize] (242:1.5cm) node[anchor= east]{$x_4=x_7$};
\draw[<->>] (100:1.5cm).. controls (-0.1,0.6) and (0.1,-0.3)..(242:1.5cm);
\draw[font=\footnotesize] (248:1.5cm) node[anchor=north west]{$x_5=x_6$};
\draw[<->>] (242:1.5cm)..controls (238:3cm) and (242:2.7cm)..(248:1.5cm);
\draw[<->>] (248:1.5cm)..controls (-0.1,-0.7) and (0,-0.4)..(0.3,-0.3);
\end{tikzpicture}
$\qquad$
\begin{tikzpicture}[>=stealth]
\draw[dashed] (0,0) circle (1.5cm);
\draw[font=\footnotesize] (130:1.5cm) node[anchor=east]{$\bar{R}$};
\filldraw[font=\footnotesize] (0.3,-0.3) circle (1pt)
          (0.5,0.3) circle (1pt)
          (0,0.5) circle (1pt)
          (-0.3,-0.3) circle (1pt)
          (-0.5,0.3) circle (1pt);
\draw[font=\footnotesize] (100:1.5cm) node[anchor=south east]{$x_1=x_2$}
                          (90:1.5cm) node[anchor=south west]{$x_0=x_3$}
                          (355:1.5cm) node[anchor=north west]{$x_4=x_9$}
                          (4:1.5cm) node[anchor=south west]{$x_5=x_8$}
                          (195:1.5cm) node[anchor=south east]{$x_6=x_7$};
\draw[<<->] (90:1.5cm).. controls (93:3cm) and (100:3cm)..(100:1.5cm);
\draw[<<->](4:1.5cm).. controls (2:2.8cm) and (355:3cm)..(355:1.5cm);
\draw[<<->] (355:1.5cm).. controls (0.2,-0.1) and (0.1,0.7)..(90:1.5cm);
\draw[<->>] (4:1.5cm)..controls (1,0.1) and (-0.5,-0.1)..(195:1.5cm);
\draw[<->>] (195:1.5cm)..controls (-2.3,-0.6) and (-2,-0.5)..(195:3cm);
\draw[<<->] (100:1.5cm)..controls (-0.1,0.3) and (-0.2,0)..(-0.3,-0.3);
\draw[dashed] (185:3cm) arc (185:205:3cm);
\draw[font=\footnotesize] (185:3cm) node[anchor=east]{$\{V=-h\}$};
\end{tikzpicture}
\end{center}

\subsection{Fixed ends problems}
To prove  Theorem \ref{esistenza di soluzioni periodiche} we shall make use of a broken geodesics argument, finally leading to a finite dimensional reduction.
A key step consists in solving fixed end problems having the desired topological characterization. This will yield to a constrained minimization for the Maupertuis' functional, the main difficulty being the possible interaction with the centres.  Similar minimization problems in the presence of topological constraints have been recently treated in the literature concerning the variational approach to the $N$-body problem (see e.g. \cite{BaTeVe1,Chen2, Chen3, FuGrNe}). We believe that the following intermediate result can be of independent interest.
\begin{teor}\label{teor_dinamica interna partizioni}
Let $\a \in [1,2)$, $c_1, \ldots, c_N \in \mathbb{R}^2$, $m_1,\ldots,m_N \in \R^+$.
There exist $\bar{h}<0$ and $R>0$ such that for every $h \in (\bar{h},0)$ and every pair of points $p_1,p_2 \in \pa B_R(0)$ and $P_j\in \mathcal{P}$, there exist $T>0$ and a solution $y_{P_j}(\cdot\,;p_1,p_2)$  of the $N$-centre problem \eqref{1} at energy $h$ such that $y_{P_j}(0)=p_1$, $y_{P_j}(T)=(p_2)$, $y_{P_j}(0,T)\subset B_R(0)$. Moreover:
\begin{itemize}
\item[($i$)] if $\a \in (1,2)$ then $y_{P_j}$ is collisions-free and self-intersections-free.
\item[($ii$)] if $\a=1$ we have to distinguish among
\begin{itemize}
\item[$a$)] $p_1 \neq p_2$; then $y_{P_j}$ is collisions-free and self-intersections-free.
\item[$b$)] $p_1=p_2$ and $P_j \in \mathcal{P} \setminus \mathcal{P}_1$; then $y_{P_j}$ is collisions-free and self-intersections-free.
\item[$c$)] $p_1=p_2$ and $P_j \in \mathcal{P}_1$; then $y_{P_j}$ can be a collisions-free and self-intersections-free solution, or can be an ejection-collision solution, with a unique collision with $c_j$.
\end{itemize}
\end{itemize}
Whenever it is collision free, then $y_{P_j}$ separates the centres according to the partition $P_j$.
\end{teor}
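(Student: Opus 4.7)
The plan is to realize $y_{P_j}$ as the reparametrization of a minimizer of the Maupertuis--Jacobi functional
\[
M_h(u) = \frac{1}{2}\int_0^1 |\dot u|^2\,dt \cdot \int_0^1 \bigl(V(u) + h\bigr)\,dt
\]
within a homotopy class of paths joining $p_1$ to $p_2$ that encodes the partition $P_j$. First I would fix $R$ so that all centres lie in $B_R(0)$ and so that, for $h$ sufficiently close to $0$, $\overline{B_R(0)}$ is contained in the Hill region $\{V+h\geq 0\}$. Writing $P_j=\{A,B\}$, I would define $\mathcal{K}_{P_j}(p_1,p_2)$ as the set of $H^1$ paths $u\colon [0,1]\to\overline{B_R(0)}\setminus\{c_1,\ldots,c_N\}$ with $u(0)=p_1$, $u(1)=p_2$ such that closing $u$ with a prescribed arc on $\partial B_R(0)$ produces a Jordan curve separating $A$ from $B$; equivalently, a winding number condition on each $c_i$. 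This class is non-empty for every admissible choice of endpoints and partition.

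Next I would apply the direct method. Coercivity holds on each class because $V+h\geq 0$ on the admissible set and $M_h$ controls the Dirichlet integral $\int|\dot u|^2$ up to a multiplicative constant; weak lower semicontinuity on $H^1$ is standard. A minimizer $u^*$ thus exists in the weak closure of $\mathcal{K}_{P_j}(p_1,p_2)$. A barrier argument rules out tangential contacts with $\partial B_R(0)$ at interior times: any such arc can be pushed slightly inward while remaining in the same class and strictly decreasing $M_h$, exploiting the fact that $V$ is larger closer to the centres. Once $u^*$ is interior and collision-free, the classical Maupertuis reparametrization yields a solution of \eqref{1} at energy $h$ with the required endpoint conditions.

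The main obstacle is collision exclusion. For $\a\in(1,2)$ the strong force condition forces $\int_0^1 V(u)\,dt = +\infty$ on any $u$ hitting a centre, so colliding paths have infinite Maupertuis action and are automatically excluded; this gives case $(i)$. For $\a=1$ the Coulombic action of an ejection--collision is finite and this simple argument breaks down. I would instead perform a local blow-up analysis at each putative collision instant, using the Levi-Civita regularization advertised in the paper's keywords to identify the local profile with a degenerate Kepler ejection--collision trajectory through a single centre $c_j$. A local variation argument, in the spirit of averaging and angle-opening techniques used in the $N$-body literature, then shortcuts such a profile and strictly decreases $M_h$, unless the topological class is incompatible with every admissible perturbation; this incompatibility occurs precisely when $p_1=p_2$ and $P_j=Q_j\in\mathcal{P}_1$, which is the residual case $(ii\text{-}c)$. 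In the remaining cases $(ii\text{-}a)$ and $(ii\text{-}b)$ the perturbation is always available and the minimizer is collision-free.

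Finally, self-intersection freeness and the claimed geometric separation of the centres follow from a cut-and-paste argument: a self-intersection would produce a sub-loop whose removal preserves the partition constraint and strictly lowers $M_h$, contradicting minimality. Once self-intersections are excluded, the trajectory is a Jordan arc in $\overline{B_R(0)}$ and membership in $\mathcal{K}_{P_j}$ directly translates into the claimed separation of $\{c_1,\ldots,c_N\}$ according to $P_j$.
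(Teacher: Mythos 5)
Your overall skeleton (Maupertuis minimization in winding classes, blow-up plus Levi--Civita at $\a=1$) matches the paper, but the argument for case ($i$) contains a genuine error. You claim that for $\a\in(1,2)$ ``the strong force condition forces $\int_0^1 V(u)\,dt=+\infty$ on any $u$ hitting a centre,'' so that colliding paths are excluded for free. This is false: Gordon's strong-force condition requires homogeneity degree $-\a$ with $\a\geq 2$. For $\a\in[1,2)$ a colliding $H^1$ path behaving like $|t-t_0|^{2/(2+\a)}$ near the collision has $V(u)\sim|t-t_0|^{-2\a/(2+\a)}$ with exponent strictly less than $1$, hence finite Maupertuis action; indeed the Kepler ejection--collision arcs themselves lie in the weak closure of your class with finite action. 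This is exactly why the paper must run the blow-up analysis for \emph{all} $\a\in[1,2)$: the limiting zero-energy Kepler profile sweeps a total angle $2\pi/(2-\a)$, which exceeds the bound $2\pi$ imposed by absence of self-intersections precisely when $\a>1$, and only this angular obstruction (not an action blow-up) kills collisions in case ($i$). Without it your proof of ($i$) has no content.

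Two further steps are shakier than you present them. First, your barrier argument at $\pa B_R(0)$ points the wrong way: since the Jacobi conformal factor $V+h$ \emph{increases} toward the centres, pushing an arc inward tends to \emph{increase} the Jacobi length, and in fact for the pure Kepler potential the circle of the circular orbit is itself a geodesic, so tangential contact cannot be dismissed by a local variation. The paper instead fixes $R$ to be the radius of the circular $\a$-Kepler orbit at energy $-1$ and argues dynamically: a $\mathcal C^1$ minimizer leaving $\pa B_R(0)$ tangentially would shadow the circular solution and could never re-enter $B_{R/2}(0)$ in the bounded time forced by the topological constraint. Second, your cut-and-paste for self-intersections deletes a sub-loop, which changes the winding numbers by the winding of that loop and may therefore leave the admissible class when the loop encloses a centre an odd number of times; the paper's device is to \emph{reverse the orientation} of the sub-loop, which alters each winding number by an even integer, preserves the mod-$2$ classes the minimization is actually performed in, keeps $M_h$ unchanged, and yields a non-$\mathcal C^1$ minimizer, a contradiction. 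Your identification of the residual collision case ($p_1=p_2$, $P_j\in\mathcal P_1$) is correct in spirit, but the mechanism is not a ``shortcut'' variation: it is the Levi--Civita regularization showing that any collision minimizer is a reflected ejection--collision arc, which forces equal endpoints and the special winding parity.
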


\subsection{Symbolic dynamics}\label{subsection:symbolic}
Let us consider the discrete metric space $S$ (endowed with the trivial distance: $d_1(s_j,s_k):= \d_{jk}$ $\forall s_j,s_k \in S$, , where $\d_{jk}$ is the Kronecker delta), and consider the bi-infinite sequences of elements of $S$:
\[
S^\Z:=\{(s_m)_{m \in \mathbb{Z}}: s_m \in S \ \forall m\}.
\]
It is a metric space with respect to the distance
\beq\label{distanza in S^Z}
d((s_m), (t_m)):= \sum_{m \in \Z} \frac{1}{2^{|m|}} d_1(s_m,t_m), \qquad \forall (s_m), (t_m) \in S^\Z.
\eeq
Of course, we can introduce a right shift in this space of sequences letting
\[
T_r((s_m)):=(s_{m+1}) \qquad \forall (s_m) \in S^\Z.
\]
\begin{defin}
Let $\Sigma$ be a metric space, $\sigma:\Sigma \to \Sigma$ a continuous map, $S$ a finite set. We say that the dynamical system  $(\Sigma,\sigma)$ has a \emph{symbolic dynamics with set of symbols $S$} if there exist a $\sigma$-invariant subset $\Pi$ of $\Sigma$ and a continuous and surjective map $\pi:\Pi \to S^\Z$ such that the diagram
\[
\xymatrix{
\Pi \ar[r]^\sigma \ar[d]^{\pi}  &\Pi \ar[d]^{\pi}\\
S^\Z \ar[r]^{T_r} & S^\Z
}
\]
commutes, i.e. the restriction $\sigma|_{\Pi}$ is topologically semi-conjugate to the right shift in the metric space $(S^\Z, d)$ ($d$ defined in \eqref{distanza in S^Z}).
\end{defin}
Let us write equation \eqref{1} as a first order autonomous Hamiltonian system:
\beq\label{sistema dinamico}
 \begin{cases}
\dot{x}(t)=v(t)\\
\dot{v}(t)=\n V(x(t))
 \end{cases} \Leftrightarrow \left(\begin{array}{c} \dot{x}(t) \\ \dot{v}(t)\end{array}\right) = \left(\begin{array}{cc} 0 & 1 \\ -1 & 0\end{array}\right)\left(\begin{array}{c} -\n V(x(t)) \\ v(t)\end{array}\right),
\eeq
whit Hamiltonian function given by the energy.
\begin{corol}\label{symbolic dynamic}
Let $\bar{h}$ be introduced in Theorem \ref{esistenza di soluzioni periodiche}, let $h \in (\bar{h},0)$.
Then there exist a subset $\Pi_h$ of the energy shell $\mathcal{U}_h$, a first return map $\mathfrak{R}:\Pi_h \to \Pi_h$, and a continuous and surjective map $\pi:\Pi_h \to \mathcal{P}^\Z$ such that the diagram
\[
 \xymatrix{
\Pi_h \ar[r]^{\mathfrak{R}} \ar[d]^\pi & \Pi_h \ar[d]^\pi \\
\mathcal{P}^\Z \ar[r]^{T_r} & \mathcal{P}^\Z,
}
\]
commutes; namely for every $h \in (\bar{h},0)$ the dynamical system associated with the $N$-centre problem on the energy shells $\mathcal{U}^h$ has a symbolic dynamics, with set of symbols $\mathcal{P}$.
\end{corol}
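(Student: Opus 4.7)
The plan is to realize $\Pi_h$ as a (subset of a) Poincar\'e section on the energy shell, define $\mathfrak{R}$ as the appropriate first-return map, and construct $\pi$ as the itinerary map that records which partition is separated at each passage inside $B_{\bar R}(0)$. The conjugation relation $\pi\circ\mathfrak{R}=T_r\circ\pi$ will then hold by construction, so the real content is the existence of enough orbits to make $\pi$ surjective, which I would extract from Theorem \ref{esistenza di soluzioni periodiche} via a compactness/approximation argument.

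First I would fix the cross-section
\[
\Sigma_h:=\{(x,v)\in\mathcal{U}_h: |x|=\bar R,\ \langle x,v\rangle<0\}.
\]
Since on $\partial B_{\bar R}(0)$ one has $|v|^2=2(h+V(x))$, bounded away from $0$ for $h$ close to $0$, the flow crosses $\Sigma_h$ transversally and $\Sigma_h$ is compact. For $(x_0,v_0)\in\Sigma_h$ whose bi-infinite orbit stays collision-free and, by Theorem \ref{esistenza di soluzioni periodiche}, alternates excursions outside and passages inside $B_{\bar R}(0)$ with uniformly bounded transit times, I define the coding $\pi(x_0,v_0)=(s_m)_{m\in\Z}\in\mathcal{P}^\Z$ by letting $s_m$ be the partition of the centres separated by the $m$-th inside-passage, and I take $\Pi_h$ as the set of $(x_0,v_0)\in\Sigma_h$ for which this coding is well-defined. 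The first-return map $\mathfrak{R}$ sends such a state to the one obtained after one outside excursion plus one inside passage, so it shifts $\pi$ by one symbol.

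The crucial step is surjectivity. Given $(P_{j_m})_{m\in\Z}\in\mathcal{P}^\Z$, truncate to $(P_{j_{-n}},\ldots,P_{j_n})$ and invoke Theorem \ref{esistenza di soluzioni periodiche} to produce a $(2n+1)$-periodic solution $x^{(n)}$ whose $k$-th inside-passage (for $|k|\le n$) separates exactly $P_{j_k}$. Choose as initial condition $\xi^{(n)}\in\Sigma_h$ the entry point of its $0$-th inside-passage. By compactness of $\Sigma_h$, a subsequence $\xi^{(n)}\to\xi^\ast$. The uniform bounds from Theorem \ref{esistenza di soluzioni periodiche} on the diameters of outside-arcs ($<\bar\delta$) and on inside transit times force the orbit of $\xi^\ast$ to realize, on any finite window $[-k,k]$, the coding $(P_{j_{-k}},\ldots,P_{j_k})$; hence $\pi(\xi^\ast)=(P_{j_m})_{m\in\Z}$. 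Continuity of $\pi$ is the classical statement that nearby initial conditions have orbits that agree on long time intervals, hence identical codings on a large window around $0$, which is exactly closeness in the metric \eqref{distanza in S^Z}.

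The main obstacle, as usual in such constructions, is the collision issue: the limit $\xi^\ast$ must yield a genuine solution of \eqref{1}, not a generalised trajectory through a centre. For $\alpha\in(1,2)$ this is free from Theorem \ref{esistenza di soluzioni periodiche}$(i)$ and from the fact that collisions are ruled out in the limit by the lower bound on $|x^{(n)}|$ away from the centres during outside-arcs and by the partition-separating property during inside-arcs. In the Coulombic case $\alpha=1$ I would remove from $\Pi_h$ the (lower-dimensional) set of initial conditions producing the exceptional ejection--collision or wall-rebound orbits described in cases $(ii\text{-}b)$ and $(ii\text{-}c)$; since these correspond only to the reflection-symmetric periodic itineraries, they form a meager, invariant subset, and surjectivity of $\pi$ onto $\mathcal{P}^\Z$ is preserved by perturbing the truncations $(P_{j_{-n}},\ldots,P_{j_n})$ by an asymmetric padding when needed, so the limiting $\xi^\ast$ remains in $\Pi_h$.
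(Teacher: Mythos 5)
Your overall architecture (Poincar\'e section, first-return map, itinerary coding, surjectivity by truncation and compactness) matches the paper's, but the step you yourself call crucial --- surjectivity --- is exactly where your argument has a genuine gap. You extract a convergent subsequence of the \emph{initial conditions} $\xi^{(n)}\to\xi^\ast$ and then assert that the uniform bounds on transit times ``force'' the orbit of $\xi^\ast$ to realize the prescribed coding. This does not follow: to transfer the coding to the limit you need continuous dependence of the orbit on initial data over the whole window $[-k,k]$, which is available only once you already know the limit orbit is collision-free there; and nothing in Theorem \ref{esistenza di soluzioni periodiche} gives a lower bound, uniform in $n$, on the distance of the inner arcs of $x^{(n)}$ from the centres, so the limit could a priori collide even for $\a\in(1,2)$. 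The paper closes this gap variationally: it extracts (by a diagonal argument) convergence of \emph{all} the crossing points $x^n_k\to\bar x_k$ on $\pa B_{\bar R}(0)$, builds the candidate limit orbit by gluing the exterior arcs (which depend continuously on their ends by Theorem \ref{teorema 0.1}) with interior arcs obtained as weak limits of the minimizers of $M_h$ in the classes $\mathfrak{K}_{P_{j_k}}$ (Lemma \ref{conv estremi->conv debole}); since these classes are weakly closed, the limit interior arcs are again minimizers in the same class, hence collision-free and correctly coded for $\a\in(1,2)$, and only then does one upgrade to $\mathcal{C}^2$-convergence on compact sets to see that the glued curve is a genuine solution. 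Without identifying the limit arcs as minimizers in the appropriate weakly closed classes, neither the absence of collisions nor the partition-separating property of the limit is justified.

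Your treatment of $\a=1$ also diverges from the paper in a way that does not work as stated. You propose to delete the ejection--collision orbits from $\Pi_h$ and to recover surjectivity by ``asymmetric padding'' of the truncations; but the padding only changes the approximating finite sequences, not the target bi-infinite sequence, so for a reflection-symmetric target with $P_{j_0}\in\mathcal{P}_1$ the limit orbit may still be an ejection--collision solution and your restricted $\pi$ would then have no preimage of that sequence. The paper instead \emph{keeps} these orbits in $\Pi_h$, codes a collision with $c_j$ by the symbol $Q_j$ through the map $\chi$, and obtains continuity of the flow (hence of $\pi$) through collisions via the local Levi--Civita regularization (Remark \ref{regolarizzazione}); your appeal to ``classical'' continuous dependence is not available at collision orbits without this regularization.
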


Although the planar $N$-centre problem appears as a simplified version of the restricted planar $(N+1)$-problem, in which the Coriolis' and the centrifugal forces are neglected, it is far away from being simple, except for the two-center problem, which is known to be integrable (see e.g. \cite{Whit}). To give an idea of its complexity, we list below some remarkable results which are related with ours (our intent is not to give an exhaustive bibliography, and we refer the interested reader to the quoted works and the references therein): in \cite{Bo1}, Bolotin proved the analytic non-integrability of the system for $N \geq 3$ on the energy shells for any $h> 0$. The question of analytic non-integrability has been faced also for the spatial problem by Knauf and Taimanov (\cite{KT}): they proved that an analytic integral which is independent with respect to the energy does not exist in case $n \geq 3$ and the energy is greater than some threshold $h_{th}$. The authors showed also the existence of smooth first integrals independent by the energy for both the planar and the spatial problem, with energy $h>0$ and $h>h_{th}$ respectively. Another crucial reference, always for positive energies, is the work of Klein and Knauf \cite{Kn}, which provides an accurate description of the scattering for a wide class of problems having singular potentials. The spatial case was treated in \cite{BoNe01}. As far as the negative energy case is concerned, the literature shows very few works,   and the most remarkable results are obtained with fairly restrictive assumptions: in \cite{BoNe} Bolotin and Negrini proved the occurrence of chaotic dynamics for the $3$-centre problem under the assumption of the third centre far away from the others two and a small absolute value of $h$; in \cite{Di} Dimare obtained a similar result for $h<0$, $|h|$ small enough, in the case when one centre has small mass with respect to the others.
In both papers the problem is approached in a perturbative setting.

\subsection{Plan of the paper}

Let us fix $\a \in [1,2)$, $c_1,\ldots,c_N \in \R^2$, $m_1,\ldots,m_N > 0$. In section \ref{problema normalizzato} we show  equivalence of the fixed energy problem for \eqref{1}  with small negative energies with the similar problem where the energy is normalized to $-1$ and the new centres lie
inside a ball of radius $\eps$. Hence we will deal with a rescaled potential
\[
V_{\eps}(y)= \sum_{k=1}^N \frac{m_k}{\a |y- c_k'|^\a}, \qquad \max_{1\leq k\leq N} |c_k'|<\eps.
\]

The advantage of the reformulation is that, if $\eps$ is chosen sufficiently small, outside a ball of radius $R>\eps>0$, the new problem is a small perturbation of the Kepler's problem with homogeneity degree $-\a<0$ (we will call it "$\a$-Kepler's problem").
This consideration leads to the search periodic solutions to
\beq\label{P}
\begin{cases}
\ddot{y}(t)=\n V_\eps(y(t)), \\
\frac{1}{2}|\dot{y}(t)|^2-V_\eps(y(t))=-1, \qquad \eps \in (0,\bar{\eps})
\end{cases}
\eeq
dividing the investigation inside/outside the ball of radius $R> \bar{\eps} > 0$. In section \ref{dinamica esterna} we will  find arcs of solutions to \eqref{P} lying in $\R^2 \setminus B_R(0)$ connecting any pair of points $(x_0,x_1) \in (\pa B_R(0))^2$, provided their distance is sufficiently small, via perturbative techniques. In section \ref{dinamica interna} we will study the dynamics inside the ball $B_R(0)$ and provide solutions of \eqref{P} which connect $x_1,x_2 \in \pa B_R(0)$ for every $x_1,x_2$. This will be made trough a variational approach under suitable topological constraints. Finally, in section \ref{incollamento}, we will collect the previous results to obtain periodic solutions of \eqref{P} which pass alternatively outside and inside $B_R(0)$, using a broken geodesics argument, through a finite dimensional reduction. Then, using the results of section \ref{problema normalizzato}, we will obtain a periodic solution of the original problem. Once we proved the existence Theorem \ref{esistenza di soluzioni periodiche}, we will focus on the symbolic dynamics, proving Corollary \ref{symbolic dynamic} in section \ref{dinamica simbolica}.

\subsection{Notations}
We will often identify a function $u$ with the its image  $u([a,b]) \subset \R^2$, with some abuse of notation. Throughout the paper $C$ will be a strictly positive constant which may refer to different quantities even in the same proof. Sometimes it will be necessary to define different constants $C_1,\ldots,C_m$; in this case we point out that constants defined in one proof are not defined outside that proof.\\
It is convenient to introduce the polar coordinates for a point $x \in \R^2$:
\[
x=r e^{i \t}, \qquad r>0 \text{ and } \t \in \R.
\]
The angle $\t$ is counted in counterclockwise sense, and $\t=0$ if $x=(1,0)$. For every continuous function $x:I \subset \R \to \R^2 \setminus \{0\}$, there exist continuous functions $r:I \to \R^+$ and $\t:I \to \R$ such that
\[
x(t)=r(t) e^{i \t(t)}.
\]
Dealing with the angular momentum of a $\mathcal{C}^1$ function $x$, we will write
\[
\mathfrak{C}_x(t):=|x(t)\land\dot{x}(t)|=|r^2(t)\dot{\t}(t)|
\]
We will use the notations $\| \cdot \|_{L^p([a,b])}$ for the $L_p\left([a,b],\R^2\right)$-norm and $\| \cdot \|_{H^1([a,b])}$ for the $H^1\left([a,b],\R^2\right)$-norm; when there will not be possibility of misunderstanding, we will briefly write $\| \cdot \|_p$ or $\| \cdot \|$, respectively. The symbol $\wc$ will denote the weak convergence in $H^1$.

\section{Preliminaries}\label{sezione 2}

Let us fix $\a \in [1,2)$, $c_1,\ldots,c_N \in \R^2$, $m_1,\ldots,m_N > 0$. and fix the origin in the center of mass. Here and in what follows $M:= \sum_{j=1}^N m_j$. In this subsection we prove that solving \eqref{1} with energy $h<0$ is equivalent to solve a rescaled $N$-centre problem on the energy level $-1$.
In this perspective the quadratic mean of the centers will replace the energy as a parameter. To be precise, we state the following elementary result.

\begin{prop}\label{problema normalizzato}
Let $x \in \mathcal{C}^2\left((a,b);\R^2\right)$ be a classical solution of \eqref{1} with energy $h<0$. Then the function
\begin{equation}\label{eq:scaling}
y(t)= \left( -h \right)^{\frac{1}{\a}} x\left(\left(-h\right)^{-\frac{\a+2}{2\a}} t\right), \qquad t \in \left(\left(-h\right)^{\frac{\a+2}{2\a}}a,\left(-h\right)^{\frac{\a+2}{2\a}}b\right)
\end{equation}
is a solution of energy $-1$ of a $N$-centre problem with centres
\[
c_j'= \left(-h\right)^{\frac{1}{\a}} c_j, \qquad j=1,\ldots,N.
\]
The converse holds true: let $y \in \mathcal{C}^2\left(\left(a',b'\right),\R^2\right)$ be a classical solution of energy $-1$ of a $N$-centres problem, with centres $c_j'$. Let us set
\[
c_j= \left(-h\right)^{-\frac{1}{\a}} c_j', \qquad j=1,\ldots,N.
\]
Then
\[
x(t)=\left(-h\right)^{-\frac{1}{\a}} y \left(\left(-h\right)^{\frac{\a+2}{2\a}}t\right), \qquad t \in \left(\left(-h\right)^{-\frac{\a+2}{2\a}} a', \left(-h\right)^{-\frac{\a+2}{2\a}}b'\right)
\]
is a classical solution of \eqref{1} with energy $h<0$.
\end{prop}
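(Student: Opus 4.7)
The plan is a direct verification based on the scale-invariance of the Newtonian-type equation under the combined rescaling of space and time dictated by the $-\a$-homogeneity of $V$. First I would set $\mu := (-h)^{1/\a}$ and $\tau := (-h)^{-(\a+2)/(2\a)}$, so that the ansatz reads $y(t) = \mu\, x(\tau t)$ and $c_j' = \mu\, c_j$. Dimensional analysis actually forces this choice: one imposes the two requirements that the rescaled curve solve an equation of the same form (so the coefficient multiplying the force field should be $1$) and that its energy be $-1$.

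Next I would carry out the chain rule: $\ddot{y}(t) = \mu\tau^2\, \ddot{x}(\tau t)$. Plugging in the equation \eqref{1} for $x$ and using the identity $x(\tau t) - c_j = \mu^{-1}(y(t) - c_j')$ gives
\[
\ddot{y}(t) = -\sum_{j=1}^N \frac{m_j\, \mu^{\a+2}\tau^2}{|y(t) - c_j'|^{\a+2}}\bigl(y(t) - c_j'\bigr).
\]
By definition $\mu^{\a+2}\tau^2 = (-h)^{(\a+2)/\a}\cdot(-h)^{-(\a+2)/\a} = 1$, so $y$ satisfies the $N$-centre equation with centres $c_j'$ and the original masses $m_j$.

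For the energy, I would compute $|\dot{y}(t)|^2 = \mu^2\tau^2\,|\dot{x}(\tau t)|^2$ and, from $|y - c_j'| = \mu|x(\tau t) - c_j|$, the potential scaling $V'(y(t)) = \mu^{-\a}\, V(x(\tau t))$, where $V'$ is the potential with the rescaled centres. A brief check shows $\mu^2\tau^2 = \mu^{-\a} = (-h)^{-1}$, so by conservation of energy for $x$,
\[
\tfrac{1}{2}|\dot{y}(t)|^2 - V'(y(t)) = (-h)^{-1}\bigl(\tfrac{1}{2}|\dot{x}(\tau t)|^2 - V(x(\tau t))\bigr) = (-h)^{-1} h = -1,
\]
as required. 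The converse follows from the same computation applied to the inverse scaling $x(t) = \mu^{-1}\, y(\tau^{-1} t)$, $c_j = \mu^{-1} c_j'$; no separate argument is needed.

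No genuine obstacle is expected here: the whole content of the proposition is the scale covariance of Newton's equation associated with a homogeneous potential of degree $-\a$. The only point requiring care is the simultaneous bookkeeping of the two exponent conditions $\mu^{\a+2}\tau^2 = 1$ (for the equation) and $\mu^{-\a} = \mu^2\tau^2$ (for the energy); one verifies that these are in fact equivalent, so the pair $(\mu,\tau)$ is uniquely determined and the rescaling is consistent.
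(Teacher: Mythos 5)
Your verification is correct: the exponent bookkeeping ($\mu^{\a+2}\tau^2=1$ for the equation and $\mu^2\tau^2=\mu^{-\a}=(-h)^{-1}$ for the energy) checks out, and the converse is indeed just the inverse rescaling. The paper omits the proof entirely, labelling the proposition an elementary result, and your direct computation is exactly the intended argument.
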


\begin{corol}\label{h--epsilon}
For every $\eps > 0$ there exists $\zeta(\eps) > 0$ such that if $-\zeta(\eps)<h<0$, then the centres $c_j'$ of the equivalent problem lye in $B_\eps(0)$. The function $\zeta$ is strictly decreasing in $\eps$.
\end{corol}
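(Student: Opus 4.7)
The plan is to apply Proposition~\ref{problema normalizzato} directly and translate the condition on the rescaled centres into a single scalar inequality in $h$. Set $R_0 := \max_{1 \leq j \leq N}|c_j|$, a positive constant depending only on the original centre configuration. By the rescaling formula $c_j' = (-h)^{1/\alpha} c_j$ supplied by Proposition~\ref{problema normalizzato},
\[
\max_{1 \leq j \leq N}|c_j'| = (-h)^{1/\alpha} R_0.
\]
Hence the requirement that $c_j' \in B_\eps(0)$ for every $j$ is equivalent to the inequality $(-h)^{1/\alpha} R_0 < \eps$, i.e.\ $-h < (\eps/R_0)^\alpha$.

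This suggests the explicit definition $\zeta(\eps) := (\eps/R_0)^\alpha$. With this choice, whenever $h \in (-\zeta(\eps),0)$ one has $-h < (\eps/R_0)^\alpha$, so $(-h)^{1/\alpha} < \eps/R_0$ and therefore $|c_j'| < \eps$ for every $j = 1, \ldots, N$, as required. Strict monotonicity of the map $\eps \mapsto (\eps/R_0)^\alpha$ is then immediate from the fact that $\alpha, R_0 > 0$ are fixed, so $\zeta$ is a strictly monotone power function of $\eps$.

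Since the argument is essentially algebraic once Proposition~\ref{problema normalizzato} is in hand, I do not foresee any real obstacle: the only substantive ingredient is the explicit rescaling of the centres, and everything else reduces to solving a scalar inequality in $h$ and inspecting the resulting formula.
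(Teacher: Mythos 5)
Your proof is correct and coincides with the paper's own argument, which simply records the formula $\zeta(\eps)=\bigl(\eps/\max_{1\leq j\leq N}|c_j|\bigr)^\a$ obtained from the rescaling $c_j'=(-h)^{1/\a}c_j$ of Proposition \ref{problema normalizzato}. Note only that this $\zeta$ is strictly \emph{increasing} in $\eps$, not decreasing as the corollary's statement asserts; you prudently wrote ``strictly monotone,'' and the discrepancy lies in the paper's statement (whose own displayed formula even carries a spurious minus sign incompatible with $\zeta(\eps)>0$), not in your derivation.
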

\begin{proof}
Given $\eps>0$ we find
\[
\zeta(\eps)=-\left(\frac{\eps}{\max_{1\leq j \leq N}|c_j|}\right)^\a. \qedhere
\]
\end{proof}

\begin{rem}\label{da eps a h}
Of course, periodic solutions of the problem \eqref{P} for every $\eps \in (0,\bar{\eps})$,  will correspond, via Proposition \ref{problema normalizzato}, to periodic solutions of \eqref{1} of energy $h=\zeta(\eps)$ for every $h \in (-\zeta(\bar{\eps}),0)$.
\end{rem}

As said, if $\eps$ is chosen sufficiently small, outside a ball of radius $R>\eps>0$ we can consider the new problem as a small perturbation of the $\a$-Kepler's problem: indeed let us consider the total potential: $V_\eps$; then, for $|y| \geq R>\eps$ and $\max_j |c_j'|<\eps$, we have the expansion (in  $\mathcal C^1(\R^2 \setminus B_R(0))$):

\[
\n V_\eps(y)= \sum_{j=1}^N \frac{m_j}{\left|y-c_j'\right|^{\a+2}} \left(y-c_j\right)=
\nabla \left(\frac{M}{\a |y|^\a}\right)+  \mathcal{W}_\eps(y)= \nabla \left(\frac{M}{\a |y|^\a}\right)+o(\eps),
\]
where
\[
\mathcal{W}_\eps(y) =\n \left(V_\eps(y)-\frac{M}{\a |y|^\a}\right)\Rightarrow \mathcal{W}_\eps(y)=\n W_\eps(y).
\]

\begin{rem}\label{remark su R}
If $y$ is a solution of $\ddot{y}=\n V_\eps(y)$ over an interval $I \subset \R$, there holds
\[
V_\eps(y(t)) \geq 1 \qquad \forall t \in I,
\]
so that to exploit the previous argument we have to check that, for every $\eps>0$ sufficiently small, there exists $R>0$ such that
\[
B_\eps(0) \subset B_R(0) \subset \left\lbrace y \in \R^2: V_\eps(y) \geq 1\right\rbrace.
\]
Of course, we can find such and $R$ independent of the choice of $\eps$, if $\eps$ is small enough.  Then, for $y \in B_R(0)$, and for every $j \in \left\lbrace 1,\ldots,N\right\rbrace $
\[
|y-c_j'|\leq R+\eps \Rightarrow V_\eps(y) \geq \frac{M}{\a (R+\eps)^\a},
\]
which is strictly greater than $1$ if and only if $R<\left(\frac{M}{\a}\right)^\frac{1}{\a}-\eps$. There exists $\eps_1>0$ such that
\[
0<\eps<\eps_1 \Rightarrow \left(\frac{M}{\a}\right)^\frac{1}{\a}-\eps> \eps.
\]
This argument shows that for every $\eps \in (0,\eps_1)$ there exists $R>0$ such that $\eps_1 < R < \left(\frac{M}{\a}\right)^\frac{1}{\a}-\eps_1$. Actually, we will make the further request
\[
\eps < \frac{R}{2} < R < \left(\frac{M}{\a}\right)^\frac{1}{\a}-\eps,
\]
which is satisfied for every $\eps \in (0, \eps_1 /2)$. \\
For reasons which appear clear in the section \ref{dinamica interna}, \emph{it is convenient to choose $R$ such that $\pa B_R(0)$ is the support of the circular solution of the $\a$-Kepler's problem with energy $-1$}; let $y(t) = R \exp{\left\lbrace i \omega t \right\rbrace }$; we find $R$ such that
\beq\label{pn1}
\ddot{y}(t)=-M \frac{y(t)}{|y(t)|^{\a+2}} \Leftrightarrow R \o^2 = \frac{M}{R^{\a+1}}.
\eeq
The conservation of the angular momentum $\mathfrak{C}_y(t)=|y(t) \land \dot{y}(t)|$ gives
\beq\label{pn2}
R^2 \omega = R \sqrt{2\left(-1+\frac{M}{\a R^\a}\right)}.
\eeq
Collecting \eqref{pn1} and \eqref{pn2} we obtain
\beq\label{scelta di R}
R:=\left(\frac{(2-\a)M}{2\a}\right)^{\frac{1}{\a}}.
\eeq
This is consistent with the previous restriction on $R$, if $\eps_1$ is sufficiently small (if this was not true, it is sufficient to replace $\eps_1$ with a smaller quantity).
\end{rem}

We end this section with an elementary but useful remark. Proposition \ref{problema normalizzato} and Corollary \ref{h--epsilon} enable us to switch from solutions $x$ of \eqref{1} with energy $h<0$, $|h|$ sufficiently small, to solutions $y$ of \eqref{P}. In this correspondence the topological properties of the solutions  with respect to the centres are obviously preserved.

\section{Outer dynamics}\label{dinamica esterna}
We are going to use a perturbation argument in order to find particular solutions of problem \eqref{P} lying in $\R^2 \setminus B_R(0)$, connecting pairs of neighbouring points of $\pa B_R(0)$ with a \emph{close to brake} arc. To be precise we will prove the following theorem.

\begin{teor}\label{teorema 0.1}
There exist $\d>0$ and $\eps_2>0$ such that for every $\eps \in (0,\eps_2)$, for every $p_0,p_1 \in \pa B_R(0):|p_1-p_0| < 2\d$, there exist $T>0$ and a unique solution $y_{\text{ext}}(\cdot\,;p_0,p_1;\eps)$ of \ref{P} such that
$|y(t)| > R$, for $ t \in (0,T)$ and
$y(0)=p_0$, $y(T)=p_1$. Moreover, $y$ depends in a $\mathcal C^1$ way on the endpoints $p_0$ and $p_1$.
\end{teor}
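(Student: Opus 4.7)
The strategy is to view problem \eqref{P} outside $B_R(0)$ as a $\mathcal{C}^1$-small perturbation of the $\a$-Kepler problem (corresponding to $\eps=0$, namely $V_0(y)=M/(\a|y|^\a)$) and then to apply the implicit function theorem. I first construct, for $\eps=0$, a two-parameter family of \emph{close to brake arcs} connecting any pair of sufficiently close endpoints on $\pa B_R(0)$; afterwards I transfer the construction to small $\eps>0$.

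For the unperturbed $\a$-Kepler problem at energy $-1$, the zero-velocity circle is $\{|y|=r_*\}$ with $r_*:=(M/\a)^{1/\a}>R$, and every solution crossing $\pa B_R(0)$ has speed $v_R:=\sqrt{2\a/(2-\a)}$ there. Starting at $p_0\in\pa B_R(0)$ with outward radial velocity $v_R\, p_0/|p_0|$ yields the \emph{radial brake orbit}, which ascends to $r_*$, reverses, and returns to $p_0$ in finite time $T_0=2\int_R^{r_*}dr/\sqrt{2(V_0(r)-1)}$. Tilting the initial velocity by a small angle $\psi$ introduces the angular momentum $L=Rv_R\sin\psi$; by the standard quadrature for central motion, the resulting solution lies in $R\le|y|\le r_{\max}(L)<r_*$ throughout its outward--then--inward half, crossing $\pa B_R(0)$ again after time
\[
T(L)=2\int_R^{r_{\max}(L)}\frac{dr}{\sqrt{2(V_0(r)-1)-L^2/r^2}}
\]
and angular displacement
\[
\Theta(L)=2\int_R^{r_{\max}(L)}\frac{L/r^2}{\sqrt{2(V_0(r)-1)-L^2/r^2}}\,dr.
\]
Absorbing the integrable singularity at $r=r_{\max}(L)$ via the change of variable $r=r_{\max}(L)-(r_{\max}(L)-R)u^2$, dominated convergence gives $T(L)\to T_0$ and $\Theta(L)/L\to I_0:=2\int_R^{r_*}dr/(r^2\sqrt{2(V_0(r)-1)})>0$ as $L\to 0$. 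Hence $L\mapsto\Theta(L)$ is a local $\mathcal{C}^1$-diffeomorphism at the origin, and by rotational invariance the endpoint map $(p_0,L)\mapsto(p_0,p_1(L))$ is a $\mathcal{C}^1$-diffeomorphism of $\pa B_R(0)\times(-L_0,L_0)$ onto a uniform neighbourhood of the diagonal of $(\pa B_R(0))^2$.

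To pass to $\eps>0$, I invoke the implicit function theorem. Since $V_\eps=V_0+W_\eps$ with $W_\eps\to 0$ in $\mathcal{C}^1(\R^2\setminus B_R(0))$ (see Section~\ref{sezione 2}), the Hamiltonian flow $\Phi_\eps^t$ of $\ddot y=\n V_\eps(y)$ depends $\mathcal{C}^1$-smoothly on $(\eps,p_0,v,t)$ on the open set $\{|y|\ge R\}$. The non-degeneracy of the endpoint map at $\eps=0$ persists for small $\eps$ and yields constants $\d,\eps_2>0$ such that for every $\eps\in(0,\eps_2)$ and every $p_0,p_1\in\pa B_R(0)$ with $|p_0-p_1|<2\d$ the system
\[
\Phi_\eps^T(p_0,v)=p_1,\qquad \tfrac{1}{2}|v|^2=V_\eps(p_0)-1,
\]
admits a unique solution $(v,T)$ depending in a $\mathcal{C}^1$ way on $(p_0,p_1)$; the open condition $|y(t)|>R$ on $(0,T)$ survives from the radial brake orbit by continuity. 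The main obstacle is precisely the degeneracy of the unperturbed endpoint map at $L=0$, where the arc returns to its starting point: the transversality at this limit must be established through a careful analysis of the Abelian integral defining $\Theta(L)$ near its integrable turning-point singularity.
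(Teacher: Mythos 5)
Your proposal is correct in outline and follows the same two-stage strategy as the paper: analyse the unperturbed $\alpha$-Kepler brake arc outside $B_R(0)$, establish a non-degeneracy of the endpoint map there, and then transfer to small $\epsilon>0$ by the implicit function theorem, with uniformity in $p_0$ coming from rotational invariance. Where you differ is in how the key non-degeneracy is proved. The paper works directly with the flow-endpoint map $\psi(\dot{\theta}_0,T)=y(T;\theta_0,\dot{\theta}_0)$, which is $\mathcal{C}^1$ a priori by smooth dependence on initial data, and shows that $\bigl\langle \partial\psi/\partial\dot{\theta}_0(0,\bar{T}),e_1\bigr\rangle\neq 0$ by combining conservation of angular momentum with the elementary bounds $\frac{\pi}{2}<(\alpha/M)^{2/\alpha}R^2T\dot{\theta}_0+\frac{\pi}{2}\leq\theta(T)\leq 4T\dot{\theta}_0+\frac{\pi}{2}<\pi$ and the monotonicity of the cosine; this gives invertibility of the full $2\times 2$ Jacobian in $(\dot{\theta}_0,T)$, which is exactly what the implicit function theorem for $\Psi(\dot{\theta}_0,T,\epsilon,p_1)$ needs. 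You instead reduce to the one-dimensional angular displacement $\Theta(L)$ written as an Abelian integral and show $\Theta(L)/L\to I_0>0$. The two computations encode the same fact (the tangential displacement is asymptotically linear in the angular momentum with a nonzero coefficient), and your quadrature makes the constant $I_0$ explicit, which is a small gain; the paper's formulation buys the $\mathcal{C}^1$ regularity of the relevant map for free.

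That regularity is the one point you leave genuinely open, and you flag it yourself at the end. Knowing only that $\Theta$ is differentiable at $L=0$ with $\Theta'(0)=I_0>0$ does not by itself give a local diffeomorphism: you need $\Theta$ to be $\mathcal{C}^1$ up to $L=0$ (or at least local injectivity with continuous inverse), which requires differentiating the Abelian integral in $L$ uniformly near the turning point $r_{\max}(L)$ — doable, since the radicand vanishes to first order there with derivative bounded away from zero, but it must be carried out, and the same care is needed for the joint $\mathcal{C}^1$ dependence on $(\epsilon,p_0,p_1)$ claimed in the statement. The paper's choice of unknowns $(\dot{\theta}_0,T)$, rather than your $(L,\text{return event})$, is precisely what sidesteps this: no hitting time of a degenerate (zero-radial-velocity) level set ever enters the map whose Jacobian is inverted. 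If you rewrite your final implicit-function step in the variables $(\dot{\theta}_0,T)$ — which your three-equation system $\Phi_\epsilon^T(p_0,v)=p_1$, $\frac{1}{2}|v|^2=V_\epsilon(p_0)-1$ essentially already does after eliminating the radial component of $v$ — and use your quadrature only to verify the sign of the single partial derivative at $(0,\bar{T})$, the argument closes completely and coincides with the paper's.
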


The proof requires some preliminary results. We start from our unperturbed problem
\beq\label{PB_e}
\begin{cases}
\ddot{y}(t)=-M\frac{y(t)}{|y(t)|^{\a+2}} \qquad &t \in [0,T], \\
\frac{1}{2}|\dot{y}(t)|^2-\frac{M}{\a |y(t)|^\a}=-1  \qquad &t \in [0,T],\\
|y(t)| > R  \qquad &t \in (0,T).\\
\end{cases}
\eeq

Let us solve the Cauchy problem
\[
\begin{cases}
\ddot{y}(t)=-M\frac{y(t)}{|y(t)|^{\a+2}} \\
y(0)=p_0=R \exp{\{i \t_0\}}, & \dot{y}(0)= \sqrt{2\left(-1+\frac{M}{\a R^\a}\right)}\left(\frac{p_0}{R}\right).
\end{cases}
\]
The trajectory returns at the point $p_0$ after a certain time $\bar{T}>0$, having swept the portion of the rectilinear brake orbit starting from $p_0$ and lying in $\R^2 \setminus B_R(0)$. Our aim is to catch the behaviour of the solutions under small variations of the boundary conditions. Hence, we consider
\beq\label{PC_e}
\begin{cases}
\ddot{y}(t)=-M\frac{y(t)}{|y(t)|^{\a+2}} \\
y(0)=p_0, & \dot{x}(0)=  \dot{r}_0 e^{i \t_0}+ R \dot{\t}_0 i e^{i \t_0},
\end{cases}
\eeq
where $\dot{r}_0$ is assigned as function of $\dot{\t}_0$ by means of the energy integral:
\[
\dot{r}_0=\dot{r}_0(\dot{\t}_0) = \sqrt{2\left(\frac{M}{\a R^\a}-1\right)-R^2 \dot{\t}_0^2}.
\]
We denote as $y(\cdot\,;\t_0,\dot{\t}_0)$ the solution of \eqref{PC_e}. For the brake orbit $y\left(\cdot\,; \t_0,0\right)$ it results
\[
\t(t;\t_0,0)\equiv \t_0 \qquad \forall t \in [0,\bar{T}].
\]
\textbf{Let us fix $p_0 \in \pa B_R(0)$}. We define
\begin{align*}
\psi:& \Theta \times I \to \R^2 \\
&(\dot{\t}_0, T) \mapsto y(T;\t_0,\dot{\t}_0),
\end{align*}
where $\Theta \times I \subset S^1 \times \R$ is a neighbourhood of $(0,\bar{T})$ on which $\psi$ is well defined (such a neighbourhood exists). We can assume
\beq\label{su Theta}
\max\left\{ \sup_{(\dot{\t}_0,T) \in \Theta \times I} 4|T \dot{\t}_0| , \sup_{(\dot{\t}_0,T) \in \Theta \times I} \left|\left(\frac{\a}{M}\right)^{\frac{2}{\a}} R^2 T \dot{\t}_0\right|\right\} < \frac{\pi}{2},
\eeq
otherwise it is sufficient to replace $\Theta \times I$ with a smaller neighbourhood.

\begin{lemma}\label{lemma 0.1}
The Jacobian  of $\psi$ in $(0,\bar{T})$ is invertible.
\end{lemma}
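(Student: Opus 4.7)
The plan is to compute the two columns of the Jacobian $D\psi(0,\bar T)$ explicitly and show they are orthogonal and both nonzero. By the rotational symmetry of the unperturbed problem \eqref{PB_e} I may assume without loss of generality $\t_0=0$, so $p_0=(R,0)$, the brake orbit lies on the positive $x$-axis, and $\psi(0,\bar T)=p_0$.

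For the $T$-column, $\pa_T\psi(\dot{\t}_0,T)=\dot y(T;\t_0,\dot{\t}_0)$; at $(0,\bar T)$ this is the velocity of the brake orbit upon its return to $p_0$, namely $-\dot{r}_0\,e^{i\t_0}$ with $\dot{r}_0=\sqrt{2\bigl(M/(\a R^\a)-1\bigr)}>0$ (positive because the choice \eqref{scelta di R} gives $R^\a=(2-\a)M/(2\a)<M/\a$). This vector is purely radial.

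For the $\dot{\t}_0$-column my approach is not to attack the full linearised system, but to exploit conservation of angular momentum. For the perturbed orbit $y(\,\cdot\,;0,\dot{\t}_0)$ the angular momentum is $L=R^2\dot{\t}_0$, hence $\dot\t=L/r^2$, and the reduced radial equation reads $\ddot r=-M/r^{\a+1}+L^2/r^3$. Since the centrifugal correction is $O(\dot{\t}_0^2)$, smooth dependence on the parameter $L^2$ on the compact interval $[0,\bar T]$ yields $r(\,\cdot\,;0,\dot{\t}_0)=r(\,\cdot\,;0,0)+O(\dot{\t}_0^2)$ uniformly. Integrating $\dot\t=L/r^2$ then gives
\[
\t(t;0,\dot{\t}_0)=\dot{\t}_0\,R^2\int_0^t \frac{ds}{r(s;0,0)^2}+O(\dot{\t}_0^3).
\]
Writing $y=r\,e^{i\t}$ and differentiating in $\dot{\t}_0$ at $0$, with $t=\bar T$ and $r(\bar T;0,0)=R$, I obtain
\[
\pa_{\dot{\t}_0}\psi(0,\bar T)=i\,R^3 e^{i\t_0}\int_0^{\bar T}\frac{ds}{r(s;0,0)^2},
\]
which is purely tangential and strictly nonzero, since $r\geq R$ along the brake orbit makes the integrand positive and bounded.

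The two columns are therefore orthogonal, one along $e^{i\t_0}$ and the other along $ie^{i\t_0}$, and both nonzero; hence $D\psi(0,\bar T)$ is invertible, with determinant $-\dot{r}_0\,R^3\int_0^{\bar T}ds/r^2\neq 0$. The only nontrivial step I anticipate is justifying the uniform $O(\dot{\t}_0^2)$ bound on the radial correction, but this follows routinely from smooth dependence of the radial Cauchy problem on the parameter $L^2$ over $[0,\bar T]$. Conceptually, the argument encodes two elementary features of the radial unperturbed dynamics: the return velocity of a brake orbit is antiparallel to its outgoing velocity (giving the radial first column), and switching on a small angular momentum produces, to leading order, only an azimuthal displacement (giving the tangential second column).
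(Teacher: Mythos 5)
Your proof is correct and follows essentially the same route as the paper: the $T$-column is the purely radial return velocity of the brake orbit, and the $\dot{\t}_0$-column is analysed through the angular-momentum identity $\t(T)=\t_0+R^2\dot{\t}_0\int_0^T r^{-2}\,ds$. The only difference is in execution: you extract the exact tangential derivative $iR^3e^{i\t_0}\int_0^{\bar T}r^{-2}\,ds$ via a smooth-dependence expansion of the radial problem in the parameter $L^2$ (which also absorbs the $O(\dot{\t}_0^2)$ change in $\dot{r}_0$ forced by the energy constraint), whereas the paper avoids that expansion by bounding the difference quotient using $R/2\le r\le (M/\a)^{1/\a}$ and the monotonicity of the cosine on $(\pi/2,\pi)$; both arguments are sound.
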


\begin{proof}
Since the $\a$-Kepler's problem is invariant under rotations, it isn't restrictive suppose $\t_0=\pi/2$, so that $\exp{\{i\t_0\}}= (0,1)=:e_2$. The function  $\psi \in \mathcal{C}^1(\Theta \times I)$  satisfies
\[
\frac{\pa \psi}{\pa T} \left(0,\bar{T}\right)= \dot{y}(\bar{T};p_0,0) = - \sqrt{2\left(\frac{M}{\a R^\a}-1\right)}e_2.
\]
Hence the Jacobian matrix of $\psi$ is invertible in $(0,\bar{T})$ if
\[
\left\langle \frac{\pa \psi}{\pa \dot{\t}_0}(0,\bar{T}), e_1 \right\rangle \neq 0,
\]
where $e_1:=(1,0)$.  By continuous dependence with respect to initial data we have, for every $(\dot{\t}_0,T) \in \Theta \times I$, and for every $t \in [0,T]$,
\beq\label{0.1**}
r(t;\t_0,\dot{\t}_0)  \geq \frac{R}{2}.
\eeq
We use the conservation of the angular momentum: for every $t \in [0,T]$ there holds
\[
\mathfrak{C}_y:=\mathfrak{C}_y(t)= |\mathfrak{C}_y(0)|, \Leftrightarrow \mathfrak{C}_y = r^2(t) |\dot{\t}(t)|= R^2 |\dot{\t}_0|.
\]
Assume $\dot{\t}_0 >0$; one has
\[
\t(t;\t_0,\dot{\t}_0)= \frac{\pi}{2}+ \int_0^t \frac{d\t}{ds}(s) \,ds= \frac{\pi}{2}+\int_0^t \frac{R^2 \dot{\t}_0}{r^2(s)}\,ds.
\]
If $(\dot{\t}_0, T) \in \Theta \times I$, from \eqref{su Theta}, \eqref{0.1**}, and the fact that $r(s)\leq \left(M/\a\right)^{1/\a}$, it follows
\[
\frac{\pi}{2} < \left(\frac{\a}{M}\right)^{\frac{2}{\a}} R^2 T \dot{\t}_0 +\frac{\pi}{2} \leq \t(T;\t_0,\dot{\t}_0) \leq 4 T \dot{\t}_0 + \frac{\pi}{2} < \pi.
\]
The function $\cos\left(\cdot\right)$ being decreasing over $(\pi/2,\pi)$, we obtain

\begin{multline*}
\left\langle \frac{\psi(\dot{\t}_0,\bar{T})-\psi(0,\bar{T})}{\dot{\t}_0} , e_1\right\rangle  = \frac{r(\bar{T};\t_0,\dot{\t}_0) \cos \left(\t(\bar{T},\t_0,\dot{\t}_0)\right)}{\dot{\t}_0} \\
 \leq \frac{ r(\bar{T};\t_0,\dot{\t}_0) \cos \left(\left(\frac{\a}{M}\right)^{\frac{2}{\a}} R^2 T \dot{\t}_0 +\frac{\pi}{2}\right)}{\dot{\t}_0}
= -r(\bar{T};\t_0,\dot{\t}_0)\left(\frac{\a}{M}\right)^{\frac{2}{\a}} R^2 T + o(\dot{\t}_0^2)<0,
\end{multline*}
for $\dot{\t}_0 \to 0$. Passing to the limit for $\dot{\t}_0 \to 0$ the inequality is preserved. Since the same argument works for $\dot{\t}_0<0$, the thesis follows.
\end{proof}

The previous discussion has to be refined in order to include the variations of the potential due to the presence of the centres, which are now included in the  $\eps$-disk. Recall that we fixed $p_0 \in \pa B_R(0)$. We know that
\[
\lim_{\eps \to 0^+} V_\eps(y) = \frac{M}{\a |y|^\a} \qquad \text{uniformly in $y \in \R^2 \setminus B_R(0)$}.
\]
So we define
\begin{align*}
\Psi : &\Theta \times I \times \left[0,\frac{\eps_1}{2}\right) \times \pa B_R(0) \to \R^2 \\
&(\dot{\t}_0,T,\eps,p_1) \mapsto y(T;\t_0,\dot{\t}_0;\eps)-p_1,
\end{align*}
where $y(\cdot \,;\t_0,\dot{\t}_0;\eps)$ is the solution of
\beq\label{PC_e2}
\begin{cases}
\ddot{y}(t)=\nabla V_\eps(y(t)) \\
y(0)=p_0, & \dot{y}(0)= \dot{r}_\eps e^{i \t_0}+ R \dot{\t}_0 i e^{i \t_0},
\end{cases}
\eeq
and
\[
\dot{r}_\eps=\dot{r}_\eps(\dot{\t}_0;\eps) = \sqrt{2\left(V_\eps(p_0)-1\right)-R^2 \dot{\t}_0^2}.
\]

\begin{lemma}\label{lemma 0.2}
There exist $\d > 0$ and $0<\eps_2<\eps_1/2$ such that for every $\eps \in (0,\eps_2)$, for every $p_1 \in \pa B_R(0):|p_1-p_0| < 2\d$, there exists a unique solution $y(\cdot\,;\t_0,\dot{\t}_0;\eps)$ of \eqref{PC_e2} defined in $[0,T]$ for a certain $T$ and satisfying
\beq\label{condizioni aggiuntive}
\begin{split}
\frac{1}{2}|\dot{y}(t;\t_0,\dot{\t}_0;\eps)|^2-V_{\eps}(y(t;\t_0,\dot{\t}_0;\eps))=-1 \qquad t \in [0,T],\\
|y(t;\t_0,\dot{\t}_0;\eps)|>R \qquad t \in (0,T), \qquad y(T;\t_0,\dot{\t}_0;\eps)=p_1.
\end{split}
\eeq
Moreover, it is possible to choose $\delta$ and $\eps_2$ independent on $p_0 \in \pa B_R(0)$.
\end{lemma}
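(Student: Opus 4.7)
The plan is to apply the Implicit Function Theorem to the map $\Psi$ at the base point $(\dot\t_0, T, \eps, p_1) = (0, \bar T, 0, p_0)$. First, $\Psi(0, \bar T, 0, p_0) = 0$ by the very definition of $\bar T$: at $\eps=0$ the unperturbed brake orbit emanating radially from $p_0$ returns to $p_0$ at time $\bar T$. The auxiliary function $\dot r_\eps(\dot\t_0; \eps)$ is $C^1$ near $(0,0)$ because at the base point the radicand equals $2(M/(\a R^\a) - 1)$, which is strictly positive by the choice of $R$ in \eqref{scelta di R}. Moreover, the unperturbed brake orbit stays in the annulus $\{R \le |y| \le (M/\a)^{1/\a}\}$, which for $\eps < \eps_1/2$ is bounded away from $B_\eps(0)$; hence $V_\eps$ is smooth along any nearby orbit and the classical theorem of $C^1$ dependence of ODE flows on initial data and parameters applies, showing that $\Psi$ is of class $C^1$ in a neighbourhood of the base point.

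At $\eps = 0$ and $p_1 = p_0$ the partial Jacobian $\pa_{(\dot\t_0,T)} \Psi$ agrees with the Jacobian of $\psi$, which is invertible by Lemma \ref{lemma 0.1}. The IFT then yields neighbourhoods of $(0,\bar T)$ and of $(0, p_0)$ together with a unique $C^1$ map $(\eps, p_1) \mapsto (\dot\t_0(\eps, p_1), T(\eps, p_1))$ solving $\Psi = 0$. The corresponding trajectory $y(\cdot\,; \t_0, \dot\t_0(\eps,p_1); \eps)$ satisfies the energy identity in \eqref{condizioni aggiuntive} by construction of $\dot r_\eps$, and its endpoint equals $p_1$ by construction. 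The constraint $|y(t)| > R$ on $(0, T)$ is then obtained from a continuity argument: at the base point the orbit satisfies $|y(t)| > R$ strictly on $(0, \bar T)$ and crosses $\pa B_R(0)$ transversally at $t = 0$ and $t = \bar T$ (with nonzero radial velocity $\pm \sqrt{2(M/(\a R^\a) - 1)}$). By $C^1$-closeness of the perturbed orbits to the brake orbit on the compact interval $[0,\bar T]$, the transverse crossings persist at the two endpoints and the strict inequality is preserved on the open interval in between, for $\eps$ and $|p_1-p_0|$ sufficiently small.

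For the uniformity in $p_0$: the $\eps = 0$ problem is $SO(2)$-invariant, so the conclusion of Lemma \ref{lemma 0.1} and the norm of the inverse Jacobian of $\psi$ at $(0, \bar T)$ do not depend on the choice of $p_0 \in \pa B_R(0)$. Since $\Psi$ is jointly $C^1$ in $(\dot\t_0, T, \eps, p_1, p_0)$ (the last dependence being smooth via the initial condition $y(0) = p_0$), the quantitative version of the IFT provides, for each $p_0$, a neighbourhood of applicability whose size is a lower-semicontinuous function of $p_0$. By compactness of $\pa B_R(0)$ and the $SO(2)$-symmetry at $\eps=0$, one can then choose $\d > 0$ and $\eps_2 \in (0, \eps_1/2)$ independent of $p_0$.

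The main obstacle is precisely this last step: packaging the local IFT construction into a uniform statement in $p_0$. All the other ingredients (smoothness, invertibility, energy conservation, and preservation of the exterior constraint) are standard consequences of the set-up and of Lemma \ref{lemma 0.1}; only the uniformity requires exploiting both the rotational symmetry of the unperturbed $\a$-Kepler problem and the compactness of $\pa B_R(0)$.
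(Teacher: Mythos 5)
Your proposal follows essentially the same route as the paper: apply the implicit function theorem to $\Psi$ at $(0,\bar{T},0,p_0)$, using Lemma \ref{lemma 0.1} for the invertibility of the partial Jacobian in $(\dot{\t}_0,T)$, read off the energy identity from the definition of $\dot{r}_\eps$, obtain $|y(t)|>R$ on $(0,T)$ by perturbative closeness to the brake orbit, and get uniformity in $p_0$ from the rotational invariance of the unperturbed problem (the paper observes directly that the inverse Jacobian at the base point is the same up to rotations, so the symmetry alone already gives a $p_0$-independent neighbourhood without needing the compactness/lower-semicontinuity step). The argument is correct and, if anything, spells out the transversality and uniformity steps in more detail than the original.
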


\begin{proof}
We apply the implicit function theorem to the function $\Psi$, which is $\mathcal{C}^1$ in the variables $\dot{\t}_0$ and $T$ for the differentiable dependence of the solutions by time and initial data. There holds
\begin{equation*}
\Psi(0,\bar{T},0,p_0)=0, \quad
\frac{\pa \Psi}{\pa \dot{\t}_0} \left(0,\bar{T},0,p_0\right)= \frac{\pa \psi}{\pa \dot{\t}_0}\left(0,\bar{T}\right), \quad
\frac{\pa \Psi}{\pa T} \left(0,\bar{T},0,p_0\right)= \frac{\pa \psi}{\pa T}\left(0,\bar{T}\right),
\end{equation*}
so that from Lemma \ref{lemma 0.1} we deduce that the Jacobian matrix of $\Psi$ with respect to $(\dot{\t}_0,T)$ is invertible; hence the assumptions of the implicit function theorem are satisfied, and we can find a neighbourhood $\Theta' \times J \subset \Theta \times I$ of $(0,\bar{T})$, a neighbourhood $[0,\eps_2) \times B_{2\d}(p_0) \subset [0,\eps_1/2) \times \R^2$ of $(0,p_0)$,
and  a unique function $\eta:[0,\eps_2) \times B_{2\d}(p_0) \to \Theta' \times J$ such that
\begin{align*}
& 1) \eta\left(0,p_0\right)=(0,\bar{T}),\\
& 2) \Psi\left(\eta_1(\eps,p_1),\eta_2(\eps,p_1),\eps,p_1\right)=0 \quad \text{for every $(\eps,p_1) \in [0,\eps_2) \times B_{2\d}(p_0)$},\\
& 3) \Psi(\dot{\t}_0,T,\eps,p_1)=0 \quad \text{with $(\dot{\t}_0,T,\eps,p_1) \in \Theta' \times J \times [0,\eps_2) \times B_{2\d}(p_0)$} \\
& \qquad \Rightarrow (\dot{\t}_0,T)=\eta(\eps,p_1).
\end{align*}

This means that, if we fix $\eps \in (0,\eps_2)$, for every $p_1 \in \pa B_R(0) \cap B_{2\d}(p_0)$, we can find a solution $y(\cdot\,;\t_0, \dot{\t}_0 ;\eps)$ of \eqref{PC_e2}. This solution has constant energy $-1$ because of the definition of $\dot{r}_\eps$; moreover, $y(T;\t_0,\dot{\t}_0;\eps)=p_1$.
We remark that outside $B_R(0)$ the potential $V_\eps$ is a small perturbation of the $\a$-Kepler's one, so that $|y(t;\t_0,\dot{\t}_0;\eps)|>R$ for every $t \in (0,T)$.
It remains to prove that one can choose $\d$ and $\eps_2$ independent on $x_0$. This is a consequence of the proof of the implicit function theorem: the wideness of the neighbourhood of $(0,p_0)$ in $[0,\eps_1/2)\times \R^2$ in which we can guarantee the definition of the implicit function depends on the norm of
\[
\left(J_{(\dot{\t}_0, T)} \Psi (0,\bar{T},0,p_0)\right)^{-1},
\]
and for every $p_0 \in \pa B_R(0)$ this matrix is the same up to rotations.
\end{proof}

\begin{center}
\begin{tikzpicture}[scale=3,>=stealth]
\clip (-0.7,0.7) rectangle (0.7,2.3);
\draw (0,0) circle (1cm);
\draw (59:1cm)--(61:1cm) node[anchor=north]{$R$};
\draw (90:1cm) node[anchor=north]{$p_0$};
\draw (0,0) circle (2cm);
\draw (74:2cm)--(76:2cm) node[anchor=north]{$\left(\frac{M}{\alpha}\right)^{\frac{1}{\alpha}}$};
\draw[<<->] (90:1cm)--(90:2cm) ;
\end{tikzpicture} $\hspace{2cm}$ \begin{tikzpicture}[scale=3,>=stealth]
\clip (-0.7,0.7) rectangle (0.7,2.3);
\draw (0,0) circle (1cm);
\draw (59:1cm)--(61:1cm) node[anchor=north]{$R$};
\draw (83:1cm) node[anchor=north]{$p_0$};
\draw (95:1cm) node[anchor=north]{$p_1$};
\draw (0,0) circle (2cm);
\draw (75:2cm)--(78:2cm) node[anchor=north]{$\{V_\eps=-1\}$};
\draw[->] (85:1 cm).. controls (89:2cm) and (91:2cm)..(95:1 cm);
\end{tikzpicture}
\end{center}
The picture represents the portion of the brake rectilinear solution for the $\a$-Kepler's problem in comparison with a "perturbed" solution obtained for the potential $V_\eps$ via the implicit function theorem.

\vspace{1 em}

Theorem \ref{teorema 0.1} is a straightforward consequence of this lemma. The solutions obtained are uniquely determined and depends in a smooth way on the ends $p_0$  and $p_1$.
\section{Inner dynamics}\label{dinamica interna}

In this section we are going to seek arcs of solutions of \eqref{P} connecting two points $p_1, p_2 \in \pa B_R(0)$ and lying inside the disk $B_R(0)$. We admit the case $p_1=p_2$. Close to the center of the ball, the potential $V_\eps$ cannot be seen as a small perturbation of the $\a$-Kepler's one, so that we are lead to use variational methods rather than perturbative techniques. The first step is to introduce a suitable functional whose critical points are weak solutions of \eqref{P}; this will be made in subsection \ref{funzionale di Maupertuis}. Our trajectories will be local minimizers of the Maupertuis' functional or, equivalently, of the Jacobi length. More precisely,  in subsection \ref{main result} we will determine weakly closed sets in which we will minimize the functional and we will state the main theorem of the section. It will be proved in \ref{minimizzazione} and \ref{assenza di collisioni}; in the first one we will show that the direct method of the calculus of variations applies to provide weak solutions of \eqref{P}, while in the latter one we will describe the behaviour of the solutions, proving in particular the absence of collisions in case $\a \in (1,2)$. The case $\a=1$ deserves a special analysis.\\
In what follows we will consider $\eps \in (0,\eps_1/2)$ fixed, and we will write $c_j$ instead of $c_j'$ to ease the notation. We are going to seek solutions of
\beq\label{PI}
\begin{cases}
\ddot{y}(t)= \n V_\eps(y(t)) \qquad &t \in [0,T],\\
\frac{1}{2}|\dot{y}(t)|^2-V_\eps(y(t))=-1 \qquad &t \in [0,T],\\
|y(t)|<R \qquad  &t \in (0,T) ,\\
y(0)=p_1,\quad y(T)=p_2,
\end{cases}
\eeq
with $p_1,p_2 \in \pa B_R(0)$, and $T>0$ to be determined.

\subsection{The Maupertuis' principle}\label{funzionale di Maupertuis}

Dealing with a singular potential, we introduce the spaces on non-collision paths
\begin{multline}\label{H^_C.L.}
\wh{H}_{p_1 p_2}\left([a,b]\right):=\left\lbrace u \in H^1\left([a,b],\R^2\right): u(a)=p_1,\  u(b)=p_2, \right. \\
\left. u(t) \neq c_j \text{ for every $t\in [a,b]$, for every $j \in \left\lbrace 1,\ldots,N\right\rbrace$ }\right\rbrace ,
\end{multline}
and
\[
H_{p_1 p_2}\left([a,b]\right):=\left\lbrace u \in  H^1\left([a,b],\R^2\right): u(a)=p_1,\  u(b)=p_2\right\rbrace
\]
 (briefly $\wh{H}$ and $H$). Let us note that, since $H^1\left([a,b],\R^2\right)$ is embedded in $\mathcal{C}\left([a,b],\R^2\right)$, the definitions are well posed. We point out that, defining
\begin{multline*}
\mathfrak{Coll}_{p_1 p_2}\left([a,b]\right):=\left\lbrace u \in  H^1\left([a,b],\R^2\right): u(a)=p_1,\  u(b)=p_2, \right. \\
\left. \exists t \in [a,b]: u(t) = c_j \text{ for some $j \in \left\lbrace 1,\ldots,N\right\rbrace$ }\right\rbrace ,
\end{multline*}
the set of colliding functions in $H^1\left([a,b],\R^2\right)$ which connect $p_1$ with $p_2$, there holds
\[
H_{p_1 p_2}\left([a,b]\right)=\wh{H}_{p_1 p_2}\left([a,b]\right) \cup \mathfrak{Coll}_{p_1 p_2}\left([a,b]\right)
\]
and $H_{p_1 p_2}\left([a,b]\right)$ is the closure of $\wh{H}_{p_1 p_2}\left([a,b]\right)$ in the weak topology of $H^1$.
Let us define the \emph{Maupertuis' functional}
\begin{equation*}
M_h\left([a,b];\cdot\right) \colon  H_{p_1 p_2}\left([a,b]\right) \to \R \cup \left\lbrace +\infty\right\rbrace \qquad M_h\left([a,b];u\right) = \frac{1}{2}\int_a^b |\dot{u}(t)|^2 \,dt \int_a^b \left(V(u(t))+h\right)\,dt.
\end{equation*}
We will often write $M_h$ instead of $M_h\left([a,b];\cdot\right)$. If $M_h([a,b];u)>0$ both its factors are strictly positive and it makes sense to set
\beq\label{omega}
\o^2:=\frac{\int_a^b \left(V(u)+h\right)}{\frac{1}{2} \int_a^b |\dot{u}|^2}.
\eeq
The Maupertuis' functional is differentiable over $\wh{H}$ (seen as an affine space on $H_0^1$), and its critical points, suitably reparametrized, are solutions to our fixed energy problem (see \cite{AmCZ}).

\begin{teor}\label{teorema 4.1}
Let $u \in \wh{H}_{p_1 p_2}\left([a,b]\right)$ be a  critical point of $M_h$ at a positive level, i.e.
\[
dM_h\left([a,b];u\right)[v]=0 \ \  \forall v \in H_0^1\left([a,b],\R^2\right), \quad \text{and} \quad M_h\left([a,b];u\right)>0,
\]
and let $\o$ be given by \eqref{omega}.  Then $x(t):=u(\o t)$ is a classical solution of
\beq \label{P_h}
\begin{cases}
\ddot{x}(t) = \n V(x(t)) \qquad &t \in \left[\frac{a}{\o},\frac{b}{\o}\right],\\
\frac{1}{2}|\dot{x}(t)|^2-V(x(t))=h \qquad  &t \in \left[\frac{a}{\o},\frac{b}{\o}\right],\\
x\left(\frac{a}{\o}\right)=p_1, \quad &x\left(\frac{b}{\o}\right)=p_2,
\end{cases}
\eeq
while $u$ itself is a classical solution of
\beq \label{P_u}
\begin{cases}
\o^2 \ddot{u}(t) = \n V(u(t)) \qquad &t \in [a,b],\\
\frac{1}{2}|\dot{u}(t)|^2-\frac{V(u(t))}{\o^2}=\frac{h}{\o^2} \qquad &t \in [a,b],\\
u(a)=p_1, \quad u(b)=p_2.
\end{cases}
\eeq
\end{teor}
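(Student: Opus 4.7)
The plan is a direct application of the Maupertuis principle, which is by now fairly standard; the role of the theorem is mostly to pin down the normalization carefully so that the energy on the rescaled side equals $h$. Throughout we use crucially that $u\in\widehat H_{p_1p_2}([a,b])$ means $u$ stays at positive distance from every $c_j$ on the compact interval $[a,b]$, so $V\circ u$ and $\nabla V\circ u$ are continuous, hence bounded, on $[a,b]$.

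First, I would compute the differential of $M_h$ at $u$ in a direction $v\in H_0^1([a,b],\R^2)$. Setting $A:=\int_a^b(V(u)+h)\,dt$ and $B:=\tfrac12\int_a^b|\dot u|^2\,dt$ (both strictly positive by $M_h(u)=AB>0$), the critical point condition reads
\[
A\int_a^b\langle\dot u,\dot v\rangle\,dt + B\int_a^b\langle\nabla V(u),v\rangle\,dt = 0\qquad\forall v\in H_0^1.
\]
This is the weak form of $-A\ddot u+B\nabla V(u)=0$, i.e.\ $\omega^2\ddot u=\nabla V(u)$ with $\omega^2=A/B$ as in \eqref{omega}. Since $\nabla V(u)\in C([a,b],\R^2)$, bootstrapping gives $u\in C^2([a,b],\R^2)$ and the equation holds classically. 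This proves the first line of \eqref{P_u}.

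Next I would establish the energy first integral. Dotting $\omega^2\ddot u=\nabla V(u)$ with $\dot u$ yields
\[
\frac{d}{dt}\Bigl(\tfrac12\omega^2|\dot u|^2-V(u)\Bigr)=0,
\]
so there exists a constant $E$ with $\tfrac12\omega^2|\dot u|^2-V(u)\equiv E$ on $[a,b]$. Integrating over $[a,b]$ and using the definition $\omega^2=A/B$, i.e.\ $\omega^2\cdot B=A=\int_a^b V(u)\,dt+(b-a)h$, one obtains $E=h$. Dividing by $\omega^2$ gives exactly the energy identity in \eqref{P_u}.

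Finally, the passage from $u$ to $x(t):=u(\omega t)$ is a routine time change: $\dot x(t)=\omega\dot u(\omega t)$, $\ddot x(t)=\omega^2\ddot u(\omega t)=\nabla V(u(\omega t))=\nabla V(x(t))$, so $x$ solves the Newton equation on $[a/\omega,b/\omega]$; the energy relation is obtained by multiplying the identity $\tfrac12|\dot u|^2-V(u)/\omega^2=h/\omega^2$ by $\omega^2$. The boundary conditions $x(a/\omega)=p_1$, $x(b/\omega)=p_2$ are immediate. I do not foresee a real obstacle here: the only delicate point is to keep track of the constant in the energy identity so that it matches $h$ rather than some other multiple, which is handled by the computation of $E$ above using precisely the definition of $\omega^2$.
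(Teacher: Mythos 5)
Your proposal is correct and is exactly the standard Maupertuis-principle argument: the paper does not prove Theorem \ref{teorema 4.1} in the text but delegates it to \cite{AmCZ}, and your computation (Euler--Lagrange equation in weak form, bootstrap to $\mathcal{C}^2$ using that $u$ avoids the centres, conservation of $\tfrac12\o^2|\dot u|^2-V(u)$, and the identification of the constant $E=h$ by integrating over $[a,b]$ and using the definition of $\o^2$) is precisely the content of that reference. The normalization step you flag as the only delicate point is indeed the crux, and you handle it correctly.
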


\begin{rem}\label{viceversa}
The converse of Theorem \ref{teorema 4.1} is also true: if $x \in \mathcal{C}^2\left([a',b'],\R^2\right)$ is a collisions-free solution  of \eqref{P_h}, setting $\o=1/(b'-a')$ and $u(t):= x(t/\o)$, $u$ is a classical solution of \eqref{P_u} defined in $[a'/(b'-a'),b'/(b'-a')]=:[a,b]$ and hence a critical point of $M_h\left([a,b];\cdot\right)$ at a strictly positive level. Also, the identity
\[
\omega^2=\frac{\int_a^b \left(V(u)+h\right)}{\int_a^b |\dot{u}|^2}.
\]
is fulfilled.
\end{rem}

In order to use variational methods it is worth working in $H$ rather than in $\wh{H}$, for $\wh{H}$ isn't weakly closed. The disadvantage is that
we will need some \textit{ad hoc} argument to rule out the occurrence of collisions and to apply Theorem \ref{teorema 4.1} and to obtain a classical solution of the motion equation. Nevertheless, although collision minimizers are not true critical points of the Maupertuis' functional on $H$, the following result allows to recover the conservation of the energy.

\begin{lemma}\label{conservazione dell'energia}
If $u \in H$ is a local minimizer of $M_h$, then
\[
\frac{1}{2} |\dot{u}(t)|^2-\frac{V(u(t))}{\o^2}= \frac{h}{\o^2}  \qquad \text{a.e.} \ t \in [a,b]
\]
\end{lemma}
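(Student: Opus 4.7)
The key point is that $M_h$ is invariant under orientation-preserving reparametrizations of $[a,b]$, hence even a collision minimizer is critical with respect to \emph{inner} variations, and this alone forces the energy identity. I would test $u$ against the family $v_\lambda := u\circ\phi_\lambda$, where $\phi_\lambda(t)=t+\lambda\eta(t)$ for $\eta\in\mathcal{C}_c^\infty(a,b)$; for $|\lambda|$ small, $\phi_\lambda$ is a smooth diffeomorphism of $[a,b]$ fixing the endpoints, so $v_\lambda\in H_{p_1p_2}([a,b])$, and the chain rule plus the smoothness of $\phi_\lambda$ give $v_\lambda\to u$ in $H^1$ as $\lambda\to 0$. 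Local minimality of $u$ then yields $\frac{d}{d\lambda}M_h(v_\lambda)\big|_{\lambda=0}=0$, provided the derivative exists.

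\medskip

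For the computation I would change variables $s=\phi_\lambda(t)$ in both factors of $M_h(v_\lambda)$, obtaining
\[
\int_a^b|\dot v_\lambda(t)|^2\,dt=\int_a^b|\dot u(s)|^2\bigl(1+\lambda\dot\eta(\phi_\lambda^{-1}(s))\bigr)\,ds,\qquad
\int_a^b\!\!\bigl(V(v_\lambda)+h\bigr)dt=\int_a^b\frac{V(u(s))+h}{1+\lambda\dot\eta(\phi_\lambda^{-1}(s))}\,ds.
\]
Since $M_h(u)<\infty$ forces $V\circ u\in L^1$, dominated convergence justifies differentiating at $\lambda=0$, and the product rule together with the definition \eqref{omega} of $\omega^2$ (which gives $\int(V(u)+h)=\tfrac{\omega^2}{2}\int|\dot u|^2$) reduces the identity $\frac{d}{d\lambda}M_h(v_\lambda)|_{\lambda=0}=0$ to
\[
\int_a^b\!\left(\frac{\omega^2}{2}|\dot u(t)|^2-V(u(t))-h\right)\dot\eta(t)\,dt=0 \qquad \forall\, \eta\in\mathcal{C}_c^\infty(a,b).
\]

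\medskip

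By the du Bois--Reymond lemma, $\tfrac{\omega^2}{2}|\dot u|^2-V(u)-h\equiv K$ for some constant $K$, a.e.\ on $[a,b]$. Integrating this identity over $[a,b]$ and substituting $\int V(u)=\tfrac{\omega^2}{2}\int|\dot u|^2-h(b-a)$ (again from the definition of $\omega^2$) makes the left side collapse to $0$, so $K=0$. Dividing by $\omega^2$ gives the claimed pointwise conservation of energy.

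\medskip

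The only delicate point I anticipate is the justification that $v_\lambda$ is an admissible competitor close to $u$ in the topology in which minimality is assumed, despite possible collisions of $u$. This is handled cleanly by the reparametrization structure: $v_\lambda$ has exactly the same image as $u$, so it lies in $H_{p_1p_2}([a,b])$ (allowing collisions) and converges to $u$ in $H^1$ by smoothness of $\phi_\lambda$; the inner variation is insensitive to the singularities of $V$ because we never differentiate $V$ itself.
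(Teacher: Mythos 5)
Your proposal is correct and follows essentially the same route as the paper: the authors also derive the identity from the extremality of $u$ under inner variations $u_\lambda(t)=u(t+\lambda\varphi(t))$, compute $\frac{d}{d\lambda}M_h(u_\lambda)\big|_{\lambda=0}=0$ by the same change of variables, and then identify the constant by integrating over $[a,b]$ and comparing with the definition of $\omega^2$. No gaps.
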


\begin{rem}
The lemma says that the energy is constant almost everywhere even if $u$ has collisions. Of course, in this case $u$ could be not of class $\mathcal{C}^1$.
It is a classical result and it is a consequence of the extremality of $u$ with respect to time reparametrization keeping the ends fixed: if $\f \in \mathcal{C}_c^\infty((a,b),\R)$, setting $u_\l (t):=u(t+\l \f(t))$, there holds
\[
\left. \frac{d}{d\l} M_h(u_\l)\right|_{\l=0}=0.
\]
\end{rem}

\paragraph{\textbf{The Jacobi metric.}}
The original Maupertuis' principle states that solutions of \eqref{P_h} are obtained, after a suitable reparametrization, as non-constant critical points of the functional
\[
L_h(u)=L_h\left([a,b];u\right):= \int_a^b \sqrt{|\dot{u}(t)|^2\left(V(u(t))+h\right)}\,dt,
\]
which is defined on those $u \in H_{p_1 p_2}\left([a,b]\right)$ such that $V(u(t)) \geq -h$ for every $t \in [a,b]$. We define
\[
H^*=H^*_{p_1,p_2}\left([a,b]\right):= \left\lbrace u \in H : V(u(t)) > -h, |\dot{u}(t)|>0 \text{ for every $t \in [a,b]$}\right\rbrace,
 \]
so that the domain of $L_h$ is the closure of $H^*_{p_1 p_2}\left([a,b]\right)$ in the weak topology of $H^1$.\\
The functional $L_h(\gamma)$ has an important geometric meaning: it is
the length of the curve parametrized by $\gamma \in H^*$ with respect to the Jacobi's metric:
\[
g_{ij}(x):=\left(V(x)+h\right) \d_{ij}, \quad
\d_{ij}= \begin{cases}
          1 & i =j, \\
          0 & i \neq j
          \end{cases}, \quad i,j=1,2.
\]

 The explicit expression of the reparametrization needed to pass from critical points of $L_h$ to solution of \eqref{P_h} is the following. For $u \in H^1\left([a,b];\R^2\right)$ let us set
\[
\Gamma_u:=\left\lbrace \left([a',b'],f\right): f:[a',b'] \to [a,b], \text{$f \in \mathcal{C}^1\left([a',b'],\R\right)$}
\text{ and increasing, such that $u \circ f \in H^1\left([a',b'],\R^2\right)$}\right\rbrace .
\]
Since $L_h$ is a length, it is invariant under reparametrization: for every $u \in H^*_{p_1 p_2}\left([a,b]\right)$, for every $\left([a',b'],f\right) \in \Gamma_u$ it results
\[
L_h\left([a,b];u\right)= L_h\left([a',b'];u\circ f\right).
\]

\vspace{1 em}

It is well known that $u \in H^*$ is a critical point of $L_h$ with respect to variations with compact support if and only if $u$ solves the Euler-Lagrange equation
\beq\label{EL}
\frac{d}{dt} \left( \dot{u} \sqrt{\frac{V(u(t))+h}{|\dot{u}(t)|^2}}\right)-\frac{1}{2}\sqrt{\frac{|\dot{u}(t)|^2}{V(u(t))+h} } \n V(u(t))=0
\eeq
for almost every $t \in [a,b]$.

\begin{teor}\label{teorema su L}
Let $u \in H^*_{p_1 p_2}\left([a,b]\right) \cap \wh{H}_{x_1,x_2}\left([a,b]\right)$ be a non-constant critical point of $L_h\left([a,b];\cdot\right)$. \\
Then there exist a reparametrization $x$ of $u$ which is a classical solution of $\eqref{P_h}$ in a certain interval $[0,T/\sqrt{2}]$.
\end{teor}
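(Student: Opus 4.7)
The strategy is to exploit the reparametrization invariance of the Jacobi length $L_h$ to produce, from the critical point $u$, a reparametrized curve $x$ in which the Euler--Lagrange equation \eqref{EL} collapses to Newton's equation $\ddot{x}=\nabla V(x)$. The right choice is the parametrization which makes the two radicals in \eqref{EL} constant, i.e.\ the one enforcing $|\dot{x}(s)|^{2}=2\bigl(V(x(s))+h\bigr)$ identically; this prescription is already the desired energy identity $\tfrac{1}{2}|\dot{x}|^{2}-V(x)=h$.

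Concretely, I would set $x:=u\circ f$ with $f\colon[0,T']\to[a,b]$ determined by the scalar ODE
\[
f'(s)=\sqrt{\frac{2\bigl(V(u(f(s)))+h\bigr)}{|\dot{u}(f(s))|^{2}}},\qquad f(0)=a,\quad T':=f^{-1}(b).
\]
The hypothesis $u\in H^{*}\cap\wh{H}$ is tailor-made so that the radicand is continuous, strictly positive, and bounded on $[a,b]$, so $f$ is a $C^{1}$ orientation-preserving diffeomorphism. With $x=u\circ f$, both $\sqrt{(V+h)/|\dot{x}|^{2}}$ and $\sqrt{|\dot{x}|^{2}/(V+h)}$ reduce to the constants $1/\sqrt{2}$ and $\sqrt{2}$ respectively, and \eqref{EL} collapses into $\ddot{x}=\nabla V(x)$. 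The boundary conditions $x(0)=u(a)=p_{1}$ and $x(T')=u(b)=p_{2}$ are inherited from $u$, while a direct change of variables gives
\[
T'=\int_{a}^{b}\frac{dt}{f'(f^{-1}(t))}=\frac{1}{\sqrt{2}}\int_{a}^{b}\sqrt{\frac{|\dot{u}|^{2}}{V(u)+h}}\,dt=:\frac{T}{\sqrt{2}},
\]
identifying the parameter interval as $[0,T/\sqrt{2}]$.

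The main technical delicacy is a preliminary regularity bootstrap: a priori $u$ is only in $H^{1}$, so that $\dot{u}$ and the ODE for $f$ are only defined almost everywhere. However, on the open set where $V(u)+h>0$, $|\dot{u}|>0$, and $u$ avoids the centres $c_1,\dots,c_N$, the Euler--Lagrange equation \eqref{EL} is a non-degenerate ODE with smooth coefficients in $(u,\dot{u})$, so a standard bootstrap upgrades $u$ to a classical $C^{2}$ curve; only then is $f$ truly pointwise defined and $x$ a \emph{classical} solution of \eqref{P_h}. Everything else is algebra driven by the two normalizing constants $1/\sqrt{2}$ and $\sqrt{2}$ forced by the choice $|\dot{x}|^{2}=2(V+h)$.
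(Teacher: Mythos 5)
Your proposal is correct and follows essentially the same route as the paper: both reparametrize $u$ so that the energy relation $\tfrac12|\dot x|^2=V(x)+h$ holds identically, which makes the two radicals in \eqref{EL} constant and collapses it to Newton's equation, after the same preliminary bootstrap from weak to classical solution of \eqref{EL}. The only cosmetic difference is that the paper performs this in two stages (first enforcing $|\dot y|^2=V(y)+h$ to get $y''=\tfrac12\nabla V(y)$, then rescaling time by $\sqrt{2}$), whereas you absorb the factor $\sqrt{2}$ directly into the defining ODE for $f$.
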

\begin{proof}
The function $u$ is a collisions-free weak solution of \eqref{EL}, hence it is a strong solution. Define, for $t \in [a,b]$,
\[
\t(t):=\int_a^t \sqrt{\frac{|\dot{u}(z)|^2}{V(u(z))+h}}\,dz,
\]
and set $T=\t(b)$. It results $\left([0,T],\t\right) \in \Gamma_u$ and for every $s \in [0,T]$ (we denote with "$'$" the differentiation with respect to the new parameter $s$)
\[
\frac{dt}{ds}(s)= \left( \left.  \frac{d\t}{dt}(t)   \right|_{t=t(s)}\right)^{-1} = \sqrt{ \frac{ V(u (t (s)))+h }{ \left|\dot{u}(t(s))\right|^2} }.
\]
With this change of variable, setting $y(s)=u(t(s))$, the \eqref{EL} becomes
\[
\frac{1}{t'(s)} \frac{d}{ds} \left( \frac{y'(s)}{t'(s)} t'(s)\right)-\frac{1}{2t'(s)} \n V(y(s))=0,
\]
i.e.
\[
y''(s) = \frac{1}{2} \n V(y(s)).
\]
Setting $x(s):=y( \sqrt{2} s)$, $x$ is a solution of the first equation in \eqref{P_h} in $[0,T/\sqrt{2}]$. As far as the second equation is concerned, for every $s \in [0,T/\sqrt{2}]$
\[
|y'(s)|^2= |\dot{u}(t(s)) t'(s)|^2 = V(y(s))+h \Rightarrow \frac{1}{2}|x'(s)|^2=V(x(s))+h
\]
which completes the proof.
\end{proof}

\paragraph{\textbf{Relationship between $L_h$ and $M_h$.}}
It is convenient to establish a correspondence between minimizers of $M_h$ at positive level and minimizers of $L_h$. This can be done through the simple inequality
\beq\label{5 Maupertuis}
L_h^2(u) = \left( \int_a^b \sqrt{|\dot{u}|^2\left(V(u)+h\right)}\right)^2 \leq \int_a^b |\dot{u}|^2 \int_a^b \left(V(u)+h\right) = 2 M_h(u),
\eeq
for every $u \in H^*$. The equality holds true if and only if there exists $\l \in \R$ such that for almost every $t \in [a,b]$
\[
|\dot{u}(t)|^2=\l \left(V(u(t))+h\right).
\]

\begin{prop}\label{minimo M->L}
Let $u \in H^* \cap \wh{H}$ be a non-constant minimizer of $M_h$. Then $u$ is a minimizer of $L_h$ in $H^* \cap \wh{H}$.
\end{prop}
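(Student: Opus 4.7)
The plan is to exploit the Cauchy-Schwarz-type bound \eqref{5 Maupertuis} and use the parametrization-invariance of $L_h$ to match a generic competitor against $u$ after a time rescaling.

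First I would observe that the inequality $L_h^2 \le 2 M_h$ becomes an equality exactly for parametrizations proportional to Jacobi arclength, namely $|\dot u|^2 = \lambda(V(u)+h)$ a.e. By Lemma \ref{conservazione dell'energia}, any minimizer $u$ of $M_h$ satisfies $\frac{1}{2}|\dot u|^2 = \frac{V(u)+h}{\omega^2}$ a.e.\ on $[a,b]$, so the proportionality holds with $\lambda = 2/\omega^2$. Consequently
\[
L_h^2(u) \;=\; 2\,M_h(u).
\]

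Second, given an arbitrary non-constant $v \in H^*\cap\wh H$, I would construct a reparametrization $\tilde v = v\circ f$ of $v$, still defined on $[a,b]$ with $\tilde v(a)=p_1$, $\tilde v(b)=p_2$, realizing the equality case. Since $v\in H^*$, both $|\dot v|$ and $V(v)+h$ are strictly positive and continuous on $[a,b]$. Thus
\[
\tau(s):=\int_a^s \sqrt{\frac{|\dot v(r)|^2}{V(v(r))+h}}\,dr,\qquad s\in[a,b],
\]
defines an increasing $\mathcal C^1$ diffeomorphism of $[a,b]$ onto $[0,T]$ with $T=\tau(b)>0$. Setting $f(t):=\tau^{-1}\!\bigl(T(t-a)/(b-a)\bigr)$ yields $f\in\Gamma_v$ with $f(a)=a$, $f(b)=b$, and
\[
|\dot{\tilde v}(t)|^{2}
\;=\;|\dot v(f(t))|^{2}\,(f'(t))^{2}
\;=\;\frac{T^{2}}{(b-a)^{2}}\bigl(V(\tilde v(t))+h\bigr)
\qquad \text{a.e. } t\in[a,b].
\]
Hence $\tilde v\in H^*\cap\wh H$ and equality holds in \eqref{5 Maupertuis} for $\tilde v$, giving $L_h^2(\tilde v)=2 M_h(\tilde v)$. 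Since $L_h$ is invariant under reparametrization, $L_h(\tilde v)=L_h(v)$.

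Third, I would close the argument by chaining these identities with the $M_h$-minimality of $u$: for every $v\in H^*\cap\wh H$,
\[
L_h^2(u)\;=\;2\,M_h(u)\;\le\;2\,M_h(\tilde v)\;=\;L_h^2(\tilde v)\;=\;L_h^2(v),
\]
so $L_h(u)\le L_h(v)$, proving the claim. The only delicate point is step two, the careful construction of $f$: I must verify that $\tilde v$ still lies in $H^1$ with the correct endpoints, which is guaranteed precisely by the strict positivity of $|\dot v|$ and $V(v)+h$ on the compact interval $[a,b]$ granted by $v\in H^*$, and by the $\mathcal C^1$ regularity of $f$ (so that $f\in\Gamma_v$ in the sense defined right after the Jacobi-metric definition).
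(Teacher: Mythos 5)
Your proposal is correct and follows essentially the same route as the paper: establish equality in \eqref{5 Maupertuis} for $u$ via the pointwise energy relation $|\dot u|^2=\tfrac{2}{\o^2}(V(u)+h)$, reparametrize an arbitrary competitor to also realize equality, and then compare through the $M_h$-minimality of $u$. The only differences are cosmetic — you argue directly where the paper argues by contradiction, and you write out the reparametrization explicitly where the paper refers back to the construction in Theorem \ref{teorema su L}.
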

\begin{proof}
Since $u$ is a critical point of $M_h$ in $\wh{H}$ at a positive level, from Theorem \ref{teorema 4.1} we know that
\[
|\dot{u}(t)|^2=\frac{2}{\o^2}\left(V(u(t))+h\right).
\]
Hence there is equality in \eqref{5 Maupertuis}. If there existed $v \in H^* \cap \wh{H}$ such that $L_h(v)<L_h(u)$, then we could reparametrize $v$ to obtain a function (still denoted by $v$) satisfying
\[
|\dot{v}(t)|^2= \left(V(v(t))+h\right)
\]
(apply the argument in Theorem \ref{teorema su L}). So,
\[
0<2M_h(v)=L_h^2(v)<L_h^2(u)=2M_h(u),
\]
a contradiction.
\end{proof}

\begin{prop}\label{minimo L->M}
If $u \in H^* \cap \wh{H}$ is a non-constant minimizer of $L_h$ then, up to reparametrization, $u$ is a minimizer of $M_h$ on $H^* \cap \wh{H}$.
\end{prop}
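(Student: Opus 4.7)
The plan is to exploit the inequality \eqref{5 Maupertuis} together with the characterization of the equality case, which tells us that equality $L_h^2(u)=2M_h(u)$ holds on $H^*$ precisely when $|\dot u|^2=\lambda(V(u)+h)$ a.e.\ for some $\lambda>0$. The functional $L_h$ is invariant under increasing $\mathcal{C}^1$-reparametrizations in $\Gamma_u$, while $M_h$ is not; this asymmetry is exactly what makes the implication work.

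First I would reparametrize $u$ to saturate \eqref{5 Maupertuis}. Concretely, since $u \in H^* \cap \wh H$ is non-constant and $V(u(t))+h>0$ on $[a,b]$, define $\tau:[a,b]\to[0,T]$ by
\[
\tau(t):=\int_a^t \sqrt{\frac{|\dot u(z)|^2}{V(u(z))+h}}\,dz,\qquad T:=\tau(b),
\]
so that $\bigl([0,T],\tau^{-1}\bigr)\in\Gamma_u$, and set $\tilde u(s):=u(\tau^{-1}(s))$. An elementary computation (identical in spirit to the one in the proof of Theorem \ref{teorema su L}) shows that $|\tilde u'(s)|^2=V(\tilde u(s))+h$ for a.e.\ $s\in[0,T]$, i.e.\ the equality case in \eqref{5 Maupertuis} is attained by $\tilde u$. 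Hence
\[
2M_h\bigl([0,T];\tilde u\bigr)=L_h^2\bigl([0,T];\tilde u\bigr).
\]
Moreover, $\tilde u\in H^*_{p_1p_2}([0,T])\cap \wh H_{p_1p_2}([0,T])$, since both the non-collision property and the strict positivity $V(\tilde u)+h>0$, $|\tilde u'|>0$ are invariant under smooth increasing reparametrizations.

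Next I use the hypothesis that $u$ minimizes $L_h$ on $H^*\cap\wh H$. Because $L_h$ is reparametrization-invariant, $\tilde u$ minimizes $L_h$ as well (on the appropriate interval). Thus for any competitor $v\in H^*_{p_1p_2}\cap\wh H_{p_1p_2}$, reparametrized on $[0,T]$ without loss of generality (lengths are invariant), the chain
\[
2M_h(\tilde u)=L_h^2(\tilde u)\le L_h^2(v)\le 2M_h(v)
\]
holds, where the last inequality is \eqref{5 Maupertuis} applied to $v$. This shows $M_h(\tilde u)\le M_h(v)$ for every admissible $v$, so $\tilde u$ minimizes $M_h$ on $H^*\cap\wh H$.

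The only point that requires some care (and is the closest thing to an obstacle) is to check that the reparametrization $\tau$ is indeed a $\mathcal{C}^1$ increasing bijection with $\tilde u\in H^1$: this follows from $u\in H^*$, which guarantees $|\dot u|$ and $V(u)+h$ are uniformly bounded away from $0$, so $\tau'(t)$ and $(\tau^{-1})'(s)$ are bounded and bounded away from $0$; the chain rule then gives $\tilde u\in H^1$ with the claimed pointwise identity. Everything else reduces to the interplay between \eqref{5 Maupertuis} and the equality case, exactly mirroring the argument used to prove Proposition \ref{minimo M->L}.
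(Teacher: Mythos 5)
Your proof is correct and follows essentially the same route as the paper's: both arguments rest on the inequality \eqref{5 Maupertuis}, its equality case, and the reparametrization invariance of $L_h$. The only difference is cosmetic — you argue directly via the chain $2M_h(\tilde u)=L_h^2(\tilde u)\le L_h^2(v)\le 2M_h(v)$, while the paper phrases the same comparison as a proof by contradiction.
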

\begin{proof}
We can assume from the beginning that there exists $\l \in \R$ such that for every $t \in [0,1]$
\[
|\dot{u}(t)|^2=\l \left(V(u(t))+h\right).
\]
Otherwise it is sufficient to perform a suitable reparametrization. Then there is equality in \eqref{5 Maupertuis}. Assume by contradiction that there existed $v \in H^* \cap \wh{H}$ such that $M_h(v)<M_h(u)$. We can reparametrize $v$ so that there is equality in \eqref{5 Maupertuis}. Therefore, we would deduce
\[
L_h^2(v)=2M_h(v) <2M_h(u) = L_h^2(u),
\]
a contradiction.
\end{proof}

\paragraph{\textbf{Final comments.}}
In this paper we will use both $M_h$ and $L_h$. It is clear that the Maupertuis' functional $M_h$ is easier to treat, so that it is convenient use it whenever possible. On the other hand the geometric meaning of the functional $L_h$ will come useful. Indeed the couple set-metric given by
\[
N=\left\lbrace x \in \R^2: V(x) > -h\right\rbrace, \quad g_{ij}(x) = \left(V(x)+h\right)\d_{ij}
\]
(called the Hill's region) defines a Riemaniann manifold and we will take advantage of this structure, in spite to the degeneration of the metric on the boundary of the Hill region. More precisely, we will often make use of the following facts:

\begin{itemize}
\item[1)] If $\gamma:[a,b] \to N$ is a piecewise differentiable curve, it is always possible to reparametrize it so that the length of the tangent vector
\[
\sqrt{|\dot{\gamma}(t)|^2\left(V(\gamma(t))+h\right)}
\]
is a constant $C \in \R^+ \cup \left\lbrace 0\right\rbrace$.
\item[2)] If a piecewise differentiable curve $\gamma:[a,b] \to N$, with parameter proportional to arc length, has length less or equal to the length of any other piecewise differentiable curve joining $\g(a)$ and $\g(b)$, then $\g$ is a geodesic. In particular, $\g$ is regular (recall that a geodesic is a curve satisfying the geodesics equation).
\item[3)] Let $p \in N$. We say that a subset $A \subset N$ is a \emph{totally normal neighbourhood} of $p$ if for every $p_1, p_2 \in \bar{A}$ there exists a unique minimizing geodesic $\g$ joining $p_1$ and $p_2$. If this geodesic is contained in $A$, we say that $A$ is a \emph{strongly convex} neighbourhood.\\
For any $p \in N$ there exist a totally normal neighbourhood $U$ of $p$. It is possible to choose $U$ in such a way that $U$ is strongly convex. If $\gamma$ is the minimizing geodesic connecting $p_1$ and $p_2$ in $U$, $\gamma$ depends smoothly on $p_1$ and $p_2$.
\end{itemize}

Furthermore we will strongly use the fact that, on contrarily to $M_h$, \emph{the functional $L_h$ is additive}. This is essential for the proof of the following Proposition.

\begin{prop}\label{localizzazione dei minimi_M}
Let $u \in H_{p_1 p_2}\left([a,b]\right)$ be a minimizer of $L_h\left([a,b];\cdot\right)$, let $[c,d] \subset [a,b]$. Then $u|_{[c,d]}$ is a minimizer of $L_h\left([c,d];\cdot\right)$ in $H_{u(c) u(d)}\left([c,d]\right)$. Moreover,
if $u$ is a minimizer of $M_h\left([a,b];\cdot\right)$ in $H_{p_1 p_2}\left([a,b]\right)$, then, for any  subinterval $[c,d] \subset [a,b]$, the restriction $u|_{[c,d]}$ is a minimizer of $M_h\left([c,d];\cdot\right)$ in $H_{u(c),u(d)}([c,d))$.
\end{prop}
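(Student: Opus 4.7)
The plan is to handle the Jacobi length functional $L_h$ first via additivity under concatenation, and then to reduce the Maupertuis case to the length case using the Cauchy--Schwarz inequality \eqref{5 Maupertuis} together with the equi-partition identity supplied by Lemma \ref{conservazione dell'energia}.

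For $L_h$, the key fact is that $L_h$ is a path integral, hence additive: for every admissible $w$ and every $a\le c\le d\le b$,
\[
L_h([a,b];w) = L_h([a,c];w) + L_h([c,d];w) + L_h([d,b];w).
\]
I argue by contradiction. If some $v \in H_{u(c)u(d)}([c,d])$ satisfies $L_h([c,d];v) < L_h([c,d];u|_{[c,d]})$, then the function $\tilde u$ obtained by setting $\tilde u = u$ on $[a,c]\cup[d,b]$ and $\tilde u = v$ on $[c,d]$ belongs to $H_{p_1 p_2}([a,b])$ (the values match at $c$ and $d$, so $H^1$ regularity is preserved), and additivity yields $L_h([a,b];\tilde u) < L_h([a,b];u)$, contradicting minimality.

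For $M_h$, let $u$ be a minimizer of $M_h([a,b];\cdot)$. By Lemma \ref{conservazione dell'energia} the equi-partition $|\dot u|^2 = \tfrac{2}{\omega^2}(V(u)+h)$ holds a.e.\ on $[a,b]$, which is exactly the equality case of \eqref{5 Maupertuis}; hence $L_h^2([c,d];u|_{[c,d]}) = 2M_h([c,d];u|_{[c,d]})$ on every subinterval. Suppose by contradiction that some $v \in H_{u(c)u(d)}([c,d])$ satisfies $M_h([c,d];v) < M_h([c,d];u|_{[c,d]})$. Applying \eqref{5 Maupertuis} to $v$ and using the equi-partition of $u|_{[c,d]}$ I obtain
\[
L_h^2([c,d];v) \le 2M_h([c,d];v) < 2M_h([c,d];u|_{[c,d]}) = L_h^2([c,d];u|_{[c,d]}),
\]
so $v$ is strictly shorter than $u|_{[c,d]}$ in the Jacobi metric. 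Gluing as in the first step produces $\gamma \in H_{p_1 p_2}([a,b])$ with $L_h([a,b];\gamma) < L_h([a,b];u)$. To convert this length deficit into an $M_h$ deficit, I reparametrize $\gamma$ (via the change of variable used in the proof of Theorem \ref{teorema su L}) so that the new curve $\tilde\gamma$, defined on some interval $[a',b']$, satisfies $|\dot{\tilde\gamma}|^2 = \lambda(V(\tilde\gamma)+h)$ a.e.; then equality in \eqref{5 Maupertuis} and the reparametrization invariance of $L_h$ give $2M_h([a',b'];\tilde\gamma) = L_h^2([a',b'];\tilde\gamma) = L_h^2([a,b];\gamma)$. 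A direct computation with the substitution $t \mapsto s\,t$ shows that $M_h$ is invariant under linear rescalings of the time interval, so I linearly rescale $[a',b']$ onto $[a,b]$ without affecting $M_h$. The resulting $\gamma^\ast \in H_{p_1 p_2}([a,b])$ satisfies
\[
M_h([a,b];\gamma^\ast) = \tfrac{1}{2} L_h^2([a,b];\gamma) < \tfrac{1}{2} L_h^2([a,b];u) = M_h([a,b];u),
\]
contradicting the minimality of $u$.

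The delicate point I anticipate is the reparametrization of the glued competitor $\gamma$ to equi-partition: $\gamma$ is merely $H^1$ and the change of variable $dt/ds = \sqrt{(V(\gamma)+h)/|\dot\gamma|^2}$ degenerates where $\dot\gamma = 0$ or where $\gamma$ meets the boundary of the Hill region $\{V+h=0\}$ or a centre. I expect to deal with this exactly as in Theorem \ref{teorema su L}, performing the change of variable on the open set where both quantities are positive and extending by continuity; since the strict inequality $M_h([c,d];v) < M_h([c,d];u|_{[c,d]})$ is fixed in advance with a positive gap, no uniform control on $\gamma$ is needed, and if necessary a standard approximation in $H^1$ by paths in $H^\ast \cap \widehat H$ would suffice.
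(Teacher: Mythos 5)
Your proof is correct and follows essentially the same route as the paper's: the additivity-and-gluing contradiction for $L_h$, and for $M_h$ the passage through the Jacobi length via the Cauchy--Schwarz inequality \eqref{5 Maupertuis} together with the equi-partition identity of Lemma \ref{conservazione dell'energia} (you merely unpack inline the content of Propositions \ref{minimo M->L} and \ref{minimo L->M}, which the paper invokes by reference). The reparametrization subtlety you flag at the end is glossed over in the paper as well, and your proposed treatment of it is adequate.
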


\subsection{The existence theorem}\label{main result}

As said earlier, in order to find weak solutions of \eqref{PI},  we are going to minimize the Maupertuis' functional with some topological constraints. To this aim, the first step is to introduce suitable (possibly weakly closed) sets of functions. Let us fix $[a,b] \subset \R$ and $p_1,p_2 \in \pa B_R(0)$, $p_1 = R \exp{\left\lbrace i \t_1\right\rbrace }$, $p_2= R \exp{\left\lbrace i\t_2\right\rbrace}$. The paths in $\wh{H}$ can be classified according to their winding numbers with respect to each centre. This can be done by artificially closing it, in the following way:
\[
\Gamma(t):=\begin{cases}
                  \begin{cases}
                  u(t) & t \in [a,b]\\
                  R e^{i(t-b+\t_2)} & t \in (b,b+\t_1+2\pi-\t_2)
                  \end{cases} & \text{if $\t_1<\t_2$}\\
                  \, u(t) \qquad t \in [a,b] & \text{if $\t_1=\t_2$}\\
                  \begin{cases}
                  u(t) & t \in [a,b]\\
                  R e^{i(t-b+\t_2)} & t \in (b,b+\t_1-\t_2)
                  \end{cases} & \text{if $\t_1>\t_2$},
           \end{cases}
\]
i.e. if $p_1 \neq p_2$ we close the path $u$ with the arc of $\pa B_R(0)$ connecting $p_2$ and $p_1$ in counterclockwise sense. Then it is well defined the usual winding number
\[
\textrm{Ind}\left(u([a,b]),c_j\right)= \frac{1}{2\pi i}\int_{\Gamma} \frac{dz}{z-c_j}.
\]
Given $l=(l_1,\ldots,l_N) \in \Z^N$, a  component of $\wh{H}$ is of the form
\[
\wh{\mathfrak{H}}_l:=  \left\lbrace  u \in \wh{H}: \textrm{Ind}\left(u([a,b]),c_j\right)=l_j \quad \forall j =1,\ldots, N\right\rbrace .
\]
\begin{rem}
1) In general $\wh{\mathfrak{H}}_l$ may contain paths with self-intersections. Actually, $\wh{\mathfrak{H}}_l$ contains self-intersections-free paths lying completely in $B_R(0)$ if and only if $l_j \in \left\lbrace 0,1\right\rbrace$ for every $j$.\\
2) For every $l \in \Z^N$ the set $\wh{\mathfrak{H}}_l$ is not weakly closed in $H^1$.
\end{rem}
In the next subsection it will be useful to work on sets containing some self-intersections-free paths. For this reason we consider $l \in \mathbb{Z}_2^N$ instead of $l \in \Z^N$ and set
\[
\wh{H}_l := \left\lbrace  u \in \wh{H}: \text{Ind}\left(u([a,b]),c_j\right) \equiv l_j \mod 2 \quad \forall j=1,\ldots,N\right\rbrace,
\]
namely we collect together the components having winding numbers having the same parity with respect to each centre. We also assume that
\beq\label{scelta di l 2}
\exists j,k \in \left\lbrace 1,\ldots,N\right\rbrace,\  j \neq k, \text{ such that } l_j \neq l_k \mod 2.
\eeq
With this choice of $l$, if $u \in \wh{H}_l$ then $u$ has to pass through the ball $B_\eps(0)$ which contains the centres. In particular $u \in \wh{H}_l$ cannot be constant even if $p_1=p_2$, so that all the results stated in subsection \ref{funzionale di Maupertuis} hold true even in this case. From now on, we will say that $l \in \Z_2^N$ is a \emph{winding vector}.

\vspace{1 em}

In order to succeed in minimizing, we need to close those sets with respect to the weak $H^1$ topology. To this aim, we need to allow collisions with the centres. For $j \in \left\lbrace 1,\ldots,N\right\rbrace $, let us set
\begin{multline*}
\mathfrak{Coll}_l^j := \left\lbrace u \in H: \text{Ind}\left(u([a,b]),c_k\right) \equiv l_k \mod 2\;, \forall k \in \left\lbrace 1,\ldots,j-1,j+1,\ldots, N\right\rbrace, \right.\\
\left.  \text{ and there exists}\;  t \in [a,b]: u(t)=c_j\right\rbrace .
\end{multline*}
A path $u \in \mathfrak{Coll}_l^j$ behaves as a path of $\wh{H}_l$ with respect to $c_k$ for
$k \in \left\lbrace 1,\ldots,j-1,j+1,\ldots, N\right\rbrace$ and collides in $c_j$ at a certain instant.
Analogously, for $j_1,j_2 \in \left\lbrace 1,\ldots,N\right\rbrace$ we define
\begin{multline*}
\mathfrak{Coll}_l^{j_1,j_2} = \left\lbrace u \in H: \text{Ind}\left(u\left([a,b]\right),c_k\right)\equiv l_k \mod 2 \;,  \forall k \in \left\lbrace 1,\ldots, N\right\rbrace \setminus \left\lbrace j_1,j_2\right\rbrace\;,\right.\\
\left.  \text{ and there are } \; t_1,t_2 \in [a,b]: u(t_1)=c_{j_1}, u(t_2)=c_{j_2}\right\rbrace ,
\end{multline*}
the set of the paths behaving as paths of $\wh{H}_l$ with respect to $c_k$ for
$k \in \left\lbrace 1,\ldots N\right\rbrace \setminus \left\lbrace j_1,j_2\right\rbrace $ and colliding in $c_{j_1}$ and $c_{j_2}$; in the same way
\begin{align*}
& \mathfrak{Coll}_l^{j_1,j_2,j_3} := \dots ,\\
& \vdots \\
&  \mathfrak{Coll}_l^{1,\ldots,N} = \mathfrak{Coll}^{1,\ldots,N} :=\left\lbrace u \in H: \text{$u$ collides in each centre}\right\rbrace .
\end{align*}
Finally, we name
\[
\mathfrak{Coll}_l:= \bigcup_{j=1}^N  \mathfrak{Coll}_l^j \cup \bigcup_{1\leq j_1<j_2\leq N} \mathfrak{Coll}_l^{j_1,j_2} \cup \cdots \cup \mathfrak{Coll}_l^{1,\ldots,N}.
\]
\begin{prop}
The set
\[
H_l:= \wh{H}_l \cup \mathfrak{Coll}_l
\]
is weakly closed in $H^1\left([a,b],\R^2\right)$.
\end{prop}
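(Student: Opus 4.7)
The plan is to take a weakly convergent sequence $u_n \wc u$ in $H^1([a,b],\R^2)$ with $u_n \in H_l$, and show $u \in H_l$. By the compact embedding $H^1([a,b],\R^2) \hookrightarrow \mathcal{C}([a,b],\R^2)$, one has $u_n \to u$ uniformly on $[a,b]$. In particular the boundary conditions $u(a)=p_1$, $u(b)=p_2$ pass to the limit, so $u \in H_{p_1 p_2}([a,b])$. Define the limit collision set
\[
J_u := \{ j \in \{1,\ldots,N\} : \exists\, t \in [a,b] \text{ with } u(t)=c_j\}.
\]
The problem splits in the natural way: if $J_u = \emptyset$ we must show $u \in \wh{H}_l$, while if $J_u \neq \emptyset$ we must show $u \in \mathfrak{Coll}_l^{J}$ for the appropriate $J$. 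In either case what needs to be proven is that, for every index $j \notin J_u$, the path $u$ has winding number around $c_j$ of parity $l_j$.

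Fix $j \notin J_u$. Then $r_j := \min_{t\in[a,b]} |u(t)-c_j| > 0$ because $u$ is continuous on a compact interval and avoids $c_j$. By uniform convergence, for $n$ large enough we have $\|u_n - u\|_\infty < r_j/2$, hence $u_n$ also avoids $c_j$, so its winding number is well defined. I then intend to use the straight-line homotopy
\[
H(s,t) := s\, u_n(t) + (1-s)\,u(t), \qquad s \in [0,1], \; t \in [a,b],
\]
which satisfies $|H(s,t)-c_j| \geq r_j/2 > 0$. Since $u_n$ and $u$ share the same endpoints $p_1,p_2$, the artificial closures $\Gamma_n,\Gamma$ introduced before the statement differ only on the interval $[a,b]$ (the closing arc on $\pa B_R(0)$ is identical and does not pass through $c_j$, as $|c_j|<\eps < R$). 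Hence $\Gamma_n$ and $\Gamma$ are freely homotopic in $\R^2 \setminus \{c_j\}$, and by homotopy invariance of the winding number,
\[
\mathrm{Ind}(u_n([a,b]),c_j) = \mathrm{Ind}(u([a,b]),c_j) \quad \text{for $n$ large.}
\]
In particular, since $u_n \in H_l$ implies $\mathrm{Ind}(u_n([a,b]),c_j) \equiv l_j \mod 2$ (whether $u_n \in \wh{H}_l$ or $u_n \in \mathfrak{Coll}_l^{J_n}$ with $j \notin J_n$, which is automatic for $n$ large since $u_n$ avoids $c_j$), the same congruence holds for $u$.

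Assembling the conclusions: if $J_u = \emptyset$, then $u$ has the correct winding parity around every centre and is collision-free, so $u \in \wh{H}_l$; if $J_u \neq \emptyset$, then $u$ collides precisely at the centres indexed by $J_u$ and has correct winding parity for $k \notin J_u$, so $u \in \mathfrak{Coll}_l^{J_u} \subset \mathfrak{Coll}_l$. In either case $u \in H_l$, which proves weak closedness. The only real subtlety—and the step I would take most care with—is the homotopy invariance argument for the closed curves $\Gamma_n, \Gamma$: one must verify that the prescribed closure on $\pa B_R(0)$ is inert (it stays at distance $\geq R - \eps$ from each $c_j$) and that the straight-line homotopy between the free parts does not cross the centres, both of which are immediate from the uniform estimate $\|u_n - u\|_\infty < r_j/2$.
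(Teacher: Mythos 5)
Your proof is correct and takes essentially the same route as the paper's, which consists of the same two-case split (does the uniform limit $u$ collide or not?) stated far more tersely: the paper simply invokes that weak $H^1$ convergence implies uniform convergence and that the membership in $\wh{H}_l$ or $\mathfrak{Coll}_l$ passes to the limit. Your homotopy-invariance argument for the stability of the winding parities about the non-collided centres merely makes explicit what the paper leaves implicit.
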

\begin{proof}
Let $\left(u_n\right) \subset H_l$, $u_n \wc u$ in $H^1$. Since the weak convergence in $H^1$ implies the uniform one, if $u$ has a collision
\[
\left(u_n\right) \subset H_l \Rightarrow u \in \mathfrak{Coll}_l.
\]
If $u$ is collisions-free, the uniform convergence implies the existence of $n_0 \in \mathbb{N}$ such that
\[
u_n \in \wh{H}_l \quad \forall n \geq n_0 \Rightarrow u \in \wh{H}_l. \qedhere
\]
\end{proof}

To complete the choice of suitable sets, it is convenient to add a further requirement: since we search functions lying in $B_R(0)$, let us set
\begin{gather*}
\wh{K}_l=\wh{K}_l^{p_1 p_2}([a,b]):= \left\lbrace u \in \wh{H}_l: |u(t)| \leq R \ \forall t \in [a,b]\right\rbrace  \\
K_l=K_l^{p_1 p_2}([a,b]):=\left\lbrace u \in H_l: |u(t)| \leq R \ \forall t \in [a,b]\right\rbrace .
\end{gather*}

\begin{prop}\label{K_l debolmente chiuso}
The set $K_l$ is weakly closed in $H^1\left([a,b],\R^2\right)$.
\end{prop}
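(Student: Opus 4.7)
The plan is to leverage the previous proposition (which establishes that $H_l$ is weakly closed) together with the compact embedding $H^1([a,b],\R^2) \hookrightarrow \mathcal{C}([a,b],\R^2)$. The point-wise constraint $|u(t)| \leq R$ is preserved under uniform convergence, so combining these two facts should immediately give the result.

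Concretely, I would take a sequence $(u_n) \subset K_l$ with $u_n \wc u$ in $H^1([a,b],\R^2)$. First, since $K_l \subset H_l$ and $H_l$ was just shown to be weakly closed in $H^1$, one gets $u \in H_l$ for free. Second, the weak convergence $u_n \wc u$ in $H^1([a,b],\R^2)$ implies, via the Rellich--Kondrachov theorem (in dimension one the embedding $H^1 \hookrightarrow \mathcal{C}^0$ is actually compact), uniform convergence $u_n \to u$ on $[a,b]$. In particular, for every $t \in [a,b]$,
\[
|u(t)| = \lim_{n \to \infty} |u_n(t)| \leq R,
\]
since each $|u_n(t)| \leq R$ by the assumption $(u_n) \subset K_l$. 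This gives $u \in K_l$, proving the weak closedness.

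I do not anticipate any real obstacle here: the only subtle point is making sure the constraint is genuinely closed under the correct topology, which is why one needs the uniform (rather than merely weak or pointwise a.e.) convergence. Since we are in one spatial dimension, the Sobolev embedding into continuous functions is compact, so the uniform convergence comes at no cost, and no further argument (e.g.\ Fatou-type reasoning) is needed. The proof is essentially two lines once the previous proposition on $H_l$ is invoked.
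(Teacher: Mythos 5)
Your proof is correct and follows exactly the paper's argument: the paper's own proof is the one-line observation that $K_l$ is a subset of the weakly closed set $H_l$ and that the constraint $|u(t)|\leq R$ is stable under the uniform convergence implied by weak $H^1$ convergence. You have simply spelled out the same two steps in more detail.
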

\begin{proof}
$K_l$ is a subset of the weakly closed set $H_l$, and it is stable under uniform convergence.
\end{proof}

Some examples of paths: the first path is a collisions-free path with winding vector $(0,0,1,1,0)$; the second one is a collision path of $K_l$ with $l=(0,1,1,0,0)$ or $l=(1,1,1,0,0)$; the third one is a path of $K_l$ with $l=(0,0,0,0,0)$, which does not satisfy \eqref{scelta di l 2}.
\begin{center}
\begin{tikzpicture}[>=stealth]
\draw (0,0) circle (1.5cm);
\draw[->] (55:1.5cm).. controls (0.5,-1.5) and (-0.3,1.3)..(240:1.5cm);
\filldraw[font=\footnotesize] (0.3,-0.3) circle (1pt) node[anchor=north]{$c_3$}
          (0.5,0.3) circle (1pt) node[anchor=south]{$c_2$}
          (0,0.5) circle (1pt) node[anchor=south]{$c_1$}
          (-0.3,-0.3) circle (1pt) node[anchor=north]{$c_4$}
          (-0.5,0.3) circle (1pt) node[anchor=south]{$c_5$};
\draw (135:1.5cm) node[anchor=south]{$R$};
\end{tikzpicture}
$\qquad$
\begin{tikzpicture}[>=stealth]
\draw (0,0) circle (1.5cm);
\draw (55:1.5cm)..controls (0.4,0.6) and (0.3,1.3)..(0,0.5);
\draw[->] (0,0.5)..controls (0.5,-1) and (-0.4,0)..(240:1.5cm) ;
\filldraw[font=\footnotesize] (0.3,-0.3) circle (1pt) node[anchor=north]{$c_3$}
          (0.5,0.3) circle (1pt) node[anchor=south]{$c_2$}
          (0,0.5) circle (1pt) node[anchor=south]{$c_1$}
          (-0.3,-0.3) circle (1pt) node[anchor=north]{$c_4$}
          (-0.5,0.3) circle (1pt) node[anchor=south]{$c_5$};
\draw (135:1.5cm) node[anchor=south]{$R$};
\end{tikzpicture}
$\qquad$
\begin{tikzpicture}[>=stealth]
\draw (0,0) circle (1.5cm);
\draw[->] (55:1.5cm)..controls (55:1.5cm |- 0,-0.8) and (1,0 |- 240:1.5cm)..(240:1.5cm);
\filldraw[font=\footnotesize] (0.3,-0.3) circle (1pt) node[anchor=north]{$c_3$}
          (0.5,0.3) circle (1pt) node[anchor=south]{$c_2$}
          (0,0.5) circle (1pt) node[anchor=south]{$c_1$}
          (-0.3,-0.3) circle (1pt) node[anchor=north]{$c_4$}
          (-0.5,0.3) circle (1pt) node[anchor=south]{$c_5$};
\draw (135:1.5cm) node[anchor=south]{$R$};
\end{tikzpicture}
\end{center}

The main result of this section is the following theorem.

\begin{teor}\label{teo dinamica interna}
There exists $\eps_3>0$ such that for every $\eps \in (0,\eps_3)$, $p_1,p_2 \in \pa B_R(0)$ and $l \in \Z_2^N$ satisfying \eqref{scelta di l 2}, there exist $T>0$ and a solution $y_{l}(\cdot\,;p_1,p_2;\eps) \in K_l^{p_1 p_2}([0,T])$ of problem \eqref{PI}, which is a reparametrization of a local minimizer of the Maupertuis' functional in $K_l^{p_1 p_2}([0,1])$. Moreover:
\begin{itemize}
\item[($i$)] if $\a \in (1,2)$ then $y_l$ is collisions-free and self-intersections-free.
\item[($ii$)] if $\a=1$ we have to distinguish among
\begin{itemize}
\item[$a$)] $p_1 \neq p_2$; then $y_l$ is collisions-free and self-intersections-free.
\item[$b$)] $p_1=p_2$ and $l$ is such that there exist $j_1,j_2,k_1,k_2 \in \{1,\ldots,N\}$:
\[
l_{j_1}=l_{j_2} \equiv 0 \mod 2 \quad l_{k_1}=l_{k_2} \equiv 1 \mod 2;
\]
then $y_l$ is collisions-free and self-intersections-free.
\item[$c$)] $p_1=p_2$ and $l$ such that there exists $j \in \{1,\ldots,N\}$:
\beq\label{105}
l_1=\cdots=l_{j-1}=l_{j+1}=\cdots=l_N \neq l_j \mod 2;
\eeq
then $y_l$ can be collisions-free and self-intersections-free or can be an ejection-collision solution, with a unique collision with one centre $c_j$.
\end{itemize}
\end{itemize}
\end{teor}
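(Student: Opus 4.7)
The plan is to obtain $y_l$ as the reparametrization of a minimizer $u_l$ of the Maupertuis functional $M_h$ over the weakly closed class $K_l^{p_1 p_2}([0,1])$, and then to rule out collisions and self-intersections except in the exceptional case $(ii)$-$c$.

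First I apply the direct method of the calculus of variations. The set $K_l$ is non-empty (an explicit admissible path realising the prescribed parities is easy to write down) and weakly closed by Proposition \ref{K_l debolmente chiuso}. On $B_R(0)$, Remark \ref{remark su R} yields $V_\eps - 1 \geq \eta(\eps) > 0$, hence
\[
M_h(u) \geq \tfrac{\eta(\eps)}{2}\|\dot u\|_2^2,
\]
which together with the pointwise bound $|u|\leq R$ gives coercivity in $H^1$. Weak lower semicontinuity of $M_h$ follows from the convexity of the Dirichlet term together with Fatou's lemma applied to the lower semicontinuous potential term (using $V_\eps + h \geq 0$ and the uniform convergence $u_n \to u$ guaranteed by weak convergence in $H^1$). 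A minimizer $u_l$ therefore exists; since condition \eqref{scelta di l 2} forces every admissible $u$ to penetrate $B_\eps(0)$, $u_l$ is non-constant and $M_h(u_l) > 0$. A local variational comparison near $\partial B_R$, exploiting the strictly inward character of the Kepler-dominant force (Remark \ref{remark su R}), shows that $|u_l(t)| < R$ on $(0,1)$, so $u_l$ is an interior critical point of $M_h$ on $\wh{K}_l$ and Theorem \ref{teorema 4.1} delivers the desired classical solution $y_l$ of \eqref{PI} as soon as $u_l$ is collision-free.

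The central obstacle is the exclusion of collisions. For $\a \in (1,2)$ the $1/r^{\a}$-potential satisfies Gordon's strong-force condition, so any colliding $u_l$ has $M_h(u_l)=+\infty$; alternatively one verifies directly that bypassing $c_j$ by a short detour of size $\d$ produces a competitor in the same parity class $K_l$ at cost $O(\d^{2-\a}) \to 0^+$, strictly saving the positive action carried by the collision arc, which proves part $(i)$. For $\a = 1$ the strong-force estimate fails and I invoke the Levi-Civita regularization: every collision of $u_l$ with a centre $c_j$ at energy $-1$ regularizes into a smooth brake arc ejected radially from $c_j$, so a collision at an interior time $t_0$ forces $u_l$ to retrace its image with reflection symmetry across $t_0$. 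Matching this symmetry with the parity vector $l$ and with the endpoints $p_1,p_2$ shows that the only topologically admissible scenario is precisely case $(ii)$-$c$, namely $p_1 = p_2$ with $l$ of the form \eqref{105}; in cases $(ii)$-$a$ and $(ii)$-$b$ the reflected trajectory either violates the endpoint condition or produces a parity vector with at most one index differing from the others, contradicting the hypothesis on $l$, and a collision-free minimizer is recovered by a local cut-and-paste.

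Finally, once collisions are excluded, self-intersections can be ruled out by a cut-and-paste on the Jacobi length $L_h$ (exploiting its additivity, Proposition \ref{localizzazione dei minimi_M}): if $u_l(t_1)=u_l(t_2)$ with $t_1<t_2$, the loop $u_l|_{[t_1,t_2]}$ encloses a set of centres, and either excising it or reversing its orientation produces a strictly shorter path in the same class $K_l$, contradicting minimality in all parity cases. The step I expect to be the hardest is the collision analysis in the Coulombic regime $\a = 1$, where the topological book-keeping required to match the reflection symmetry imposed by Levi-Civita regularization with the prescribed winding parities is delicate and is exactly what isolates the exceptional subcase $(ii)$-$c$.
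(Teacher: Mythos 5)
Your overall architecture (direct method on the weakly closed class $K_l$, confinement $|u|<R$, exclusion of collisions, then exclusion of self-intersections by a cut-and-paste on the Jacobi length) matches the paper's, and the final self-intersection argument via orientation reversal of a loop is exactly the one used in Proposition \ref{no autointersezioni}. However, there is a genuine gap at the heart of case ($i$). Your first claim --- that for $\a\in(1,2)$ the potential satisfies Gordon's strong-force condition, so colliding paths have infinite action --- is false: the strong-force condition for a $-\a$-homogeneous singularity requires $\a\geq 2$, and the whole point of the range $\a\in[1,2)$ is that collision paths have \emph{finite} Maupertuis action, so they cannot be discarded for free. Your fallback argument is also insufficient as stated: near a collision with $c_j$ the Jacobi length of the colliding arc inside $B_\d(c_j)$ and the Jacobi length of a detour along $\pa B_\d(c_j)$ are \emph{both} of order $\d^{(2-\a)/2}$, so an order-of-magnitude estimate ``$O(\d^{2-\a})\to 0$'' cannot yield a strict saving; one is comparing two infinitesimals of the same order. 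The paper resolves this by a blow-up analysis (Steps 2--3 of Section \ref{assenza di collisioni}): it constrains minimizers to stay outside $B_{\r_n}(c_j)$, rescales, and shows the limit is a zero-energy $\a$-Kepler arc whose total angular variation is at least $\frac{2\pi}{2-\a}>2\pi$, which forces a self-intersection and contradicts Proposition \ref{variazione dell'angolo limitata}; this monotonicity of the constrained minimum $d(\r)$ combined with its continuity at $\r=0$ is what excludes collisions. That mechanism is entirely absent from your proposal and cannot be replaced by the arguments you give.

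Two further, more minor, points. For $\a=1$ your description of the Levi--Civita step is imprecise: the paper does not show that the original minimizer is ``radially ejected''; it constructs a possibly different minimizer $\wh{u}$ as a limit of obstacle-constrained minimizers, shows that its Levi--Civita transform solves the regularized equation \eqref{equazione per q tilda} and passes through the origin without change of direction (using that the blown-up angle equals exactly $2\pi$ when $\a=1$), and only then deduces the reflection symmetry $\wt{u}(t_0+t)=\wt{u}(t_0-t)$, whence $p_1=p_2$ and the form \eqref{105} of $l$; the topological bookkeeping you defer is carried by this symmetry. Also, the confinement $|u|<R$ is not obtained from the ``inward character of the force'' (the force is inward everywhere in $B_R(0)$, which by itself proves nothing); the paper uses the specific choice \eqref{scelta di R} of $R$ as the radius of the circular orbit, the $\mathcal{C}^1$ regularity of the minimizer on $\pa B_R(0)$, and continuous dependence to show that a tangential touching would prevent the path from reaching $B_\eps(0)$ in the time allowed by Lemma \ref{prop del minimo}($iv$).
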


\begin{rem}
The statement motivates us to say that an element $l \in \Z_2^N$ is a \emph{collision winding vector} if it satisfies the \eqref{105}.
Let us also observe that the case ($ii$-$b$) makes sense just for $N \geq 4$.
\end{rem}

The proof consists in an application of Theorem \ref{teorema 4.1}, at least for the cases ($i$), ($ii$-$a$), ($ii$-$b$). We will check that all its assumptions are satisfied in the next two subsections; in the latter one, we will also discuss the classification.
\medskip
We recall the
\begin{defin}
An \emph{ejection-collision solution} of an equation
\[
\ddot{x}(t)=\n V(x(t))
\]
is a continuous function $x: I\subset \R \to \R^2$ such that
\begin{itemize}
\item there exists a collision set $T_c(x)\subset I$ such that for every $t^* \in T_c(x)$ there holds $x(t^*)=c_k$ for some $k=1, \ldots, N$,
\item the restriction $x|_{I \setminus T_c(x)}$ is a classical solution of
\[
\ddot{x}(t)=\n V(x(t)),
\]
\item the energy is preserved trough collisions,
\item at a collision instant, the trajectory is reflected:
\[
x(t+t^*)=x(t^*-t) \qquad \forall t^* \in T_c(x), \forall t \in I \setminus T_c(x).
\]
\end{itemize}
\end{defin}

Before proceeding into the proof, we translate Theorem \ref{teo dinamica interna} in the language of partitions. To do this, we note that if $u \in \wh{K}_l$ is self-intersections-free then it separates the centres in two different groups, which are determined by the particular choice of $l \in \Z_2^N$; namely, a self-intersections-free path in a class $\wh{K}_l$ induces a partition of the centres in two sets. Since we are assuming \eqref{scelta di l 2}, these sets are both non-empty. Hence it is well-defined an application $\mathcal{A}:\{l \in\Z_2^N: l \text{ satisfies \eqref{scelta di l 2}}\} \to \mathcal{P}$ which associate to a winding vector
\[
l=(l_1,\ldots,l_N) \text{ with } \begin{cases}
                                  l_k\equiv 0 \mod 2 & k \in A_0 \subset \{1,\ldots,N\}\\
                                  l_k\equiv 1 \mod 2 & k \in A_1 \subset \{1,\ldots,N\}
                                  \end{cases}
\]
the partition
\[
\mathcal{A}(l):=\{\{c_k:l_k \in A_0\},\{c_k:l_k \in A_1\}\}.
\]
This map is surjective but non injective, since for each couple $l,\wt{l} \in \mathbb{Z}_2^N$ such that
\[
l_k \neq \wt{l}_k \mod 2 \qquad \forall k=1,\ldots,N,
\]
then $\mathcal{A}(l)=\mathcal{A}(\wt{l})$. \\
Now it is natural to define
\begin{gather*}
\wh{K}_{P_j}=\wh{K}_{P_j}^{p_1 p_2}([a,b]):= \left\lbrace u \in \wh{K}_l: l \in \mathcal{A}^{-1}(P_j) \text{ and $u$ is self-intersections-free}\right\rbrace ,\\
K_{P_j}=K_{P_j}^{p_1 p_2}([a,b]):=\left\lbrace u \in K_l: l \in \mathcal{A}^{-1}(P_j) \text{ and $u$ is self-intersections-free}\right\rbrace .
\end{gather*}
They are respectively the set of the paths which connect $p_1$ and $p_2$ dividing the centres according to the partitions $P_j$, and its closure in the weak topology of $H^1$. \\
From Theorem \ref{teo dinamica interna}, we obtain
\begin{corol}\label{dinamica interna partizioni}
Let $\eps_3$ be introduced in Theorem \ref{teo dinamica interna}. For every $\eps \in (0,\eps_3)$, $p_1,p_2 \in \pa B_R(0)$ and $P_j\in \mathcal{P}$, there exist $T>0$ and a solution $y_{P_j}(\cdot\,;p_1,p_2;\eps) \in K_{P_j}^{p_1 p_2}([0,T])$ of problem \eqref{PI}, which is a reparametrization of a local minimizer of the Maupertuis' functional $M_{-1}$ in $K_{P_j}^{p_1 p_2}([0,1])$. Moreover:
\begin{itemize}
\item[($i$)] if $\a \in (1,2)$ then $y_{P_j}$ is collisions-free and self-intersections-free.
\item[($ii$)] if $\a=1$ we have to distinguish among
\begin{itemize}
\item[$a$)] $p_1 \neq p_2$; then $y_{P_j}$ is collisions-free and self-intersections-free.
\item[$b$)] $p_1=p_2$ and $P_j \in \mathcal{P} \setminus \mathcal{P}_1$; then $y_{P_j}$ is collisions-free and self-intersections-free.
\item[$c$)] $p_1=p_2$ and $P_j \in \mathcal{P}_1$; then $y_{P_j}$ can be a collisions-free and self-intersections-free solution, or can be an ejection-collision solution, with a unique collision with $c_j$.
\end{itemize}
\end{itemize}
\end{corol}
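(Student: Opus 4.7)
The plan is to reduce Corollary \ref{dinamica interna partizioni} to Theorem \ref{teo dinamica interna} via the $2$-to-$1$ surjection $\mathcal{A}$ from winding vectors (satisfying \eqref{scelta di l 2}) onto $\mathcal{P}$. Given $P_j \in \mathcal{P}$, I would pick any $l \in \mathcal{A}^{-1}(P_j)$; since both blocks of $P_j$ are non-empty, $l$ satisfies \eqref{scelta di l 2}, and Theorem \ref{teo dinamica interna} applied to this $l$ supplies a reparametrization $y_l$ of a local minimizer of $M_{-1}$ on $K_l^{p_1 p_2}([0,1])$ solving \eqref{PI}.

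The classification is then a direct matching of hypotheses. In cases ($i$) and ($ii$-$a$), $y_l$ is collisions-free and self-intersections-free, hence $y_l \in \wh{K}_{P_j} \subset K_{P_j}$. In case ($ii$-$b$), the hypothesis $P_j \notin \mathcal{P}_1$ translates to both blocks of $P_j$ having at least two centres, which means any $l \in \mathcal{A}^{-1}(P_j)$ has at least two coordinates of each parity mod $2$: this is precisely the assumption under which case ($ii$-$b$) of Theorem \ref{teo dinamica interna} yields a collisions-free, self-intersections-free $y_l$. In case ($ii$-$c$), $P_j = Q_k \in \mathcal{P}_1$ forces $l$ to satisfy \eqref{105} with the distinguished index $k$, so Theorem \ref{teo dinamica interna}($ii$-$c$) gives $y_l$ either collisions-free and self-intersections-free, or an ejection-collision solution whose unique collision is with $c_k$, which is the distinguished centre of $Q_k$.

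The only genuinely delicate point is to verify that the ejection-collision solution belongs to $K_{P_j}$. I would approximate the reflected trajectory weakly in $H^1$ by a family of self-intersections-free non-colliding paths in $\wh{K}_l$ obtained by replacing the rebound at $c_k$ with a small detour around the centre in the orientation prescribed by $l$; as the detour amplitude tends to zero, these approximants converge to $y_l$, placing $y_l$ in the closure $K_{P_j}$. Since $K_{P_j} \subset K_l$, local minimality of $y_l$ in $K_l$ restricts automatically to local minimality in $K_{P_j}$, which concludes the proof. I expect this last approximation step, together with the check that the detours preserve the winding class $\mathcal{A}^{-1}(P_j)$, to be the only technical obstacle; the rest is bookkeeping translating winding vectors into partitions.
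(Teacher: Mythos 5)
Your proposal is correct and follows essentially the same route as the paper, which obtains the corollary directly from Theorem \ref{teo dinamica interna} through the correspondence $\mathcal{A}$ between winding vectors satisfying \eqref{scelta di l 2} and partitions, with exactly your case-matching ($P_j\notin\mathcal{P}_1$ $\leftrightarrow$ two indices of each parity, $P_j=Q_k\in\mathcal{P}_1$ $\leftrightarrow$ \eqref{105}). The one point you flag as delicate --- that the ejection-collision minimizer belongs to $K_{P_j}$ --- is already secured by the paper's construction, since that minimizer arises (Step 5 of Subsection \ref{assenza di collisioni}, cf.\ Proposition \ref{variazione dell'angolo limitata} and Remark \ref{LC convergenza}) as the uniform, hence weak-$H^1$, limit of the self-intersections-free, collision-free obstacle minimizers $u_n$, which play precisely the role of your small-detour approximants.
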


\subsection{Minimization inside $B_R(0)$}\label{minimizzazione}

Let us fix $l \in \mathbb{Z}_2^N$ satisfying \eqref{scelta di l 2}, and consider the restriction of the Maupertuis' functional $M_{-1}$ to the set $K_l$. In this subsection we are going to provide weak solutions of \eqref{PI} applying the direct method of the calculus of variations to $M_{-1}$. We fixed $[a,b]=[0,1]$, $p_1,p_2 \in \pa B_R(0)$; we will write $M$ and $L$ instead of $M_{-1}$ and $L_{-1}$, respectively.
\begin{rem}\label{l-eps}
In the statement of Theorem \ref{teo dinamica interna} the value $\eps_3$ depends neither on $p_1,p_2 \in \pa B_R(0)$, nor on $l \in \mathbb{Z}_2^N$, while here we fixed $l$ before finding $\eps_3$. Actually, once we found $\eps_3$, we will see that it is independent on $l$.
\end{rem}
The following statements are  by now standard results and can be proved by routine applications of Poincar\'e inequality, Fatou's lemma and weak compactness arguments (see, for instance, \cite{Vethesis,TeVe,BaFeTe}).

\begin{lemma}\label{lemma 1.4}
Assume \eqref{scelta di l 2} holds for  $l \in \Z_2^N$. There exists a constant $C > 0$ such that
\[
M(u) \geq C > 0 \qquad \forall u \in K_l.
\]
\end{lemma}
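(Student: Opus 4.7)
The plan is to exploit the product structure of the Maupertuis functional: for $u \in K_l$,
\[
M(u) = \frac{1}{2}\Bigl(\int_0^1 |\dot u|^2 \, dt\Bigr)\Bigl(\int_0^1 (V_\eps(u) - 1) \, dt\Bigr),
\]
and show that \emph{both} factors admit a strictly positive lower bound, independent of $u$.

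For the potential factor, I would use that $u \in K_l$ satisfies $|u(t)| \le R$ for all $t$, so $|u(t) - c_j| \le R + \eps$ for every centre $c_j$, and hence
\[
V_\eps(u(t)) \geq \frac{M}{\a (R+\eps)^\a}.
\]
By the choice of $R$ in \eqref{scelta di R} and the restriction $\eps < \eps_1/2$ made in Remark \ref{remark su R}, this bound is strictly larger than $1$, giving
\[
\int_0^1 (V_\eps(u) - 1)\, dt \;\geq\; \frac{M}{\a(R+\eps)^\a} - 1 \;=:\; C_1 > 0.
\]

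For the kinetic factor, the Cauchy--Schwarz inequality yields $\int_0^1 |\dot u|^2 \, dt \geq \bigl(\int_0^1 |\dot u|\, dt\bigr)^2 = \mathrm{length}(u)^2$, so it suffices to bound the length of $u$ below. This is the main (and really the only nontrivial) point: here I use the hypothesis \eqref{scelta di l 2} in an essential way. Since all centres lie in $B_\eps(0)$, any closed curve in $\R^2 \setminus \overline{B_\eps(0)}$ has the same winding number around each $c_j$. Hence the closed path $\Gamma$ obtained from $u$ by appending an arc of $\partial B_R(0)$ (as in the definition preceding $\wh{\mathfrak{H}}_l$) cannot stay outside $B_\eps(0)$: the parities of its winding numbers around the two centres $c_j,c_k$ given by \eqref{scelta di l 2} differ, so $\Gamma$ must enter $B_\eps(0)$. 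The closing arc lies on $\partial B_R(0)$ with $R > \eps$, so it does not enter $B_\eps(0)$; therefore $u$ itself enters $B_\eps(0)$. Consequently there is $t^* \in [0,1]$ with $|u(t^*)| \leq \eps$, and
\[
\mathrm{length}(u) \;\geq\; |p_1 - u(t^*)| + |u(t^*) - p_2| \;\geq\; (R-\eps) + (R-\eps) \;=\; 2(R-\eps),
\]
giving $\int_0^1 |\dot u|^2 \, dt \geq 4(R-\eps)^2$.

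Combining the two bounds yields
\[
M(u) \;\geq\; 2 (R-\eps)^2 \left(\frac{M}{\a(R+\eps)^\a} - 1\right) \;=:\; C > 0,
\]
a constant depending only on $\a$, $M$, $R$, $\eps$ (but not on $u$, $p_1$, $p_2$, or the specific $l$). The hard part is the topological argument that the winding hypothesis \eqref{scelta di l 2} forces $u$ to penetrate the small disc $B_\eps(0)$ containing the centres; everything else reduces to Cauchy--Schwarz and the quantitative choice of $R$ from Remark \ref{remark su R}.
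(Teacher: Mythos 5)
Your proof is correct, and it bounds the two factors of $M$ exactly as the paper does; the only real divergence is in how the kinetic factor is handled. The paper proves $\|\dot u\|_2\geq C$ by contradiction: a sequence with $\|\dot u_n\|_2\to 0$ is bounded in $H^1$, hence weakly convergent to some $v\in K_l$ (using that $K_l$ is weakly closed), which must be constant yet must cross $B_\eps(0)$ and so cover a distance at least $R-\eps$ --- a soft compactness argument that yields no explicit constant. You instead combine Cauchy--Schwarz, $\int_0^1|\dot u|^2\geq\mathrm{length}(u)^2$, with the same underlying topological fact (the winding hypothesis \eqref{scelta di l 2} forces $u$ to penetrate $B_\eps(0)$) to get the explicit bound $\|\dot u\|_2^2\geq 4(R-\eps)^2$, and hence an explicit $C$; this is more elementary and slightly stronger, and your spelled-out winding-number argument (constancy of the index on the connected set $B_\eps(0)$, the closing arc on $\partial B_R(0)$ staying clear of $B_\eps(0)$) is exactly the justification the paper leaves implicit. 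One small point worth a sentence in your write-up: $K_l$ also contains the collision classes $\mathfrak{Coll}_l$, for which the winding number about a collided centre is undefined and your index argument does not literally apply; but such paths pass through some $c_j\in B_\eps(0)$, so they enter $B_\eps(0)$ trivially and the length bound $\mathrm{length}(u)\geq 2(R-\eps)$ still holds.
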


\begin{proof}
If $u \in K_l$, for every $j = 1, \ldots, N$ and every $t \in [0,1]$, we have
\[
|u(t)-c_j| \leq R+\eps  \Rightarrow V_\eps(u(t)) \geq \frac{M}{\a(R+\eps)^\a}.
\]
We required $\eps < R < \left(\frac{M}{\a}\right)^\frac{1}{\a}-\eps$, so there exists $\l_1>0$ such that
\[
R=\left(\frac{M}{\a}\right)^\frac{1}{\a}-\eps-\l_1.
\]
Thus for every $t \in [0,1]$
\[
V_\eps(u(t))-1 \geq \frac{M}{\a \left(\left(\frac{M}{\a}\right)^\frac{1}{\a}-\l_1\right)} -1 =: C > 0 \Rightarrow \int_0^1 \left(V_\eps(u)-1\right) \geq C > 0.
\]
Therefore the proof will be complete when we show the existence of $C>0$ such that, for all $u \in K_l$, there holds
\beq\label{*11.5}
\|\dot{u}\|_2 \geq C.
\eeq
Assume not: then there exists $\left(u_n\right) \subset K_l$ such that $\| \dot{u}_n\|_2 \to 0$. In particular $(\|\dot{u}_n\|_2) \subset \R$ is bounded. The sequence $(\|u_n\|_2)$ is bounded, too:
\[
\int_0^1 |u_n(t)|^2\,dt \leq R^2.
\]
Then the sequence $(u_n)$ is bounded in $H^1$, and this implies that up to subsequence $u_n \wc v \in K_l$. Note that $v$ must be a constant function.
Thus $v(0)=p_1=v(1)=p_2$. It is sufficient to note that, thanks to \eqref{scelta di l 2}, $v$ has to cross the ball $B_\eps(0)$ and therefore performs at least a distance $R-\eps$ and cannot be constant.
\end{proof}

\begin{prop}\label{metodo diretto}
Let $p_1,p_2 \in \pa B_R(0)$ (it is admissible $p_1=p_2$). Let $l \in \Z_2^N$ satisfying \eqref{scelta di l 2}. Then there exists a minimum of $M$ on $K_l$ at a positive level.
\end{prop}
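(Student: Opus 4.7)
The plan is a direct-method minimization. Since $K_l$ is weakly closed by Proposition \ref{K_l debolmente chiuso} and $\inf_{K_l} M \geq C > 0$ by Lemma \ref{lemma 1.4}, the task reduces to exhibiting a minimizing sequence bounded in $H^1$ whose weak limit remains in $K_l$, together with proving sequential weak lower semi-continuity of $M$ on $K_l$.

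For the extraction of a weak limit, I would pick $(u_n) \subset K_l$ with $M(u_n) \to \inf_{K_l} M$. The pointwise constraint $|u_n(t)| \leq R$ forces $\|u_n\|_2 \leq R$. The choice of $R$ in Remark \ref{remark su R} yields
\[
V_\eps(y)-1 \geq \delta_0 := \frac{M}{\a(R+\eps)^\a}-1 > 0 \qquad \forall y \in B_R(0),
\]
hence
\[
\frac{\delta_0}{2}\,\|\dot u_n\|_2^2 \leq \frac{1}{2}\,\|\dot u_n\|_2^2 \int_0^1 \bigl(V_\eps(u_n)-1\bigr)\,dt = M(u_n),
\]
and the right-hand side is bounded along the minimizing sequence. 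Thus $(u_n)$ is bounded in $H^1$; up to a subsequence $u_n \wc u$ in $H^1$, and Proposition \ref{K_l debolmente chiuso} guarantees $u \in K_l$.

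For weak lower semi-continuity, the kinetic factor satisfies $\|\dot u\|_2^2 \leq \liminf_n \|\dot u_n\|_2^2$ by the standard $L^2$-norm property. Compact Sobolev embedding yields uniform convergence $u_n \to u$, hence $V_\eps(u_n)(t) \to V_\eps(u)(t)$ at every $t$ with $u(t) \notin \{c_1,\ldots,c_N\}$; the set of $t$ where $u(t)$ meets a centre is Lebesgue-null, since otherwise uniform convergence plus positivity of $V_\eps-1$ would force $\int_0^1(V_\eps(u_n)-1)\,dt \to \infty$, contradicting boundedness of $M(u_n)$ combined with the kinetic lower bound above. Because $V_\eps-1 \geq 0$ on $B_R(0)$, Fatou's lemma delivers
\[
\int_0^1 \bigl(V_\eps(u)-1\bigr)\,dt \leq \liminf_n \int_0^1 \bigl(V_\eps(u_n)-1\bigr)\,dt,
\]
and multiplying the two semi-continuity inequalities gives $M(u) \leq \liminf_n M(u_n) = \inf_{K_l} M$; thus $u$ realises the minimum, at a level $\geq C > 0$ by Lemma \ref{lemma 1.4}. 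The main subtlety lies in the singularities of $V_\eps$, since a priori the weak limit could spend positive time at a centre and wreck Fatou; what saves the argument is precisely the sign $V_\eps-1 \geq 0$ on $B_R(0)$, which both forbids this via the $H^1$-bound and makes Fatou applicable without excising a collision set. This is also the reason one must enlarge $\wh K_l$ to the weakly closed $K_l$ containing collision paths, the question of whether the resulting minimizer is actually collision-free being postponed to the next subsection.
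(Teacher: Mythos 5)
Your proof is correct and follows essentially the same route as the paper, which simply invokes the direct method together with Proposition \ref{K_l debolmente chiuso} and Lemma \ref{lemma 1.4} and calls the semicontinuity and coercivity arguments ``routine''; your coercivity estimate via $V_\eps-1\geq \delta_0>0$ on $B_R(0)$ and your Fatou-plus-uniform-convergence treatment of the potential term are exactly the intended (and, in an earlier draft of the paper, explicitly written) arguments.
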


\begin{proof}
Apply the direct method of the calculus of variations to the functional $M$ defined on $K_l$: use Proposition \ref{K_l debolmente chiuso} and Lemma \ref{lemma 1.4}, together with routine arguments of lower semi-continuity and coercivity.
\end{proof}

Let $l \in \Z_2^N$ satisfying \eqref{scelta di l 2} be fixed. If we show that the minimizer $u \in K_l$ is such that $|u(t)|<R$ for every $t \in (0,1)$, we can say that for every $\f \in \mathcal{C}_c^\infty([0,1],\R^2)$ there holds
\[
\left. \frac{d}{d\l} M(u+\l \f)\right|_{\l=0} = 0,
\]
so that $u$ is a critical point of $M$ at a positive level. In order to prove that $|u(t)|<R$, we follow the ideas in \cite{BaTeVe}; before proceeding, it is convenient recall a well known property of the solutions of the $\a$-Kepler's problem.

\begin{prop}\label{su Keplero}
Let  $\a \in [1,2)$ and let $x:(a,b) \subset \R \to \R^2$ be a collision solution for the $\a$-Kepler's problem with energy $h<0$:
\[
\lim_{t \to b^-} x(t)=0.
\]
Then the angular momentum $\mathfrak{C}_x$ of $x$ is $0$.
\end{prop}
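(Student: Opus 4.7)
The plan is to combine the two first integrals available for any central-force motion — conservation of angular momentum and conservation of energy — and then exploit the strict inequality $\alpha < 2$ to force the angular momentum to vanish.

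More precisely, since the $\alpha$-Kepler force $-M x/|x|^{\alpha+2}$ is central, the angular momentum $\mathfrak{C}_x(t) = r^2(t)|\dot\theta(t)|$ is constant along the classical solution on $(a,b)$; call this constant $C \geq 0$. Writing the energy relation in polar coordinates gives
\[
\frac{1}{2}\dot r(t)^2 \;=\; h \,+\, \frac{M}{\alpha\, r(t)^\alpha} \,-\, \frac{C^2}{2\, r(t)^2}, \qquad t\in(a,b).
\]
The left-hand side is nonnegative, so the same must hold for the right-hand side throughout the existence interval.

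Now suppose for contradiction that $C>0$. Because $\alpha\in[1,2)$, as $r\to 0^+$ the centrifugal term $C^2/(2 r^2)$ dominates the Newtonian-like term $M/(\alpha r^\alpha)$: indeed
\[
\frac{C^2/(2r^2)}{M/(\alpha r^\alpha)} \;=\; \frac{\alpha\, C^2}{2M}\, r^{\alpha-2} \;\longrightarrow\; +\infty \quad\text{as } r\to 0^+,
\]
and of course both dominate the constant $h$. Hence there exists $r_0>0$ such that for every $r\in(0,r_0)$ the right-hand side of the energy identity is strictly negative. On the other hand, the hypothesis $\lim_{t\to b^-} x(t)=0$ forces $r(t)\in(0,r_0)$ for $t$ sufficiently close to $b$, which contradicts $\dot r(t)^2\geq 0$. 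Therefore $C=0$, i.e.\ $\mathfrak{C}_x\equiv 0$.

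There is no substantive obstacle: the only point to verify carefully is the asymptotic comparison of the two singular terms, which is exactly where the assumption $\alpha<2$ enters (for $\alpha=2$ the two terms are of the same order and the argument breaks down, consistently with the well-known peculiarities of the inverse-cube force). The argument applies uniformly to the Coulomb case $\alpha=1$ and the strong-force range $\alpha\in(1,2)$.
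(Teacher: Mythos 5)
Your proof is correct and is essentially identical to the paper's: both write the energy in polar coordinates, observe that $\tfrac12\dot r^2=h+\tfrac{M}{\alpha r^\alpha}-\tfrac{\mathfrak{C}_x^2}{2r^2}\geq 0$, and derive a contradiction as $r\to 0^+$ when $\mathfrak{C}_x\neq 0$. You merely make explicit the comparison of the two singular terms (where $\alpha<2$ enters), which the paper leaves implicit.
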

\begin{proof}
In polar coordinates the energy is
\[
\frac{1}{2}\dot{r}^2(t)+\frac{\mathfrak{C}_x^2}{2r^2(t)}-\frac{M}{\a r^\a(t)}=h \qquad \forall t \in (a,b).
\]
In particular
\[
h-\frac{\mathfrak{C}_x^2}{2r^2(t)}+\frac{M}{\a r^\a(t)} \geq 0 \qquad \forall t \in (a,b),
\]
but if $\mathfrak{C}_x \neq 0$ then
\[
\lim_{t \to b^-} h-\frac{\mathfrak{C}_x^2}{2r^2(t)}+\frac{M}{\a r^\a(t)}=-\infty,
\]
a contradiction. Necessarily $\mathfrak{C}_x=0$.
\end{proof}

Let us term
\[
T_R(u):=\left\lbrace t \in [0,1]: |u(t)|=R\right\rbrace , \quad T_{R/2}^+(u):=\left\lbrace t \in [0,1]: |u(t)| > \frac{R}{2}\right\rbrace
\]
A connected component of $T_R(u)$ is an interval (possibly a single point) $[t_1,t_2]$ with $t_1 \leq t_2$. The complement $T_{R/2}^+(u) \setminus T_R(u)$ is the union of a finite or countable number of open intervals.

\begin{lemma}\label{prop del minimo}
A minimizer $u \in K_l$ of $M$ has the following properties:
\begin{itemize}
\item[($i$)] If $(a,b)$ is a connected component of $T_{R/2}^+(u) \setminus T_R(u)$, then $u|_{(a,b)}$ is of class $\mathcal{C}^2$ and is a solution of
\[
\o^2 \ddot{u}(t)=\n V_\eps(u(t)), \quad \text{where} \quad \o^2:= \frac{\int_0^1 \left(V_\eps(u)-1\right) }{\frac{1}{2}\int_0^1 |\dot{u}|^2}.
\]
\item[($ii$)] If $[t_1,t_2]$ is a connected component of $T_R(u)$, then $\t|_{(t_1,t_2)}$ is $\mathcal{C}^2$, strictly monotone, and solves
\beq\label{equazione per theta}
\ddot{\t}(t)=\frac{1}{R \o^2} \left\langle \n V_\eps(R e^{i \t(t)}),i e^{i\t(t)}\right\rangle .
\eeq
\item[($iii$)] If $[t_1, t_2]$ is a connected component of $T_R(u)$, and $(a,b)$ is a connected component of $T_{R/2}^+$ such that $[t_1,t_2] \subset (a,b)$, then one of the following situations occurs:
\begin{itemize}
\item[$a$)] $t_1<t_2$ and $u \in \mathcal{C}^1\left((a,b)\right)$,
\item[$b$)] $t_1=t_2$ and $u \in \mathcal{C}^1\left((a,b)\right)$,
\item[$c$)] $t_1=t_2$ and $\dot{u}(t_1^-) \neq \dot{u}(t_1^+)$; in such a case $u$ undergoes a radial reflection, i.e.
\[
\dot{r}(t_1^-) = -\dot{r}(t_1^+) \neq 0 \quad \text{and} \quad \dot{\t}(t_1^-)=\dot{\t}(t_1^+).
\]
\end{itemize}
\item[($iv$)] There exist $\eps_3 >0$ and $\tau >0$ such that, if $\eps \in (0,\eps_3)$, for $t_3$ and $t_4$ satisfying
\[
|u(t_3)| = R, \quad |u(t_4)|=\frac{R}{2}, \quad \frac{R}{2}<|u(t)|<R \quad \forall t \in \begin{cases} (t_3,t_4) & \text{if $t_3<t_4$}\\ (t_4,t_3) & \text{if $t_3>t_4$} \end{cases},
\]
there holds $|t_4-t_3|\leq \tau$.
\end{itemize}
\end{lemma}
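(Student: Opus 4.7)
The plan is to exploit the unilateral nature of the constraint $|u|\le R$ in $K_l$ together with the equivalence between minimizers of $M$ at positive level and minimizers of the Jacobi length $L$ (Propositions~\ref{minimo M->L}--\ref{minimo L->M}). On a connected component $(a,b)$ of $T_{R/2}^+(u)\setminus T_R(u)$ one has $R/2<|u(t)|<R$ strictly, so the pointwise constraint is inactive and, since $|u|>R/2>\eps>\max_j|c_j'|$, no collision occurs. Every variation $u+\l\f$ with $\f\in\mathcal{C}_c^\infty((a,b),\R^2)$ therefore stays in $K_l$ for small $|\l|$, and the minimality of $u$ yields $dM(u)[\f]=0$. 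As $V_\eps$ is smooth along $u|_{(a,b)}$, standard elliptic bootstrap upgrades $u$ to $\mathcal{C}^2((a,b))$ and gives the Euler--Lagrange equation $\o^2\ddot u=\n V_\eps(u)$ with $\o$ as in \eqref{omega}, proving (i).

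For (ii), on a connected component $[t_1,t_2]\subset T_R(u)$ write $u(t)=Re^{i\t(t)}$. The variations preserving $|u_\l|=R$ are the tangential ones $u_\l(t)=Re^{i(\t(t)+\l\f(t))}$ with $\f\in\mathcal{C}_c^\infty((t_1,t_2))$. Expanding $|\dot u_\l|^2=R^2(\dot\t+\l\dot\f)^2$ and $V_\eps(u_\l)$ in $\l$, and dividing the vanishing $\l$-derivative at $\l=0$ by the positive factor $R^2\int_0^1(V_\eps(u)-1)\,dt$ (positive by Lemma~\ref{lemma 1.4}), one obtains the weak form of
\[
\ddot\t=\frac{1}{R\o^2}\left\langle\n V_\eps(Re^{i\t}),ie^{i\t}\right\rangle,
\]
and bootstrap gives $\t\in\mathcal{C}^2$. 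If strict monotonicity failed there would exist $s_1<s_2\in(t_1,t_2)$ with $u(s_1)=u(s_2)\in\pa B_R(0)$; excising the loop $u|_{[s_1,s_2]}$, which lies on $\pa B_R$ and so crosses no centre (preserving the winding-parity class), would produce a competitor in $K_l$ of strictly smaller Jacobi length, contradicting the $L$-minimality of $u$ supplied by Proposition~\ref{minimo M->L}.

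For (iii), Lemma~\ref{conservazione dell'energia} gives $|\dot u|^2=2(V_\eps(u)-1)/\o^2$ a.e., and since $V_\eps(u)$ is continuous on $(a,b)$ this forces $|\dot u(t_i^+)|=|\dot u(t_i^-)|$ at each endpoint $t_i\in\{t_1,t_2\}$. Localized tangential variations near $t_i$ that preserve the constraint to first order force continuity of the tangential component of $\dot u$ there (a unilateral Karush--Kuhn--Tucker condition); only the radial component $\dot r$ may jump, and only in a sign-reversing way compatible with $|u|\le R$ on both sides. If $t_1<t_2$ the equation in (ii) on $[t_1,t_2]$ forces $\dot r(t_1^+)=\dot r(t_2^-)=0$, and the magnitude equality above gives $\dot r=0$ also on the free side, yielding case (a); if $t_1=t_2$ and no sign change of $\dot r$ occurs, $u\in\mathcal{C}^1$ near $t_1$ (case (b)); otherwise we record case (c), with $\dot\t$ preserved and $\dot r$ reflected.

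For (iv), choose $\eps_3$ so small that $R+\eps_3<(M/\a)^{1/\a}$ and $\eps_3<R/4$; then in the annulus $R/2\le|u|\le R$ we have $|u-c_j'|\ge R/4$ and
\[
1<V_{\min}:=\frac{M}{\a(R+\eps_3)^\a}\le V_\eps(u)\le\frac{M}{\a(R/4)^\a}=:V_{\max}.
\]
The sharpness of \eqref{5 Maupertuis} at minimizers (after reparametrization) together with $|\dot u|^2=2(V_\eps(u)-1)/\o^2$ gives
\[
\int_{t_3}^{t_4}\sqrt{|\dot u|^2(V_\eps(u)-1)}\,dt=\frac{\sqrt 2}{\o}\int_{t_3}^{t_4}(V_\eps(u)-1)\,dt\ge\frac{\sqrt 2\,(V_{\min}-1)}{\o}(t_4-t_3),
\]
while the left-hand side is bounded by $L(u)=\sqrt{2M(u)}\le\sqrt{2M^+}$, where $M^+$ is produced by evaluating $M$ on a fixed piecewise-linear test path threading $B_\eps(0)$ with the right winding parity, kept at distance $\ge\eps_3/2$ from each centre (uniform in $p_1,p_2\in\pa B_R(0)$, in $l$ satisfying \eqref{scelta di l 2}, and in $\eps<\eps_3$). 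Combining with the uniform lower bound $\|\dot u\|_2\ge C_1>0$ of Lemma~\ref{lemma 1.4}, we obtain $\o=2\sqrt{M(u)}/\|\dot u\|_2^2\le 2\sqrt{M^+}/C_1^2=:\o^+$, and hence $t_4-t_3\le \o^+\sqrt{M^+}/(V_{\min}-1)=:\tau$, independent of the data. The hard part is this last step: engineering the test competitor and the uniform bound on $\o$ for every admissible $(p_1,p_2,l,\eps)$; once these are in place, the time control is immediate.
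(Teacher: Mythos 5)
Your treatment of ($i$)--($iii$) is essentially sound. For ($i$) it coincides with the paper's argument. For ($ii$) you take a longer route than necessary: the paper does not compute a tangential first variation, but simply observes that on a component of $T_R(u)$ one has $\dot r\equiv 0$, so the pointwise energy identity of Lemma \ref{conservazione dell'energia} reduces to $R^2\dot\t^2=\tfrac{2}{\o^2}\left(V_\eps(Re^{i\t})-1\right)>0$; this gives $\dot\t\neq 0$ (hence strict monotonicity) at once, and \eqref{equazione per theta} by differentiating the identity in $t$. Your excision argument for monotonicity also works (the excised loop lies on $\pa B_R(0)$, has zero winding about every centre, and has positive Jacobi length since $|\dot u|>0$ a.e.), but it is redundant. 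For ($iii$) the paper outsources the trichotomy to Proposition 3.6 of \cite{BaTeVe}; your KKT-type sketch is the right idea.

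The genuine gap is in ($iv$), where you also depart most from the paper. Your scheme $t_4-t_3\le \o\, L(u)/\bigl(\sqrt2\,(V_{\min}-1)\bigr)$ is fine in principle, but it hinges on a uniform upper bound $M(u)\le M^+$, and the competitor you propose to produce $M^+$ does not exist: any $v\in K_l$ with $l$ satisfying \eqref{scelta di l 2} must separate two nonempty groups of centres, all of which lie in $B_\eps(0)$ and hence are pairwise within distance $2\eps$; such a path cannot stay at distance $\ge\eps_3/2$ from every centre once $\eps<\eps_3/4$. If instead you let the test path approach the centres and parametrize it with constant Euclidean speed, then $\int_0^1(V_\eps(v)-1)$ scales like $\eps^{1-\a}$ (or $\log(1/\eps)$ for $\a=1$) and $M^+$ blows up as $\eps\to 0^+$, destroying the uniformity of $\tau$ required by the statement. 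The bound can be repaired --- parametrize the test path proportionally to Jacobi arclength so that $M(v)=\tfrac12 L(v)^2$ with $L(v)=\int\sqrt{V_\eps-1}\,ds$, which stays bounded as $\eps\to0$ precisely because $\a<2$ makes $|x-c_j|^{-\a/2}$ integrable along a segment --- but this step is the actual content and is missing. The paper avoids the issue altogether by a different mechanism: it bounds the \emph{radial} speed from below on the annulus, writing $\dot r^2=\tfrac{2}{\o^2}(V_\eps-1)-\mathfrak{C}_u^2/r^2$ and showing that the angular momentum term is $o(\eps)$ uniformly on $[t_3,t_4]$ (because the constraint \eqref{scelta di l 2} forces the trajectory through $B_\eps(0)$, which degenerates to the collision case of Proposition \ref{su Keplero} as $\eps\to0^+$); then $|t_4-t_3|\le\int_{R/2}^{R}dr/|\dot r|\le\tau$. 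You would need either to import that angular-momentum estimate or to supply the corrected uniform bound on $M(u)$ before ($iv$) can be considered proved.
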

\begin{proof}
($i$) It is a consequence of the minimality of $u$ with respect to variations of $u_n$ with compact support in $(a,b)$. These variation are compatible with the constraint $\{x \in \R^2:R/2 \leq |x| \leq R\}$.\\
($ii$) For $t \in (t_1,t_2)$, the energy integral becomes
\beq\label{energia in polari}
R^2 \dot{\t}^2(t)=-\frac{2}{\o^2}+\frac{2}{\o^2} V_\eps\left(R e^{i \t(t)}\right) \qquad \forall t \in [t_1,t_2];
\eeq
as a consequence $\t \in \mathcal{C}^2((t_1,t_2))$. Since $V_\eps\left(R\exp\{i \t\}\right) > 1$ for every $\t \in [0,2\pi]$, equation \eqref{energia in polari} implies that $\dot{\t}(t) \neq 0$ for every $t \in (t_1,t_2)$. To get \eqref{equazione per theta} it is sufficient to differentiate \eqref{energia in polari} with respect to $t$.\\
($iii$) See the proof of Proposition 3.6 in \cite{BaTeVe}.\\
($iv$) In polar coordinates the energy integral reads
\beq\label{di1}
\frac{1}{2}\dot{r}^2(t)+ \frac{\mathfrak{C}_u^2(t)}{2 r^2(t)} - \frac{V_\eps\left(r(t) e^{i \t(t)}\right)}{\o^2}=-\frac{1}{\o^2} \qquad \forall t \in [0,1].
\eeq
It results
\[
\frac{2}{\o^2}\left(-1+V_\eps\left(r(t)e^{i \t(t)}\right)\right)- \frac{\mathfrak{C}_u^2(t)}{r^2(t)} \geq \frac{2}{\o^2}\left(-1+ \frac{M}{\a (R+\eps)^\a}\right) + o(\eps);
\]
The last equality is due to the fact that if we makes $\eps \to 0^+$, $V_\eps$ uniformly converges in the circular crown $R/2\leq |x|\leq R$ to the potential of the Kepler's problem with homogeneity degree $-\a$. In particular, since $u$ has to pass through the ball $B_\eps(0)$, which collapses in the origin, the angular momentum of $u$ uniformly converges over the interval $[t_3,t_4]$ (or $[t_4,t_3]$) to $0$ (see Proposition \ref{su Keplero}).
\medskip
From \eqref{di1} we infer
\[
|t_4-t_3| \leq \int_{R/2}^R \frac{dr}{\sqrt{\frac{2}{\o^2}\left(-1+ \frac{M}{\a(R+\eps)^\a}\right)+o(\eps)}}.
\]
Since $-1+\frac{M}{\a(R+\eps)^\a}>0$ for every $\eps \in (0,\eps_1/2)$, there exists $0<\eps_3\leq \eps_1/2$ such that
\[
\frac{2}{\o^2}\left(-1+ \frac{M}{\a (R+\eps)^\a}\right) + o(\eps) \geq C>0 \qquad \forall \eps \in (0,\eps_3),
\]
and
\[
|t_4-t_3| \leq \frac{R}{2 C}=: \tau. \qedhere
\]
\end{proof}

\begin{rem}
From the proof of point ($iv$) it follows that $\eps_3$ does not depend on $p_1,p_2 \in \pa B_R(0)$ or on $l \in \mathbb{Z}_2^N$, cf. Remark \ref{l-eps}.
\end{rem}

\begin{lemma}\label{u C^1}
Let $u \in K_l$ be the minimizer found in Proposition \ref{metodo diretto}, let $(a,b)$ be a connected component of $T_{R/2}^+$. Then $u \in \mathcal{C}^1((a,b)$.
\end{lemma}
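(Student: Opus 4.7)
The only obstruction to the conclusion of Lemma \ref{u C^1} is case (c) of Lemma \ref{prop del minimo}(iii), in which $u$ hits $\pa B_R(0)$ tangentially and undergoes a radial reflection $\dot r(t_1^-)=-\dot r(t_1^+)\neq 0$; cases (a) and (b) already grant $u\in\mathcal{C}^1((a,b))$. The plan is to derive a contradiction with the minimality of $u$ from Proposition \ref{metodo diretto} by exhibiting a one-sided, inward radial variation supported near the hypothetical reflection whose first variation of $M$ is strictly negative.

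Suppose that (c) occurs at some $t_1\in(a,b)$ with, up to time reversal, $\dot r(t_1^-)>0$, and set $\nu:=u(t_1)/R$ for the outer unit normal to $\pa B_R(0)$. Pick $\phi\in \mathcal{C}^\infty_c((t_1-\delta,t_1+\delta))$ with $\phi\geq 0$ and $\phi(t_1)>0$, and consider the one-sided family $u_\lambda:=u-\lambda\phi\nu$, $\lambda\in[0,\lambda_0)$. The first step is to check admissibility in $K_l$: for $\delta$ small, on the support of $\phi$ the outer radial direction at $u(t)$ is uniformly close to $\nu$, so the perturbation is genuinely inward and $|u_\lambda|\leq R$ throughout $[0,1]$; since moreover $\mathrm{dist}(u,\{c_j\})\geq R/2-\eps>0$ on $\mathrm{supp}\,\phi$, for $\lambda<\lambda_0$ small $u_\lambda$ is collisions-free and the winding vector is preserved by continuity. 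Thus $u_\lambda\in K_l$.

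The second step is the first-variation computation. Differentiating $M(u_\lambda)=\tfrac12\int_0^1|\dot u_\lambda|^2\int_0^1(V_\eps(u_\lambda)-1)$ at $\lambda=0^+$ gives
\[
\frac{d}{d\lambda}M(u_\lambda)\Big|_{0^+}=-\!\int_{t_1-\delta}^{t_1+\delta}\!\!\dot\phi\,\langle\dot u,\nu\rangle\,dt\cdot\!\int_0^1\!(V_\eps(u)-1)\,dt-\frac{1}{2}\!\int_0^1\!|\dot u|^2\,dt\cdot\!\int_{t_1-\delta}^{t_1+\delta}\!\!\phi\,\langle\nabla V_\eps(u),\nu\rangle\,dt.
\]
Integrating by parts on $(t_1-\delta,t_1)$ and $(t_1,t_1+\delta)$ separately, and using $\omega^2\ddot u=\nabla V_\eps(u)$ on each smooth piece (Lemma \ref{prop del minimo}(i)), the jump at $t_1$ produces $2\phi(t_1)\dot r(t_1^-)$, while the interior contributions are killed by the Maupertuis identity $\omega^2=\int_0^1(V_\eps(u)-1)/\bigl(\tfrac12\int_0^1|\dot u|^2\bigr)$. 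One is left with
\[
\frac{d}{d\lambda}M(u_\lambda)\Big|_{0^+}=-2\,\phi(t_1)\,\dot r(t_1^-)\int_0^1(V_\eps(u)-1)\,dt,
\]
which is strictly negative by Lemma \ref{lemma 1.4} and $\dot r(t_1^-)>0$, contradicting the minimality of $u$.

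The main subtlety is the admissibility check for $u_\lambda$: the one-sided nature of the variation is forced by the obstacle $|u|\leq R$, and the radial direction $\nu$ must be frozen at the reflection point rather than moved with $u(t)$, so the statement that the perturbation keeps $u_\lambda$ inside $B_R(0)$ relies on the smallness of $\delta$ together with the non-degeneracy of the transversal approach to $\pa B_R(0)$. Once this is secured, the cancellation in the first variation is automatic from the Euler--Lagrange equation holding on each smooth piece and from the Maupertuis normalization of $\omega$.
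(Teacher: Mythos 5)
Your proof is correct, but it follows a genuinely different route from the paper's. The paper works with the Jacobi length $L$ and invokes the Riemannian machinery it set up earlier: around the hypothetical corner $u(t_1)$ one takes a totally normal neighbourhood, replaces the arc of $u$ between two points $u(t_*),u(t_{**})$ on its boundary by the unique minimizing geodesic, and concludes from the regularity of geodesics that the cornered curve cannot be $L$-minimal. You instead stay with the Maupertuis functional and compute the one-sided first variation along an explicitly inward perturbation $u_\lambda=u-\lambda\phi\nu$; the Euler--Lagrange equation on each smooth piece together with the identity $\o^2=\int(V_\eps(u)-1)/\bigl(\tfrac12\int|\dot u|^2\bigr)$ kills the interior terms, and the corner leaves the strictly negative jump $-2\phi(t_1)\dot r(t_1^-)\int_0^1(V_\eps(u)-1)$. (Note that in case $c$) the condition $\dot r(t_1^\pm)\neq 0$ forces $t_1$ to be isolated in $T_R(u)$, so the equation of motion does hold on both sides of $t_1$ for $\delta$ small --- worth saying explicitly.) Your argument is more computational but buys something the paper's version leaves implicit: since the variation is inward and frozen along the normal $\nu$ at the contact point, admissibility in $K_l$ --- in particular the obstacle constraint $|u_\lambda|\leq R$ --- is verified directly, whereas the geodesic shortcut of the paper must additionally be checked not to leave the ball $B_R(0)$. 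The paper's approach, in exchange, is shorter and reuses the normal-neighbourhood facts needed elsewhere (e.g.\ in Step 2 of the proof of Theorem \ref{teorema principale 2}).
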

\begin{proof}
If $u \in K_l$ is a minimizer of $M$, we show that situation $c)$ of point ($iii$) of previous lemma cannot occur. Recall that $u$ is a minimizer also for $L$, which is the length with respect to the Jacobi's metric. If the situation $c)$ occurred, we could consider a totally normal neighbourhood $U$ of the point $u(t_1)$ such that
\[
\exists t_*,t_{**} \in (a,b): u(t_*), u(t_{**}) \in u((a,b)) \cap \pa U.
\]
If we connect $u(t_*)$ with $u(t_{**})$ with a minimizing arc for $L\left([t_*,t_{**}];\cdot\right)$, we get a uniquely determined geodesic segment $\gamma$ lying in $U$. In particular $\g$ is regular, so that cannot coincide with $u|_{(t_*,t_{**})}$. Then, the curve
\[
\wt{u}(t):= \begin{cases}
            u(t) & t \in [0,1] \setminus [t_*,t_{**}] \\
            \g(t)& t \in [t_*,t_{**}]
            \end{cases}
            \]
would be an element of $K_l$ with $L(\gamma)<L(u)$, a contradiction.
\end{proof}

\begin{prop}\label{|u|<R}
If $u \in K_l$ is the minimizer found in Proposition \ref{metodo diretto}, then
\[
|u(t)|<R \quad \forall t \in (0,1).
\]
\end{prop}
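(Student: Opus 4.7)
The plan is to argue by contradiction: suppose some $\bar t \in (0,1)$ satisfies $|u(\bar t)| = R$. Since $u$ is $\mathcal{C}^1$ near $\bar t$ by Lemma~\ref{u C^1} and $|u|^2 \leq R^2$ attains at $\bar t$ an interior maximum along the path, the first-order condition yields $u(\bar t) \cdot \dot u(\bar t) = 0$, i.e.\ $\dot u(\bar t)$ is tangent to $\partial B_R(0)$. I would then distinguish the case in which the connected component $[t_1,t_2]$ of $T_R(u)$ containing $\bar t$ is a single point (isolated kiss) from the case $t_1 < t_2$, where $u$ traces a genuine arc on $\partial B_R(0)$.

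In both cases I would use a surgery argument based on the Jacobi length. Since $u$ also minimizes $L_{-1}$ on $K_l$ (Proposition~\ref{minimo M->L}), and length minimization localizes (Proposition~\ref{localizzazione dei minimi_M}), I would select a window $[\bar t - \delta, \bar t + \delta]$ so small that $u(\bar t \pm \delta)$ both lie strictly inside $B_R(0)$ and both belong to a strongly convex totally normal neighbourhood of $u(\bar t)$ in the Jacobi manifold. Such a neighbourhood exists because $u(\bar t) \in \partial B_R(0)$ is, by Remark~\ref{remark su R}, an interior point of the Hill region for $\eps$ small. There is then a unique smooth minimizing Jacobi geodesic $\gamma$ joining $u(\bar t - \delta)$ to $u(\bar t + \delta)$, depending continuously on its endpoints; shrinking $\delta$ once more keeps $\gamma$ inside $B_R(0)$. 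Replacing $u|_{[\bar t - \delta, \bar t + \delta]}$ by $\gamma$ yields a competitor $\tilde u \in K_l$ with $L_{-1}(\tilde u) \leq L_{-1}(u)$, and via Proposition~\ref{minimo L->M} a strict inequality would contradict minimality of $u$ for $M_{-1}$.

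It remains to exclude the possibility $u|_{[\bar t - \delta, \bar t + \delta]} = \gamma$. If it held, $u$ would be a smooth solution of the unconstrained Maupertuis equation $\omega^2 \ddot u = \nabla V_\eps(u)$ on this window, along which $|u|^2$ attains the interior maximum $R^2$ at $\bar t$. Writing $\frac{d^2}{dt^2}|u|^2\big|_{\bar t} = 2(|\dot u(\bar t)|^2 + \omega^{-2} u(\bar t) \cdot \nabla V_\eps(u(\bar t)))$ and eliminating $|\dot u(\bar t)|^2$ through the energy identity $\tfrac{1}{2}|\dot u|^2 = \omega^{-2}(V_\eps(u) - 1)$, the requirement that this quantity be $\leq 0$ becomes a rigid algebraic identity between $V_\eps$ and $u \cdot \nabla V_\eps$ at $u(\bar t) \in \partial B_R(0)$. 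For the specific choice~\eqref{scelta di R} of $R$ this identity collapses, in the Kepler limit, to the circular-orbit balance; writing $V_\eps = M/(\alpha|\cdot|^\alpha) + W_\eps$ one checks that the residual condition reduces to $2 W_\eps(u(\bar t)) + u(\bar t)\cdot \nabla W_\eps(u(\bar t)) \leq 0$, which is violated for $\eps > 0$ and a generic position of $u(\bar t)$ on $\partial B_R(0)$ since the centres break the radial symmetry. In the arc case, the same identity would have to hold along the entire arc, which is even more clearly incompatible with the $\theta$-dependence introduced by the off-origin centres.

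The hard part is precisely this last step: the choice of $R$ in~\eqref{scelta di R} is engineered so that $\partial B_R(0)$ is the circular orbit of the pure $\alpha$-Kepler problem and hence a closed Jacobi geodesic in the unperturbed limit, making the Kepler-level argument degenerate. The whole contradiction therefore rests on the quantitative use of the symmetry-breaking perturbation $\mathcal W_\eps$, together with the fact that the winding constraint~\eqref{scelta di l 2} forces $u$ to actually visit the interior region $B_\eps(0)$, preventing any degenerate degeneration of the competitor onto the circular orbit.
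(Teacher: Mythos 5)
Your strategy diverges from the paper's and, as written, does not close. The decisive step is the last one, and it fails for the reason you yourself flag: with $R$ chosen as in \eqref{scelta di R}, the second-derivative test at an interior touching point is \emph{exactly} degenerate at the Kepler level. Writing $\frac{d^2}{dt^2}|u|^2=\frac{2}{\o^2}\left[2\left(V_\eps(u)-1\right)+u\cdot\n V_\eps(u)\right]$ and substituting $V_\eps=\frac{M}{\a|\cdot|^\a}+W_\eps$, the unperturbed bracket vanishes identically on $\pa B_R(0)$, so the sign of the whole expression is that of $2W_\eps(u(\bar t))+u(\bar t)\cdot\n W_\eps(u(\bar t))$. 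You assert this is positive ``for a generic position of $u(\bar t)$'', but the contradiction must be produced at the actual touching point, wherever it happens to be, and this $O(\eps)$ quantity has no definite sign on $\pa B_R(0)$ (already for a single displaced centre it changes sign as one moves around the circle). A local second-order test therefore cannot work. There is also a secondary gap in the surgery step: the minimizing Jacobi geodesic $\g$ joining $u(\bar t-\d)$ to $u(\bar t+\d)$ need not stay in $\overline{B_R(0)}$ --- the minimizer may stick to the obstacle precisely because the free geodesic wants to leave the ball --- so $\wt{u}$ is not automatically an admissible competitor in $K_l$.

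The paper's proof replaces your local test by a global-in-time argument that survives the degeneracy. By Lemma \ref{u C^1} the velocity at the touching time is tangential, and by the energy relation its magnitude is, up to a small error, that of the circular $\a$-Kepler orbit of radius $R$; since outside $B_{R/2}(0)$ the field is a small $\mathcal{C}^1$ perturbation of the Kepler field, continuous dependence forces the trajectory to shadow the circular orbit and hence to remain near $\pa B_R(0)$ for an arbitrarily long time as $\eps\to 0$. This contradicts the uniform transit-time bound of Lemma \ref{prop del minimo}($iv$) combined with the winding constraint \eqref{scelta di l 2}, which forces $u$ to cross $B_{R/2}(0)$ within a bounded time after touching $\pa B_R(0)$. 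If you want to salvage your outline, this quantitative shadowing --- not the sign of $W_\eps$ --- is the missing ingredient.
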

\begin{proof}
Let $[t_1, t_2]$ be a connected component of $T_R(u)$, let $(a,b)$ be a connected component of $T_{R/2}^+$ such that $[t_1,t_2] \subset (a,b)$. Let us consider $y(t):=u(\o t)$. Since $y \in \mathcal{C}^1\left((a/\o,b/\o)\right)$, it can lean against the circle $\left\lbrace y \in \R^2: |y|=R\right\rbrace$ with tangential velocity, and for every $\nu>0$ there exists $t_5>t_2$ (or $t_5<t_1$, and in this case the following inequality has to be changed in obvious way) such that
\[
\left|y\left(\frac{t_5}{\o}\right)-Re^{i \t(t_2/\o)}\right|< \nu \quad \text{and} \quad \left|\dot{y}\left(\frac{t_5}{\o}\right)- R \dot{\t}\left(\frac{t_2}{\o}\right)i e^{i \t(t_2/\o)}\right| < \nu.
\]
Thus
\begin{itemize}
\item $R$ is the radius of the circular solution of energy $-1$ for the Kepler's problem with homogeneity degree $-\a$,
\item outside $B_{R/2}(0)$, the $N$-centres problem can be seen as a small perturbation of the $\a$-Kepler's one: $V_\eps(y) = \frac{M}{\a |y|^\a}+ W_\eps(x)$.
\item $y$ is a solution of
\[
\begin{cases}
\ddot{y}(t)=\n V(y(t))  \\
y\left(\frac{t_5}{\o}\right) \simeq Re^{i \t(t_2/\o)}, & \dot{y}\left(\frac{t_5}{\o}\right) \simeq R \dot{\t}\left(\frac{t_2}{\o}\right)i e^{i \t(t_2/\o)}.
\end{cases}
\]
in an open neighbourhood of $t_5/\o$; these initial data are "more or less" the initial data of a circular solution.
\item the theorem of continuous dependence of the solutions with respect to the vector field and the initial data holds true for our problem outside $B_{R/2}(0)$.
\end{itemize}
Therefore $y$ cannot enter (or exit from) the ball $B_{R/2}(0)$ in a finite time, in contradiction with the choice of $l$ and point ($iv$) of Lemma \ref{prop del minimo}.
\end{proof}

\subsection{Classification of the minimizers}\label{assenza di collisioni}
So far, we obtained a set of extremals of the Maupertuis' functional $M$ at positive levels. In order to obtain classical solutions to \eqref{PI}, we need to show that these minimizers are collisions-free. In case $\a=1$ this fact isn't always true; however, we will be able to describe the behaviour of the collision-solutions, proving the classification in Theorem \ref{teo dinamica interna}.

The proof is by contradiction and requires several steps. In what follows we consider $l \in \Z_2^N$ satisfying \eqref{scelta di l 2} and fixed. We assume that the minimizer $u \in K_l$ has at least one collision; developing a blow-up analysis at the collision, we will reach a contradiction in the case $\a \in (1,2)$; the case $\a=1$ will be more difficult and will be treated separately by Levi-Civita regularization.

\paragraph{\textbf{Step 1)}} We prove that \emph{$u$ has no self-intersections at points different from the centres and that the set of collision times of $u$}
\[
T_c(u):=\left\lbrace t \in [0,1]:u(t)=c_j \text{ for some $j \in \left\lbrace 1,\ldots,N\right\rbrace $}\right\rbrace
\]
\emph{is finite}.

Since $M(u)<+\infty$, it follows immediately that $T_c(u)$ is a closed set of null measure. Hence $[0,1] \setminus T_c(u)$ is the union of a finite or countable number of open intervals. We recall that the energy of $u$ is constant and equal to $-1/\o^2$, see Lemma \ref{conservazione dell'energia}, where the value $\o$ has been already defined in Lemma \ref{prop del minimo}.

\begin{lemma}
If the interval $(a,b)$ is a connected component of $[0,1] \setminus T_c(u)$, then $u|_{(a,b)} \in \mathcal{C}^2((a,b),\R^2)$ and
\beq\label{equazione per u}
\o^2 \ddot{u}(t) = \n V_\eps(u(t)) \qquad \forall t \in (a,b).
\eeq
\end{lemma}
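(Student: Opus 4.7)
The plan is to establish this via the standard first-variation argument on the Maupertuis' functional, exploiting that on the connected component $(a,b)$ the minimizer is bounded away from both the centres and the constraint $\partial B_R(0)$, so that compactly supported variations are admissible.

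First I would observe that on any compact sub-interval $[\alpha,\beta] \subset (a,b)$, two things hold. By definition of $T_c(u)$ the restriction $u|_{[\alpha,\beta]}$ avoids all centres; by continuity of $u$ it is bounded away from them, say $\min_{t \in [\alpha,\beta], j} |u(t)-c_j| \geq \rho > 0$. Moreover, by Proposition \ref{|u|<R}, $|u(t)|<R$ on $(0,1)$, hence $\max_{t \in [\alpha,\beta]} |u(t)| \leq R - \eta$ for some $\eta>0$. Consequently, for every $\varphi \in \mathcal{C}_c^\infty((a,b),\R^2)$ with $\operatorname{supp}\varphi \subset [\alpha,\beta]$, there exists $\lambda_0>0$ such that for all $|\lambda|<\lambda_0$ the perturbed curve $u_\lambda := u+\lambda \varphi$ still satisfies $|u_\lambda(t)|\leq R$ on $[0,1]$ and never hits any centre on $[\alpha,\beta]$. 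Away from $[\alpha,\beta]$ the curve coincides with $u$, so the winding numbers with respect to each centre are unchanged. Thus $u_\lambda \in K_l$.

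Next I would compute the first variation. Setting
\[
A := \int_0^1 |\dot{u}|^2\,dt, \qquad B:= \int_0^1 \bigl(V_\eps(u)-1\bigr)\,dt,
\]
so that $\omega^2 = 2B/A$, the minimality of $u$ yields
\[
0 = \left.\frac{d}{d\lambda} M(u_\lambda)\right|_{\lambda=0} = B \int_a^b \langle \dot{u},\dot{\varphi}\rangle\,dt + \frac{A}{2}\int_a^b \langle \nabla V_\eps(u),\varphi\rangle\,dt.
\]
Dividing by $A/2>0$ (recall $M(u)>0$ by Lemma \ref{lemma 1.4}) gives the weak identity
\[
\omega^2 \int_a^b \langle \dot{u},\dot{\varphi}\rangle\,dt = \int_a^b \langle \nabla V_\eps(u),\varphi\rangle\,dt, \qquad \forall\,\varphi \in \mathcal{C}_c^\infty((a,b),\R^2),
\]
which is the distributional form of $\omega^2 \ddot{u}=\nabla V_\eps(u)$ on $(a,b)$.

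Finally I would bootstrap regularity: since $u \in H^1 \hookrightarrow \mathcal{C}^0$ and $u$ avoids the centres on $(a,b)$, the composition $\nabla V_\eps \circ u$ is continuous on $(a,b)$; the weak equation then forces $\ddot{u}\in \mathcal{C}^0((a,b))$, so $u \in \mathcal{C}^2((a,b),\R^2)$, and the ODE is satisfied classically. The only step requiring any care is verifying admissibility of the variations $u_\lambda$ in $K_l$ — in particular that the winding number constraints and the bound $|u_\lambda|\leq R$ are preserved — but both follow immediately from the compact support of $\varphi$ and Proposition \ref{|u|<R}, so there is no real obstacle here.
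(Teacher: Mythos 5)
Your proof is correct and follows essentially the same route as the paper, which disposes of this lemma by invoking the first-variation argument of Lemma \ref{prop del minimo}($i$): admissibility of compactly supported variations (using Proposition \ref{|u|<R} and the stability of the winding parities under small perturbations away from the centres), the resulting weak Euler--Lagrange identity, and a regularity bootstrap. The only blemish is a sign slip in your displayed weak identity: dividing $0=B\int\langle\dot u,\dot\varphi\rangle+\tfrac{A}{2}\int\langle\nabla V_\eps(u),\varphi\rangle$ by $A/2$ yields $\o^2\int\langle\dot u,\dot\varphi\rangle=-\int\langle\nabla V_\eps(u),\varphi\rangle$, which (after integration by parts) is the distributional form of $\o^2\ddot u=\n V_\eps(u)$.
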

\begin{proof}
It is enough to repeat the proof of point ($i$) of Lemma \ref{prop del minimo}.
\end{proof}

\begin{prop}\label{no autointersezioni}
The minimizer $u$ parametrizes a path without self-intersections at points different from the centres $c_j$ ($j=1,\ldots,N$).
\end{prop}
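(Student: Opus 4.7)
The plan is to argue by contradiction, assuming that $u(s_1)=u(s_2)=q$ for some $0<s_1<s_2<1$ with $q \notin \{c_1,\ldots,c_N\}$. Since $q$ is not a centre and $|q|<R$ by Proposition \ref{|u|<R}, the Jacobi metric $g_{ij}(x)=(V_\eps(x)-1)\delta_{ij}$ is smooth and non-degenerate in a neighbourhood of $q$, so I may fix a strongly convex, totally normal neighbourhood $U \subset B_R(0)$ of $q$. Combining Lemma \ref{conservazione dell'energia} with the arguments behind Propositions \ref{minimo M->L}--\ref{minimo L->M}, $u$ is equivalently a minimizer of the Jacobi length $L$ on $K_l$, and I will work with $L$ because of its additivity and reparametrization invariance.

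The first competitor I would introduce is the time-reversed curve
$$
\wt u(t):= \begin{cases} u(t) & t \in [0,s_1]\cup[s_2,1],\\ u(s_1+s_2-t) & t \in (s_1,s_2). \end{cases}
$$
Arc-length is invariant under reversal, so $L(\wt u)=L(u)$; reversing the middle loop alters the winding number around each $c_k$ by an even integer, hence preserves the parities defining $K_l$, and in particular the bound $|\wt u|\le R$ is inherited from $u$. Thus $\wt u \in K_l$ is itself a minimizer. Since $s_1,s_2 \notin T_c(u)$, a direct computation of lateral derivatives shows that $\wt u$ is $\mathcal{C}^1$ at $s_1$ if and only if $\dot u(s_1)=-\dot u(s_2)$, and the junction at $s_2$ is governed by the same identity. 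If this identity fails, $\wt u$ has a genuine corner at $s_1$: pick $t_-<s_1<t_+$ with $\wt u([t_-,t_+])\subset U$ and replace $\wt u|_{[t_-,t_+]}$ by the unique minimizing Jacobi-geodesic joining $\wt u(t_-)$ to $\wt u(t_+)$ inside $U$. By strong convexity of $U$ this geodesic is regular and therefore strictly shorter than the broken arc it replaces, while the modification stays inside $U\subset B_R(0)$ and hence keeps the path in $K_l$; this contradicts the minimality of $\wt u$.

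It remains to rule out the tangential case $\dot u(s_1)=-\dot u(s_2)$. Here both $t\mapsto u(s_1+t)$ and $t\mapsto u(s_2-t)$ solve \eqref{equazione per u} with the common Cauchy datum $(q,\dot u(s_1))$ at $t=0$, so local uniqueness for the ODE (valid away from the centres, and extended symmetrically through any intermediate collision by the reflection property of ejection-collision solutions in the case $\a=1$) forces $u(s_1+t)=u(s_2-t)$ for all $t\in[0,s_2-s_1]$. Thus $u|_{[s_1,s_2]}$ is a self-retracing brake arc whose winding number around every $c_k$ vanishes. Excising it produces
$$
u^*(t):=\begin{cases} u(t) & t\in[0,s_1],\\ u(t+s_2-s_1) & t\in[s_1,1-(s_2-s_1)], \end{cases}
$$
which, suitably reparametrized on $[0,1]$, still lies in $K_l$ (same endpoints, same winding parities, and $|u^*|\le R$), but satisfies $L(u^*)=L(u)-L(u|_{[s_1,s_2]})<L(u)$, the strict inequality following from the fact that $u|_{[s_1,s_2]}$ cannot be constant (by the energy identity and $V_\eps>1$ on $\overline{B_R(0)}$). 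This final contradiction closes the argument.

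I expect the tangential case $\dot u(s_1)=-\dot u(s_2)$ to be the main obstacle: the corner-smoothing trick gains nothing there, and one must instead combine ODE uniqueness with the topological observation that a retracing loop is null-homologous with respect to the centres in order to justify the shortening excision. The other delicate point is making the cut-out argument work uniformly in $\a\in[1,2)$, which is why the symmetric continuation through possible collisions in the Coulombic case has to be invoked when asserting the global identity $u(s_1+t)=u(s_2-t)$.
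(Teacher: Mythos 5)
Your main case --- reversing the loop between $s_1$ and $s_2$ to produce a competitor $\wt u\in K_l$ of the same Jacobi length, and then smoothing the resulting corner inside a strongly convex neighbourhood to contradict minimality --- is exactly the paper's argument (the paper phrases the contradiction as: the reversed path is again a minimizer, yet fails to be $\mathcal{C}^1$ on a collision-free interval containing $t_*$, so it cannot be a classical solution of \eqref{equazione per u} there). You are also right that the corner is present precisely when $\dot u(s_1)\neq -\dot u(s_2)$; the paper's parenthetical ``unless $\dot{u}(t_*)=\dot{u}(t_{**})=0$'' glosses over the nonzero antiparallel case, so your instinct to isolate it is sound.

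However, your treatment of the tangential case $\dot u(s_1)=-\dot u(s_2)$ contains a genuine error, and in fact that case cannot be excluded in full generality. If the retraced loop contains no collision, the identity $u(s_1+t)=u(s_2-t)$ forces $\dot u\bigl(\tfrac{s_1+s_2}{2}\bigr)=0$, which already contradicts the energy relation $|\dot u|^2=\tfrac{2}{\o^2}\left(V_\eps(u)-1\right)>0$ on $\overline{B_R(0)}$ (Lemma \ref{conservazione dell'energia}); no excision is needed. If the loop does contain a collision --- which for $\a=1$ cannot be ruled out at this stage --- two things go wrong. First, extending $u(s_1+t)=u(s_2-t)$ past the collision instant by ``the reflection property of ejection-collision solutions'' is circular: that property is only established later (Proposition \ref{alpha=1}, via the Levi-Civita transform), and its proof uses the present proposition through Proposition \ref{variazione dell'angolo limitata}. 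Second, and more seriously, the excised path $u^*$ need not lie in $K_l$: the original $u$ belongs to $K_l$ through the collision class $\mathfrak{Coll}_l$, which imposes no parity condition around the collided centre $c_j$, whereas $u^*$ avoids $c_j$ and must then satisfy $\textrm{Ind}(u^*,c_j)\equiv l_j \bmod 2$ --- which fails, since a fully retraced arc has winding number zero around every centre. This is not a repairable technicality: for $\a=1$ and a collision winding vector, the ejection-collision minimizers of Theorem \ref{teo dinamica interna}($ii$-$c$) satisfy $u(t_0+t)=u(t_0-t)$ and hence pass twice through every non-centre point of their image. The proposition can therefore only exclude self-intersections with non-antiparallel velocities; that is exactly what the corner argument delivers, and all that the later angle-variation estimate requires.
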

\begin{proof}
Suppose by contradiction that $u$ has a self-intersection at a point $p \neq c_j$ for every $j$: $p=u(t_*)=u(t_{**})$, $t_*<t_{**}$. Let $(a,b)$ the connected component of $[0,1] \setminus T_c(u)$ containing $t_*$. We know that $u|_{(a,b)}$ is a classical solution of \eqref{equazione per u}, in particular it is of class $\mathcal{C}^2$.

\medskip
First we notice that, by the energy integral, $|\dot{u}(t)|>0$ for every $t$ such that $u(t)\in B_R(0)$, hence both $\dot{u}(t_*)$ and $\dot{u}(t_{**})$ are different from $0$. Let us define $v:[0,1] \to \R^2$ as follows
\[
v(t)=\begin{cases}
      u(t) & t \in [0,t_*] \cup (t_{**},1] ,\\
      u\left(\frac{t-t_*}{t_{**}-t_*} t_* +\left(1-\frac{t-t_*}{t_{**}-t_*}\right)t_{**}\right) & t \in (t_*,t_{**}].
      \end{cases}
\]
\begin{center}
\begin{tikzpicture}[>=stealth]
\draw (0,0) circle (1.5cm);
\draw (70:1.5cm) node[anchor=south]{$x_1$};
\draw (330:1.5cm) node[anchor=west]{$x_2$};
\draw[->] (70:1.5cm)..controls (0.2,0.6) and (0.4,0.8)..(0.1,0.2);
\draw[->] (0.1,0.2)..controls (-0.9,-0.3) and (-0.9,1)..(0.1,0.2);
\draw[->] (0.1,0.2)..controls (0.8,-0.3) and (0.5,0).. node[near end,above]{$u$} (330:1.5cm);
\filldraw[font=\footnotesize] (0.3,-0.3) circle (1pt) node[anchor=north]{$c_3$}
          (0.5,0.3) circle (1pt) node[anchor=south]{$c_2$}
          (0,0.5) circle (1pt) node[anchor=south]{$c_1$}
          (-0.3,-0.3) circle (1pt) node[anchor=north]{$c_4$}
          (-0.5,0.3) circle (1pt) node[anchor=south]{$c_5$};
\draw (135:1.5cm) node[anchor=south]{$R$};
\end{tikzpicture}
$\qquad$
\begin{tikzpicture}[>=stealth]
\draw (0,0) circle (1.5cm);
\draw (70:1.5cm) node[anchor=south]{$x_1$};
\draw (330:1.5cm) node[anchor=west]{$x_2$};
\draw[->] (70:1.5cm)..controls (0.2,0.6) and (0.4,0.8).. (0.1,0.2);
\draw[<-] (0.1,0.2)..controls (-0.9,-0.3) and (-0.9,1)..(0.1,0.2);
\draw[->] (0.1,0.2)..controls (0.8,-0.3) and (0.5,0).. node[near end,above]{$v$} (330:1.5cm);
\filldraw[font=\footnotesize] (0.3,-0.3) circle (1pt) node[anchor=north]{$c_3$}
          (0.5,0.3) circle (1pt) node[anchor=south]{$c_2$}
          (0,0.5) circle (1pt) node[anchor=south]{$c_1$}
          (-0.3,-0.3) circle (1pt) node[anchor=north]{$c_4$}
          (-0.5,0.3) circle (1pt) node[anchor=south]{$c_5$};
\draw (135:1.5cm) node[anchor=south]{$R$};
\end{tikzpicture}
\end{center}
The function $v$ parametrizes a path with $u([0,1])=v([0,1])$, but it goes along the loop connecting $u(t_*)$ and $u(t_{**})$ with the reversed orientation. The key observation is that this operation does not change the parity of the winding numbers with respect to the centres. Hence $v\in K_l$. Note that $v$ is also an extremal for $M$, since $M(u)=M(v)$.  On the other hand, it is trivially checked that, unless $\dot{u}(t_*)=\dot{u}(t_{**})=0$, $v$ isn't $\mathcal{C}^1$ at those instants. So we have a new minimizer of $M$ on $K_l$, which is collisions-free in an interval $(a,d)\ni t_*$, and hence here should be a classical solution of \eqref{equazione per u}; but this isn't possible since $v|_{(a,d)} \notin \mathcal{C}^1((a,d),\R^2)$.
\end{proof}

Coming back to the properties of $T_c(u)$, we state the following known result (see e.g. \cite{BaFeTe}).
\begin{lemma}\label{collisioni isolate}
If $u$ has a collision at an instant $t_0 \in [0,1]$, then $t_0$ is isolated in $T_c(u)$. In particular, the cardinality of $T_c(u)$ is finite.
\end{lemma}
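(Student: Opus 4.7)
The strategy combines a local Lagrange--Jacobi--type convexity estimate for $|u-c_j|^2$ near a collision with a structural argument on the closed set $T_c(u)$. First, I set up a localization: because $u$ is continuous and the centres $c_1,\ldots,c_N$ are pairwise distinct, there exists $\delta>0$ such that
\[
|u(t)-c_j|<\tfrac{1}{2}\min_{k\neq j}|c_k-c_j|\qquad \forall\, t\in(t_0-\delta,t_0+\delta),
\]
where $c_j:=u(t_0)$. In this neighbourhood every collision time is necessarily a collision with $c_j$. The set $T_c(u)$ is closed (by continuity of $u$ and finiteness of the centre set), and since $M(u)<+\infty$ gives $V_\epsilon(u)\in L^1([0,1])$, it has Lebesgue measure zero.

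Second, the convexity estimate. On every connected component $(\tau_1,\tau_2)$ of $[0,1]\setminus T_c(u)$, by the previous lemma $u$ is a classical solution of $\omega^2\ddot u=\nabla V_\epsilon(u)$ with energy $|\dot u|^2=2(V_\epsilon(u)-1)/\omega^2$. Splitting $V_\epsilon = \tfrac{m_j}{\alpha|u-c_j|^\alpha}+V_{nc}$, where $V_{nc}$ stays smooth and bounded on the neighbourhood above, and using $(u-c_j)\cdot\nabla(m_j/(\alpha|u-c_j|^\alpha))=-m_j/|u-c_j|^\alpha$, a direct computation yields
\[
\frac{d^2}{dt^2}\!\left(\frac{|u-c_j|^2}{2}\right) = |\dot u|^2 + \frac{(u-c_j)\cdot\nabla V_\epsilon(u)}{\omega^2} = \frac{(2-\alpha)\,m_j}{\alpha\,\omega^2\,|u-c_j|^\alpha} + O(1).
\]
Since $\alpha\in[1,2)$, the dominant term is positive and diverges as $u\to c_j$. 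Shrinking $\delta$ further, this second derivative is therefore strictly positive on every collision-free interval meeting $(t_0-\delta,t_0+\delta)$, so $|u-c_j|^2$ is strictly convex there.

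Third, the structural argument ruling out accumulation. Suppose, for contradiction, $t_0$ is a right-accumulation point of $T_c(u)$ (the left case is symmetric). The open set $(t_0,t_0+\delta)\setminus T_c(u)$ has positive measure (being the complement, in an interval of length $\delta$, of a null set) and decomposes into a disjoint union of open intervals. If every such component had $t_0$ or $t_0+\delta$ as an endpoint, at most two components would exist and the one starting at $t_0$ would be of the form $(t_0,c)$ with $c>t_0$, contradicting right-accumulation of collisions at $t_0$. Hence there exists a component $(a,b)$ with $t_0<a<b<t_0+\delta$. Both $a,b$ are boundary points of $T_c(u)$, so $a,b\in T_c(u)$, and by the localization $u(a)=u(b)=c_j$, i.e.\ $|u(a)-c_j|^2=|u(b)-c_j|^2=0$. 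Strict convexity of $|u-c_j|^2$ on $(a,b)$ together with the two vanishing boundary values forces $|u(t)-c_j|^2<0$ on $(a,b)$, which is absurd.

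Thus no point of $T_c(u)$ is an accumulation point, so $T_c(u)$ is a closed discrete subset of the compact interval $[0,1]$ and consequently finite. The main obstacle is carrying out the topological decomposition of $T_c(u)$ near an accumulation point in such a way that one always extracts a collision-free component with \emph{both} endpoints in $T_c(u)$ (and hence both mapped to $c_j$); the Lagrange--Jacobi computation itself is routine once the dominant $c_j$-singularity has been isolated, and the sign condition $\alpha<2$ is used precisely to make the quantity $\tfrac{d^2}{dt^2}(|u-c_j|^2)$ eventually positive.
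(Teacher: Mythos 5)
Your proof is correct and rests on the same key computation as the paper's: the modified Lagrange--Jacobi identity showing that $\ddot{(|u-c_j|^2)}$ is dominated near the collision by the positive singular term $\tfrac{(2-\alpha)m_j}{\alpha\,\omega^2|u-c_j|^\alpha}$, applied to a collision-free interval whose two endpoints both map to $c_j$. The only cosmetic difference is how the contradiction is extracted (you use convexity plus nonnegativity with vanishing endpoint values, the paper evaluates $\ddot{I}$ at an interior maximizer where it must be $\leq 0$), so the argument is essentially the paper's own.
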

\begin{proof}
Assume by contradiction that $t_0$ is an accumulation point in the set $T_c(u)$, with $u(t_0)=c_j$. By continuity, only collisions in $c_j$ can accumulate in $t_0$. In this case there exists a sequence of intervals $((a_n,b_n))$ with $(a_n,b_n) \subset [0,1]$, $a_n \to t_0$ and $b_n \to t_0$ as $n \to \infty$, $u(a_n)=c_j=u(b_n)$ for every $n$, and
\[
|u(t)-c_j|>0 \quad \forall t \in (a_n,b_n).
\]
On each of these intervals, since $u$ is close to $c_j$ (at least for $n$ sufficiently large),
\[
|u(t)-c_k| \geq C >0 \qquad \text{for every $k \in \left\lbrace 1,\ldots,N\right\rbrace $, $k \neq j$} .
\]
Let us set $I(t):= |u(t)-c_j|^2$. Since $t \mapsto u(t)$ is a classical solution of \eqref{equazione per u} for $t \in (a_n,b_n)$, by differentiating  twice $I(t)$ we obtain a modified Lagrange-Jacobi identity:
\begin{equation*}
\ddot{I}(t)=-\frac{4}{\o^2}+\frac{2}{\o^2}(2-\a)\frac{m_j}{\a |u(t)-c_j|^\a}
+\frac{2}{\o^2}\sum_{\substack{k=1 \\k \neq j}}^N \frac{m_k}{|u(t)-c_k|^\a}\left(\frac{2}{\a}-\frac{\left\langle u(t)-c_k,u(t)-c_j\right\rangle }{|u(t)-c_k|^2}\right).
\end{equation*}
Let $\xi_n \in (a_n,b_n)$ the maximizer of $I$ in $(a_n,b_n)$. It results $\ddot{I}(\xi_n) \leq 0$ for every $n$. Since in a neighbourhood of $t_0$ the second term in the expression of $\ddot{I}$ becomes arbitrarily large, while the other terms are bounded, we also get
\[
\lim_{n \to \infty} \ddot{I}(\xi_n)=+\infty,
\]
a contradiction. The collisions are isolated and, by compactness, the interval $[0,1]$ contains only a finite number of them.
\end{proof}

\begin{rem}\label{convessita'}
The previous proof shows that, \emph{if $u$ collides in $c_j$, in a sufficiently small neighbourhood of $c_j$ the function $I(t)=|u(t)-c_j|^2$ is strictly convex}.
\end{rem}

\paragraph{\textbf{Step 2})} We would pass from a global analysis of the minimizer $u$ to a local study in a neighbourhood of a collision. This is possible thanks to step 1: \emph{$u$ has an isolated collision at $t_0$ in a centre $c_j$}, $j \in \left\lbrace 1,\ldots,N\right\rbrace $. In particular there exist $c,d \in [0,1]$ such that
\begin{itemize}
\item $c<t_0<d$ and $t_0$ is the unique collision time in $[c,d]$,
\item the function $I$ is strictly convex in $[c,d]$.
\end{itemize}
Let us set $\bar{p}_1:=u(c)$, $\bar{p}_2=u(d)$. Since $u \in \mathcal{C}([c,d],\R^2)$ there exists $\mu > 0$ such that
\[
|u(t)-c_k| \geq 2\mu> 0 \quad \text{for every $t \in [c,d]$ and for every $k \in \left\lbrace 1,\ldots, N \right\rbrace \setminus \{ j\}$}.
\]
This motivates us to write
\beq\label{def di V_0}
V_\eps(y)=\frac{m_j}{\a|y-c_j|^\a}+ V_\eps^j(y), \quad \text{where} \quad V_\eps^j(y):= \sum_{\substack{k=1 \\ k \neq j}}^N\frac{m_k}{\a|y-c_k|^\a}.
\eeq
Indeed, in a neighbourhood $U_j$ of $c_j$ such that $\text{dist}(U_j,c_k) \geq  \mu$ for every $k$, the potential $V_\eps$ splits in a principal component due to the attraction of $c_j$, and a perturbation term $V_\eps^j$ due to the attraction of the other centres. Of course, for $x \in U_j$, $V_\eps^j$ is smooth and bounded.

We define
\begin{multline*}
\wh{\mathcal{K}}_l^{\bar p_1 \bar p_2}:= \left\lbrace v \in H^1\left([c,d],\R^2 \setminus \left\lbrace c_1, \ldots, c_N\right\rbrace \right): v(c)=\bar p_1, v(d)= \bar p_2, \right.\\
\text{the function } \Gamma_v(t):=\begin{cases} u(t) & t \in [0,c) \cup (d,1] \\ v(t) & t \in [c,d] \end{cases} \text{belongs to $K_l$} \big\},
\end{multline*}
and
\[
\mathcal{K}_l^{\bar p_1 \bar p_2}:= \wh{\mathcal{K}}_l^{\bar p_1 \bar p_2} \cup \left\lbrace v \in H^1([c,d],\R^2): v(c)=\bar p_1, v(d)=\bar p_2, \Gamma_v \in \mathfrak{Coll}_l \right\rbrace .
\]
The set $\mathcal{K}_l^{\bar p_1 \bar p_2}$ is weakly closed. We define the restriction of the Maupertuis' functional to $\mathcal{K}_l^{\bar p_1 \bar p_2}$ as
\[
M_l^{\bar p_1 \bar p_2}\;\colon\; \mathcal{K}_l^{\bar p_1 \bar p_2} \to \R \cup \left\lbrace +\infty\right\rbrace  \qquad M_l^{\bar p_1 \bar p_2}(u)=\frac{1}{2} \int_c^d |\dot{v}(t)|^2\,dt \int_c^d \left( V_\eps(v(t))-1\right)\,dt.
\]
It inherits the properties of weak lower semi-continuity and coercivity from $M$, then has a minimum on $\mathcal{K}_l^{\bar p_1 \bar p_2}$ at a positive level. Since $u$ is a minimizer of $M$ on $K_l$, then $u|_{[c,d]}$ is a minimizer of $M_l^{\bar p_1 \bar p_2}$ on $\mathcal{K}_l^{p_1 p_2}$ (see Proposition \ref{localizzazione dei minimi_M}).

\paragraph{\textbf{Step 3})} We introduce some more notation. For $\rho \geq 0$, we define
\[
d(\rho):= \min \left\lbrace M_l^{\bar p_1 \bar p_2}(v): v \in \mathcal{K}_l^{\bar p_1 \bar p_2}, \min_{t \in [c,d]} |v(t)-c_j|=\rho \right\rbrace .
\]
The value $d(0)$ is the minimum of $M_l^{\bar p_1 \bar p_2}$ on the elements of $\mathcal{K}_l^{\bar p_1 \bar p_2}$ which collide in $c_j$; hence $d(0)$ is achieved by $u|_{[c,d]}$.

\begin{lemma}\label{lemma 2.6}
The function $\rho \mapsto d(\rho)$ is continuous in $\rho=0$.
\end{lemma}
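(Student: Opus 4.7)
The plan is to prove continuity of $d$ at $\rho=0$ by establishing the two one-sided bounds $\limsup_{\rho\to 0^+}d(\rho)\le d(0)$ and $\liminf_{\rho\to 0^+}d(\rho)\ge d(0)$.

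For upper semicontinuity, I would start from the collision minimiser $u|_{[c,d]}$, whose (necessarily isolated, by Lemma \ref{collisioni isolate}) collision lies at time $t_0\in(c,d)$ in the centre $c_j$, and construct a family of competitors $u_\rho\in\mathcal{K}_l^{\bar p_1 \bar p_2}$ with $\min_{[c,d]}|u_\rho-c_j|=\rho$ by a purely local modification near $t_0$. Concretely, I would take $t_\rho^\pm$ to be the first and last times near $t_0$ at which $|u-c_j|=2\rho$, and replace $u|_{[t_\rho^-,t_\rho^+]}$ by a short arc contained in the annulus $\{\rho\le|y-c_j|\le 2\rho\}$, choosing on which side of $c_j$ to detour so that the resulting loop carries the prescribed winding parity $l_j$ around $c_j$. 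Working with the Jacobi length $L$, which is additive and tied to $M$ by Propositions \ref{minimo M->L}--\ref{minimo L->M} through the equality case of \eqref{5 Maupertuis}, the length element near $c_j$ behaves like $|y-c_j|^{-\a/2}$ in Euclidean arclength and is integrable for $\a\in[1,2)$; hence the Jacobi length of any arc lying inside $\{|y-c_j|\le 2\rho\}$ is controlled by $O(\rho^{1-\a/2})\to 0$. This yields $L(u_\rho)\to L(u)$ and therefore $M_l^{\bar p_1 \bar p_2}(u_\rho)\to d(0)$.

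For lower semicontinuity, I would pick $\rho_n\to 0^+$ and minimisers $v_n$ realising $d(\rho_n)$. The upper bound just obtained gives $M_l^{\bar p_1 \bar p_2}(v_n)\le d(0)+o(1)$, and a standard coercivity argument paralleling Lemma \ref{lemma 1.4} and the compactness part of Proposition \ref{metodo diretto} yields a uniform $H^1$ bound, so up to a subsequence $v_n\wc v^*$ weakly in $H^1$. Weak closedness of $\mathcal{K}_l^{\bar p_1 \bar p_2}$ places $v^*$ in that set, and the uniform convergence $v_n\to v^*$ forces $\min_{[c,d]}|v^*-c_j|=\lim\rho_n=0$, making $v^*$ a competitor for $d(0)$. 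Weak lower semicontinuity of $M_l^{\bar p_1 \bar p_2}$ then closes the chain $d(0)\le M_l^{\bar p_1 \bar p_2}(v^*)\le\liminf M_l^{\bar p_1 \bar p_2}(v_n)=\liminf d(\rho_n)$.

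The delicate step is the quantitative estimate in the upper-semicontinuity construction: one has to verify that the Jacobi-length cost of the detour around $c_j$ at distance $\rho$ is genuinely $o(1)$, with no compensating blow-up in the kinetic part of the Maupertuis functional once one passes back from $L$ to $M$. Both reduce exactly to the integrability of $r^{-\a/2}$ near $r=0$, which holds precisely because $\a<2$. The winding-parity constraint around $c_j$ is automatically handled by the freedom to choose the side of the detour, and everything else rests on standard weak compactness and semicontinuity in $H^1$.
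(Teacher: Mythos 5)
Your proposal is correct and follows essentially the same scheme as the paper, whose ``proof'' here is only a pointer to Lemma 17 of \cite{TeVe}: upper semicontinuity by a local detour around the obstacle at distance $\rho$ from $c_j$, with Jacobi-length cost $O(\rho^{1-\a/2})\to 0$ thanks to the integrability of $r^{-\a/2}$ for $\a<2$, and lower semicontinuity by weak compactness plus weak lower semicontinuity of $M_l^{\bar p_1 \bar p_2}$. The adaptations you spell out (collision at $c_j$ rather than at the origin, Maupertuis/Jacobi functionals in place of the fixed-time action, and the choice of side of the detour to preserve the winding parity) are exactly the ones the authors declare are needed, so your write-up is a faithful reconstruction of the cited argument.
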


\begin{proof}
The proof is exactly the same of that of Lemma 17 in \cite{TeVe}. We have to take into account that in our case collisions occur in $c_j$ and not in $0$, and that we are dealing with the Maupertuis' functional and not with the action functional; nevertheless the same argument works.
\end{proof}

Now, given $0<\rho_1<\rho_2$, we set
\[
\mathcal{K}_l^{\bar p_1 \bar p_2}(\rho_1,\rho_2):=\left\lbrace v \in \mathcal{K}_l^{\bar p_1 \bar p_2}: \min_{t \in [c,d]} |v(t)-c_j| \in [\r_1,\r_2]\right\rbrace .
\]
It is a weakly closed subset of $\mathcal{K}_l^{\bar p_1 \bar p_2}$, so the restriction of $M_l^{\bar p_1 \bar p_2}$ to $\mathcal{K}_l^{p_1 p_2}(\r_1,\r_2)$ has a minimum that we denote as
\[
m(\r_1,\r_2):= \min_{v \in \mathcal{K}_l^{\bar p_1 \bar p_2}(\r_1,\r_2)} M_l^{\bar p_1 \bar p_2}(v).
\]
We also set
\[
\mathcal{M}_{\r_1 \r_2} :=\left\lbrace v \in \mathcal{K}_l^{\bar p_1 \bar p_2}(\r_1,\r_2): M_l^{\bar p_1 \bar p_2}(v)=m(\r_1,\r_2) \text{ and}
 \min_{t \in [c,d]} |v(t)-c_j| <\r_2\right\rbrace .
\]

In this step we aim at proving the following result.

\begin{prop}\label{teorema 2.7}
There exists $\bar{\r}>0$ such that for $\r_1$, $\r_2: 0<\r_1<\r_2 \leq \bar{\r}$ implies $\mathcal{M}_{\r_1 \r_2} = \emptyset$.
\end{prop}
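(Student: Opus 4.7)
The plan is to argue by contradiction. Assume there exists $0 < \rho_n^1 < \rho_n^2 \to 0$ and $v_n \in \mathcal{M}_{\rho_n^1 \rho_n^2}$, and set $\sigma_n := \min_{[c,d]}|v_n - c_j|$, attained at $t_n^*$, so that $\sigma_n \in [\rho_n^1, \rho_n^2)$ and in particular $\sigma_n \to 0^+$. The decisive point is that $\sigma_n < \rho_n^2$: the outer constraint is strictly inactive, so $v_n$ is a free extremal of $M_l^{\bar p_1 \bar p_2}$ under all variations supported where $|v_n - c_j| > \rho_n^1$. Consequently $v_n$ is a classical collision-free solution of $\omega_n^2 \ddot v = \nabla V_\varepsilon(v)$ with constant energy $-1/\omega_n^2$ on every open interval where $|v_n - c_j|$ is strictly above $\rho_n^1$. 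By Lemma \ref{lemma 1.4} and the upper bound $M_l^{\bar p_1 \bar p_2}(v_n) \leq d(0)$, the speed factors $\omega_n^2$ defined in \eqref{omega} stay uniformly bounded away from $0$ and $+\infty$.

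Next I perform a blow-up near $(t_n^*, c_j)$. Since the other centres remain at distance $\geq 2\mu$, on the maximal open interval $(\alpha_n, \beta_n) \ni t_n^*$ on which $|v_n - c_j| < \mu/2$ the potential reads $V_\varepsilon = m_j/(\alpha |y - c_j|^\alpha) + V_\varepsilon^j(y)$ with $V_\varepsilon^j$ smooth and bounded, so on $(\alpha_n,\beta_n)$ the trajectory is a smooth perturbation of an $\alpha$-Kepler arc centred at $c_j$. Rescale by the $\alpha$-homogeneity,
\[
w_n(s) := \sigma_n^{-1}\bigl(v_n(t_n^* + \sigma_n^{(\alpha+2)/2} s) - c_j\bigr),
\]
so that $w_n$ satisfies $\omega_n^2 \ddot w = -m_j w/|w|^{\alpha+2}$ up to an error $O(\sigma_n^{\alpha+1})$ at rescaled energy $-\sigma_n^\alpha/\omega_n^2 \to 0$, with $|w_n(0)| = 1 = \min|w_n|$. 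A subsequence converges locally uniformly in $\mathcal C^1$ to a zero-energy $\alpha$-Kepler orbit $w_\infty$ attaining its perihelion at distance $1$ from the origin; since $\alpha < 2$, such orbits escape to infinity in both time directions along well-defined asymptotic directions. Pulled back, for any $\tau_n$ with $\sigma_n \ll \tau_n \to 0$ the trajectory $v_n$ crosses $\partial B_{\tau_n}(c_j)$ at two times $s_n^- < t_n^* < s_n^+$ within real time $O(\tau_n^{(\alpha+2)/2})$ of $t_n^*$.

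The desired contradiction comes from a competitor construction in the Jacobi metric. Choose $\tau_n$ with $\rho_n^2 < \tau_n \ll 1$ and replace the arc $v_n|_{[s_n^-, s_n^+]}$ by an arc $\tilde\gamma_n$ running along $\partial B_{\tau_n}(c_j)$ from $v_n(s_n^-)$ to $v_n(s_n^+)$. Because the blow-up controls the relative position of the two crossing points on $\partial B_{\tau_n}(c_j)$, the modification preserves the homotopy class in $\R^2 \setminus \{c_k\}_{k=1}^N$ and hence the winding vector $l$, so $\tilde v_n \in \mathcal{K}_l^{\bar p_1 \bar p_2}(\rho_n^1, \rho_n^2)$. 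The Sundman-type asymptotics $|v_n(t) - c_j| \sim C|t - t_n^*|^{2/(\alpha+2)}$ inherited from $w_\infty$, together with the explicit scaling of the Jacobi density $\sqrt{V_\varepsilon - 1} \sim \sqrt{m_j/\alpha}\,|y-c_j|^{-\alpha/2}$, yields
\[
L_{-1}\bigl(v_n|_{[s_n^-, s_n^+]}\bigr) - L_{-1}(\tilde\gamma_n) \geq c(\alpha)\, \tau_n^{1-\alpha/2}\,(1 + o(1)) > 0
\]
for $n$ large. By Propositions \ref{minimo M->L}--\ref{minimo L->M} this strict inequality contradicts the Maupertuis-minimality of $v_n$, so $\mathcal{M}_{\rho_n^1 \rho_n^2}$ is empty below a threshold $\bar\rho$.

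The principal obstacle lies precisely in the quantitative comparison in the displayed inequality, and it is here that the two regimes of Theorem \ref{teo dinamica interna} already diverge. For $\alpha \in (1,2)$ the constant $c(\alpha)$ is strictly positive for every admissible Kepler limit $w_\infty$, including rectilinear ones, so the construction goes through uniformly. For $\alpha = 1$, rectilinear Coulombic collision-ejection arcs saturate the estimate, and ruling them out requires a separate treatment by Levi-Civita regularisation (to be carried out in the sequel of the paper); this is exactly the reason why case $(ii\text{-}c)$ of Theorem \ref{teo dinamica interna} cannot in general be excluded. Once Proposition \ref{teorema 2.7} is established, the continuity of $d$ at $0$ from Lemma \ref{lemma 2.6}, together with the inequality $d(\rho) \leq m(\rho_1, \rho_2)$ valid whenever $\rho \leq \rho_1$, will force $d(\rho) < d(0)$ for small $\rho > 0$, contradicting the colliding character of $u$ and yielding the classification of the minimizers.
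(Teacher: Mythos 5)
Your setup (contradiction, blow-up at scale $\sigma_n$ with time rescaled by $\sigma_n^{(\alpha+2)/2}$, convergence to a zero-energy $\alpha$-Kepler arc with perihelion $1$) coincides with the paper's, but the step where you derive the contradiction — replacing the deep excursion by an arc of $\partial B_{\tau_n}(c_j)$ and claiming a strict length gain — does not work, and the sign of your displayed inequality is in fact reversed. The Jacobi density near $c_j$ is $\sqrt{V_\eps-1}\sim\sqrt{m_j/\alpha}\,r^{-\alpha/2}$, so the excursion from radius $\tau_n$ down to perihelion $\sigma_n\ll\tau_n$ and back has Jacobi length $\frac{4}{2-\alpha}\sqrt{m_j/\alpha}\;\tau_n^{1-\alpha/2}(1+o(1))$ (the radial part dominates; the angular contribution is $O(\sigma_n^{1-\alpha/2})$), while a detour along $\partial B_{\tau_n}(c_j)$ sweeping an angle $\Theta$ costs $\Theta\sqrt{m_j/\alpha}\;\tau_n^{1-\alpha/2}(1+o(1))$. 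The blow-up limit sweeps a total angle $2\pi/(2-\alpha)+(\phi^+-\phi^-)\ge 2\pi/(2-\alpha)$ between the two crossings of $\partial B_{\tau_n}(c_j)$, so a competitor that preserves the winding vector $l$ must take $\Theta=\frac{2\pi}{2-\alpha}(1+o(1))$, and since $2\pi>4$ it is \emph{longer} than the excursion: near-collision arcs are length-efficient, which is precisely why collision minimizers are hard to exclude. If instead you take the short arc ($\Theta\le\pi$), the closed loop formed by the excursion and the reversed detour has nonzero winding number about $c_j$, so the replacement changes the parity of $\mathrm{Ind}(\cdot,c_j)$ and the competitor leaves $\mathcal{K}_l^{\bar p_1\bar p_2}$; your assertion that "the modification preserves the homotopy class" is exactly what fails. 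Either way no admissible shorter competitor is produced.

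The paper's actual mechanism is topological rather than metric: minimality forces $u_n$ to be free of self-intersections (by the orientation-reversal trick, Proposition \ref{no autointersezioni} and Proposition \ref{variazione dell'angolo limitata}), which caps the total variation of the angle $\phi_n$ around $c_j$ by $2\pi$; passing to the blow-up limit gives $\phi(+\infty)-\phi(-\infty)\le 2\pi$, while the zero-energy Kepler computation gives $\phi(+\infty)-\phi(-\infty)\ge \frac{2\pi}{2-\alpha}$, and these are incompatible exactly when $\alpha>1$. (For $\alpha=1$ the two bounds coincide, which is why the Coulomb case needs the separate Levi-Civita analysis — your diagnosis of that dichotomy is correct, though the obstruction is the angle count, not a saturated length estimate.) Two further small gaps: when $\sigma_n=\rho_n^1$ the inner constraint may be active on a whole interval $[t_n^-,t_n^+]$, where the equation of motion fails and one needs the obstacle analysis of Lemma \ref{proprietà di u_n} and Proposition \ref{prop 2.12} to control the angular momentum and the riding time before the blow-up; and the nonvanishing of the limit angular momentum (hence the perihelion at distance $1$) should be derived from the tangency of $\dot u_n$ on the obstacle rather than assumed.
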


\begin{rem}
The proposition states that if we force the functions to go very close to $c_j$, i.e.
\[
\min_{t \in [c,d]} |v(t)-c_j|< \bar{\r},
\]
then the minima $m(\r_1, \r_2)$ are achieved by elements of $\mathcal{K}_l^{\bar p_1 \bar p_2}(\r_1,\r_2)$ which stay as far as possible from $c_j$.
\end{rem}

Assume by contradiction that the statement is not true. Then there existed two sequences $(\r_n)$, $(\bar{\r}_n)$ such that
\begin{gather}
0<\r_n<\bar{\r}_n \quad \forall n, \qquad \r_n \to 0, \bar{\r}_n \to 0, \quad \text{for $n \to \infty$}, \notag\\
\forall n  \ \exists u_n \in \mathcal{K}_l^{\bar p_1 \bar p_2}: \min_{t \in [c,d]} |u_n(t)-c_j|=\rho_n, \label{successione assurda} \\
 M_l^{\bar p_1 \bar p_2}(u_n)=m(\r_n,\bar{\r}_n)=d(\rho_n). \notag
\end{gather}
We can assume also that for every $n \in \mathbb{N}$
\[
\max \left\{ \inf_{y \in \pa B_{\r_n}(c_j)} |\bar p_1-y|,  \inf_{y \in \pa B_{\r_n}(c_j)} |\bar p_2-y|\right\} > 0.
\]
Thanks to Lemma \ref{lemma 2.6}, $M_l^{\bar p_1 \bar p_2}(u_n) \to d(0)$ for $n \to \infty$, namely $(u_n)$ is a minimizing sequence in $\mathcal{K}_l^{\bar p_1 \bar p_2}$ (we are assuming that the minimum of $M_l^{\bar p_1 \bar p_2}$ is achieved over collisions). Since $M_l^{\bar p_1 \bar p_2}$ is coercive, $(u_n)$ is bounded and up to subsequence is weakly convergent to a function $\wt{u} \in \mathcal{K}_l^{\bar p_1 \bar p_2}$, which is a minimizer of $M_l^{\bar p_1 \bar p_2}$ (possibly different from $u|_{[c,d]}$) due to the weakly lower semi-continuity of $M_l^{\bar p_1 \bar p_2}$. We point out that $\wt{u}$ has to collide in $c_j$ and could collide in centres different from $c_j$ as well.\\
By Lemma \ref{conservazione dell'energia}, the energy of $\wt{u}$ is constant and equal to $-1/\wt{\o}^2$, where
\[
\wt{\o}^2 := \frac{\int_d^c V_\eps(\wt{u})-1}{\frac{1}{2}\int_c^d |\dot{\wt{u}}|^2}.
\]
Now, the same discussion of step 1 shows that the set $T_c(\wt{u})$ of collision times of $\wt{u}$ contains a finite number of elements, and we can assume that
\begin{itemize}
\item there exists a unique collision time $t_0$ in $[c,d]$ such that $\wt{u}(t_0)=c_j$,
\item there exists $\mu >0$ such that $|\wt{u}(t)-c_k| \geq 2\mu > 0$ for every $t \in [c,d]$, for every $k \neq j$.
\item the function $|\wt{u}(t)-c_j|^2$ is strictly convex in $[c,d]$.
\end{itemize}
Otherwise we can replace $[c,d]$ with a smaller interval.

\vspace{1 em}

The paths $u_n$ enjoy some common properties. Firstly, since the weak convergence in $H^1$ implies the uniform one, there exists $n_0 \in \mathbb{N}$ such that
\beq\label{uniforme limitatezza di V_0}
n \geq n_0 \Rightarrow |u_n(t)-c_k| \geq \mu \quad \forall t \in [c,d], \ \forall k \neq j.
\eeq
We rename as $(u_n)$ the sequence obtained by dropping the first $(n_0 -1)$-terms.
Let us set
\[
T_{\r_n} (u_n)=\left\{ t \in [c,d]: |u_n(t)-c_j|=\r_n\right\}.
\]
We also introduce the polar coordinates and the (absolute value of the) angular momentum of $u_n$ with respect to the centre $c_j$:
\begin{gather*}
u_n(t)= c_j+ w_n(t) e^{i \phi_n(t)}, \\
\mathfrak{C}_n^j(t):= \left| \left(u_n(t)-c_j\right) \land \dot{u}_n(t)\right|.
\end{gather*}
Here $w_n:[c,d] \to \R^+$ and $\phi_n(t):[c,d] \to \R$.

\begin{lemma}\label{proprietà di u_n}
For every $n \in \mathbb{N}$ the path $u_n$ has the following properties:
\begin{itemize}
\item[($i$)] If $(c',d')$ is a connected component of $[c,d] \setminus T_{\r_n}(u_n)$, then $u_n|_{(c',d')}$ is $\mathcal{C}^2$ and solves
\beq\label{omega_n}
\o_n^2 \ddot{u}_n(t)=\n V_\eps(u_n(t)) \quad \text{where} \quad \o_n^2:= \frac{\int_c^d \left(V_\eps(u_n)-1\right)}{\frac{1}{2} \int_c^d |\dot{u}_n|^2}.
\eeq
\item[($ii$)] For every $n \in \mathbb{N}$, there exist $t_n^- \leq t_n^+$ such that:
\begin{align*}
&|u_n(t)-c_j| > \rho_n \qquad t \in [c,t_n^-) \cup (t_n^+,d]\\
&|u_n(t)-c_j|=\r_n \qquad t \in [t_n^-,t_n^+],
\end{align*}
namely $T_{\r_n}(u_n)=[t_n^-,t_n^+]$.
\item[($iii$)] The sequence $(\o_n^2)$ is bounded above and uniformly bounded below by a strictly positive constant. Hence there exist a subsequence of $(u_n)$ (still denoted $(u_n)$) and $\Omega>0$ such that
\[
 \lim_{n \to \infty} \o_n=\Omega.
\]
\item[($iv$)] The energy of the function $u_n$ is constant in $[c,d]$:
\[
\frac{1}{2}|\dot{u}_n(t)|^2-\frac{V_\eps(u_n(t))}{\o_n^2}=-\frac{1}{\o_n^2} \qquad \forall t \in [c,d].
\]
Moreover, the sequence $(-1/\o_n^2)$ is bounded in $\R$.
\item[($v$)] The function $\phi_n|_{(t_n^-,t_n^+)}$ is $\mathcal{C}^2$, strictly monotone and is a solution of
\beq\label{equazione per phi_n}
\ddot{\phi}_n(t) = \frac{1}{\r_n \o_n^2} \left\langle \n V_\eps\left(c_j+\r_n e^{i \phi_n(t)}\right), i e^{i \phi_n(t)}\right\rangle .
\eeq
\item[($vi$)] A minimizer of $M_l^{\bar p_1 \bar p_2}$ in $\mathcal{K}_l^{\bar p_1 \bar p_2}$ is of class $\mathcal{C}^1$ in $[c,d]$. In particular, this holds true for $u_n$, for every $n$.
\end{itemize}
\end{lemma}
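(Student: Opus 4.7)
The plan is to establish the six items in turn, adapting to the constrained minimizer $u_n$ the arguments already proved for the global minimizer $u$ in Lemmas~\ref{prop del minimo} and~\ref{u C^1}; the constraint $\min_t|u_n(t)-c_j|=\r_n$ is preserved by all the variations considered, so essentially the same proofs apply, with the extra care that the constants be uniform in $n$. For (i), on each connected component $(c',d')$ of $[c,d]\setminus T_{\r_n}(u_n)$ the constraint is strictly inactive, hence $u_n|_{(c',d')}$ is extremal for $M_l^{\bar p_1\bar p_2}$ against $\mathcal{C}^\infty_c$-variations supported there; the Euler--Lagrange equation of Maupertuis together with the standard bootstrap give the $\mathcal{C}^2$-regularity and the ODE \eqref{omega_n}, exactly as in point ($i$) of Lemma~\ref{prop del minimo}. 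For (iv), reparametrizations $t\mapsto t+\l\f(t)$ with $\f\in\mathcal{C}^\infty_c((c,d))$ preserve the image $u_n([c,d])$ and hence the value of $\min_t|u_n-c_j|$, so $u_n$ remains extremal for $M_l^{\bar p_1\bar p_2}$ with respect to this family; the computation of Lemma~\ref{conservazione dell'energia} then delivers the pointwise identity $\tfrac12|\dot u_n|^2-V_\eps(u_n)/\o_n^2=-1/\o_n^2$ a.e.\ on $[c,d]$.

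For (ii), I would apply the Lagrange--Jacobi identity of Lemma~\ref{collisioni isolate} to $I_n(t):=|u_n(t)-c_j|^2$; using \eqref{uniforme limitatezza di V_0} to control the smooth perturbation $V_\eps^j$ uniformly in $n$, and picking $\bar\r$ small enough, one concludes that $\ddot I_n(t)>0$ whenever $I_n(t)<(2\bar\r)^2$, in the spirit of Remark~\ref{convessita'}. If $T_{\r_n}(u_n)$ had two disjoint components one would find $t_1<t_2$ with $I_n(t_1)=I_n(t_2)=\r_n^2$ and $I_n(t)>\r_n^2$ on $(t_1,t_2)$, contradicting strict convexity on $[t_1,t_2]$. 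For (v), setting $w_n\equiv\r_n$ in the energy integral from (iv) gives
\[
\r_n^2\,\dot\phi_n(t)^2=\frac{2}{\o_n^2}\bigl(V_\eps(c_j+\r_n e^{i\phi_n(t)})-1\bigr),
\]
whose right-hand side is uniformly positive for $\bar\r$ small; hence $\dot\phi_n\neq0$ and $\phi_n\in\mathcal{C}^2((t_n^-,t_n^+))$, and differentiating once more in $t$ yields \eqref{equazione per phi_n}.

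For (iii), from $M_l^{\bar p_1\bar p_2}(u_n)\to d(0)\in(0,\infty)$, from the coercivity-type lower bound of Lemma~\ref{lemma 1.4} adapted to $\mathcal{K}_l^{\bar p_1\bar p_2}$ (giving $\|\dot u_n\|_2\geq C>0$, since the contact constraint forces $u_n$ to travel at least a distance $|\bar p_1-\mbox{\rm{dist}}(\bar p_1,\pa B_{\r_n}(c_j))|$), and from the weak convergence $u_n\wc\tilde u$ (giving $\|\dot u_n\|_2$ bounded above), both factors of $2M_l^{\bar p_1\bar p_2}(u_n)=\|\dot u_n\|_2^2\int_c^d(V_\eps(u_n)-1)\,dt$ are trapped in a compact subinterval of $(0,\infty)$; the uniform lower bound $V_\eps>1$ on $\overline{B_R(0)}$ supplies the missing lower bound on $\int(V_\eps(u_n)-1)$. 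Hence $\o_n^2$ stays in a compact subset of $(0,\infty)$ and, along a subsequence, converges to some $\Omega>0$; the boundedness of $-1/\o_n^2$ is then immediate. For (vi), I would mimic Lemma~\ref{u C^1}: off $T_{\r_n}(u_n)$ the function $u_n$ is $\mathcal{C}^2$ by (i), so a $\mathcal{C}^1$-defect could only arise at an instant $t^*\in T_{\r_n}(u_n)$; any genuine corner is eliminated by replacing a sufficiently short arc of $u_n$ around $t^*$ by the unique Jacobi minimizing geodesic in a totally normal neighbourhood of $u_n(t^*)$. Since minimality forces the contact with $\pa B_{\r_n}(c_j)$ to be tangential (otherwise the corner could already be shortened by cutting into the exterior), the replacement still satisfies $|v-c_j|\geq\r_n$ provided $\bar\r$ is small compared to the local curvature radii of Jacobi geodesics, contradicting the minimality of $u_n$ in $\mathcal{K}_l^{\bar p_1\bar p_2}$.

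The main obstacle is the uniformity in $n$ of the bounds underlying (ii) and (iii): the Lagrange--Jacobi convexity and the lower bound on $\o_n$ both rest on the $n$-uniform separation \eqref{uniforme limitatezza di V_0} of $u_n$ from the centres $c_k$ with $k\neq j$, which in turn is inherited from the uniform convergence $u_n\to\tilde u$ and from the fact that $\tilde u$ collides only at $c_j$ on $[c,d]$. Once these uniform estimates are in place, (i), (iv), (v) and (vi) essentially transcribe the corresponding arguments from the unconstrained case.
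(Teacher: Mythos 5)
Your proof follows essentially the same route as the paper's: items ($i$), ($iv$), ($v$) transcribe points ($i$)--($ii$) of Lemma \ref{prop del minimo} and Lemma \ref{conservazione dell'energia}, item ($ii$) uses the Lagrange--Jacobi convexity of Lemma \ref{collisioni isolate}, item ($iii$) traps the two factors of $M_l^{\bar p_1 \bar p_2}(u_n)$ between positive constants exactly as in the paper, and item ($vi$) is the geodesic-replacement argument of Lemma \ref{u C^1}. The only point worth sharpening is in ($ii$): the strict convexity of $I_n$ on all of $[t_1,t_2]$ is guaranteed not by the smallness of $\bar{\r}$ (which only controls $\min_t|u_n(t)-c_j|$) but by the uniform convergence $u_n\to\wt{u}$ together with the choice of $[c,d]$ so small that $\wt{u}([c,d])$ --- hence $u_n([c,d])$ for $n$ large --- lies in the neighbourhood of $c_j$ where $\ddot{I}_n>0$; since you acknowledge the role of the uniform convergence in your closing paragraph, this is a matter of phrasing rather than a gap.
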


\begin{proof}
The proof of ($i$) and ($v$) is the same of the points ($i$) and ($ii$) of Lemma \ref{prop del minimo}, respectively.\\
($ii$) On every interval $(c',d') \subset (c,d) \setminus T_{\r_n}(u_n)$ $u_n$ solves the \eqref{omega_n}; hence the uniform convergence of $(u_n)$ to $\wt{u}$ and the computation of
\[
\frac{d^2}{dt^2} |u_n(t)-c_j|^2
\]
(see the proof of Lemma \ref{collisioni isolate}) imply that the function $|u_n(t)-c_j|^2$ is strictly convex over such an interval. Therefore, if there exist $t_1<t_2$ such that $|u_n(t_1)-c_j| = |u_n(t_2)-c_j|=\r_n$ then $|u_n(t)-c_j|=\r_n$ for every $t \in (t_1,t_2)$.\\
($iii$) We have
\beq\label{20}
\o_n^2=\frac{M_l^{\bar p_1 \bar p_2}(u_n)}{\frac{1}{4} \left(\int_c^d |\dot{u}_n|^2\right)^2}=\frac{d(\r_n)}{\frac{1}{4}\|\dot{u}_n\|_{L^2([c,d])}^4}.
\eeq
We know that
\beq\label{20*}
0<d(0)<d(\r_n) \quad \text{and} \quad d(\r_n) \to d(0) \Rightarrow \exists C_1,C_2 > 0: \ C_1 \leq d(\r_n) \leq C_2 \ \forall n.
\eeq
As to the denominator of \eqref{20}, observe that, for every, $n$ the path $u_n$ covers at least a fixed distance; therefore, like in the proof of Lemma \ref{lemma 1.4},  there exists  $C_3>0$ such that
\beq\label{20**}
\|\dot{u}_n\|_{L^2([c,d])} \geq C_3 \qquad \forall n
\eeq
Moreover, being $(u_n)$ a minimizing sequence of a coercive functional, $(u_n)$ is bounded in the $H^1$-norm and a fortiori there exists $C_4 >0$ such that
\beq\label{20***}
\|\dot{u}_n\|_{L^2([c,d])}\leq C_4 \qquad \forall n.
\eeq
Altogether, \eqref{20}, \eqref{20*}, \eqref{20**} and \eqref{20***} imply the assertion.\\
($iv$) The energy is constant, as proved in Lemma \ref{conservazione dell'energia}. The boundedness of $(-1/\o_n^2)$ is a trivial consequence of point ($iii$).\\
($vi$) We take advantage again of the Proposition 3.16 in \cite{BaTeVe}: since $u_n$ is a minimizer of $M_l^{\bar p_1 \bar p_2}$ on $K_l^{\bar p_1 \bar p_2}(\r_n, \bar{\r}_n)$, then one of the following situations occurs:
\begin{itemize}
\item[a)] $t_n^-<t_n^+$ and $u \in \mathcal{C}^1\left([c,d]\right)$,
\item[b)] $t_n^-=t_n^+$ and $u \in \mathcal{C}^1\left([c,d]\right)$,
\item[c)] $t_n^-=t_n^+$ and $u \notin \mathcal{C}^1\left([c,d]\right)$.
\end{itemize}
But applying the line of reasoning already used in the proof of Theorem \ref{|u|<R}, it is easy to check that the situation $c)$ cannot occur.
\end{proof}

We are now in a position to prove the following result.

\begin{prop}\label{variazione dell'angolo limitata}
The minimizer $u_n$ is free of self-intersections in $[c,d]$. In particular, the total variation of the angle $\phi_n$ is smaller then $2\pi$.
\end{prop}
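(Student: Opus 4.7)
The plan is to mimic the loop-reversal trick of Proposition~\ref{no autointersezioni}, adapted to the constrained minimizer $u_n$, and to combine it with uniqueness for the Newton ODE~\eqref{omega_n}. Suppose by contradiction there exist $t_*<t_{**}$ in $[c,d]$ with $u_n(t_*)=u_n(t_{**})=p$; because $u_n$ keeps distance $\geq \rho_n>0$ from $c_j$ and $\geq \mu>0$ from the remaining centres (by~\eqref{uniforme limitatezza di V_0}), $p$ is not a centre. First I would define the reversed-loop path $v$ equal to $u_n$ outside $[t_*,t_{**}]$ and to $u_n(t_*+t_{**}-\cdot)$ inside. Since $v([c,d])=u_n([c,d])$, the constraint $\min_t|v(t)-c_j|=\rho_n$ is preserved, and the winding around every centre changes by an even integer, so $v\in \mathcal{K}_l^{\bar p_1 \bar p_2}(\rho_n,\bar\rho_n)$; a direct change of variable gives $M_l^{\bar p_1\bar p_2}(v)=M_l^{\bar p_1\bar p_2}(u_n)$, hence $v$ is another minimizer at the level $m(\rho_n,\bar\rho_n)$.

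Next, Lemma~\ref{proprietà di u_n}$(vi)$ forces $v\in \mathcal{C}^1([c,d])$, and matching one-sided derivatives at $t_*$ yields $\dot u_n(t_*)=-\dot u_n(t_{**})$. Assuming first that $p\notin\partial B_{\rho_n}(c_j)$, both $u_n$ and $v$ solve~\eqref{omega_n} in an open two-sided neighbourhood of $t_*$; sharing value and velocity there, uniqueness forces $v\equiv u_n$ locally, i.e.\ $u_n(t)=u_n(t_*+t_{**}-t)$. Differentiating at the midpoint yields $\dot u_n((t_*+t_{**})/2)=0$, but the energy identity $|\dot u_n|^2=2(V_\eps(u_n)-1)/\omega_n^2$ combined with the strict inequality $V_\eps>1$ inside $B_R(0)$ (Remark~\ref{remark su R}) gives $|\dot u_n|>0$ everywhere: contradiction. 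The borderline case $p\in\partial B_{\rho_n}(c_j)$ is reduced to the previous one by applying the same reversibility argument to one of the free arcs flanking the constraint interval $[t_n^-,t_n^+]$.

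Once self-intersections are ruled out, the bound $\int_c^d|\dot\phi_n|\,dt<2\pi$ follows by minimality. Assume for contradiction that the total variation is $\geq 2\pi$; then I would select $t_1<t_2$ with winding number $\pm 1$ around $c_j$ along $u_n|_{[t_1,t_2]}$. Either continuity of $w_n$ and $\phi_n$ produces $w_n(t_1)=w_n(t_2)$, giving a self-intersection that contradicts the first part, or $u_n|_{[t_1,t_2]}$ is a true spiral inside $\{\rho_n\leq|y-c_j|\leq R+\eps\}$. In the spiral case, replacing this portion by a Jacobi-minimising arc between $u_n(t_1)$ and $u_n(t_2)$ with winding around $c_j$ changed by $\mp 2$ preserves all winding parities and the minimum-distance constraint while strictly decreasing $L$, hence $M_l^{\bar p_1\bar p_2}$ via Proposition~\ref{minimo L->M}: this contradicts the minimality of $u_n$.

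The main obstacle is the configuration $p\in\partial B_{\rho_n}(c_j)$ in the first step: there the Newton equation does not hold and the $C^1$-regularity from Lemma~\ref{proprietà di u_n}$(vi)$ has to be combined with the constrained equation~\eqref{equazione per phi_n} to produce a uniqueness-type statement sharp enough to recover the time-symmetry and the desired contradiction.
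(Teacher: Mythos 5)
Your overall strategy for the first assertion --- reverse the loop between the two passage times, observe that the reversed path stays in $\mathcal{K}_l^{\bar p_1 \bar p_2}(\rho_n,\bar\rho_n)$ with the same Maupertuis value, and then play the $\mathcal{C}^1$-regularity of constrained minimizers (point ($vi$) of Lemma \ref{proprietà di u_n}) against the energy integral --- is exactly the paper's. Off the obstacle your treatment is in fact more careful than the printed one: the paper just says the argument is that of Proposition \ref{no autointersezioni}, while you correctly note that $\mathcal{C}^1$-matching only forces $\dot u_n(t_*)=-\dot u_n(t_{**})$ (not that both velocities vanish) and close the tangential case by ODE uniqueness together with $|\dot u_n|>0$.

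The genuine gap is the case you yourself flag as unresolved: a self-intersection with $p\in\partial B_{\rho_n}(c_j)$, which by point ($ii$) of Lemma \ref{proprietà di u_n} means $t_*,t_{**}\in[t_n^-,t_n^+]$. Your proposed reduction ``to one of the free arcs flanking the constraint interval'' does not produce a self-intersection of a free arc, so nothing is reduced; moreover your uniqueness propagation in the unconstrained case can only stop when the symmetric pair of times enters $[t_n^-,t_n^+]$, so the whole argument funnels into precisely this unhandled configuration. The paper closes it directly from points ($ii$), ($iv$), ($v$) of Lemma \ref{proprietà di u_n}: on $[t_n^-,t_n^+]$ the radius is constantly $\rho_n$ and $\phi_n$ is strictly monotone, so $u_n(t_*)=u_n(t_{**})$ forces $\phi_n(t_{**})-\phi_n(t_*)\in 2\pi\Z\setminus\{0\}$ (a complete wind on the obstacle), and the two velocities $\rho_n\dot\phi_n(t_*)\,ie^{i\phi_n(t_*)}$ and $\rho_n\dot\phi_n(t_{**})\,ie^{i\phi_n(t_{**})}$ are nonzero (energy integral) and point in the \emph{same} direction; hence the orientation-reversed competitor has a genuine corner and cannot be $\mathcal{C}^1$, contradicting ($vi$). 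No uniqueness statement for the constrained equation \eqref{equazione per phi_n} is needed. A secondary weak point: in your derivation of the $2\pi$ bound, the claim that replacing the spiralling portion by a minimizing arc in a different homotopy class ``strictly decreases $L$'' is asserted, not proved, and does not follow from anything you have established; the paper instead reads the angle bound off as a direct consequence of the absence of self-intersections, using the structure of $u_n$ (strict monotonicity of $|u_n-c_j|$ on the free arcs, coming from the strict convexity of $|u_n-c_j|^2$, and of $\phi_n$ on the obstacle) rather than a length-comparison surgery.
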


\begin{proof}
The function $u_n$ has no self-intersections for $t \in [c,t_n^-) \cup (t_n^+,d]$. The prove is the same of that of Proposition \ref{no autointersezioni}. If $u_n$ has a self-intersection on the obstacle $\left\lbrace |y-c_j|=\r_n \right\rbrace $, the monotonicity of $\phi_n$ implies that $u_n$ makes a complete wind around it. But then we can consider the function $v$ which parametrizes the same path of $u$, but reverses the orientation on the obstacle. One has $M_l^{\bar p_1 \bar p_2}(u_n)=M_l^{\bar p_1 \bar p_2}(v)$, so that $v$ is a local minimizer of $M_l^{\bar p_1 \bar p_2}$ with $\min_{t \in [c,d]} |v(t)-c_j| = \r_n$. Hence $v$ satisfies the energy integral, so that it cannot approach to the obstacle with velocity $0$. Therefore it should be a minimizer which is not $\mathcal{C}^1$, against point ($vi$) of the previous lemma.
\end{proof}

\begin{prop}\label{prop 2.12}
The estimates
\[
\mathfrak{C}_n^j(t)= \r_n^{\frac{2-\a}{2}} \sqrt{\frac{2 m_j}{\o_n^2 \a}}\left(1+ O(\r_n^\a)\right), \qquad t_n^+ - t_n^-=O(\r_n^{\frac{\a+2}{2}})
\]
hold for $n \to \infty$.
\end{prop}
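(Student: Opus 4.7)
The plan is to compute both quantities directly from energy conservation, exploiting the fact that on the obstacle arc $[t_n^-,t_n^+]$ the radial component $w_n$ is frozen at $\rho_n$. By Lemma 4.11(vi) the minimizer $u_n$ is of class $\mathcal{C}^1$ on $[c,d]$, and by Lemma 4.11(iv) the energy identity
\[
\frac{1}{2}|\dot{u}_n(t)|^2 = \frac{V_\eps(u_n(t)) - 1}{\omega_n^2}
\]
holds pointwise on $[c,d]$. First I would restrict this identity to $[t_n^-, t_n^+]$, where $\dot{w}_n \equiv 0$ so that $|\dot{u}_n|^2 = \rho_n^2\dot{\phi}_n^2$, and invoke the splitting in \eqref{def di V_0} together with \eqref{uniforme limitatezza di V_0} to write
\[
V_\eps(u_n(t)) = \frac{m_j}{\alpha\rho_n^\alpha} + V_\eps^j(u_n(t)),
\]
with $V_\eps^j$ bounded by a constant independent of $n$ (for $n$ large enough, $u_n(t)\in U_j$ on the whole obstacle since $\rho_n\to 0$).

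Combining these ingredients yields
\[
\rho_n^2\dot{\phi}_n^2(t) = \frac{2m_j}{\omega_n^2\alpha\rho_n^\alpha}\Bigl(1 + \frac{\alpha\rho_n^\alpha}{m_j}\bigl(V_\eps^j(u_n(t)) - 1\bigr)\Bigr) = \frac{2m_j}{\omega_n^2\alpha\rho_n^\alpha}\bigl(1 + O(\rho_n^\alpha)\bigr),
\]
where the remainder is uniform in $t$ thanks to the boundedness of $V_\eps^j$ on $U_j$ and the uniform bounds on $\omega_n^2$ established in Lemma 4.11(iii). Since on the obstacle $\mathfrak{C}_n^j(t) = \rho_n^2|\dot{\phi}_n(t)|$, the first estimate follows by taking a square root.

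For the time estimate I would use Proposition 4.12, which bounds the total variation of $\phi_n$ on the full interval $[c,d]$ by $2\pi$, hence a fortiori on the sub-interval $[t_n^-, t_n^+]$. Since the expression for $\dot{\phi}_n^2$ above shows that $|\dot{\phi}_n|$ is, uniformly on $[t_n^-, t_n^+]$, of order $\rho_n^{-(\alpha+2)/2}$ and bounded below by a positive multiple of the same quantity, I perform the change of variable $t \mapsto \phi_n(t)$ (legitimate by the strict monotonicity of $\phi_n$ from Lemma 4.11(v)) and estimate
\[
t_n^+ - t_n^- = \int_{\phi_n(t_n^-)}^{\phi_n(t_n^+)} \frac{d\phi}{|\dot{\phi}_n|} \leq \frac{2\pi}{\min_{[t_n^-,t_n^+]}|\dot{\phi}_n|} = O\bigl(\rho_n^{(\alpha+2)/2}\bigr).
\]

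The computation is essentially routine once the right setup is in place; the only genuine concern is the uniformity of the $O(\rho_n^\alpha)$ remainder, which reduces to controlling $V_\eps^j(u_n(t))$ from above and below on the obstacle and to bounding $1/\omega_n^2$ away from zero and infinity. Both are already available: the former from \eqref{uniforme limitatezza di V_0} together with the smoothness of $V_\eps^j$ on the compact set $\overline{U_j}$, and the latter from Lemma 4.11(iii). No further blow-up or variational argument is needed at this stage; the proposition is a direct consequence of the geometry of the obstacle and of energy conservation.
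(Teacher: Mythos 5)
Your proposal is correct and follows essentially the same route as the paper: on the obstacle arc the $\mathcal{C}^1$ regularity forces the velocity to be purely tangential, so $\mathfrak{C}_n^j=\rho_n^2|\dot{\phi}_n|$, the energy identity combined with the splitting $V_\eps=\frac{m_j}{\a|\cdot-c_j|^\a}+V_\eps^j$ and the uniform bounds on $V_\eps^j$ and $\o_n^2$ gives the first estimate, and the $2\pi$ bound on the variation of $\phi_n$ then yields the time estimate. Your integral form $t_n^+-t_n^-=\int d\phi/|\dot{\phi}_n|$ is if anything a slightly more careful phrasing of the paper's final step, which writes the angular variation as the uniform rate times the elapsed time; the content is the same.
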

\begin{proof}
Since $u_n \in \mathcal{C}^1\left([c,d]\right)$, it can lean against the obstacle $\left\lbrace |y-c_j| =\r_n\right\rbrace$ with velocity $\dot{u}_n(t)$ orthogonal to the radial segment joining $c_j$ and $u_n(t)$. Therefore for every $t \in [t_n^-,t_n^+]$ there holds $\mathfrak{C}_n^j(t)=\r_n |\dot{u}_n(t)|= \r_n^2\dot{\phi}_n(t)$. From the expression of the energy and the uniform boundedness of $(V_\eps^j(u_n))$ (see equation \eqref{uniforme limitatezza di V_0}) we get
\beq \label{*25}
\begin{split}
\mathfrak{C}_n^j(t)&= \r_n \sqrt{ \frac{2}{\o_n^2} \left(\frac{m_j}{\a\r_n^\a}+ V_\eps^j\left(u_n(t)\right)-1\right) } \\
&= \r_n^{\frac{2-\a}{2}} \sqrt{\frac{2m_j}{\o_n^2 \a}+ \frac{2\r_n^\a}{\o_n^2} \left(V_\eps^j(u_n(t))-1\right)} = \r_n^{\frac{2-\a}{2}} \sqrt{\frac{2 m_j}{\o_n^2 \a}}\left(1+O(\r_n^\a)\right).
\end{split}
\eeq
Therefore
\[
\dot{\phi}_n(t) = \r_n^{\frac{-2-\a}{2}} \sqrt{\frac{2 m_j}{\o_n^2 \a}}\left(1+O(\r_n^\a)\right),
\]
and the total variation of $\phi_n$ on the obstacle is
\[
\phi_n(t_n^+)-\phi_n(t_n^-) = \r_n^{\frac{-2-\a}{2}} \sqrt{\frac{2 m_j}{\o_n^2 \a}}\left(1+O(\r_n^\a)\right)(t_n^+-t_n^-).
\]
This variation is bounded by $2\pi$, so that $t_n^+ - t_n^-=O(\r_n^{\frac{\a+2}{2}})$.
\end{proof}

In order to obtain a contradiction, we consider a blow-up of our sequence.\\
For every $n \in \mathbb{N}$, let us fix $t_n \in [t_n^-,t_n^+]$. By the previous Proposition the sequence $(t_n)$ tends to the limit $t_0$ which is the unique collision time of $\wt{u}$ in $(c,d)$. Let us set
\[
c_n:=\r_n^{-\frac{\a+2}{2} }(c-t_n), \qquad d_n:=\r_n^{-\frac{\a+2}{2}} (d-t_n).
\]
We also define
\[
s_n^-:= \r_n^{-\frac{\a+2}{2} }(t_n^- - t_n), \qquad s_n^+:= \r_n^{-\frac{\a+2}{2} }(t_n^+ - t_n)
\]
We note that $c_n \to -\infty$, $d_n \to +\infty$ as $n \to \infty$. As far as $(s_n^-)$ and $(s_n^+)$ are concerned, they are two bounded sequences thanks to proposition \ref{prop 2.12}, so that there exists a subsequence of $(\r_n)$ (which we still denote $(\r_n)$) such that they converge to limits $s^-$ and $s^+$ respectively.

\begin{rem}
Consider the change of variable
\[
s(t,n)= \r_n^{-\frac{\a+2}{2} }(t-t_n) \Leftrightarrow t(s,n)= t_n+ \r_n^{\frac{\a+2}{2} }s.
\]
One has
\[
s(t,n) \in [c_n,d_n] \Leftrightarrow t(s,n) \in [c,d], \qquad s(t,n) \in [s_n^-,s_n^+] \Leftrightarrow t(s,n) \in [t_n^-,t_n^+].
\]
\end{rem}

\noindent We introduce the sequence of paths $v_n:[c_n,d_n] \to \R^2$,
\[
v_n(s):=c_j+\frac{1}{\r_n}\left(u_n\left(t_n+\r_n^{ \frac{\a+2}{2}  }s\right)- c_j\right).
\]
In polar coordinates with respect to the centre $c_j$ we write
\[
v_n(s)=c_j+\bar{w}_n(s) e^{i \bar{\phi}_n(s)},
\]
where
\[
\bar{w}_n(s)=\frac{1}{\r_n}w_n\left(t_n+\r_n^{ \frac{\a+2}{2}  }s\right), \qquad \bar{\phi}_n(s)=\phi_n\left(t_n+\r_n^{ \frac{\a+2}{2}  }s\right).
\]
Each $v_n$ is of class $\mathcal{C}^1$ and
\begin{align*}
& |v_n(s)-c_j| = 1 \qquad \text{for $s \in [s_n^-,s_n^+]$},\\
& |v_n(s)-c_j| > 1 \qquad \text{for $s \in [c_n,s_n^-) \cup (s_n^+,d_n]$}.
\end{align*}
The restriction $v_n|_{[c_n,s_n^-) \cup (s_n^+,d_n]}$ is of class $\mathcal{C}^2$ and satisfies the equation
\begin{align*}
\ddot{v}_n(s) & = -\frac{\r_n^{2+\a}}{\o_n^2\r_n} \sum_{k=1}^N \frac{m_k}{\left|u_n\left(t(s,n)\right)-c_k\right|^{\a+2}}\left(u_n\left(t(s,n)\right)-c_k\right) \\
& = - \frac{m_j \left[ \frac{1}{\r_n} \left( u_n\left(t(s,n)\right)-c_j\right) \pm c_j\right]}{\o_n^2 \left|\frac{1}{\r_n} \left( u_n\left(t(s,n)\right)-c_j\right)\pm c_j \right|^{\a+2}} + \frac{\r_n^{\a+1}}{\o_n^2} \n V_\eps^j (u_n\left(t(s,n)\right) \\
& = -\frac{m_j}{\o_n^2 \left|v_n(s)-c_j\right|^{\a+2}} \left(v_n(s)-c_j\right) + O(\r_n^{\a+1}).
\end{align*}
This suggests to consider the quantity
\[
\bar{h}_n(s):= \frac{1}{2}|\dot{v}_n(s)|^2- \frac{m_j}{\o_n^2 \a \left|v_n(s)-c_j\right|^\a},
\]
the energy of the function $v_n$ for the potential of the $\a$-Kepler's problem with centre in $c_j$.  This is not a constant function in $[c_n,d_n]$, however it can be easily controlled.
\begin{align*}
\bar{h}_n(s) & = \r_n^\a \left[ \frac{1}{2}\left| \dot{u}_n\left(t(s,n)\right)\right|^2- \frac{m_j}{\o_n^2 \a \left|u_n\left(t(s,n)\right)-c_j\right|^\a} \right]\\
& = \r_n^\a \left[ -\frac{1}{\o_n^2} + \frac{1}{\o_n^2} V_\eps^j\left(u_n\left(t(s,n)\right)\right)\right].
\end{align*}
Therefore, form the point ($iv$) of Lemma \ref{proprietà di u_n} we deduce
\[
\lim_{n \to \infty} \bar{h}_n(s)=0 \quad \text{for every $s \in [c_n,d_n]$}.
\]
The uniform boundedness of $\left(V_\eps^j(u_n)\right)$  makes the convergence uniform on every closed interval $[a,b] \subset \R$.\\
Let us also define the (absolute value of the) angular momentum of $v_n$ with respect to the centre $c_j$:
\[
\bar{\mathfrak{C}}_n^j(s):= \left| \left(v_n(s)-c_j\right) \land \dot{v}_n(s)\right|.
\]
If $s \in [s_n^-,s_n^+]$, using Proposition \ref{prop 2.12} we obtain
\[
\bar{\mathfrak{C}}_n^j(s)= \r_n^{\frac{\a+2}{2}} \dot{\phi}_n\left(t(s,n)\right) = \r_n^{\frac{\a-2}{2}} \mathfrak{C}_n^j\left(t(s,n)\right) = \sqrt{\frac{2 m_j}{\o_n^2\a}} \left(1+O(\r_n^\a)\right).
\]
Hence
\beq\label{convergenza di c_n}
\lim_{n \to \infty} \bar{\mathfrak{C}}_n^j(s) = \sqrt{\frac{2 m_j}{\Omega^2 \a}}, \quad \text{for every $s \in [s_-,s_+]$},
\eeq
with uniform convergence in $[s^-,s^+]$. For the reader's convenience, we recall that $\Omega=\lim_n \o_n$. The previous computation implies that the sequence $(\bar{\mathfrak{C}}_n^j|_{[s^-,s^+]})$ is uniformly bounded in a neighbourhood of $[s^-,s^+]$.

Recalling the point ($v$) of Lemma \ref{proprietà di u_n}, we obtain an equation for $\bar{\phi}_n$ when $s \in (s_n^-,s_n^+)$:
\begin{align*}
\ddot{\bar{\phi}}_n(s) &= \frac{\r_n^{\a+1}}{\o_n^2}\left\langle \n V_\eps\left(c_j+\r_n e^{i \bar{\phi}_n(s)}\right), i e^{i \bar{\phi}_n(s)}\right\rangle \\
&= -\frac{1}{\o_n^2} \left\langle m_j   e^{i \bar{\phi}_n(s)}, i  e^{i \bar{\phi}_n(s)}\right\rangle  + \frac{\r_n^{\a+1}}{\o_n^2}\left\langle \n V_\eps^j\left(c_j+ \r_n  e^{i \bar{\phi}_n(s)}\right),i  e^{i \bar{\phi}_n(s)}\right\rangle  \\
&= 0+ O(\r_n^{\a+1}).
\end{align*}
Hence the restriction $v_n|_{(s_n^-,s_n^+)}$ is of class $\mathcal{C}^2$ and satisfies
\begin{multline*}
\ddot{v}_n(s)  = \ddot{\bar{\phi}}_n(s) i e^{i\bar{\phi}_n(s)}- \left(\dot{\bar{\phi}}_n(s)\right)^2 e^{i\bar{\phi}_n(s)}  = \ddot{\bar{\phi}}_n(s) i \left(v_n(s)-c_i\right)- \left(\bar{\mathfrak{C}}_n^j(s)\right)^2\left(v_n(s)-c_i\right)=\\
= - \left(\bar{\mathfrak{C}}_n^j(s)\right)^2\left(v_n(s)-c_i\right) + i \left(v_n(s)-c_i\right) O(\r_n^{\a+1}). \qedhere
\end{multline*}

Summing up
\beq\label{equazione per v_n}
\ddot{v}_n = \begin{cases}
                -\frac{m_j \left(v_n-c_i\right)}{\o_n^2 \left|v_n-c_i\right|^{\a+2}}  + O(\r_n^{\a+1})  &  \text{in $[c_n,s_n^-) \cup (s_n^+,d_n]$} \\
                \\
                 - \left(\bar{\mathfrak{C}}_n^j\right)^2\left(v_n-c_i\right) + i \left(v_n-c_i\right) O(\r_n^{\a+1}) & \text{in $(s_n^-,s_n^+)$}.
                 \end{cases}
\eeq
This shows that $v_n$ is not necessarily of class $\mathcal{C}^2$ in $s_n^-$ and $s_n^+$; anyway there exist the right and left limits at these points.

\begin{prop}\label{convergenza per v_n}
Let $[a,b] \subset \R$, $a \leq 0\leq b$. There exists a subsequence of $(v_n)$ which converges in the $\mathcal{C}^1$ topology on $[a,b]$.
\end{prop}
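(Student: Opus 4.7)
The plan is to apply Ascoli--Arzel\`a twice: first to extract a $\mathcal C^0$-convergent subsequence from a uniform bound on $\|\dot v_n\|_\infty$, and then to upgrade to $\mathcal C^1$ convergence via a uniform $L^\infty$ bound on $\ddot v_n$ obtained from the piecewise ODE \eqref{equazione per v_n}.

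First I would control $|\dot v_n|$ uniformly on $[a,b]$. Since $c_n\to-\infty$ and $d_n\to+\infty$, the interval $[a,b]$ is contained in $[c_n,d_n]$ for $n$ large, so the definition of $\bar h_n$ rearranges to
\[
\tfrac{1}{2}|\dot v_n(s)|^2 \;=\; \bar h_n(s) \;+\; \frac{m_j}{\o_n^2\,\a\,|v_n(s)-c_j|^\a}.
\]
By construction $|v_n(s)-c_j|\geq 1$ on $[c_n,d_n]$; $\bar h_n\to 0$ uniformly on compact sets, so in particular $|\bar h_n|$ is uniformly bounded on $[a,b]$; and $\o_n\to\Omega>0$ by point ($iii$) of Lemma \ref{proprietà di u_n}, keeping $1/\o_n^2$ bounded. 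Therefore $\|\dot v_n\|_{L^\infty([a,b])}$ is uniformly bounded in $n$. Since $t_n\in[t_n^-,t_n^+]$, one has $0\in[s_n^-,s_n^+]$, whence $|v_n(0)-c_j|=1$, so $v_n(0)$ is bounded; integration from $s=0$ then bounds $v_n$ uniformly on $[a,b]$. The family $(v_n)$ is thus uniformly bounded and uniformly Lipschitz on $[a,b]$, so by Ascoli--Arzel\`a a subsequence, still denoted $(v_n)$, converges uniformly to some continuous $v:[a,b]\to\R^2$.

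To upgrade to $\mathcal C^1$ convergence, I would inspect the right-hand side of \eqref{equazione per v_n} on each piece. In the region $[c_n,s_n^-)\cup(s_n^+,d_n]$ the factor $|v_n-c_j|\geq 1$ keeps the vector field bounded, and the error term $O(\r_n^{\a+1})$ is infinitesimal. In the region $(s_n^-,s_n^+)$ we have $|v_n-c_j|=1$, while $\bar{\mathfrak C}_n^j(s)=|\dot v_n(s)|$ is uniformly bounded by the first step (this also matches the explicit formula \eqref{convergenza di c_n}). Hence $\|\ddot v_n\|_{L^\infty([a,b])}$ is uniformly bounded in $n$. By point ($vi$) of Lemma \ref{proprietà di u_n}, $v_n\in\mathcal C^1$, so $\dot v_n$ is absolutely continuous with a uniform Lipschitz constant, and is therefore equicontinuous. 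A second application of Ascoli--Arzel\`a yields, along a further subsequence, uniform convergence $\dot v_n\to w$ on $[a,b]$. Combining uniform convergence of $v_n\to v$ with uniform convergence of $\dot v_n\to w$ gives $w=\dot v$, proving that $v_n\to v$ in $\mathcal C^1([a,b])$.

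The only delicate point is that $v_n$ fails to be $\mathcal C^2$ precisely at the corners $s_n^\pm$ where the ODE switches form; however, this is harmless because the $L^\infty$ bound on $\ddot v_n$ is piecewise and $\dot v_n$ stays absolutely continuous across $s_n^\pm$ thanks to point ($vi$) of Lemma \ref{proprietà di u_n}, so the Lipschitz estimate on $\dot v_n$ holds on the whole interval $[a,b]$ and the Ascoli--Arzel\`a argument goes through without modification.
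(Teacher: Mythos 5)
Your proof is correct and follows essentially the same strategy as the paper's: the energy identity for $\bar h_n$ together with $|v_n-c_j|\geq 1$ and $\o_n\to\Omega>0$ gives the uniform bound on $\dot v_n$, the piecewise ODE \eqref{equazione per v_n} and the angular momentum estimate give the uniform bound on $\ddot v_n$, and Ascoli--Arzel\`a is applied twice. The handling of the corner points $s_n^\pm$ via the $\mathcal C^1$ regularity of $v_n$ matches the paper's treatment.
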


\begin{proof}
There is uniform convergence to $0$  of the energies $\bar{h}_n$ over $[a,b]$; thus the restrictions $(\bar{h}_n|_{[a,b]})$ define a bounded sequence in the uniform topology. Since for every $n$
\[
\inf_{s \in [a,b]} |v_n(s)-c_j|= |v_n(0)-c_j|=1,
\]
for every $s \in [a,b]$
\[
|\dot{v}_n(s)|^2 = 2 \bar{h}_n(s)+\frac{2m_j}{\o_n^2 \a \left|v_n(s)-c_j\right|^\a} \leq 2 \| \bar{h}_n|_{[a,b]}\|_\infty + 2 \frac{m_j}{\o_n^2 \a} .
\]
Therefore
\[
\| \dot{v}_n|_{[a,b]} \|_\infty \leq \sqrt{2} \sup_n \left( \| \bar{h}_n|_{[a,b]}\|_\infty + \frac{m_j}{\o_n^2 \a}\right)^{\frac{1}{2}}<+\infty,
\]
i.e. $\left(\dot{v}_n|_{[a,b]}\right)$ is uniformly bounded. Now,
\begin{enumerate}
\item $(v_n|_{[a,b]})$ is equicontinuous: for every $s_1,s_2 \in [a,b]$, for every $n \in \mathbb{N}$
\[
|v_n(s_1)-v_n(s_2)| \leq \| \dot{v}_n|_{[a,b]} \|_\infty |s_1-s_2| \leq C |s_1-s_2|.
\]
\item $\left(v_n|_{[a,b]}\right)$ is uniformly bounded: for every $s \in [a,b]$, for every $n \in \mathbb{N}$:
\[
|v_n(s)| \leq |v_n(0)|+C|s| \leq \eps +1+C \max \{|a|,|b|\}.
\]
\end{enumerate}
Hence we can apply the Ascoli-Arzel\`a theorem, to obtain a uniformly converging subsequence (still denoted by $(v_n)$).
From equation \eqref{equazione per v_n} we see also that $(\ddot{v}_n|_{[a,b]})$ is uniformly bounded. Indeed
\begin{align*}
&|\ddot{v}_n(s)| \leq \frac{m_j}{\o_n^2}+O(\r_n^{\a+1}) \leq C<+\infty  \qquad \text{for every $s \in [c_n,s_n^-) \cup (s_n^+,d_n]$},\\
&|\ddot{v}_n(s)| \leq \left(\bar{\mathfrak{C}}_n^j(s)\right)^2 + O(\r_n^{\a+1}) \leq C<+\infty \qquad \text{for every $s \in (s_n^-,s_n^+)$} \\
& \max \left\lbrace \lim_{s \to \left(s_n^\pm\right)^\pm}|\ddot{v}(s)|\right\rbrace  = C<+\infty ,
\end{align*}
(recall \eqref{convergenza di c_n} for the second bound) and immediately $\sup_n \|\ddot{v}_n|_{[a,b]} \|_\infty < +\infty$. Moreover
\[
\lim_{n \to \infty} \frac{1}{2}|\dot{v}_n(0)|^2= \lim_{n \to \infty} \bar{h}_n(0)+\frac{m_j}{\o_n^2 \a} = \frac{m_j}{\Omega^2 \a}.
\]
In particular, $\left(\dot{v}_n(0)\right)$ is bounded, too.
Now it is sufficient to repeat the previous argument and use the Ascoli-Arzel\`a theorem for $(\dot{v}_n)$.
\end{proof}

Applying the Proposition on each interval $[-k,k]$ we obtain a subsequence of $(v_n)$ (still denoted by $(v_n)$) which converges in the $\mathcal{C}^1$ topology on every closed interval of $\R$. We call $v: \R \to \R^2$ its limit. By \eqref{equazione per v_n} the sequence $(\ddot{v}_n)$ uniformly converges on every compact subset of $\R \setminus \{s^-,s^+\}$, so $v \in \mathcal{C}^2\left(\R \setminus \{s^-,s^+\}\right)$ and
\begin{itemize}
\item $v$ is a classical solution of the $\a$-Kepler's problem
\[
\ddot{v}(s) = -\frac{m_j}{ \Omega^2 |v(s)-c_j|^{\a+2}}\left(v(s)-c_j\right) \quad \text{for $s \in (-\infty,s^-) \cup (s^+,+\infty)$}.
\]
\item $v$ has constant energy equal to $0$ (even in $[s^-,s^+]$),
\item $v$ has constant angular momentum with respect to $c_j$, equal to $\bar{\mathfrak{C}}^j=\sqrt{\frac{2m_j}{\Omega ^2 \a}}$ (even in $[s^-,s^+]$),
\item $|v(s)-c_j|=1$ for $s \in [s^-,s^+]$,
\item $|v(s)-c_j|>1$ for $s \in (-\infty,s^-)\cup (s^+,+\infty)$.
\end{itemize}
We write $v(s)=c_j+ w(s) \exp{\{i\phi(s)\}}$, and term $\phi^-:=\phi(s^-)$, $\phi^+:=\phi(s^+)$. Thanks to the conservation of the angular momentum, the function $s \mapsto \phi(s)$ is strictly monotone; it is not restrictive to assume that it is increasing, and it makes sense to write
\[
\phi(+\infty)=\lim_{s \to +\infty} \phi(s), \qquad \phi(-\infty)=\lim_{s \to -\infty} \phi(s).
\]
Writing the energy in polar coordinates we get
\[
ds=\frac{dw}{\sqrt{2\left(\frac{m_j}{\a \Omega^2 w^\a}- \frac{\left(\bar{\mathfrak{C}}^j\right)^2}{w^2}   \right)}}.
\]
Hence
\begin{align*}
\phi(+\infty)-\phi^+ & = \int_{s^+}^{+\infty} \frac{d\phi}{ds}\,ds= \int_1^{+\infty} \frac{   \bar{\mathfrak{C}}^j\,dw     }{w^2 \sqrt{\frac{2m_j}{\a \Omega^2 w^\a}- \frac{\left(\bar{\mathfrak{C}}^j\right)^2}{w^2}   } }
 = \int_1^{+\infty} \frac{dw}{w^2\sqrt{\frac{1}{w^\a}-\frac{1}{w^2}}} = \int_0^1 \frac{d\xi}{\sqrt{\xi^\a-\xi^2}}.
\end{align*}
The same computation holds true for $\phi^- - \phi(-\infty)$. With the change of variable $\xi= \eta^{\frac{2}{2-\a}}$ we obtain
\begin{equation*}
\phi(+\infty)-\phi^+ = \phi^- - \phi(-\infty) = \frac{2}{2-\a} \int_0^1 \frac{ \eta^{ \frac{\a}{2-\a} } }{ \sqrt{ \eta^{\frac{2\a}{2-\a}} - \eta^{\frac{4}{2-\a} }  } }\,d\eta =\frac{2}{2-\a} \int_0^1 \frac{ d\eta }{ \sqrt{1-\eta^2} } = \frac{\pi}{2-\a}.
\end{equation*}

\noindent We deduce the following estimate for the total variation of the angle $\phi$:
\beq\label{*40}
\phi(+\infty)-\phi(-\infty) = \frac{2\pi}{2-\a} +\phi^+-\phi^- \geq \frac{2\pi}{2-\a}.
\eeq
On the other hand we know that $\bar{\phi}_n$ uniformly converges to $\phi$ on every closed interval $[a,b]$ of $\R$. For $n$ sufficiently large
\[
\bar{\phi}_n(b)-\bar{\phi}_n(a) \leq \bar{\phi}_n(d_n)-\bar{\phi}_n(c_n) < 2\pi
\]
for Proposition \ref{variazione dell'angolo limitata}. Passing to the limit for $n \to \infty$
\[
\phi(b)-\phi(a) \leq 2\pi.
\]
Since $a$ and $b$ are arbitrarily chosen, we can take $a \to -\infty$, $b \to +\infty$ to obtain
\beq\label{**40}
\phi(+\infty)-\phi(-\infty) \leq 2\pi.
\eeq
If $\a \in (1,2)$, \eqref{*40} and \eqref{**40} give a contradiction, and the proof of Proposition \ref{teorema 2.7} is complete. When $\a=1$ we don't reach yet a contradiction, but each result of this step (except Proposition \ref{teorema 2.7}, of course)  still holds true.

\paragraph{\textbf{Step 4}) Conclusion of the proof of Theorem \ref{teo dinamica interna} for $\a \in (1,2)$.}
From Proposition \ref{teorema 2.7} there exists $\bar{\r} > 0$ such that, if $0<\r_1<\r_2<\r^* \leq \bar{\r}$,
\[
\text{$u$ is a minimizer of $M_l^{\bar p_1 \bar p_2}|_{\mathcal{K}_l(\r_2,\r^*)}$} \Rightarrow \min_{t \in [c,d]} |v(t)-c_i| = \r^*,
\]
and
\[
\text{$u$ is a minimizer of $M_l^{\bar p_1 \bar p_2}|_{\mathcal{K}_l(\r_1,\r_2)}$} \Rightarrow \min_{t \in [c,d]} |v(t)-c_i| = \r_2.
\]
Hence $d(\r^*)<d(\r_2)<d(\r_1)$. We recall that the function $d(\cdot)$ is continuous in $0$, so that taking $\r_1 \to 0^+$ we obtain $d(\r^*) < d(0)$: this is a contradiction, since we are assuming that the minimum of $M_l^{\bar p_1 \bar p_2}$ on $\mathcal{K}_l^{p_1 p_2}$ is achieved over collision paths. Applying the same argument for each collision time of $u$ we obtain, by Proposition \ref{localizzazione dei minimi_M}, that a minimizer of $M$ on $K_l$ is collisions-free, too.

\paragraph{\textbf{Step 5}) The case $\a=1$.}
In case $\a=1$ the third step does not give a contradiction: indeed it is possible that $\phi(+\infty)-\phi(-\infty)=2\pi$. However, we will strongly use the results proved in step 3).

\medskip

 In order to complete the proof of Theorem \ref{teo dinamica interna}, we need the following statement.

\begin{prop}\label{alpha=1}
If the local minimizer $u \in K_l$ of $M$ has a collision, then there exists a possibly different minimizer $\wh{u} \in K_l$ such that the collision set $T_c(\wh{u})$ consists of a unique instant, and $y(t):=\wh{u}(\o t)$ is an ejection-collision solution of \eqref{PI}. In particular, this implies $p_1=p_2$.
\end{prop}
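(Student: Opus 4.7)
The plan combines the blow-up of Step~3 (specialized to $\alpha=1$) with the classical Levi-Civita regularization of the Coulombic singularity at each colliding centre, and then exploits the global minimality of $u$ together with Proposition~\ref{|u|<R} to force $p_1=p_2$ and the single-collision structure simultaneously.

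The first move is to sharpen the outcome of Step~3 in the case $\alpha=1$. The computation preceding \eqref{*40} gives $\phi(+\infty)-\phi^+=\phi^- -\phi(-\infty)=\pi/(2-\alpha)=\pi$, so the total angular variation of the blow-up limit equals $2\pi+(\phi^+-\phi^-)$; combined with \eqref{**40}, this forces $\phi^+=\phi^-$. The limiting Keplerian profile therefore grazes its obstacle at a single point, with antipodal incoming and outgoing asymptotic directions, which is the Keplerian signature of a reflected collision. To transfer this local geometry to the unrescaled minimizer I would use Levi-Civita: splitting $V_\eps$ as in \eqref{def di V_0}, the complex change of variables $u(t)-c_j=z(s)^2$, $dt=|z(s)|^2\,ds$ turns the Keplerian part into a two-dimensional harmonic oscillator perturbed by a smooth bounded term coming from $V_\eps^j$; the resulting equation is regular through $z=0$. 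Local existence and uniqueness for the regularized ODE, together with the time-reversibility of the Newton equation and the selection made by minimality, should yield at every collision time $t_0\in T_c(u)$ the exact reflection identity
\begin{equation*}
u(t_0+\tau)=u(t_0-\tau),\qquad \dot u(t_0+\tau)=-\dot u(t_0-\tau),
\end{equation*}
valid for every sufficiently small $\tau>0$.

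The second move is to extend this reflection globally. Between consecutive collisions $u$ is a classical $\mathcal{C}^2$ solution of the Newton equation, so uniqueness of the Cauchy problem propagates the identity as long as both $t_0\pm\tau$ remain in $[0,1]$ without encountering another collision. Let $t_0:=\min T_c(u)$: were there a further collision $t_1\in(t_0,2t_0]$, the identity at $\tau=t_1-t_0$ would force $t_0-(t_1-t_0)\in[0,t_0)$ to be itself a collision, contradicting minimality of $t_0$. Hence the reflection extends up to $\tau=t_0$, producing $u(2t_0)=u(0)=p_1\in\partial B_R(0)$. If $2t_0<1$ this contradicts Proposition~\ref{|u|<R}, which forces $|u(t)|<R$ throughout the interior of $(0,1)$. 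Therefore $2t_0\ge 1$. A symmetric argument applied to $t_N:=\max T_c(u)$ from the endpoint $t=1$ rules out $2t_N-1>0$, giving $2t_N\le 1$. Combining, $t_0=t_N=\tfrac12$; consequently $T_c(u)=\{\tfrac12\}$, and the reflection identity evaluated at $\tau=\tfrac12$ gives $p_2=u(1)=u(0)=p_1$.

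It remains to package the conclusion: $u$ itself already has a unique collision time $\tfrac12$ and satisfies the reflection property there, so by the definition in subsection~\ref{assenza di collisioni} the reparametrization $y(t):=u(\omega t)$ is an ejection-collision solution of \eqref{PI}. Taking $\widehat u:=u$ completes the proof. I expect the main obstacle to lie in the first step, namely the rigorous passage from the blow-up geometry and Levi-Civita's formal regularity to the pointwise reflection identity for the constrained minimizer: one must transfer the $\mathcal{C}^1$-regularity and local uniqueness of $z(s)$ across $0$ to the variational object $u$, ruling out in particular a scenario in which the Levi-Civita variable touches the origin tangentially (a genuine terminal collision) rather than crossing it transversally, and verifying that the smooth perturbation $V_\eps^j$ does not break the time-symmetry selected by minimality.
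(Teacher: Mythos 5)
Your overall strategy (blow-up at the collision plus Levi--Civita regularization, then propagation of the reflection identity) is the same as the paper's, and your second move is sound: once the reflection identity is known at the first collision time, the propagation by uniqueness of the Cauchy problem between collisions, the exclusion of a second collision in $(t_0,2t_0]$, and the appeal to Proposition \ref{|u|<R} to force $t_0=t_N=\tfrac12$ and $p_1=p_2$ are all correct, and indeed spelled out more explicitly than in the paper. The derivation of $\phi^+=\phi^-$ from \eqref{*40} and \eqref{**40} when $\alpha=1$ also matches the paper.

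The gap is exactly where you suspect it, and it is not a technicality you can wave through: you apply the Levi--Civita change of variables directly to the collision minimizer $u$ and invoke ``local existence and uniqueness for the regularized ODE'' to obtain the reflection identity at $t_0$. But a collision minimizer of $M$ is only known to solve the Euler--Lagrange equation on $[0,1]\setminus T_c(u)$; nothing yet guarantees that its Levi--Civita lift $z$ is a classical solution of the regularized equation \emph{across} $z=0$ (equivalently, that $z$ is $\mathcal{C}^1$ at the collision with a well-defined velocity there), and without that you cannot set up the Cauchy problem whose uniqueness yields the reflection. The paper fills this hole by approximation: it takes the obstacle-constrained minimizers $u_n$ with $\min_t|u_n(t)-c_j|=\rho_n\to 0$, shows that their Levi--Civita transforms $q_n$ are genuine $\mathcal{C}^1$ local minimizers of a \emph{regular} Maupertuis-type functional, proves $\mathcal{C}^1$-convergence of $q_n$ to a limit $\wt{q}$ together with the vanishing of the angle swept and of the time spent on the obstacle, and only then concludes that $\wt{q}$ solves \eqref{equazione per q tilda} classically through the origin, whence $\wt{q}(\tau_0+\tau)=-\wt{q}(\tau_0-\tau)$. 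Crucially, this produces the reflection identity for the limit $\wt{u}$ of the sequence $(u_n)$, which is a minimizer of the localized functional but \emph{possibly different} from $u|_{[c,d]}$; that is why the proposition asserts the existence of ``a possibly different minimizer $\wh{u}$'' and why your final step ``take $\wh{u}:=u$'' overclaims. To repair your argument you should run the reflection and propagation analysis on the glued path $\wh{u}$ obtained by replacing $u$ on $[c,d]$ with $\wt{u}$, not on $u$ itself.
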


We keep the same notations already introduced. Since $u$ is the minimizer of $M$ in $K_l$, the set $T_c(u)$   is finite  and we set $t_0=\min T_c(u)$. We can define $c,d \in [0,1]$, $\bar p_1, \bar p_2 \in \R^2$, $\ldots$ as in the previous steps. If there exists $\bar{\rho}>0$ such that for every $0<\rho_1<\rho_2<\bar{\rho}$ it result $\mathcal{M}_{\r_1 \r_2}=\emptyset$, then $u$ is collisions-free in $[c,d]$. Otherwise there exist two sequences $(\r_n)$ and $(\bar{\r}_n)$ converging to $0$ such that
\begin{equation}\label{eq:u_n}
0<\r_n<\bar{\r}_n \quad \forall n, \qquad \forall n \ \exists u_n \in \mathcal{K}_l^{\bar p_1 \bar p_2}: \ \min_{t \in [c,d]} |u_n(t)-c_j|=\r_n
\text{ and } M_l^{\bar p_1 \bar p_2}(u_n)=m(\r_n,\bar{\r}_n)=d(\r_n).
\end{equation}
We know that $(u_n)$ uniformly converges to a collision path $\wt{u} \in \mathcal{K}_l^{\bar p_1 \bar p_2}$ (possibly different from $u$), which minimizes $M_l^{\bar p_1 \bar p_2}$. We define
\[
\wh{u}(t):= \begin{cases}
         u(t) & t \in [0,1] \setminus [c,d]\\
         \wt{u}(t) & t \in [c,d]
         \end{cases}
\]
Now we use a classical method to deal with singularities and to extend solutions beyond collisions, firstly introduced in 1920 by Levi-Civita in \cite{LeCi}.
In performing the  Levi--Civita regularization we  see an $\R^2$-valued function as a function in $\mathbb{C}$ and we exploit the conformal equivariance of the problem. In fact, we lift the geodesics with respect to the Jacobi metric into geodesics on the Riemann surface.

\begin{defin}
\textbf{(Local Levi-Civita transform).} For every complex-valued continuous function $u$ we define the set $\Lambda(u)$ of the continuous function $q$ such that
\[
u(t)=q^2(\tau(t))+c_j,
\]
where we reparametrize the time as
\[
dt=|q(\tau)|^2\,d\tau.
\]
\end{defin}
We will denote with $" ' "$ the differentiation with respect to $\tau$, and $\n_q$ the gradient in the Levi-Civita space. We remark that if a path $u$ does not collide in $c_j$, then $\Lambda(u)$ consist in two elements $\pm \sqrt{u(\tau(t)))-c_j}$.

\vspace{1 em}

Actually, we perform the Levi-Civita-type transform along  the sequence given in \eqref{eq:u_n}. It is convenient to define
\[
S_n:= \int_c^d \frac{dt}{|u_n(t)-c_j|}.
\]
\begin{lemma}\label{su S_n}
The sequence $(S_n)$ is bounded above and bounded below by a strictly positive constant. Hence there exist a subsequence (still denoted $(S_n)$) and $\wt{S}>0$ such that
\[
\lim_{n \to \infty} S_n=\wt{S}.
\]
\end{lemma}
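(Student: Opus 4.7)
The plan has two halves, corresponding to the two bounds in the statement. Both are obtained by purely elementary estimates, using information already collected about the minimizing sequence $(u_n)$ in Step 3 (Lemma \ref{proprietà di u_n}) together with the fact that $\alpha=1$ makes the potential $V_\eps$ behave exactly like $m_j/|y-c_j|$ near $c_j$ up to the smooth perturbation $V_\eps^j$.

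For the lower bound I would argue trivially: since $u_n\in K_l$, we have $|u_n(t)|\leq R$ for every $t\in[c,d]$, hence
\[
|u_n(t)-c_j|\leq R+|c_j|\leq R+\eps,
\]
so that
\[
S_n\;=\;\int_c^d\frac{dt}{|u_n(t)-c_j|}\;\geq\;\frac{d-c}{R+\eps}\;=:\;C_0\;>\;0,
\]
with $C_0$ independent of $n$.

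For the upper bound I would exploit the boundedness of the Maupertuis level. By Lemma \ref{lemma 2.6} and the hypothesis \eqref{eq:u_n}, the sequence $M_l^{\bar p_1\bar p_2}(u_n)=d(\rho_n)$ converges to $d(0)$, hence is bounded above by some constant $C_1$. On the other hand, a repetition of the argument in Lemma \ref{lemma 1.4} (each $u_n$ must still join $\bar p_1$ to $\bar p_2$ winding as prescribed by $l$, so it covers at least a fixed length) provides a constant $C_2>0$ with $\|\dot u_n\|_{L^2([c,d])}^2\geq C_2$ for every $n$. Dividing, we obtain a uniform bound
\[
\int_c^d\bigl(V_\eps(u_n)-1\bigr)\,dt\;\leq\;\frac{2\,C_1}{C_2}.
\]
Because $\alpha=1$, the decomposition \eqref{def di V_0} gives $V_\eps(u_n(t))\geq m_j/|u_n(t)-c_j|$, so
\[
m_j\,S_n\;=\;\int_c^d\frac{m_j}{|u_n(t)-c_j|}\,dt\;\leq\;\int_c^d V_\eps(u_n)\,dt\;\leq\;\frac{2C_1}{C_2}+(d-c),
\]
which yields $S_n\leq C_3$ independently of $n$.

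Having $0<C_0\leq S_n\leq C_3$, Bolzano--Weierstrass delivers a subsequence converging to some $\widetilde S\in[C_0,C_3]\subset(0,+\infty)$, proving the lemma. There is no real obstacle here; the only point to be careful about is that the argument uses $\alpha=1$ in an essential way (so that the leading singularity of $V_\eps$ dominates exactly the integrand defining $S_n$), which is precisely the regime in which the Levi--Civita regularization will be applied in the next step.
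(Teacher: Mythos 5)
Your proof is correct and follows essentially the same route as the paper: the lower bound via $|u_n(t)-c_j|\leq R+\eps$, and the upper bound from the boundedness of the Maupertuis level $d(\rho_n)\to d(0)$ combined with the uniform lower bound on $\|\dot u_n\|_{L^2([c,d])}$ and the domination $V_\eps(y)\geq m_j/|y-c_j|$ for $\a=1$. The only difference is cosmetic: the paper phrases the upper bound as a proof by contradiction, while you argue directly (and usefully make explicit the inequality $m_j S_n\leq \int_c^d V_\eps(u_n)$ that the paper leaves implicit).
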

\begin{proof}
Assume by contradiction that $(S_n)$ were not bounded above:
\[
\limsup_{n \to \infty} \int_c^d \frac{dt}{|u_n(t)-c_j|} =+\infty.
\]
In the proof of point ($iii$) of Lemma \ref{proprietà di u_n} we showed that
\[
\liminf_{n \to \infty} \int_c^d |\dot{u}_n(t)|^2\,dt >0,
\]
and hence $(M_l^{\bar p_1 \bar p_2}(u_n))$ is unbounded, too, in contradiction with the fact that $(u_n)$ is a minimizing sequence of a coercive functional. Furthermore, since
\[
\int_c^d \frac{dt}{|u_n(t)-c_j|} \geq \frac{d-c}{R+\eps}>0,
\]
$(S_n)$ is also bounded below by a positive constant.
\end{proof}

For every $n$, we define the set $\Lambda(u_n)$ of the continuous function $q_n$ such that
\begin{gather*}
u_n(t)=q_n^2(\tau(t))+c_j\qquad
dt=S_n |q_n(\tau)|^2\,d\tau.
\end{gather*}
We also set
\begin{gather*}
\wt{u}(t)=\wt{q}\,^2(\tau(t))+c_j\qquad
dt=\wt{S} |\wt{q}(\tau)|^2\,d\tau.
\end{gather*}
We point out that the new time $\tau$ depends on $n$ (we keep in mind this dependence, but we don't write it down to ease the notation). However, setting $\tau(c)=0$ for every $n$, the right end of the interval of definition of each function $q_n$ is
\[
\int_0^{\tau(d)} d\tau = \frac{1}{S_n}\int_c^d \frac{dt}{|u_n(t)-c_j|}=1,
\]
so that $q_n$ is defined over $[0,1]$ for every $n$. We set $\tau_n^-:=\tau(t_n^-)$ and $\tau_n^+:=\tau(t_n^+)$ (recall that $t_n^- = \inf \{t \in [c,d]: |u_n(t)-c_j|=\r_n\}$, $t_n^+=\sup \{t \in[c,d]: |u_n(t)-c_j|=\r_n \}$) .\\
Since $u_n$ doesn't collide in $c_j$ for every $n$, we can make a choice of $q_n \in \Lambda(u_n)$, in such a way that the sequence $(q_n)$ is uniformly convergent to a path $\wt{q} \in \Lambda(\wt{u})$ (for instance $q_n=+\sqrt{u_n-c_j}$ for every $n$). The constraint $B_{\r_n}(c_j)$ corresponds trough the transformation to the ball $B_{\sqrt{\r_n}}(0)$, so that $q_n$ satisfies
\begin{align*}
& |q_n(\tau)|>\sqrt{\r_n} \qquad \tau \in [0,\tau_n^-) \cup (\tau_n^+,1]\\
& |q_n(\tau)|=\sqrt{\r_n} \qquad \tau \in [\tau_n^-, \tau_n^+].
\end{align*}
In polar coordinates we write
\[
q_n(\tau)=\kappa_n(\tau) e^{i \sigma_n(\tau)},
\]
where $\kappa_n:[0,1] \to \R^+$, $\sigma_n:[0,1] \to \R$.\\
The next lemma establish a relationship between the variational properties of a function and its Levi-Civita transform.

\begin{lemma}\label{nuovo Maupertuis}
Every $q_n \in \Lambda(u_n)$ is a local minimizer of
\[
\wt{M}(q):=4 \int_0^1 |q'|^2 \int_0^1 \left[m_j+ \left(V_\eps^j(q^2+c_j)-1\right)|q|^2\right]
\]
at a strictly positive level.
\end{lemma}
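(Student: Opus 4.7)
The plan is to perform the Levi--Civita change of variables explicitly and verify the identity
\[
\wt{M}(q_n) \;=\; 2\, M_l^{\bar p_1 \bar p_2}(u_n),
\]
from which both local minimality and positivity of the level will follow. For the computation I would split the potential as in \eqref{def di V_0}, namely (since $\a=1$)
$V_\eps(y) - 1 = m_j/|y-c_j| + V_\eps^j(y) - 1$. Using $u_n - c_j = q_n^2$ (complex multiplication, so $|u_n - c_j| = |q_n|^2$) and the reparametrization $dt = S_n |q_n|^2\,d\tau$, differentiation in $t$ gives $\dot u_n = 2 q_n q_n'\,\tau'(t) = 2 q_n q_n'/(S_n |q_n|^2)$, hence $|\dot u_n|^2 = 4|q_n'|^2/(S_n^2 |q_n|^2)$. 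Substituting into the two integrals appearing in $M_l^{\bar p_1 \bar p_2}$ yields
\[
\int_c^d |\dot u_n|^2\,dt = \frac{4}{S_n}\int_0^1 |q_n'|^2\,d\tau,\qquad \int_c^d \bigl(V_\eps(u_n) - 1\bigr)\,dt = S_n \int_0^1 \bigl[m_j + (V_\eps^j(q_n^2 + c_j) - 1)|q_n|^2\bigr]\,d\tau.
\]
Taking the product, halving, and noting that the factors $S_n$ cancel, one obtains precisely $\wt M(q_n) = 2 M_l^{\bar p_1 \bar p_2}(u_n)$.

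Next I would transfer local minimality through the map $\Lambda$. Since $\r_n > 0$, the path $u_n$ avoids $c_j$, so the chosen branch $q_n \in \Lambda(u_n)$ avoids the origin, and $\Lambda$ is a smooth two-sheeted covering in a neighbourhood of $q_n$. Given any $H^1$-small perturbation $\tilde q = q_n + \varphi$ with $\varphi(0) = \varphi(1) = 0$, the corresponding path $\tilde u := \tilde q^2 + c_j$ (re-expressed in the original time via $dt = \tilde S |\tilde q|^2\,d\tau$, with $\tilde S$ determined as for $u_n$) has the same endpoints $\bar p_1,\bar p_2$, is $H^1$-close to $u_n$, and still satisfies $|\tilde u - c_j| \geq \r_n$; moreover the winding-number parities around the $c_k$ are preserved, because $\Lambda$ is a local diffeomorphism away from $q = 0$ and these constraints are stable under uniformly small deformations. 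Hence $\tilde q \mapsto \tilde u$ maps an $H^1$-neighbourhood of $q_n$ into $\mathcal{K}_l^{\bar p_1 \bar p_2}(\r_n, \bar\r_n)$, and the identity above forces $\wt M(\tilde q) = 2 M_l^{\bar p_1 \bar p_2}(\tilde u) \geq 2 M_l^{\bar p_1 \bar p_2}(u_n) = \wt M(q_n)$. Positivity of the level is then immediate: $\wt M(q_n) = 2 d(\r_n) \geq 2 d(0) > 0$, with $d(0) > 0$ by the same argument yielding the bound in Lemma \ref{lemma 1.4}.

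The main obstacle is the second step: one must be sure that the time reparametrization $t \leftrightarrow \tau$, which is itself determined by the competitor (via the factor $\tilde S$ and the pointwise rule $dt = \tilde S |\tilde q|^2 d\tau$), is compatible with the $H^1$-topologies on the two sides and does not destroy either the endpoint conditions, the obstacle $|\tilde u - c_j| \geq \r_n$, or the parity of the winding numbers. Once one checks that $\Lambda$ restricted to non-collision paths is bi-continuous in $H^1$ and preserves the relevant topological class, the equivalence of the variational problems is complete and the lemma follows.
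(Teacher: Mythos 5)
Your proposal is correct and follows essentially the same route as the paper: the change of variables $u_n-c_j=q_n^2$, $dt=S_n|q_n|^2\,d\tau$ applied to the two factors of $M_l^{\bar p_1 \bar p_2}$ gives exactly the identity $\wt M(q_n)=2M_l^{\bar p_1\bar p_2}(u_n)$, which is all the paper records. Your additional discussion of why the correspondence $\Lambda$ carries an $H^1$-neighbourhood of $q_n$ into the constraint class $\mathcal K_l^{\bar p_1\bar p_2}(\r_n,\bar\r_n)$ (and hence transfers local minimality and the positive level $d(\r_n)>0$) is a legitimate filling-in of a step the paper leaves implicit.
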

\begin{proof}
It is sufficient to write the factors of $M$ in terms of $\tau$ and $q_n$:
\[
|\dot{u}_n(t)|^2\,dt=\left|2 q_n(\tau(t)) q_n'(\tau(t)) \frac{d\tau}{dt}(t)\right|^2\,dt  = \frac{4}{S_n}|q_n'(\tau)|^2\,d\tau,
\]
and
\begin{align*}
\left(V_\eps(u_n(t))-1\right)\,dt &=\left( \frac{m_j}{|q_n(\tau(t))|^2}+ V_\eps^j(q_n^2(\tau(t))+c_j)-1\right)\, dt \\
&= S_n\left[m_j+\left(V_\eps^j(q_n^2(\tau)+c_j)-1\right)|q_n(\tau)|^2\right]\,d\tau. \qedhere
\end{align*}
\end{proof}

\begin{rem}
We get a functional of Maupertuis-type. In this case the potential is no more singular, and the mass $m_j$ plays the role of the energy.
\end{rem}

Now a technical result:
\begin{lemma}
For every $n$, let
\[
\wt{\o}_n^2:=\frac{\int_0^1 \left[m_j+ \left(V_\eps^j(q_n^2+c_j)-1\right)|q_n|^2\right]}{\frac{1}{2}\int_0^1 |q_n'|^2}.
\]
The sequence $(\wt{\o}_n^2)$ is bounded above and bounded below by a strictly positive constant. Hence there exist a subsequence (still denoted $(\wt{\o}_n)$) and $\wt{\Omega}>0$ such that
\[
\lim_{n \to \infty} \wt{\o}_n=\wt{\Omega}.
\]
\end{lemma}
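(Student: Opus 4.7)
The plan is to mimic, in the Levi--Civita variables, the argument already used for the bound on $(\o_n^2)$ in point $(iii)$ of Lemma \ref{proprietà di u_n}. The key is to express the numerator $N_n := \int_0^1 [m_j+(V_\eps^j(q_n^2+c_j)-1)|q_n|^2]\,d\tau$ and the denominator $\frac12 \|q_n'\|_2^2$ of $\wt\o_n^2$ in terms of quantities associated to the original minimizing sequence $(u_n)$, which are already under control.

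First I would record the basic identity behind the Levi--Civita change of variables (already obtained in the proof of Lemma \ref{nuovo Maupertuis}):
\[
\int_c^d |\dot u_n(t)|^2\,dt = \frac{4}{S_n}\|q_n'\|_2^2, \qquad \int_c^d \bigl(V_\eps(u_n(t))-1\bigr)\,dt = S_n\,N_n.
\]
Multiplying these two equalities gives
\[
\wt M(q_n) \;=\; 4\,\|q_n'\|_2^2\,N_n \;=\; 2\,M_l^{\bar p_1 \bar p_2}(u_n) \;=\; 2\,d(\r_n),
\]
and hence
\[
\wt\o_n^2 \;=\; \frac{2 N_n}{\|q_n'\|_2^2} \;=\; \frac{\wt M(q_n)}{2\,\|q_n'\|_2^4} \;=\; \frac{d(\r_n)}{\|q_n'\|_2^4}.
\]

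Next I would bound numerator and denominator of this last fraction. For the numerator, Lemma \ref{lemma 2.6} gives $d(\r_n)\to d(0)>0$; in particular there exist $C_1,C_2>0$ with $C_1\le d(\r_n)\le C_2$ for every $n$. For the denominator, from $\|q_n'\|_2^2=\tfrac{S_n}{4}\int_c^d|\dot u_n|^2\,dt$ together with Lemma \ref{su S_n} (which gives $0<\underline S\le S_n\le \overline S$) and the bounds \eqref{20**}--\eqref{20***} already obtained on $\|\dot u_n\|_{L^2([c,d])}$ in the proof of Lemma \ref{proprietà di u_n}$(iii)$, there exist $C_3,C_4>0$ such that $C_3\le \|q_n'\|_2\le C_4$ for every $n$.

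Combining the two estimates yields
\[
\frac{C_1}{C_4^4}\;\le\;\wt\o_n^2\;\le\;\frac{C_2}{C_3^4},
\]
so the sequence $(\wt\o_n^2)$ is bounded above and bounded away from $0$. By Bolzano--Weierstrass, a subsequence (still denoted $(\wt\o_n)$) converges to some $\wt\Omega>0$, which is the required assertion. The only mildly delicate point is checking that none of the inputs we are quoting (the boundedness of $S_n$ from below, the lower bound on $\|\dot u_n\|_2$, and the continuity of $d$ at $0$) depends on the specific approximation we are performing here; but each of them has already been established in full generality in the preceding lemmas, so the argument is essentially bookkeeping.
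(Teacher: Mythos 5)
Your proof is correct and follows essentially the same route as the paper: both rest on the Levi--Civita change-of-variable identities from Lemma \ref{nuovo Maupertuis} together with the bounds on $S_n$ from Lemma \ref{su S_n} and the estimates already established for $(u_n)$. The paper simply compresses the bookkeeping by writing $\wt{\o}_n^2=\frac{4}{S_n^2}\o_n^2$ and invoking $\o_n\to\Omega>0$ directly, whereas you re-derive the two-sided bounds via $d(\r_n)$ and $\|q_n'\|_2$; the two computations are equivalent.
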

\begin{proof}
There holds
\[
\wt{\o}_n^2=\frac{\frac{1}{S_n} \int_c^d V_\eps(u_n)-1}{\frac{S_n}{8}\int_c^d |\dot{u}_n|^2} = \frac{4}{S_n^2}\o_n^2.
\]
Now it is sufficient recall Lemma \ref{su S_n} and the fact that $\o_n^2 \to \Omega^2>0$.
\end{proof}

From now on, we will always consider the subsequence introduced in this statement. Now we can prove the main features of the functions $q_n$.

\begin{lemma}
For every $n$:
\begin{itemize}
\item[($i$)] The function $q_n$ is of class $\mathcal{C}^1\left([0,1]\right)$.
\item[($ii$)] The restrictions $q_n|_{[0,\tau_n^-)}$ and $q_n|_{(\tau_n^+,1]}$ are $\mathcal{C}^2$ solutions of
\[
\wt{\o}_n^2 q_n''(\tau)= \n_{q_n} \left(V_\eps^j(q_n^2(\tau)+c_j)|q_n(\tau)|^2\right)-2q_n(\tau).
\]
\item[($iii$)] The energy of $q_n$ is constant in $[0,1]$:
\[
\frac{1}{2}|q_n'(\tau)|^2-\frac{1}{\wt{\o}_n^2}\left(V_\eps^j(q_n^2(\tau)+c_j)-1\right)|q_n(\tau)|^2=\frac{m_j}{\wt{\o}_n^2} \qquad \forall \tau \in [0,1].
\]
\item[($iv$)] The variation of the angle on the constraint tends to $0$ for $n \to \infty$:
\[
\lim_{n \to \infty} |\sigma_n(\tau_n^+)-\sigma_n(\tau_n^-)|=0.
\]
\item[($v$)] The time interval on the constraint tends to $0$ for $n \to \infty$:
\[
\lim_{n \to \infty} (\tau_n^+-\tau_n^-) = 0.
\]
\end{itemize}
\end{lemma}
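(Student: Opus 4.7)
The plan is to exploit the fact that each $q_n$ is a local minimizer of the \emph{regular} Maupertuis-type functional $\wt M$ of Lemma~\ref{nuovo Maupertuis}, subject to the obstacle $|q|\geq\sqrt{\r_n}$, together with the blow-up analysis of Step~3.

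For (i), the $\mathcal{C}^1$ regularity follows by the smoothing argument of Lemma~\ref{u C^1}: if $q_n$ had a corner at some $\tau_*$, one could replace it on a small interval by the (uniquely determined, smooth) minimizing geodesic for the Jacobi metric of $\wt M$ in a strongly convex neighbourhood of $q_n(\tau_*)$, which is admissible because the associated potential $m_j+(V_\eps^j(q^2+c_j)-1)|q|^2$ is smooth and strictly positive on $\{|q|\geq\sqrt{\r_n}\}$; this would strictly decrease $\wt M$, a contradiction. For (ii), on the open set $\{|q_n|>\sqrt{\r_n}\}$ one computes the first variation of $\wt M=4A(q)B(q)$ with $A(q)=\int_0^1|q^\p|^2$ and $B(q)=\int_0^1[m_j+(V_\eps^j(q^2+c_j)-1)|q|^2]$: integration by parts yields $-2B(q_n)q_n^{\p\p}+A(q_n)\bigl(\n_{q_n}[V_\eps^j(q_n^2+c_j)|q_n|^2]-2q_n\bigr)=0$, and dividing by $A(q_n)/2$, together with $\wt\o_n^2=2B(q_n)/A(q_n)$, gives the asserted equation; classical bootstrapping upgrades this to $\mathcal{C}^2$. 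Part~(iii) is the usual Maupertuis energy conservation: taking the inner product of the equation in (ii) with $q_n^\p$ and integrating produces $\tfrac12\wt\o_n^2|q_n^\p|^2-(V_\eps^j(q_n^2+c_j)-1)|q_n|^2=C$, and one more integration over $[0,1]$ combined with the definition of $\wt\o_n^2$ identifies $C=m_j$.

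Part~(v) is a direct computation: since $|q_n|^2\equiv\r_n$ on $[\tau_n^-,\tau_n^+]$, the reparametrization $dt=S_n|q_n|^2\,d\tau$ gives
\[
t_n^+-t_n^- = S_n \r_n (\tau_n^+-\tau_n^-).
\]
Proposition~\ref{prop 2.12} at $\a=1$ yields $t_n^+-t_n^-=O(\r_n^{3/2})$, while Lemma~\ref{su S_n} gives $S_n\to\wt S>0$; hence $\tau_n^+-\tau_n^-=O(\r_n^{1/2})\to 0$.

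Part~(iv) is the main point and relies on the blow-up of Step~3. Writing $u_n-c_j=w_n e^{i\phi_n}$ and $q_n=\kappa_n e^{i\s_n}$ on continuous branches of the angle, the identity $u_n-c_j=q_n^2$ forces $w_n=\kappa_n^2$ and $\phi_n=2\s_n$, so
\[
\s_n(\tau_n^+)-\s_n(\tau_n^-) = \tfrac12\bigl(\phi_n(t_n^+)-\phi_n(t_n^-)\bigr) = \tfrac12\bigl(\bar\phi_n(s_n^+)-\bar\phi_n(s_n^-)\bigr),
\]
where $\bar\phi_n$ denotes the angular variable of the blow-up $v_n$. By Proposition~\ref{convergenza per v_n} one has $\bar\phi_n\to\phi$ in $\mathcal{C}^1_{\mathrm{loc}}$, and $s_n^\pm\to s^\pm$, so the right-hand side converges to $(\phi^+-\phi^-)/2$. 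The Step~3 computation produces $\phi(+\infty)-\phi^+=\phi^--\phi(-\infty)=\pi/(2-\a)=\pi$, while \eqref{**40} gives $\phi(+\infty)-\phi(-\infty)\leq 2\pi$; combined with the monotonicity of $\phi$, these force $\phi^+=\phi^-$, and (iv) follows. The subtlest point is precisely this final matching: the borderline case $\a=1$ saturates the $2\pi$ bound, so the gap $\phi^+-\phi^-$ collapses only in the limit (for $\a\in(1,2)$ the analogous bound is strict and yields a contradiction, hence the different conclusion between Steps~4 and~5). Keeping consistent continuous branches for $\s_n$ and $\bar\phi_n$ across the two independent rescalings acting on the vanishing interval $[\tau_n^-,\tau_n^+]$ (one of factor $S_n\r_n$ into physical time and one of factor $\r_n^{(\a+2)/2}$ into blow-up time) is the main technical obstacle.
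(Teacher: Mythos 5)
Your proposal is correct and follows essentially the same route as the paper: (ii)--(iii) from the first variation and energy identity of the regular functional $\wt{M}$, (v) from the reparametrization $dt=S_n|q_n|^2\,d\tau$ together with Proposition \ref{prop 2.12} and Lemma \ref{su S_n}, and (iv) by halving the angle, passing to the blow-up limit, and combining \eqref{*40} with \eqref{**40} to force $\phi^+=\phi^-$ in the borderline case $\a=1$. The only cosmetic difference is in (i), where the paper simply observes the regularity is immediate (from the $\mathcal{C}^1$ regularity of the non-colliding $u_n$) while you rerun the geodesic-smoothing argument; both are fine.
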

\begin{proof}
The point ($i$) is obvious, the points ($ii$) and ($iii$) are consequence of the variational property of $q_n$, Lemma \ref{nuovo Maupertuis}.\\
($iv$) We can use the results already obtained in the step 3) (recall in particular the expression of $u_n$ in polar coordinates, the definition of the sequence $(v_n)$ and the expression of $v_n$ in polar coordinates, the equations \eqref{*40} and \eqref{**40}). \\
The angle of the function $q_n$ with respect to the origin is exactly half of the angle of $u_n$ with respect to $c_j$. Hence we can we prove that
\[
\lim_{n \to \infty}|\phi_n(t_n^+)-\phi_n(t_n^-)| = 0.
\]
or equivalently
\[
\lim_{n \to \infty} |\bar{\phi}_n(s_n^+)-\bar{\phi}_n(s_n^-)|=|\phi^+ - \phi^-| =0.
\]
From \eqref{*40} and \eqref{**40} we get
\[
2\pi +|\phi^+-\phi^-|\leq 2\pi \Leftrightarrow |\phi^+-\phi^-|=0.
\]
Recall that in the proof of \eqref{*40} and \eqref{**40} we supposed (it is not restrictive) the angle $\phi$ increasing. This is why there was no absolute value.\\
($v$) It is a consequence of the same property for $u_n$, Proposition \ref{prop 2.12}:
\begin{align*}
\tau_n^+ - \tau_n^- &= \int_{\tau_n^-}^{\tau_n^+} d\,\tau = \int_{t_n^-}^{t_n^+} \frac{dt}{S_n|q_n(\tau(t))|^2}= \frac{t_n^+ - t_n^-}{S_n \r_n} \\
&= \frac{O(\r_n^{\frac{2+\a}{2}})}{S_n \r_n} \simeq \frac{\r_n^{\frac{\a}{2}}}{S_n} \to 0
\end{align*}
for $n \to \infty$.
\end{proof}

\begin{prop}
The path $\wt{q}$ is a classical solution of
\beq\label{equazione per q tilda}
\wt{\Omega}^2 \wt{q}\,''(\tau) = \n_{\wt{q}} \left(V_\eps^j(\wt{q}\,^2(\tau)+c_j)|\wt{q}\,^2(\tau)|\right)-2\wt{q}(\tau) \qquad \forall \tau \in [0,1].
\eeq
\end{prop}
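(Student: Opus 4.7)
The strategy is to pass to the limit along the sequence $(q_n)$ in the ODE
\[
\wt{\o}_n^2 q_n''(\tau)= \n_{q_n} \left(V_\eps^j(q_n^2(\tau)+c_j)|q_n(\tau)|^2\right)-2 q_n(\tau),
\]
which, by point $(ii)$ of the previous lemma, is satisfied on $[0,\tau_n^-)\cup(\tau_n^+,1]$. Since $\tau_n^+-\tau_n^-\to 0$ by point $(v)$, up to a subsequence we may assume $\tau_n^\pm\to\tau_0\in[0,1]$, so the ODE will eventually hold on every compact subset of $[0,1]\setminus\{\tau_0\}$.

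First I would establish compactness of $(q_n)$ in $\mathcal{C}^1([0,1])$. From the energy conservation in point $(iii)$, together with the uniform positivity and boundedness of $\wt{\o}_n\to\wt{\Omega}$, the uniform bound on $|q_n|$ (inherited from $|u_n|\le R+\eps$), and the smoothness/boundedness of $V_\eps^j$ in a neighbourhood of $c_j$ granted by \eqref{uniforme limitatezza di V_0}, one immediately gets $\sup_n\|q_n'\|_\infty<+\infty$; this bound holds on the whole interval $[0,1]$, including the obstacle arc $[\tau_n^-,\tau_n^+]$. An Ascoli--Arzel\`a argument, combined with the uniform convergence $q_n\to\wt q$, then yields $\mathcal{C}^0$-convergence of a subsequence and $\wt q\in W^{1,\infty}([0,1])$. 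On any closed subinterval $[a,b]\subset[0,1]\setminus\{\tau_0\}$, for $n$ large enough the ODE holds throughout $[a,b]$, so $(q_n'')|_{[a,b]}$ is uniformly bounded; a second Ascoli--Arzel\`a argument then upgrades the convergence to $\mathcal{C}^1([a,b])$ and, via the ODE itself, to $\mathcal{C}^2([a,b])$.

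With this convergence in hand I would pass to the limit in the ODE on every such $[a,b]$, using that the map $q\mapsto \n_q\!\left(V_\eps^j(q^2+c_j)|q|^2\right)-2q$ is smooth on the image of $\wt q$ (it is no longer singular, since the $c_j$-singularity has been removed by the Levi--Civita transform and the other singularities stay away from the image by \eqref{uniforme limitatezza di V_0}). This proves $\wt q\in\mathcal{C}^2([0,1]\setminus\{\tau_0\})$ and establishes \eqref{equazione per q tilda} away from $\tau_0$. To extend the equation across $\tau_0$, note that the right-hand side $F(\tau):=\wt\Omega^{-2}\bigl[\n_{\wt q}(V_\eps^j(\wt q\,^2+c_j)|\wt q|^2)-2\wt q\bigr]$ is continuous on all of $[0,1]$ because $\wt q\in\mathcal{C}^0$; since $\wt q\in\mathcal{C}^1$ and $\wt q\,''=F$ on the dense set $[0,1]\setminus\{\tau_0\}$, the elementary fact that a $\mathcal{C}^1$ function with continuously extendable second derivative is itself $\mathcal{C}^2$ gives $\wt q\in\mathcal{C}^2([0,1])$ and \eqref{equazione per q tilda} at $\tau_0$ as well.

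The main technical obstacle is the passage across the collapsing obstacle arc $[\tau_n^-,\tau_n^+]$, where the ODE for $q_n$ is not available and where $q_n$ is constrained to the circle $|q_n|=\sqrt{\rho_n}$. The point, however, is that the uniform $\mathcal{C}^1$ bound already obtained via energy conservation is valid also on the obstacle, and that both the length of the obstacle interval (point $(v)$) and the angular variation $|\sigma_n(\tau_n^+)-\sigma_n(\tau_n^-)|$ (point $(iv)$) tend to zero; consequently $\wt q(\tau_0)=0$ and the obstacle disappears in the limit, which is exactly what allows the continuity-based extension of the equation at $\tau_0$ to close the argument.
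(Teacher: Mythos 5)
Your overall route is genuinely different from the paper's. You pass to the limit in the ODE on compact subsets of $[0,1]\setminus\{\tau_0\}$ and then try to patch the equation across $\tau_0$ by a continuity argument. The paper instead exploits the variational structure: from the minimality of the $q_n$ for $\wt{M}$, together with the coercivity and weak lower semi-continuity of $\wt{M}$, it concludes that $\wt{q}$ is itself a local minimizer of $\wt{M}$, hence a weak solution of the Euler--Lagrange equation on the \emph{whole} interval $[0,1]$, and then a classical one by the regularity of the transformed potential. That variational step is precisely what delivers the equation \emph{at} $\tau_0$ without any pointwise matching of derivatives.

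This points to the gap in your argument. In the patching step you write ``since $\wt{q}\in\mathcal{C}^1$ and $\wt{q}\,''=F$ on $[0,1]\setminus\{\tau_0\}$ \dots'', but $\wt{q}\in\mathcal{C}^1([0,1])$ is never established: your compactness argument only yields $\wt{q}\in W^{1,\infty}([0,1])$ globally and $\mathcal{C}^1$-convergence away from $\tau_0$. Continuity of the right-hand side $F$ on $[0,1]$ does \emph{not} force differentiability of $\wt{q}$ at $\tau_0$: the function $\tau\mapsto|\tau-\tau_0|$ satisfies $f''=0$ off $\tau_0$ with perfectly continuous right-hand side, yet has a corner. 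What must be shown is that $\lim_{\tau\to\tau_0^-}\wt{q}\,'(\tau)=\lim_{\tau\to\tau_0^+}\wt{q}\,'(\tau)$, and this is exactly the nontrivial content of the proposition (``the regularized trajectory goes straight through the origin instead of reflecting''). It cannot be obtained by integrating $q_n''$ across the obstacle either, since on $[\tau_n^-,\tau_n^+]$ the ODE fails and the centripetal term $\sqrt{\rho_n}\,(\sigma_n')^2$ blows up like $\rho_n^{-1/2}$. The correct argument — which the paper gives and which you only gesture at in your closing paragraph — is that $q_n'(\tau_n^\pm)$ is \emph{tangent} to the circle $\{|q|=\sqrt{\rho_n}\}$ at the angular positions $\sigma_n(\tau_n^\pm)$, that these angular positions converge to a common limit because $|\sigma_n(\tau_n^+)-\sigma_n(\tau_n^-)|\to 0$ (point ($iv$), which itself rests on the blow-up analysis of Step 3), that the two tangent vectors have the same orientation by the monotonicity of $\sigma_n$ on the constraint, and that their moduli converge to the common value dictated by energy conservation as $|q_n|\to 0$. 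Once you supply that, your patching lemma applies and the argument closes; as written, the crucial step is asserted rather than proved.
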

\begin{proof}
The point ($v$) of the previous lemma implies that the sequences $(\tau_n^-)$ and $(\tau_n^+)$ converge to $\tau_0 \in (0,1)$ such that $\wt{q}(\tau_0)=0$, which corresponds to the unique collision time $t_0 \in (c,d)$ of $\wt{u}$. Since every $q_n$ is $\mathcal{C}^1$, the vectors $q_n(\tau)$ is tangent to the boundary $\{q \in \mathbb{C}: |q|=\sqrt{\r}_n\}$ for every $\tau \in [\tau_n^-,\tau_n^+]$. Moreover, the variation of the angle on the constraint tends to $0$, so that
\[
\lim_{\tau \to \tau_0^-}\wt{q}\,'(\tau)=\lim_{\tau \to \tau_0^+} \wt{q} \,'(\tau),
\]
i.e. $\wt{q}$ passes trough the origin without any change of direction. We know that $q_n$ uniformly converges to $\wt{q}$ over $[0,1]$, and it is not difficult to see that the restrictions $q_n|_{[0,\tau_n^-)}$ and $q_n|_{(\tau_n^+,1]}$ converge to $\wt{q}$ in the $\mathcal{C}^1$-topology (follow the proof of Proposition \ref{convergenza per v_n}).
Next, the minimality of $q_n$, the coercivity and the weak lower semi-continuity of $\wt{M}$ imply that $\wt{q}$ is a local minimizer of $\wt{M}$ itself. As a consequence it is a weak (and, by regularity, strong) solution of
\[
\wt{\Omega}^2 \wt{q}\,'' = \n_{\wt{q}} \left(V_\eps^j(\wt{q}^2+c_j)|\wt{q}^2|\right)-2\wt{q}. \qedhere
\]
\end{proof}

\begin{proof}[Conclusion of the proof of Proposition \ref{alpha=1}]
Let us consider the functions
\[
\wt{q}_1(\tau)=\wt{q}(\tau_0+\tau), \qquad \wt{q}_2(\tau)= -\wt{q}(\tau_0-\tau).
\]
They are both solutions of \eqref{equazione per q tilda} (as far as $q^2$ is concerned, pay attention to the change of sign in the transformation $\n_{\wt{q}} \leadsto \n_{\wt{q}^2}$) with the same initial values. Thanks to the regularity of \eqref{equazione per q tilda}, the uniqueness of the solution for the Cauchy's problem and the definition of the Levi-Civita transform gives
\[
\wt{q}(\tau_0+\tau)=-\wt{q}(\tau_0-\tau) \Rightarrow \wt{u}(t_0+t)=\wt{u}(t_0-t):
\]
if the function $\wt{u}$ has a collision, then necessarily bounce against one centre and comes back along the same trajectory until the point $p_1=p_2$. Letting $x(t)=\wt{u}(\Omega t)$ for $t \in [c/\Omega,d/\Omega]$, the function can be uniquely extended over $[0,1/\Omega]$ as an ejection-collision solution of problem \eqref{P} connecting $x_1$ and $x_2=x_1$ (this uniqueness is a consequence of the uniqueness of the solutions for smooth Cauchy's problem.
\end{proof}

We end this section with some remarks about our peculiar use of the Levi-Civita regularization.

\begin{rem}\label{LC convergenza}

 We proved that, if the minimum of the restriction of $M$ over $K_l$ is achieved over a collision path, then we can find an ejection-collision minimizer in the same class. To do this, we built a minimizing sequence and then we passed to the limit in the Levi-Civita space. Thanks to the regularity of the transformed problem, we obtained an equation satisfied by the limit, and this implied the collision-ejection condition. Actually, the same procedure works if we consider a collision minimizer $u\in K_l^{p_1 p_2}([0,1])$ which is a uniform limit of a sequence $u_n \in K_l^{p_1^n p_2^n}([0,1])$ (of course, necessarily $p_1^n \to p_1$ and $p_2^n \to p_2$ when $n\to \infty$), where $u_n$ is
\begin{itemize}
\item a collisions-free minimizer for $M$ if $p_1^n \neq p_2^n$.
\item a collisions-free minimizer for $M$ if $p_1=p_2$ and the minimum of $M$ over $K_l^{p_1^n p_2^n}$ is achieved over collisions-free paths.
\item an ejection-collision minimizer for $M$ if $p_1^n = p_2^n$ is achieved by a collision path.
\end{itemize}
Up to a subsequence,  because of the uniform convergence,  we can assume that either $u_n$ is collisions-free for every $n \geq n_0$, or $u_n$ is an ejection-collision solution for every $n \geq n_0$.
In this latter case,  the uniform convergence suffices  to imply that $u$ is an ejection collision path (reasoning as  in step 5)).
\end{rem}

\begin{Levi-Civita}\label{regolarizzazione}

As clearly explained in \cite{Kn},  the $N$-center problem admits a global Levi-Civita regularization.  It consists in extending  the pullback of the Jacobi metric on the concrete Riemann surface
\[
\mathcal R=\left\{ (u,Q)\;:\; Q^2=\prod_{j=1}^N (u-c_j)\right\}
\]
to a smooth metric. The projection from $\mathcal R\mapsto \mathbb C$ on the first factor is  a branched  covering of $\mathbb C$ whose branch points $C_j=(c_j,0)$ are of order one and project on the centers $\{c_j\}$.  The Riemann surface $\tilde{ \mathcal R}=\mathcal R\setminus \{C_j\}$ doubly covers the configuration space  $\mathbb C\setminus\{c_j\}$; moreover, there is a unique way of lifting the Jacobi metric to $\tilde{\mathcal R}$ and this extend in an unique way to a smooth metric on $\mathcal R$.  Geodesics on $\mathcal R$ can be classified according with the fundamental group $\pi_1(\mathcal R)$, which is known to be isomorphic to the free group on $N-1$ generators.  The main reason why we choose to use the local L-C transform is that  we  want to keep track of  the topology of the true configuration space, and specially, of the number of self intersections and the rotation vectors with respect to the centres.  This is a common  feature of all the cases $\alpha\in[1,2)$.

This explains why we are lead to swinging back and forth from the configuration space to the Riemann surface.

Of course, also the local Levi-Civita transform induces a regularization of the flow associated with the first order system \eqref{sistema dinamico}. Indeed, let us consider an ejection-collision solution $\wh{y}$ of \eqref{P} starting from $p_0 \in \pa B_R(0)$, coming from an ejection-collision minimizer $\wh{u} \in K_{l}^{p_0 p_0}([0,1])$. The Levi-Civita transform $\wh{q}$ of $\wh{u}$ is a regular solution of \eqref{equazione per q tilda}. Let us define the reparametrization $\wh{\mathfrak{q}}(\tau):=\wh{q}(\wt{\Omega} \tau)$; it is a regular solution of
\beq\label{lc1}
\mathfrak{q}''(\tau)=\n_{\mathfrak{q}} \left(V_\eps^j(\mathfrak{q}(\tau)^2+c_j)|\mathfrak{q}(\tau)|^2 \right)-2\mathfrak{q}(\tau)
\eeq
with energy $m_j$, starting from $\wh{\mathfrak{x}}_0 \in \Lambda(x_0)$ and arriving to $ \wt{\mathfrak{x}}_0 \in \Lambda(x_0)$, with $\wh{\mathfrak{x}}_0 \neq \wh{\mathfrak{x}}_0$.
Now let us consider a collisions-free solution $y_l$ of problem \eqref{P}, with initial data $(p_1,\dot{x}_l(0))$ close to the initial data of $\wh{y}$. This solution comes from a collisions-free minimizer $u_l \in K_l^{p_1 p_2}([0,1])$ of $M$, for some $p_2 \in \pa B_R(0)$ (see Remark \ref{viceversa}). Even in this case we can consider the Levi-Civita transform $\Lambda(u_l)$ (centred in $c_j$), given by
\begin{gather*}
u_l(t)=q_l^2(\tau(t))+c_j\\
dt=S |q_l(\tau)|^2\,d\tau\\
S=\int_0^1 \frac{dt}{|u_l(t)-c_j|}.
\end{gather*}
Each component $q_l \in \Lambda(u_l)$ is a local minimizer of $\wt{M}$ at a positive level. Setting,
\[
\o_l^2:= \frac{\int_0^1 \left[ m_j + \left(V_\eps^j(q_l^2+c_j)-1\right)|q_l|^2\right]}{\frac{1}{2}\int_0^1 |q_l|^2},
\]
we infer
\begin{align*}
\o_q^2 q_l''(\tau)= \n_q \left(V_\eps^j(q_l(\tau)^2+c_j)|q_l(\tau)|^2 \right)-2q_l(\tau) \qquad \forall \tau \in [0,1]\\
\frac{1}{2}|q_l'(\tau)|^2-\frac{1}{\wt{\o}_n^2}\left(V_\eps^j(q_l^2(\tau)+c_j)-1\right)|q_l(\tau)|^2=\frac{m_j}{\wt{\o}_n^2} \qquad \forall \tau \in [0,1].
\end{align*}
The reparametrization $\mathfrak{q}_l(\tau)=q_l(\o_l \tau)$ is a solution of equation \eqref{lc1} with energy $m_j$ and initial data close to those of $\wh{\mathfrak{q}}$. This is a smooth equation, hence the continuous dependence of the solutions holds true: since the initial values of $\mathfrak{q}_l$ and of $\wh{\mathfrak{q}}$ are close together, these solutions stays close together in a right neighbourhood of $0$. In particular it is not difficult to see that this continuous dependence holds true if the solutions stays in the set which corresponds to $B_R(0)$ trough the Levi-Civita transform.
\begin{center}
\begin{tikzpicture}[>=stealth]
\draw[->,font=\footnotesize] (20:2.5cm).. controls (0.8,0.7) and (-0.8,-0.7).. node[very near start,fill=white]{$\wh{\mathfrak{q}}$} (200:2.5cm);
\draw[->>,font=\footnotesize] (23:2.5cm).. controls (0.7,0.9) and (-0.7,-0.6)..node[near end, above]{$\mathfrak{q}_l^+$}(197:2.5cm);
\draw[<<-,font=\footnotesize] (17:2.5cm).. controls (0.7,0.5) and (-0.7,-0.8)..node[near start, below]{$\mathfrak{q}_l^-$}(203:2.5cm);
\filldraw[font=\footnotesize] (0,0) circle (1pt) node[anchor=north]{$\mathfrak{c}_3$}
          (0.8,0.8) circle (1pt) node[anchor=south]{$\mathfrak{c}^+_2$}
          (1.2,1.2) circle (1pt) node[anchor=south]{$\mathfrak{c}^+_1$}
          (0.7,1.7) circle (1pt) node[anchor=north]{$\mathfrak{c}^+_4$}
          (0,1.1) circle (1pt) node[anchor=south]{$\mathfrak{c}^+_5$}
(-0.8,-0.8) circle (1pt) node[anchor=north]{$\mathfrak{c}^-_2$}
          (-1.2,-1.2) circle (1pt) node[anchor=north]{$\mathfrak{c}^-_1$}
          (-0.7,-1.7) circle (1pt) node[anchor=north]{$\mathfrak{c}^-_4$}
          (0,-1.1) circle (1pt) node[anchor=north]{$\mathfrak{c}^-_5$};
\end{tikzpicture}
$\qquad$
\begin{tikzpicture}[>=stealth]
\draw[dashed] (0,0) circle (2cm);
\draw[<->] (20:2cm)..controls (1,0.5) and (0.8,-0.1)..(0.3,-0.3);
\draw (20:2cm) node[anchor=west]{$\wh{x}$};
\draw(22:2cm)..controls (1,0.6) and (0.4,-0.1)..(0.4,-0.1);
\draw(0.4,-0.1)..controls (0,-0.4) and (0,-0.4)..(0.4,-0.5);
\draw[<<-] (18:2cm)..controls (1,0.3) and (0.8,-0.2)..node[below]{$x_l$}(0.4,-0.5);
\filldraw[font=\footnotesize] (0.3,-0.3) circle (1pt) node[anchor=north]{$c_3$}
          (0.5,0.3) circle (1pt) node[anchor=south]{$c_2$}
          (0,0.5) circle (1pt) node[anchor=south]{$c_1$}
          (-0.3,-0.3) circle (1pt) node[anchor=north]{$c_4$}
          (-0.5,0.3) circle (1pt) node[anchor=south]{$c_5$};
\draw (135:1.5cm) node[anchor=south]{$R$};
\end{tikzpicture}

\end{center}
The picture represents a comparison between the Levi-Civita space (on the left), centred in $\mathfrak{c}_3=\Lambda(c_3)$, and the configuration space (on the right) of the true $N$-centre problem. We have the ejection-collision solution $\wh{x}$, with its collision with $c_3$. In the Levi-Civita space, the corresponding path $\wh{\mathfrak{q}}$ solves the regular differential equation \eqref{lc1} (we fix an orientation of this solution given by the arrow). If we take one solution $\mathfrak{q}_l^+$ with similar initial data, we can apply the continuous dependence theorem: hence $\wh{\mathfrak{q}}$ is close (in the uniform topology) to $\mathfrak{q}_l$. If we had chosen the inverse orientation for $\wh{\mathfrak{q}}$, we would got the $\mathfrak{q}_l^-$. Coming back to the physical space, this means that if we take a solution with initial data close to those of a collision-ejection one, a continuous dependence exists (despite the lack of regularity of the potential!).\\
Let us note that, with the exception of $c_3$, each points of $\R^2$ corresponds to two points of the Levi-Civita space. For instance $\mathfrak{c}_i^\pm \in \Lambda(c_i)$ for $i=1,2,4,5$. This is due to the fact that in the Levi-Civita space two points which are poles apart are identified when we come back to the physical space. Therefore, our ejection-collision solution correspond to a path crossing the origin and showing a central symmetry, connecting two points which are identified in $p_0 \in \R^2$. We could choose both the orientations for $\wh{\mathfrak{q}}$, and the identification would give the same path in the physical space.\\
Let us also note that the angles with respect to the point $c_3$ in the physical space are cut by half in the Levi Civita one.
\end{Levi-Civita}

\section{A finite dimensional reduction}\label{incollamento}

In this section we glue the paths alternating outer and inner arcs in order to construct periodic orbits of the $N$-centre problem on the whole plane.  Thus, our building blocks will be the fixed end trajectories  found in the discussions of sections \ref{dinamica esterna} and \ref{dinamica interna}.  In order to obtain smooth  junctions, we are going to use a variational argument.   To do this, we need to know that the time interval of each of them is conveniently bounded. This is the object of the following lemmas. We recall that $\eps_2$, $\eps_3$ have been introduced in Theorem \ref{teorema 0.1} and Theorem \ref{teo dinamica interna} respectively

\begin{lemma}\label{bound per i tempi esterni}
Let $0<\eps<\min\{\eps_2,\eps_3\}$, let $p_0,p_1 \in \pa B_{R}(0)$, $|p_1-p_0| < \d$; let \\
$y_{\text{ext}}(\cdot\,;p_0,p_1;\eps)$ the "exterior" solution of \eqref{P}, found in Theorem \ref{teorema 0.1}; let $[0,T_{\text{ext}}(p_0,p_1;\eps)]$ its domain.\\
Then there exist $C_1,C_2>0$ such that
\[
C_1 \leq T_{\text{ext}}(p_0,p_1;\eps) \leq C_2 \qquad \forall (p_0,p_1) \in \left(\pa B_R(0)\right)^2.
\]
\end{lemma}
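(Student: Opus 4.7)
The plan is to exploit the $\mathcal{C}^1$ dependence of $T_{\text{ext}}$ on its arguments (established by the implicit function theorem in Lemma \ref{lemma 0.2}) and close with a compactness argument on the parameter space.

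First, I would recall that by the construction in Theorem \ref{teorema 0.1}, the time interval $T_{\text{ext}}(p_0,p_1;\eps)$ coincides with the second component $\eta_2(\eps,p_1)$ of the implicit function $\eta$ produced in the proof of Lemma \ref{lemma 0.2} relative to the reference configuration $(\dot\theta_0,T,\eps,p_1)=(0,\bar T,0,p_0)$. The IFT yields $\eta\in\mathcal{C}^1$ with $\eta(0,p_0)=(0,\bar T)$, where $\bar T>0$ is the return time of the rectilinear brake orbit of the $\alpha$-Kepler problem at energy $-1$ issued from any point of $\pa B_R(0)$ (the same $\bar T$ for every such point, by rotational invariance). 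Crucially, as explicitly noted at the end of the proof of Lemma \ref{lemma 0.2}, the size of the neighbourhood on which $\eta$ is defined can be chosen independently of the base point $p_0\in\pa B_R(0)$, since the Jacobian $J_{(\dot\theta_0,T)}\Psi(0,\bar T,0,p_0)$ depends on $p_0$ only through a rotation. Allowing $p_0$ itself to vary, we thus obtain a jointly continuous extension
\[
T_{\text{ext}}\colon K \to (0,+\infty), \qquad K := \bigl\{(p_0,p_1,\eps)\in \pa B_R(0)\times \pa B_R(0)\times[0,\eps^*] : |p_1-p_0|\le \delta\bigr\},
\]
for any $\eps^*<\min\{\eps_2,\eps_3\}$, with $T_{\text{ext}}(p_0,p_0;0)=\bar T>0$.

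The set $K$ is compact and $T_{\text{ext}}$ is strictly positive on $K$: for $\eps>0$ this is built into the statement of Theorem \ref{teorema 0.1}, while at $\eps=0$ the value is $\bar T>0$. A continuous strictly positive function on a compact set attains a strictly positive minimum $C_1$ and a finite maximum $C_2$; restricting to the given $\eps$ (which may be assumed to lie in $(0,\eps^*)$, otherwise shrink $\eps^*$) and to the slice $|p_1-p_0|<\delta$ yields the claimed uniform bounds.

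The only delicate point in this argument is the uniformity in $p_0$ of the IFT neighbourhood on which $\eta$ is defined; but this is precisely what is recorded in the last paragraph of the proof of Lemma \ref{lemma 0.2}, and flows from the rotational invariance of the unperturbed $\alpha$-Kepler problem. Beyond that, the proof is a routine continuity/compactness statement and requires no further analysis of the dynamics.
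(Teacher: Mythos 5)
Your proposal is correct and follows essentially the same route as the paper, whose proof is a one-line appeal to continuous dependence and the perturbative construction of $y_{\text{ext}}$; you simply make explicit that $T_{\text{ext}}$ is the second component of the implicit function $\eta$ from Lemma \ref{lemma 0.2}, that its domain can be taken uniform in $p_0$ by rotational invariance of the unperturbed problem, and that continuity plus compactness of the parameter set yields the two-sided bounds. One could shortcut the final compactness step by noting that $\eta$ takes values in the fixed bounded neighbourhood $\Theta'\times J$ of $(0,\bar{T})$ with $\inf J>0$, but your argument is equally valid.
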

\begin{proof}
It is a straightforward consequence of the continuous dependence of the solutions by initial data and of the construction of $y_{\text{ext}}(\cdot\,;p_0,p_1;\eps)$ as a perturbed solution.
\end{proof}

\begin{lemma}\label{bound per i tempi}
Let $0<\eps<\min\{\eps_2,\eps_3\}$, let $p_1,p_2 \in \pa B_{R}(0)$; let $y_{P_j}(\cdot\,;p_1,p_2;\eps)$ be a solution of \eqref{P}, coming from a minimizer $u_{P_j}(\cdot\,;p_1,p_2;\eps) \in K_{P_j}^{p_1 p_2}([0,1])$ of $M$, for some $P_j \in \mathcal{P}$; let $[0,T_{P_j}(p_1,p_2;\eps)]$ be its domain.\\
Then there exist $C_3,C_4>0$ such that
\[
C_3 \leq T_{P_j}(p_1,p_2;\eps) \leq C_4 \qquad \forall (p_1,p_2) \in \left(\pa B_R(0)\right)^2,\ \forall P_j \in \mathcal{P}.
\]
\end{lemma}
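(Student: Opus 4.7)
The proof reduces to a uniform two-sided bound on the scaling factor $\omega=\omega(p_1,p_2,P_j)$ that passes from the minimizer $u_{P_j}\in K_{P_j}^{p_1 p_2}([0,1])$ of the Maupertuis functional $M=M_{-1}$ to the physical trajectory $y_{P_j}(t)=u_{P_j}(\omega t)$, for then $T_{P_j}(p_1,p_2;\eps)=1/\omega$. Combining the definition of $\omega$ in \eqref{omega} with the energy integral $\tfrac{1}{2}|\dot u_{P_j}|^2=(V_\eps(u_{P_j})-1)/\omega^2$ supplied by Lemma \ref{conservazione dell'energia} yields the algebraic identity
\[
\omega^2 \;=\; \frac{\bigl(\int_0^1 (V_\eps(u_{P_j})-1)\,dt\bigr)^2}{M(u_{P_j})}.
\]
The plan is therefore to bound numerator and denominator from above and below uniformly in $(p_1,p_2,P_j)\in(\partial B_R(0))^2\times\mathcal P$.

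Two of the four required estimates are already in hand. Lemma \ref{lemma 1.4} provides $M(u_{P_j})\geq C_M>0$, and the pointwise inequality $V_\eps(y)-1\geq C>0$ for $|y|\leq R$ established inside its proof gives $\int_0^1 (V_\eps(u_{P_j})-1)\,dt\geq C_V>0$. The same proof also contains the observation that every admissible path must traverse a distance of at least $R-\eps$, whence $\|\dot u_{P_j}\|_{L^2}^2\geq C_{\dot u}>0$ uniformly.

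The missing ingredient is a uniform upper bound $M(u_{P_j})\leq M_0$. To obtain it I would produce, for every triple $(p_1,p_2,P_j)$, an explicit competitor $\bar u\in K_{P_j}^{p_1 p_2}([0,1])$ with uniformly bounded action. Since $\mathcal P$ is finite, I fix once and for all, for each partition $P_j$, a smooth reference loop $\gamma_{P_j}$ lying in $\bar B_R(0)\setminus\{c_1,\ldots,c_N\}$, based at a chosen point $q_{P_j}\in\partial B_R(0)$, separating the centres according to $P_j$ and staying at uniform positive distance from all $c_k$. Given the endpoints $p_1,p_2$, I prepend and append two short arcs along $\partial B_R(0)$ connecting $p_1\to q_{P_j}$ and $q_{P_j}\to p_2$; after reparametrisation on $[0,1]$ the concatenation gives $\bar u$. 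The whole construction stays uniformly away from the centres and uses a compact set of endpoints, so both $\|\dot{\bar u}\|_{L^2}$ and $\int_0^1 V_\eps(\bar u)$ are uniformly bounded; this yields $M(\bar u)\leq M_0$ and hence $M(u_{P_j})\leq M_0$ by minimality. Finally, the upper bound
\[
\int_0^1(V_\eps(u_{P_j})-1)\,dt\;=\;\frac{2\,M(u_{P_j})}{\|\dot u_{P_j}\|_{L^2}^2}\;\leq\;\frac{2M_0}{C_{\dot u}}
\]
follows from the preceding estimates.

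Substituting the four bounds into the displayed identity gives $0<c_1\leq\omega^2\leq c_2$, whence $C_3\leq T_{P_j}(p_1,p_2;\eps)\leq C_4$ uniformly in $(p_1,p_2,P_j)$. The only nontrivial step of this plan is the construction of the reference loops $\gamma_{P_j}$; it is elementary because $\mathcal P$ is finite and each partition can manifestly be realised by a smooth curve sitting well away from the centres, so that neither the $\alpha\in(1,2)$ nor the $\alpha=1$ case causes any difficulty at the level of the competitor.
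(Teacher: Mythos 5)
Your proposal is correct and follows essentially the same route as the paper: both obtain the two-sided bound on $\omega$ by combining the uniform lower bounds on $M$, on $\int_0^1(V_\eps(u)-1)$ and on $\|\dot u\|_{L^2}^2$ with a uniform upper bound on $M$ produced by an explicit competitor built from a fixed reference path for each partition, extended to arbitrary endpoints by concatenating arcs of $\pa B_R(0)$. The only cosmetic differences are that you package the conclusion through the single identity $\omega^2=\bigl(\int_0^1(V_\eps(u)-1)\bigr)^2/M(u)$ and base your reference curve at one point rather than at a fixed pair of endpoints; neither affects the argument.
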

\begin{proof}
Letting $T_{P_j}(p_1,p_2;\eps)=1/\o_{P_j}(p_1,p_2;\eps)$, we recall that
\[
\o_{P_j}(p_1,p_2;\eps)=\frac{\int_0^1 \left(V_\eps(u_{P_j}(t;p_1,p_2;\eps))-1 \right)\,dt}{\int_0^1 |\dot{u}_{P_j}(t;p_1,p_2;\eps)|^2\,dt}.
\]
Therefore we can prove that there exist $C_3,C_4 >0$ such that
\[
\frac{1}{C_4} \leq \o_{P_j}(p_1,p_2;\eps) \leq \frac{1}{C_3} \qquad \forall (p_1,p_2) \in \left(\pa B_R(0)\right)^2, \ \forall P_j \in  \mathcal{P}.
\]
Since $\mathcal{P}$ is a discrete and finite set, we can fix $P_j \in \mathcal{P}$ and apply the same reasoning for every $j$. Let us fix $\wt{p}_1, \wt{p}_2 \in \pa B_R(0)$. There exist $\wt{u}_* \in \wh{K}_{P_j}^{\wt{p}_1 \wt{p}_2}([0,1])$ and $C,\mu>0$ such that
\begin{itemize}
\item $|\dot{\wt{u}}_*|=C$ for every $t \in [0,1]$.
\item $|\wt{u}_*(t)-c_k| \geq \mu$ for every $t \in [0,1]$, for every $k =1\ldots,...N$.
\end{itemize}
There holds
\[
M(\wt{u}_*)= \frac{1}{2}\int_0^1 |\dot{\wt{u}}_*|^2 \int_0^1 \left(V_\eps(\wt{u}_*)-1\right) = \frac{C^2}{2} \int_0^1 \left(\sum_{k=1}^N \frac{m_k}{\a|\wt{u}_*-c_k|^\a} +h\right)
\leq \frac{C^2}{2}\left(\frac{M}{\a \mu^\a}+h\right)=:C_5,
\]
with $C_5>0$ since the ball of radius $R$ is a subset of $\{V_\eps > 1\}$.
Also, for every $u \in \bigcup_{p_1,p_2 \in \pa B_{R}(0)} K_{P_j}^{p_1 p_2}([0,1])$,
\beq\label{sd1}
\int_0^1 \left(V_\eps(u)-1\right) \geq \frac{M}{\a\left(\bar{R}+\max_{1 \leq k \leq N} |c_k| \right)} + h=:C_6
\eeq
with $C_4>0$ for our choice of $R$.
For a minimizer $\wt{u}=\wt{u}_{P_j}(\cdot\,;\wt{p}_1,\wt{p}_2; \eps) \in K_{P_j}^{\wt{p}_1 \wt{p}_2}([0,1])$, one has
\[
M(\wt{u})=\frac{1}{2}\int_0^1|\dot{\wt{u}}|^2 \int_0^1 \left(V_\eps(\wt{u})-1\right) \leq M(\wt{u}_*),
\]
which together with \eqref{sd1} gives
\beq\label{sd2}
\int_0^1 |\dot{\wt{u}}|^2 \leq \frac{2C_5}{C_6}.
\eeq
Starting from this bound for one single minimizer, it is not difficult to obtain a uniform bound (with respect to the ends) for every minimizers. Indeed if $(p_1,p_2) \neq (\wt{p}_1,\wt{p}_2)$, we consider the  path
\[
\wh{u}_*(t):=\begin{cases}
                 \sigma_{R}(t;p_1,\wt{p}_1) & t \in [0,1/3]\\
                 \wt{u}_*(3t-1) & t \in (1/3,2/3]\\
                 \sigma_{R}(t;\wt{p}_2,p_2) & t \in (2/3,1],
                 \end{cases}
\]
where, for $p_*,p_{**} \in \pa B_R(0)$, $\sigma_R(\cdot\,;p_*,p_{**})$ is the shorter (in the Euclidean metric) arc of $\pa B_R(0)$ connecting $p_*$ and $p_{**}$ with constant angular velocity. As far as the angular velocity is concerned, it is easy to see that it is uniformly bounded with respect to $p_*,p_{**}$. This, together with the assumptions on $\wt{u}_*$, implies that also the velocity of $\wh{u}_*$ is bounded in $[0,1]$, and
\[
M(\wh{u}_*)\leq \frac{C^2}{2} \int_0^1 \left(V_\eps(\wh{u}_*)-1\right) = C+3C \int_0^1 \left(V_\eps(\wt{u}_*)-1\right)=:C_7.
\]
This (positive) constant does not depend on the ends $p_1$ and $p_2$, so that for the family of the minimizers there holds
\beq\label{sd3}
M(u_{P_j}(\cdot\,;p_1,p_2;\eps)) \leq C_7 \qquad \forall p_1,p_2 \in \pa B_{R}(0).
\eeq
Collecting \eqref{sd1} and \eqref{sd3} we obtain
\beq\label{sd7}
\int_0^1 |\dot{u}_{P_j}(\cdot\,;p_1,p_2;\eps)|^2 \leq \frac{2 C_7}{C_6}=:C_8 \qquad \forall p_1,p_2 \in \pa B_R(0).
\eeq
A few more observations: as we have already repeated many times, the paths in
$\bigcup_{p_1,p_2 \in \pa B_R(0)} K_{P_j}^{p_1 p_2}([0,1])$ are uniformly non-constant since they have to cover at least a distance $R-\eps>0$. Thus there exists $C_9>0$ such that
\beq\label{sd4}
\|\dot{u}\|_2^2 \geq C_9 \qquad \forall u \in \bigcup_{p_1,p_2 \in \pa B_{\bar{R}}(0)} K_{P_j}^{p_1 p_2}([0,1]).
\eeq
From \eqref{sd3} and \eqref{sd4} it follows
\beq\label{sd5}
\int_0^1 \left(V_\eps(u_{P_j}(\cdot\,;p_1,p_2;\eps))-1\right) \leq \frac{C_4}{C_6}=:C_{10} \qquad \forall p_1,p_2 \in \pa B_R(0).
\eeq
Collecting \eqref{sd1}, \eqref{sd7}, \eqref{sd4}, \eqref{sd5} we obtain
\[
C_9 \leq \inf_{p_1,p_2 \in \pa B_R(0)} \|\dot{u}_{P_j}(\cdot\,;p_1,p_2;\eps)\|_2^2 \leq \sup_{p_1,p_2 \in \pa B_R(0)} \|\dot{u}_{P_j}(\cdot\,;p_1,p_2;\eps)\|_2^2 \leq C_8
\]
and
\[
C_6 \leq \inf_{p_1,p_2 \in \pa B_R(0)} \int_0^1 \left(V_\eps(u_{P_j}(\cdot\,;p_1,p_2;\eps))-1\right)
\leq \sup_{p_1,p_2 \in \pa B_R(0)} \int_0^1 \left(V_\eps(u_{P_j}(\cdot\,;p_1,p_2;\eps))-1\right) \leq C_{10}.
\]
The assertion is now an immediate consequence of the definition of $\omega_{P_j}(p_1,p_2;\eps)$.
\end{proof}

For $0<\eps<\min\{\eps_2,\eps_3\}$, $n \in \mathbb{N}$, let us fix a finite sequence of partitions $(P_{k_1},P_{k_2}, \ldots, P_{k_n})\in \mathcal{P}^n$.\\
We define
\[
D=\left\lbrace (p_0, \ldots, p_{2n}) \in \left(\pa B_R(0)\right)^{2n+1}: |p_{2j+1} - p_{2j}| \leq \d \right.
\text{ for $j =0,\ldots, n-1$}, \  p_{2n}=p_0 \Big\},
\]
where $\d$ has been introduced in Theorem \ref{teorema 0.1}.
Let $(p_0, \ldots, p_{2n}) \in D$. For every $j \in \{0,\ldots,n-1\}$, we can apply Theorem \ref{teorema 0.1} to obtain the uniquely determined path
\begin{equation}\label{notazione estesa per y}
y_{2j}(t):=y_{\text{ext}}(t;p_{2j},p_{2j+1};\eps)
= r_{\text{ext}}(t;x_{2j},x_{2j};\eps) \exp\{i \t_{\text{ext}}(t;x_{2j},x_{2j};\eps)\} \qquad t \in [0,T_{2j}],
\end{equation}
where $T_{2j}:= T_{\text{ext}}(p_{2j},p_{2j+1};\eps)$. Namely
\[\begin{cases}
\ddot{y}_{2j}(t)= \n V_{\eps}(y_{2j}(t)) \qquad &t \in [0,T_{2j}],\\
\frac{1}{2}|\dot{y}_{2j}(t)|^2-V_\eps(y_{2j}(t))=-1 \qquad &t \in [0,T_{2j}],\\
|y_{2j}(t)|>R \qquad &t \in (0,T_{2j}),\\
y_{2j}(0)=p_{2j}, \quad y_{2j}(T_{2j})=p_{2j+1}.
\end{cases}\]
We recall that $y_{2j}$ depends on $p_{2j}$ and $p_{2j+1}$ in a $\mathcal{C}^1$ manner.
On the other hand, for every $j=0,\ldots,n-1$, we can find trough Corollary \ref{dinamica interna partizioni} a path
\begin{equation}\label{notazione estesa per v}
y_{2j+1}(t):=y_{P_{k_{j+1}}} (t;p_{2j+1},p_{2j+2};\eps)
= r_{P_{k_{j+1}}}(t;p_{2j+1},p_{2j+2};\eps) \exp\{i \t_{P_{k_{j+1}}}(t;p_{2j+1},p_{2j+2}\eps)\} \qquad t \in [0,T_{2j+1}],
\end{equation}
where $T_{2j+1}:=T_{P_{k_{j+1}}}(p_{2j+1},p_{2j+2};\eps)$. Namely $y_{2j+1}$ is a path of $K_{P_{k_{j+1}}}$ such that
\[\begin{cases}
\ddot{y}_{2j+1}(t)= \n V_\eps(y_{2j+1}(t)) \qquad & t \in \left[0,T_{2j+1}\right],\\
\frac{1}{2}|\dot{y}_{2j+1}(t)|^2-V_\eps(y_{2j+1}(t))=-1 \qquad & t \in \left[0,T_{2j+1}\right],\\
|y_{2j+1}(t)|<R \qquad & t \in \left(0,T_{2j+1}\right),\\
y_{2j+1}(0)=p_{2j+1}, \quad y_{2j+1}(T_{2j+1})=p_{2j+2}.
\end{cases}\]
We know that if $\a \neq 1$ or $\a=1$ and $p_{2j+1} \neq p_{2j+2}$ then $y_{2j+1}$ is collisions-free, while if $\a=1$ and $p_{2j+1}=p_{2j+2}$ $y_{2j+1}$ can be an ejection-collision solution. Due to the invariance under reparametrizations, $y_{2j+1}$ is a local minimizer of the functional $L\left(\left[0,T_{2j+1}\right];\cdot\right)$. \\
Let us set $\mathfrak{T}_k:=\displaystyle{\sum_{j=0}^k T_j}$, $k=0,\ldots, 2n-1$. We define
\[
\g_{(p_0,\ldots,p_{2n})}(s):= \begin{cases}
                            y_0(s) & s \in [0,\mathfrak{T}_0] \\
                            y_1(s-\mathfrak{T}_0) & s \in \left[\mathfrak{T}_0,\mathfrak{T}_1\right]\\
                            \vdots \\
                            \text{$\displaystyle{y_{2n-2}\left(s- \mathfrak{T}_{2n-3}\right)}$} &  \text{$\displaystyle{s \in \left[\mathfrak{T}_{2n-3},\mathfrak{T}_{2n-2}\right]}$} \\
                            \text{$\displaystyle{y_{2n-1}\left(s- \mathfrak{T}_{2n-2}\right)}$} & \text{$\displaystyle{s \in \left[\mathfrak{T}_{2n-2},\mathfrak{T}_{2n-1}\right]}$}.
                            \end{cases}
\]
The function $\g_{(p_0,\ldots,p_{2n})}$ is a piecewise differentiable $\mathfrak{T}_{2n-1}$-periodic function; to be precise, if $\a \in (1,2)$ it is a classical solution of the $N$-centre problem \eqref{P} with energy $-1$ in $[0,\mathfrak{T}_{2n-1}] \setminus \left\{0,\mathfrak{T}_0,\ldots,\mathfrak{T}_{2n-1}\right\}$; in general, it is not $\mathcal{C}^1$ in $\left\{0,\mathfrak{T}_0, \ldots, \mathfrak{T}_{2n-1}\right\}$, but the right and left limits in these points are finite, so that it is in $H^1$. If $\a=1$ it is possible also that $\g_{(p_0,\ldots,p_{2n})}$ has a finite number of collisions. Let us observe that, thanks to Lemmas \ref{bound per i tempi esterni} and \ref{bound per i tempi}, we are sure that the time interval of $\g_{(p_0,\ldots,p_{2n})}$ is bounded above and bounded below by a positive constant for every $(p_0,\ldots,p_{2n})$, so that the period is neither trivial, nor infinite.

\vspace{1 em}

Finally, we introduce the function $F_{((P_{k_1},\ldots,P_{k_n});\eps)}:D \to \R$ defined by
\[
F(p_0, \ldots, p_{2n}) := L\left([0,\mathfrak{T}_{2n-1}]; \gamma_{(p_0,\ldots,p_{2n})}\right)
= \sum_{j=0}^{2n-1} \int_0^{T_j} \sqrt{\left(V(y_j)-1\right)\left|\dot{y}_j\right|^2} = \sum_{j=0}^{2n-1} L\left([0,T_j]; y_j\right).
\]
We point out that $F$ depends on $(P_{k_1},\ldots,P_{k_n})$ and $\eps$ trough the dependence on these quantity of $\{y_j\}$. Since we fixed $(P_{k_1},\ldots,P_{k_n})$ and $\eps$, we will omit subscripts and instead of $F_{((P_{k_1},\ldots,P_{k_n});\eps)}$ we will simply write $F$.\\
The main goal of this section is to prove the following theorem.

\begin{teor}\label{teorema principale 2}
There exists $(p_0, \ldots, p_{2n}) \in D$ which minimizes $F$.
There exists $\bar{\eps}>0$ such that, if $\eps \in (0,\bar{\eps})$, then the associated function $\g_{(p_0,\ldots,p_{2n})}$ is a periodic solution of the $N$-centre problem \eqref{P} with energy $-1$. The value $\bar{\eps}$ depends neither on $n$, nor on $(P_{k_1},\ldots,P_{k_n}) \in \mathcal{P}^n$. Moreover:
\begin{itemize}
\item[($i$)] if $\alpha \in (1,2)$ then $\g_{(p_0,\ldots,p_{2n})}$ is collisions-free.
\item[($ii$)] if $\alpha=1$ there are three possibilities:
\begin{itemize}
\item[$a$)] $\g_{(p_0,\ldots,p_{2n})}$ is collisions-free.
\item[$b$)] $\g_{(p_0,\ldots,p_{2n})}$ has a collision with one centre $c_j$, covers a certain trajectory, rebounds against a centre $c_k$ (it can occur $c_j=c_k$) and come back along the same trajectory. This is possible just if $n$ is even and $(P_{k_1},\ldots, P_{k_n})$ is equivalent to $(P_{k_1}',\ldots, P_{k_n}')$ such that
\begin{gather*}
P_{k_1}'=Q_j \in \mathcal{P}_1, \quad  P_{j_{n/2+1}}' =Q_k \in \mathcal{P}_1 \quad \text{and (if $n > 2$)}\\ P_{k_{n}}'=P_{k_2}',\ P_{k_{n-1}}'=P_{k_3}',\ \ldots, \ P_{k_{n/2+2}}'=P_{k_{n/2}}'.
\end{gather*}
\item[$c$)] $\g_{(p_0,\ldots,p_{2n})}$ has a collision iwith one centre $c_j$, covers a certain path, "rebounds" against the surface $\left\{x \in \R^2: V_\eps(x)=1\right\}$ with null velocity and come back along the same trajectory. This is possible just if $n$ is odd and $(P_{k_1},\ldots, P_{k_n})$ is equivalent to $(P_{k_1}',\ldots, P_{k_n}')$ such that
\begin{gather*}
P_{k_1}'=Q_j \in \mathcal{P}_1 \quad \text{and (if $n>1$)} \\
P_{k_n}'=P_{k_2}', \ P_{k_{n-1}}'=P_{k_3}',\  \ldots, \ P_{k_{(n+1)/2+1}}'=P_{k_{(n+1)/2}}.
\end{gather*}
\end{itemize}
\end{itemize}
\end{teor}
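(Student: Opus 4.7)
The set $D$ is a closed subset of the compact product $(\pa B_R(0))^{2n+1}$, hence compact; it is non-empty, since the constant choice $p_j\equiv p$ for some $p\in \pa B_R(0)$ satisfies all constraints. The map $F$ is continuous on $D$: each of its summands $L([0,T_j];y_j)$ depends continuously on the endpoints thanks to Theorem~\ref{teorema 0.1} for the outer arcs, and thanks to Corollary~\ref{dinamica interna partizioni} together with Remark~\ref{LC convergenza} for the inner arcs (including the limiting ejection-collision case, where the Levi-Civita regularization provides continuous dependence on the endpoints). Therefore $F$ attains its infimum at some $(p_0^*,\ldots,p_{2n}^*)\in D$.

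\textbf{The minimizer lies in the interior of $D$.} The plan is to choose $\bar{\eps}$ small enough that the constraints $|p_{2j+1}^*-p_{2j}^*|\le\d$ are all strict. The key observation is that, as $\eps\to 0^+$, the outer arcs provided by Theorem~\ref{teorema 0.1} converge uniformly to brake orbits of the $\a$-Kepler problem, whose Jacobi length is minimized in that class precisely when the endpoints coincide. Using the $\mathcal{C}^1$-dependence of the outer arcs on the endpoints, together with the uniform length bounds of Lemmas~\ref{bound per i tempi esterni} and \ref{bound per i tempi}, one deduces that $q\mapsto L\bigl(y_{\text{ext}}(\cdot;p,q;\eps)\bigr)$ has a strict interior minimum on $\{|q-p|\le\d\}$, with a quantitative coercivity estimate in $|q-p|$, uniform for $\eps\in(0,\bar{\eps})$. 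Hence, if some $|p_{2j+1}^*-p_{2j}^*|$ equalled $\d$, sliding that endpoint slightly along $\pa B_R(0)$ toward $p_{2j}^*$ would strictly decrease the outer contribution, while the bounded perturbation of the two adjacent inner contributions is dominated when $\eps<\bar{\eps}$. This would contradict minimality. Since the argument uses only $\d$ and the universal constants of the length bounds, $\bar{\eps}$ is independent of $n$ and of $(P_{k_1},\ldots,P_{k_n})$. I expect this to be the main obstacle, as it requires a quantitative coercivity estimate uniform in $\eps$ and in the sequence of partitions.

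\textbf{First variation and $\mathcal{C}^1$ junctions.} At an interior minimizer, $F$ is differentiable in each $p_j^*$: for collision-free junctions this follows from the $\mathcal{C}^1$-dependence of geodesic arcs on their endpoints in a totally normal neighbourhood of the Jacobi metric; for a possibly ejection-collision inner arc, the Levi-Civita regularization described in Remark~\ref{regolarizzazione} restores differentiability after a smooth change of variable. By the first-variation formula for the Jacobi length, the critical-point equation in the tangential direction at $p_j^*$ reads
\[
\sqrt{V_\eps(p_j^*)-1}\,\bigl\langle \dot y_j(0)-\dot y_{j-1}(T_{j-1}),\,w\bigr\rangle=0
\]
for every $w\in T_{p_j^*}\pa B_R(0)$. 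Hence the two velocity vectors agree in their tangential components. Their moduli agree by the energy identity $|\dot y|^2=2(V_\eps(y)-1)$, and their radial components have the same sign since at $p_{2j+1}^*$ both the outer and the inner arc are directed inward, while at $p_{2j}^*$ both are directed outward. Together these facts give $\dot y_j(0)=\dot y_{j-1}(T_{j-1})$ at every junction, so $\g_{(p_0^*,\ldots,p_{2n}^*)}$ is of class $\mathcal{C}^1$. Since on each arc it already solves $\ddot y=\n V_\eps(y)$ classically, uniqueness for the Cauchy problem (applied in configuration space away from the centres, and in the Levi-Civita chart across any ejection-collision junction) promotes it to a classical periodic solution of \eqref{P}.

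\textbf{Classification of collisions.} When $\a\in(1,2)$, each inner arc is collision-free by Corollary~\ref{dinamica interna partizioni}$(i)$, and so is $\g$. When $\a=1$, the only way a collision can appear is along an inner arc $y_{2j+1}$ with $p_{2j+1}^*=p_{2j+2}^*$ and $P_{k_{j+1}}\in\mathcal{P}_1$ (case $(ii$-$c)$ of Corollary~\ref{dinamica interna partizioni}); by Proposition~\ref{alpha=1} such an arc is reflection-symmetric about its collision time. The $\mathcal{C}^1$ matching just established propagates this reflectional symmetry to the adjacent arcs, and by induction to the whole periodic curve. If $n$ is even, closing up under the symmetry requires a second centre-collision arc at the diametrically opposite index, which accounts for case $(ii$-$b)$ of the present theorem and for the palindromic identities on the $(P_{k_\cdot}')$. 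If $n$ is odd, parity prevents a second centre-collision, and the only way to close the orbit compatibly with the symmetry is that the outer arc half-way through the period is a genuine brake orbit tangent to the Hill surface $\{V_\eps=1\}$ with vanishing velocity, giving case $(ii$-$c)$ together with its palindromic condition on the $(P_{k_\cdot}')$.
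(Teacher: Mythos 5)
Your overall architecture --- compactness and continuity of $F$, interiority of the minimizer, first variation giving $\mathcal{C}^1$ junctions, reflection symmetry for the $\a=1$ collision cases --- coincides with the paper's, and the first-variation and classification parts are essentially correct. The genuine gap is in the interiority step. You assert that if $|p_{2j+1}^*-p_{2j}^*|=\d$, then sliding $p_{2j+1}^*$ toward $p_{2j}^*$ strictly decreases the outer contribution while ``the bounded perturbation of the adjacent inner contributions is dominated''. But moving $p_{2j+1}$ changes the adjacent inner arc's length at \emph{first order} in the displacement (only one inner arc meets $p_{2j+1}$, by the way, not two), at a rate equal to the tangential component of $\dot y_{2j+1}(0)$, whose magnitude is a priori the full speed $\sqrt{2(V_\eps(p_{2j+1})-1)}$ --- exactly the same order as the rate of decrease of the outer length. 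A bound on the \emph{size} of the inner perturbation is therefore irrelevant; you must compare two competing first-order rates, and nothing in your argument shows the inner rate is the smaller one. The paper closes precisely this point: since the inner minimizer must thread the ball $B_\eps(0)$ containing all the centres, its angular momentum is $o(\eps)$ (by Proposition \ref{su Keplero}), so $\dot{\t}_{P_{k_1}}(0;p_1,p_2;\eps)\to 0$ uniformly as $\eps\to 0^+$; meanwhile the outer arc, a perturbation of a Kepler arc with conserved angular momentum and endpoints at distance $\d>0$, has $\dot{\t}_{\text{ext}}(T_{\text{ext}})$ bounded away from $0$. The difference of the two tangential velocities then has a definite sign, giving $\frac{\pa F}{\pa p_1}[\f]<0$ for $\f$ pointing toward $p_0$, and this is what makes $\bar{\eps}$ independent of $n$ and of the partition sequence. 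Without the vanishing-angular-momentum input your step does not close.

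A secondary issue: your claim that $q\mapsto L\bigl(y_{\text{ext}}(\cdot\,;p,q;\eps)\bigr)$ has a strict interior minimum at $q=p$ with a quantitative coercivity estimate uniform in $\eps$ is itself unproven --- at $q=p$ the first derivative vanishes by reflection symmetry, so you would need a second-variation (absence of conjugate points) computation to justify it. The paper avoids this entirely by working only with the sign of the first derivative at $|q-p|=\d$. I would rewrite the interiority step along the paper's lines rather than trying to repair the coercivity claim.
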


\begin{rem}
Theorem \ref{esistenza di soluzioni periodiche} is a trivial consequence of this result, see also Remark \ref{da eps a h}: given $0<\eps<\bar{\eps}$, for every $n \in \mathbb{N}$ and for every $(P_{k_1},\ldots,P_{k_n}) \in \mathcal{P}^n$ we found a periodic solution $\g_{(p_0,\ldots,p_{2n})}$ of \eqref{P}, whose behaviour is determined by $(P_{k_1},\ldots,P_{k_n})$. Let us set $\bar{h}=-\zeta(\bar{\eps})$. Now, given $\bar{h}<h<0$, for every $n \in \mathbb{N}$ and $(P_{k_1},\ldots,P_{k_n}) \in \mathcal{P}^n$ we obtain a periodic solution $x_{(P_{k_1},\ldots,P_{k_n})}$ of the problem \eqref{1} with energy $h$ via Proposition \ref{problema normalizzato}. As we pointed out at the end of section \ref{sezione 2}, the behaviour of $x_{(P_{k_1},\ldots,P_{k_n})}$ and of $\g_{(p_0,\ldots,p_{2n})}$ is the same.
\end{rem}

\begin{proof}
The proof requires several steps. It is essential to keep in mind the notations introduced above.
\paragraph{\textbf{Step 1) Existence of a minimizer.}} The set $D$ is compact since it is a closed subset of the compact set $\left(\pa B_R(0)\right)^{2n+1}$. It remains to show that $F$ is continuous. Let $((p_0^m, \ldots, p_{2n}^m))$ a convergent sequence in $D$: $\lim_m (p_0^m, \ldots, p_{2n}^m)=(p_0, \ldots,p_{2n}) \in D$. Let us consider
\[
F(p_0^m, \ldots, p_{2n}^m) = \sum_{j=0}^{n-1} L\left([0,T_{2j}^m]; y_{2j}^m\right) + \sum_{j=0}^{n-1} L\left([0,T_{2j+1}^m];y_{2j+1}^m\right).
\]
Here $y_{2j}^m$ (resp. $y_{2j+1}^m$) is defined as $y_{2j}$ (resp. $y_{2j+1}$); it has boundary values $p_{2j}^m$, $p_{2j+1}^m$ (resp. $p_{2j+1}^m$, $p_{2j+2}^m$), and domain $[0,T_{2j}^m]$ (resp. $[0,T_{2j+1}^m]$).  \\
The first sum is continuous in $D$, since the function $y_{2j}^m$ depends in a differentiable way on its ends. As far as the second sum is concerned, we can consider the first addendum and repeat the reasoning for the others. For $x_*,x_{**} \in \pa B_R(0)$ we define $\sigma_R(\cdot\,;p_*,p_{**})$ as the shorter (in the Euclidean metric) arc of $\pa B_R(0)$ connecting $p_*$ and $p_{**}$, parametrized in $[0,1]$.
Obviously,
\[
\forall \l > 0 \exists \varrho > 0: |p_*-p_{**}|<\varrho \Rightarrow L\left([0,1];\sigma_R\left(\cdot\,;p_*,p_{**}\right)\right)<\l.
\]
Since $y_1$ minimizes $L$ among the paths connecting $p_1$ and $p_2$ which separates the centres according to $P_{k_1}$, we have
\begin{equation}\label{*50}
L\left([0,T_1];y_1\right) \leq L\left([0,T_1^m];y_1^m\right) +
+ L\left([0,1];\sigma_R\left(\cdot\,;p_1^m,p_1\right)\right)  + L\left([0,1];\sigma_R\left( \cdot\,;p_2^m,p_2\right) \right).
\end{equation}
Here we use the invariance of $L$ under reparametrizations, so that it is possible to compare the values of $L$ for functions defined over different time-intervals.\\
Analogously, the minimal property of $y_1^m$ implies
\begin{equation}\label{**50}
L\left([0,T_1^m];y_1^m\right) \leq L\left([0,T_1];y_1\right)+
 + L\left([0,1];\sigma_R\left(\cdot\,;y_1^m,y_1\right)\right)  + L\left([0,1];\sigma_R\left( \cdot\,;y_2^m,y_2\right) \right).
\end{equation}
Passing to the limit for $m \to \infty$ in \eqref{*50} and \eqref{**50}, we get
\[
\lim_{m \to \infty} L\left([0,T_1^m];y_1^m\right) = L\left([0,T_1];y_1\right).
\]
Therefore $F$ is continuous on $D$, and has a minimum.
\paragraph{\textbf{Step 2) $F$ has partial derivatives in $D^\circ$.}} We point out that we are not proving that $F$ is differentiable in $D^\circ$. Let us fix $k \in \{0,\ldots,2n\}$, $(p_0,\ldots,p_{2n}) \in D^\circ$.
\[
F(p_0, \ldots, p_{2n}) = \sum_{j=0}^{n-1} L\left([0,T_{2j}]; y_{2j}\right) + \sum_{j=0}^{n-1} L\left([0,T_{2j+1}];y_{2j+1}\right).
\]
The first sum is differentiable with respect to $p_k$, since $y_{2j}$ depends smoothly on its ends for every $j$. As far as the second sum, just one term depends on $p_k$. It is the same to consider $k$ even or odd, so we define $k=2j+1$ for some $j=0,\ldots,n-1$. We introduce a strongly convex neighbourhood $U$ of the point $p_{2j+1}$ with respect to the Jacobi's metric. We can assume that there exists $t_* \in (0, T_{2j+1}]$ such that
\[
y_{2j+1}(t_*) \in \left(\pa U \cap y_{2j+1}\left([0,T_{2j+1}]\right)\right) \quad \text{and} \quad t \in [0,t^*) \Rightarrow y_{2j+1}(t) \in U^\circ
\]
There exists a unique minimizing geodesic $\wt{y}$ for the Jacobi metric, parametrized with respect to the arc length, connecting $p_{2j+1}$ and $p_{2j+1}(t_*)$ in a certain time $\bar{t}$ and lying in $U$, which depends smoothly on its ends. For the uniqueness and the minimality of $y_{2j+1}$, this geodesic has to be a reparametrization of $y_{2j+1}$. Hence
\[
L\left([0,\bar{t}]; \wt{y}\right) = L\left([0,t^*];y_{2j+1}\right),
\]
and the differentiability of the right side with respect to $p_{2j+1}$ is a consequence of the differentiability of the left one. Now it is sufficient to note that
\[
L\left([0,T_{2j+1}];y_{2j+1}\right) = L\left([0,t^*];y_{2j+1}\right)+ L\left([t^*, T_{2j+1}];y_{2j+1}\right);
\]
hence the left side is differentiable with respect to $p_{2j+1}$.

\paragraph{\textbf{Step 3) Computation of the partial derivatives.}}
Let us assume $k=1$ to ease the notation (for the other $k$ the computation is exactly the same). For $(p_0,\ldots,p_{2n}) \in D^\circ$, there holds
\beq\label{51*}
\frac{\pa F}{\pa p_1}\left(p_0,\ldots,p_{2n}\right) = \frac{\pa}{\pa p_1} L\left([0,T_0];y_0\right)+\frac{\pa }{\pa p_1}L\left([0,T_1]; y_1\right).
\eeq
We point out that this is a linear operator from the tangent space $T_{p_1} (\pa B_R(0))$ into $\R$. Let us consider the first term in the right side.
\begin{multline*}
\frac{\pa}{\pa p_1} L\left( [0,T_0] ; y_0 \right)  = d L \left( [0,T_0] ;y_0 \right) \left[ \frac{\pa y_0}{\pa p_1}\right]
 = \frac{1}{\sqrt{2}}\int_0^{T_0} \left[\left\langle \dot{y}_0, \frac{d}{dt}\frac{\pa y_0}{\pa p_1}\right\rangle + \left\langle \n V_\eps(y_0),\frac{\pa y_0}{\pa p_1}\right\rangle  \right]  \\
= \frac{1}{\sqrt{2}} \int_0^{T_0} \left[ \left\langle -\ddot{y}_0 + \n V_\eps(y_0),\frac{\pa y_0}{\pa p_1}\right\rangle  \right] + \frac{1}{\sqrt{2}} \left[\left\langle \dot{y}_0(t),\frac{\pa y_0}{\pa p_1}(t)\right\rangle \right]_{t=0}^{t=T_0}
  = \frac{1}{\sqrt{2}}\left[\left\langle \dot{y}_0(t),\frac{\pa y_0}{\pa p_1}(t)\right\rangle \right]_{t=0}^{t=T_0}
\end{multline*}
In the second equality we use the conservation of the energy for $y_0$, in the last one we use the fact that $y_0$ is a classical solution of the motion equation.\\
Every $\f \in T_{p_1}(\pa B_R(0))$ is of the form
\[
\f= \beta'(0) \quad \text{for some $\beta:I \to \pa B_R(0)$ of class $\mathcal{C}^1$, $\beta(0)=p_1$};
\]
in the next step it will be useful to notice that, if $p_1= R \exp{\{i \t_1\}}$, then $T_{p_1}(\pa B_R(0))$ is spanned by $i \exp{\{i \t_1\}}$.\\
For $\f=\beta'(0) \in T_{p_1}(\pa B_R(0))$ there holds
\[
\frac{\pa }{\pa p_1} y_0(0)[\beta'(0)] = \lim_{\l \to 0} \frac{y_{\text{ext}}(0;x_0,\beta(\l);\eps)-y_{\text{ext}}(0;x_0,x_1;\eps)}{\l}= 0
\]
and
\[
\frac{\pa }{\pa p_1} y_0(T_0)[\beta'(0)]
= \lim_{\l \to 0} \frac{y_{\text{ext}}(T_{\text{ext}}(p_0,\beta(\l);\eps);p_0,\beta(\l);\eps)-y_0(T_0;p_0,p_1;\eps)}{\l}= \beta'(0),
\]
where $y_{\text{ext}}(\cdot\,;p_0,\beta(l);\eps)$ is the exterior solution of \eqref{P} connecting $p_0$ and $\beta(\l)$ in time $T_{\text{ext}}(p_0,beta(\l);\eps)$ (recall that, by the proof of Theorem \ref{teorema 0.1}, we can find such a solution if $\beta(\l)$ is sufficiently close to $p_0$, even if it is not on $\pa B_R(0)$). Therefore, for every $\f \in T_{p_1}(\pa B_R(0))$
\[
\frac{\pa}{\pa p_1} L\left( [0,T_0] ; y_0\right) \left[\f\right] =\frac{1}{\sqrt{2}}\left\langle \dot{y}_0\left(T_0\right), \f \right\rangle .
\]
As far as the second term in the right side of the \eqref{51*} is concerned, we can repeat the same computation obtaining
\[
\frac{\pa}{\pa p_1} L\left( [0,T_1] ; y_1\right) [\f]= -\frac{1}{\sqrt{2}}\left\langle \dot{y}_1\left(0\right), \f \right\rangle \qquad \forall \f \in T_{p_1}(\pa B_R(0)).
\]

\paragraph{\textbf{Step 4) The minimizer is an inner point of $D$.}}
Assume by contradiction that there exists $j \in \{0,\ldots,n-1\}$ such that $|p_{2j}-p_{2j+1}|=\d$. We can produce an explicit variation of $p_{2j+1}$ such that $F$ decreases along this variation, in contradiction with the minimality of $(p_0,\ldots,p_{2n})$. It is not restrictive assume $j=0$, the same argument applies for the other cases. In this step we use the notations \eqref{notazione estesa per y} and \eqref{notazione estesa per v} introduced above. The function $y_0(\cdot)=y_{\text{ext}}(\cdot \,;p_0,p_1;\eps)$ is a solution of
\[
\begin{cases}
\ddot{y}(t)=\n V_\eps(y(t))\\
\text{ $\displaystyle{ y(0)=p_0 = R e^{i\t_0}}$}, & \text{$\displaystyle{\dot{y}(0)= \dot{r}_\eps e^{i \t_0}+ R \dot{\t}_0 e^{i \t_0}}$,}
\end{cases}
\]
where
\[
\dot{\t}_0=\dot{\t}_0(p_0,p_1;\eps), \qquad \dot{r}_\eps=\dot{r}_\eps(\dot{\t}_0),
\]
and exists $T_0=T_{\text{ext}}(p_0,p_1;\eps)$ such that $y(T_0;p_0,p_1;\eps)=p_1$ (see section \ref{dinamica esterna}).\\
As far as the function $y_1=y_{P_{k_1}}(\cdot\,;p_1,p_2;\eps)$ is concerned, it solves
\[
\begin{cases}
\ddot{y}(t)=\n V_\eps(y(t))\\
\text{$\displaystyle{ y(0)=p_1 = R e^{i\t_1}}$}, & v(T_1)= p_2 = R e^{i \t_2}\\
\text{$\displaystyle{y \in \wh{K}_{P_j}\left([0,T_1]\right)}$}.
\end{cases}
\]
For $\eps=0$ the angular momentum of the exterior solution is constant, so that
\[
\dot{\t}_0(p_0,p_1;\eps)= \dot{\t}_{\text{ext}}(T_{\text{ext}}(p_0,p_1;0);p_0,p_1;0).
\]
Let us set $\dot{\bar{\t}}_0:=\dot{\t}_0(p_0,p_1;0)$, $\bar{T}_0:=T_{\text{ext}}(p_0,p_1;0)$, and assume $\dot{\bar{\t}}_0>0$ (the case $\dot{\bar{\t}}_0<0$ is analogue). The continuous dependence of the solutions by vector field and initial data implies that
\[
 \forall \lambda > 0 \ \exists \eps_4 > 0 : \ 0<\eps<\eps_4 \Rightarrow \left|\dot{\t}_{\text{ext}}\left(T_{\text{ext}}(p_0,p_1;\eps);p_0,p_1;\eps\right)- \dot{\bar{\t}}_0\right| < \lambda.
 \]
With the choice $\lambda= \dot{\bar{\t}}_0/2$ we get
\beq\label{*52}
0< \frac{1}{2} \dot{\bar{\t}}_0< \dot{\t}_{\text{ext}}\left(T_{\text{ext}}(p_0,p_1;\eps);p_0,p_1;\eps\right) < \frac{3}{2} \dot{\bar{\t}}_0 \qquad \text{if $0<\eps<\eps_4$}.
\eeq
Coming back to the function $y_{P_{k_1}}(\cdot\,;p_1,p_2;\eps)$, we define $S=S(p_1,p_2;\eps) \in \R^+$ by
\[
t \in (0,S) \Rightarrow \frac{R}{2}<|y_1(t)| <R \text{ and $|y_1(S)|=R$}.
\]
The energy integral makes this quantity uniformly bounded from below by a positive constant $C$, as function of $\eps$.\\
Letting $\eps \to 0^+$ the centres collapse in the origin, so that for the angular momentum of $y_{P_{k_1}}(\cdot\,;p_1,p_2;\eps)$ it results
\[
\mathfrak{C}_{y_{P_{k_1}}(\cdot\,;p_1,p_2;\eps)}\left(t\right) = o(\eps)  \quad \text{for} \quad \eps \to 0^+,
\]
uniformly in $[0,C]$ (recall Proposition \ref{su Keplero}). Consequently,
\[
\forall \lambda > 0 \ \exists \eps_5 > 0: 0<\eps<\eps_5 \Rightarrow \left|\dot{\t}_{P_{k_1}}\left(0;p_1,p_2;\eps\right)\right|<\lambda.
\]
The choice $\lambda=\dot{\bar{\t}}_0/3$ gives
\beq\label{**52}
\left| \dot{\t}_{P_{k_1}}(0;p_1,p_2;\eps) \right| < \frac{1}{3} \dot{\bar{\t}}_0 \qquad \text{if $0<\eps<\eps_5$}.
\eeq
To conclude, we consider a variation $\f \in T_{p_1}(\pa B_R(0))$ of $p_1$ directed towards $p_0$ on $\pa B_R(0)$: since we are assuming $\dot{\bar{\t}}_0>0$, this variation is a positive multiple of $-i\exp{\{i \t_1\}}$. Collecting \eqref{*52}, \eqref{**52} and the step 3, we obtain that if $0<\eps<\min\{\eps_2,\eps_3,\eps_4,\eps_5\}=: \bar{\eps}$, then
\begin{align*}
\frac{\pa F}{\pa p_1} &(p_0,\ldots,p_{2n})[\f]
=\frac{CR}{\sqrt{2}}\left\langle  \left(\dot{\t}_{\text{ext}}\left(T_{\text{ext}}(p_0,p_1;\eps);p_0,p_1;\eps\right) - \dot{\t}_{P_{k_1}}\left(0;p_1,p_2;\eps\right)\right) i e^{i \t_1}, -i e^{i \t_1}\right\rangle\\
& <\frac{CR}{\sqrt{2}}\left(\frac{\dot{\bar{\t}}_0}{3}- \frac{\dot{\bar{\t}}_0}{2}\right)<0,
\end{align*}
against the minimality of $(p_0,\ldots,p_{2n})$. We point out that $\bar{\eps}$ is independent on $n \in \mathbb{N}$ and on $(P_{k_1},\ldots,P_{k_n}) \in \mathcal{P}^n$.

\paragraph{\textbf{Step 5) Regularity of the minimizers.}} Let $(p_0, \ldots, p_{2n}) \in D^\circ$ be a minimizer of $F$. Since $(p_0,\ldots,p_{2n})$ is an inner point of $D$, the existence of the partial derivatives implies that
\[
\frac{\pa F}{\pa p_k}\left(p_0,\ldots,p_{2n}\right)=0 \qquad \forall k=0,\ldots,2n.
\]
We consider for instance $k=2j+1$. From step 3) we know that for all $\f \in T_{p_k}(\pa B_R(0))$
\[
\frac{1}{\sqrt{2}}\left\langle \dot{y}_{2j}\left(T_{2j}\right) - \dot{y}_{2j+1}\left(0\right) ,\f\right\rangle =0.
\]
The tangent space $T_{p_k}(\pa B_R(0))$ is spanned by a single vector, $i e^{i \t_{2j+1}}$. We deduce
\[
|\dot{y}_{2j}\left(T_{2j}\right)| \cos{\wh{(\dot{y}_{2j}(T_{2j}), ie^{i \t_{2j+1}}})}  =  |\dot{y}_{2j+1}\left(0\right)| \cos{\wh{(\dot{y}_{2j+1}(0), ie^{i \t_{2j+1}}})}
\]
Here $\wh{(\dot{y}_{2j}(T_{2j}), ie^{i \t_{2j+1}})}$, (resp. $\wh{(\dot{y}_{2j+1}(0), ie^{i \t_{2j+1}})}$) denotes the angle between the vectors $\dot{y}_{2j}(T_{2j})$ and $ie^{i \t_{2j+1}}$ (resp. $\dot{y}_{2j+1}(0)$ and $ ie^{i \t_{2j+1}}$). As a consequence of the conservation of the energy
\beq\label{101*}
|\dot{y}_{2j}\left(T_{2j}\right)|=|\dot{y}_{2j+1}\left(0\right)|
\eeq
so that $\cos{\wh{(\dot{y}_{2j}(T_{2j}), ie^{i \t_{2j+1}}})} = \cos{\wh{(\dot{y}_{2j+1}(0), ie^{i \t_{2j+1}}})}$. Both $\dot{y}_{2j}\left(T_{2j}\right)$ and $\dot{y}_{2j+1}\left(0\right)$ point towards the interior of $B_{R/2}(0)$, so that
\beq\label{101**}
\wh{(\dot{y}_{2j}(T_{2j}), ie^{i \t_{2j+1}}})=\wh{(\dot{y}_{2j+1}(0), ie^{i \t_{2j+1}}}).
\eeq
Collecting \eqref{101*} and \eqref{101**} we can conclude that $\dot{y}_{2j}(T_{2j})=\dot{y}_{2j+1}(0)$; hence
\[
\text{$\left(p_0, \ldots, p_{2n}\right) \in D$  is a minimizer of $F$ $\Rightarrow$ $\gamma_{\left(p_0, \ldots, p_{2n}\right)} \in \mathcal{C}^1\left([0,\mathfrak{T}_{2n-1}]\right)$}.
\]

\paragraph{\textbf{Step 6) Conclusion of the proof.}} Let $(p_0, \ldots, p_{2n})$ be a minimizer of $F$ in $D^\circ$. If $\a \in (1,2)$ the function $\gamma_{(p_0,\ldots,p_{2n})}$ is a classical solution of the $N$-centres problem with energy $-1$ in $[0,\mathfrak{T}_{2n-1}] \setminus \left\{0,\mathfrak{T}_0,\ldots,\mathfrak{T}_{2n-1}\right\}$, and it is of class $\mathcal{C}^1$ in the interval $[0,\mathfrak{T}_{2n-1}]$. Since
\[
\gamma_{(p_0,\ldots, p_{2n})}(0) = \gamma_{(p_0,\ldots, p_{2n})}(\mathfrak{T}_{2n-1}),
\qquad \dot{\gamma}_{(p_0,\ldots, p_{2n})}(0) = \dot{\gamma}_{(p_0,\ldots, p_{2n})}(\mathfrak{T}_{2n-1}),
\]
it can be defined over all $\R$ by periodicity. If we prove that it is of class $\mathcal{C}^2$, we can say that $\gamma_{(p_0,\ldots, p_{2n})}$ is a classical periodic solution and the proof is complete. Let us fix $k=2j+1$, $j \in \{0,\ldots,n-1\}$ (for $k$ even the same reasoning applies). It results
\begin{multline*}
\lim_{t \to \mathfrak{T}_{2j+1}^-} \ddot{\gamma}_{(p_0,\ldots,p_{2n})}(t) = \lim_{t \to T_{2j+1}^-} \ddot{y}_{2j+1}(t) = \lim_{t \to T_{2j+1}^-} \n V(y_{2j+1}(t)) = \\
= \lim_{t \to 0^+} \n V(y_{2j+2}(t)) = \lim_{t \to 0^+} \ddot{y}_{2j+2}(t) = \lim_{t \to \mathfrak{T}_{2j+1}^+} \ddot{\gamma}_{(p_0,\ldots,p_{2n})}(t);
\end{multline*}
this completes the proof for $\a \in (1,2)$. If $\a=1$ it is possible that $\g_{(p_0,\ldots,p_{2n})}$ is collisions-free; in such a case the same line of reasoning leads to alternative ($ii$)-$a$) in Theorem \ref{teorema principale 2}. If a collision occur, we aim at showing that necessarily we are in cases ($ii$)-$b$) or ($ii$)-$c$). From Corollary \ref{dinamica interna partizioni}, a necessary condition for the presence of collisions is the existence of $k_j \in \mathcal{P}_1$ for some $j \in \{1,\ldots,n\}$; by possibly applying the right shift a number of times, it is not restrictive to assume that $j=1$.
First of all we prove that $\g_{(p_0,\ldots,p_{2n})}$ has to bounce again against a centre or against the surface $\{y \in \R^2: V_\eps(y)=1\}$. Let $t^*$ its  first collision time. Since $v_1$ is an ejection-collision trajectory, $\g_{(p_0,\ldots,p_{2n})}$ has the same property:
\beq\label{102**}
\g_{(p_0,\ldots,p_{2n-1})}(t^*+t)= \g_{(p_0,\ldots,p_{2n-1})}(t^*-t) \qquad \forall t \in \R;
\eeq
this is a consequence of the uniqueness of the solutions for the Cauchy's problem with initial point different from a singularity of the potential.
On the other hand, since $\g_{(p_0,\ldots,p_{2n-1})}$ has period $\mathfrak{T}_{2n-1}$, it has a reflection symmetry also with respect to $t^*+\mathfrak{T}_{2n-1}/2$. This second reflection can be smooth just if $\dot{\g}_{(p_0,\ldots,p_{2n})}(t^* + \mathfrak{T}_{2n-1}/2)=0$, namely if
\(
V_\eps\left(\g_{(p_0,\ldots,p_{2n})}\left(t^*+\frac{\mathfrak{T}_{2n-1}}{2}\right)\right) =1;
\)
otherwise $t^* + \mathfrak{T}_{2n-1}/2$ has to be another collision instant. In conclusion, we note that the reflection symmetry of the solution impose some symmetry restrictions on the sequence $(P_{k_1},\ldots,P_{k_n})$, which we stated in Theorem \ref{esistenza di soluzioni periodiche}.
\end{proof}

\section{Symbolic dynamics}\label{dinamica simbolica}

In this section we fix $\a \in [1,2)$ and $h \in (\bar{h},0)$.
Let us rewrite some partial results obtained for the normalized problem (energy $-1$ with parameter $\eps \in (0,\bar{\eps})$) in term of the "original" $N$-centre problem (to find solution of \eqref{1} with energy $h$).
From Corollary \ref{h--epsilon} we detect a unique $\eps\in(0,\bar{\eps})$ such that $h=\zeta(\eps)$. In section \ref{dinamica esterna} we found a solution $y_{\text{ext}}(\cdot\,;p_0,p_1;\eps)$ of \eqref{P} which stays outside $\pa B_R(0)$, and connects two points $p_0,p_1 \in \pa B_R(0)$ if their distance is smaller then $\d$. Via Proposition \ref{problema normalizzato} we get a correspondent solution $x_{\text{ext}}(\cdot\,;x_0,x_1;h)$ for equation \eqref{1} with energy $h=\zeta(\eps)$, defined over an interval $[0,T_{\text{ext}}(x_0,x_1;h)]$. This solution connects $ x_0,x_1 \in \pa B_{\bar{R}}$ close together (whose distance is smaller then $\bar{\d}$), too, and stay outside $\pa B_{\bar{R}}(0)$.
In section \ref{dinamica interna} we found a solution $y_{P_j}(\cdot\,;p_1,p_2;\eps)$ of \eqref{P} connecting $p_1,p_2 \in \pa B_R(0)$, which comes from a minimizer $u$ of the Maupertuis' functional (with energy $-1$ and potential $V_\eps$) in the class $K_{P_j}^{p_1 p_2}([0,1])$. Via Proposition \ref{problema normalizzato} we get a correspondent solution $x_{P_j}(\cdot\,;x_1,x_2;h)$ for equation \eqref{1} with energy $h=\zeta(\eps)$, connecting $x_1,  x_2 \in \pa B_{\bar{R}}(0)$, and defined over an interval $[0,T_{P_j}(x_1,x_2;h)]$. We set $T_{P_j}(x_1,x_2;h)=1/\omega(x_1,x_2;h)$. As we mentioned in Remark \ref{viceversa}, $x_{P_j}(\cdot\,;x_1,x_2;h)$ is a reparametrization of a critical point $\mfu_{P_j}(\cdot\,;x_1,x_2;h)$ of the Maupertuis' functional (with energy $h$ and with potential $V$) at a positive level. To be precise, since there is a correspondence between the space of the original problem \eqref{1} and the space of the normalized problem \eqref{P}, the path $\mfu_{P_j}(\cdot\,;x_1,x_2;h)$ is a minimizer of $M_h$ in $\mathfrak{K}_{P_j}^{x_1 x_2}([0,1])$, which is the closure in the weak topology of $H^1$ of
\begin{multline*}
\wh{\mathfrak{K}}_{P_j}^{x_1 x_2}([0,1]):=\left\{ \mathfrak{v} \in H^1\left([0,1],\R^2\right): \mfv(0)=x_1, \mfv(1)=x_2, |\mfv(t)| \leq \bar{R} \text{ and }\right.\\
 \mfv(t) \neq c_j \text{ for every $t \in [0,1]$, for every $j \in \{1,\ldots,N\}$,} \\
 \left.\text{$\mfv$ separates the centres according to the partition $P_j$}\right\};
\end{multline*}
namely in the correspondence \eqref{1}$\leftrightsquigarrow$\eqref{P} there holds
\[
\wh{\mathfrak{K}}_{P_j}^{x_1 x_2}([0,1]) \leftrightsquigarrow \wh{K}_{P_j}^{x_1 x_2}([0,1]) \quad \mathfrak{K}_{P_j}^{x_1 x_2}([0,1]) \leftrightsquigarrow K_{P_j}^{x_1 x_2}([0,1]).
\]

In what follows \emph{we consider $h \in (\bar{h},0)$ and fixed}. Hence we will omit the dependence on $h$ for the pieces of solutions of equation \eqref{1}, to ease the notation. As we stated in Corollary \ref{symbolic dynamic}, Theorem \ref{esistenza di soluzioni periodiche} enables us to characterized the dynamical system of the $N$-centre problem restricted on the energy shell
\[
\mathcal{U}_h=\left\{(x,v) \in \R^2\setminus \{c_1,\ldots,c_N\} \times \R^2: \frac{1}{2}|v|^2-V(x) = h\right\}
\]
with a symbolic dynamics, where the symbols are the element of $\mathcal{P}$.
Let us rewrite the Hamilton's equations
\beq\label{Hamilton}
 \begin{cases}
  \dot{x}(t)=v(t)\\
\dot{v}(t)=\n V(x(t)).
 \end{cases}
\eeq
Such a system defines the vector field
\begin{align*}
X:&\R^2\setminus \{c_1,\ldots,c_N\} \times \R^2 \to T(\R^2\setminus \{c_1,\ldots,c_N\}) \times \R^2\\
 &(x, v) \mapsto (v, \n V(x)),
\end{align*}
which in turns generates the flow
\begin{align*}
\f^t:&\R^2\setminus \{c_1,\ldots,c_N\} \times \R^2 \to \R^2\setminus \{c_1,\ldots,c_N\} \times \R^2 \\ &(x_0,v_0)\mapsto (x(t;x_0,v_0),v(t;x_0,v_0)).
\end{align*}
It associates with $(x_0,v_0)$ the solution of \eqref{Hamilton} having initial value $(x(0)=x_0,v(0)=p_0)$ evaluated at time $t$, and it is well defined for $t$ in an open neighbourhood of $0$. In general the flow is not complete (i.e. given $(x_0,v_0)$ the solution $(x(t;x_0,v_0),v(t;x_0,v_0))$ is not defined for every $t \in \R$), due to the collisions, but we can complete it with the agreement that if there exists $t_*\in \R$ such that $x(\cdot\,;x_0,v_0)$ has a collision at $t_*$, then we extend the corresponding solution as an ejection-collision solution:
\[
\f^{t_*+t}(x_0,v_0):=\f^{t_*-t}(x_0,v_0) \quad \forall t \in \R.
\]
This implies in particular that at most two collisions occurs for every $(x,v) \in \R^2\setminus \{c_1,\ldots,c_N\} \times \R^2$, up to periodicity. \\
Furthermore the resulting flow is the same given by the Levi-Civita regularization (see Remark \ref{regolarizzazione}); hence \emph{$\f^t$ is continuous for every $t$}.

\medskip

The energy shell $\mathcal{U}_h$ is a $3$-dimensional submanifold of $\R^2\setminus \{c_1,\ldots,c_N\} \times \R^2$, which is invariant for $X$; hence it makes sense to consider the restriction $X_h:=X|_{\mathcal{U}_h}$, for every $h \in (\bar{h},0)$. We consider the $2$-dimensional submanifolds
\[
\mathcal{U}_{h,\bar{R}}^\pm:=\left\{(x,v) \in \mathcal{U}^h: |x|=\bar{R} \text{ and } \left\langle v,x \right\rangle \gtrless 0
\right\}
\]
which are some sort of cylinders in $\R^4$; thinking at $(x,v)$ as a pair position-velocity, $\mathcal{U}_{h,\bar{R}}^+$ (respectively $\mathcal{U}_{h,\bar{R}}^-$) is the set of couples with position $x \in \pa B_{\bar{R}}(0)$, and velocity which points towards the external of (resp. towards the inner of) the ball $B_{\bar{R}}(0)$ and is not tangent to $\pa B_{\bar{R}}(0)$. For a point $(x,v) \in \mathcal{U}_{h,\bar{R}}^+$, the normal field to $\mathcal{U}_{h,\bar{R}}^+$ is
\[
 \mathcal{N}_{h,\bar{R}}(x,v)=\left( \frac{x}{\bar{R}}, 0 \right).
\]
The vector field $X_h$ is transverse to $\mathcal{U}_{h,\bar{R}}^+$, in the sense that for every $(x,v) \in \mathcal{U}_{h,\bar{R}}^+$ 
\[
\left\langle X_h(x,v),N_{h,\bar{R}}(x,v)\right\rangle = \frac{\left\langle x,v\right\rangle }{\bar{R}}>0.
\]
For every $(x,v)\in \mathcal{U}_{h,\bar{R}}^+$ we can define
\[
 \mathfrak{T}^\pm(x,v):=\left\lbrace t \in (0,+\infty): \f^t(x,v) \in \mathcal{U}_{h,\bar{R}}^\pm\right\rbrace \]
which in general can be empty. Let us term
\[
\left(\mathcal{U}_{h,\bar{R}}^+\right)^\pm:= \left\{(x,v) \in \mathcal{U}_{h,\bar{R}}^+: \mathfrak{T}^\pm(x,v) \neq \emptyset
\right\}.\]

There are points $(x,v) \in \left(\mathcal{U}_{h,\bar{R}}^+\right)^\pm$, since the periodic solutions we found in Theorem \ref{esistenza di soluzioni periodiche} do cross the circle $\{|x|=\bar{R}\}$ with velocity $\dot{x}$ satisfying the transversality condition $\left\langle x,\dot{x}\right\rangle \gtrless 0$ an infinite number of times. The continuous dependence of the solution on initial data and the transversality of $\mathcal{U}_{h,\bar{R}}^+$ with respect to $X_h$ implies that $\left(\mathcal{U}_{h,\bar{R}}^+\right)^+$ is open in $\mathcal{U}_{h,\bar{R}}$.

\medskip

We point out that, in order to fulfil the transversality condition, if $(x,v) \in \left(\mathcal{U}_{h,\bar{R}}^+\right)^+$ then its trajectory could pass trough the cylinder $\{|x|<\bar{R}\}$ (hence $(x,v)$ could stay in $\left(\mathcal{U}_{h,\bar{R}}^+\right)^-$). Therefore, for $(x,v)\in \left(\mathcal{U}_{h,\bar{R}}^+\right)^+$, it makes sense to set
\[
T_{\min}^\pm:= \inf \mathfrak{T}^\pm(x,v) .
\]
For every $(x,v) \in \left(\mathcal{U}_{h,\bar{R}}^+\right)^+$ such that $T_{\min}^- < T_{\min}^+$, we consider $\{\f^t(x,v)\}_{t \in [T_{\min}^-, T_{\min}^+]}$, i.e. the restriction of the trajectory starting from $(x,v)$ to the first time interval needed to cross $B_{\bar{R}}(0)$.  We define
\begin{multline*}
\mathcal{U}_{h,\bar{R}}^\mathcal{P}:= \left\lbrace (x,v) \in \left(\mathcal{U}_{h,\bar{R}}^+\right)^+ : T_{\min}^-<T_{\min}^+, \  \{\f^t(x,v)\}_{t \in [T_{\min}^-, T_{\min}^+]} \text{ parametrizes a self-intersections-free} \right. \\
\left. \text{minimizer of $L_h$  in $\mathfrak{K}^{x(T_{\min}^-) x(T_{\min}^+)}_{P_j}$, for some $P_j \in \mathcal{P}$}\right\rbrace .
\end{multline*}
It is non-empty, since our periodic orbits provide an infinite number of points satisfying these conditions. It is possible to define a \emph{first return map} on $\mathcal{U}_{h,\bar{R}}^\mathcal{P}$ as
\[
\mathcal{R}(x,v):= \f^{T_{\min}^+}(x,v) .
\]
We can also introduce an application $\chi:\mathcal{U}_{h,\bar{R}}^\mathcal{P} \to \mathcal{P}$ given by
\[
\chi(x,v):=  \begin{cases}
P_j & \text{if $\{\f^t(x,v)\}_{t \in [T_{\min}^-, T_{\min}^+]}$ parametrizes a path which}\\
& \text{ separates the centres according to $P_j$, with $P_j \in \mathcal{P}$}\\
Q_j & \text{if $\{\f^t(x,v)\}_{t \in [T_{\min}^-, T_{\min}^+]}$ parametrizes }\\
& \text{an ejection-collision path, which collides in $c_j$} .
 \end{cases}
\]
Finally, let us term
\[
\Pi_h:= \bigcap_{j \in \mathbb{Z}} \mathcal{R}^j(\mathcal{U}_{h,\bar{R}}^\mathcal{P}),
\]
the set of initial data such that the corresponding solutions cross the circle $\pa B_{\bar{R}}(0)$ with velocity directed towards the exterior of the ball $B_{\bar{R}}(0)$ an infinite number of time in the future and in the past. Again, the periodic solutions found in Theorem \ref{esistenza di soluzioni periodiche} provide an infinite number of points in $\Pi_h$. Now, for every $(x,v)\in \Pi_h$, we set $\pi:\Pi_h \to \mathcal{P}^\Z$ as
\[
\pi(x,v)=(P_{j_k})_{k \in \mathbb{Z}} \quad \text{where} \quad P_{j_k}:=\chi(\mathcal{R}^{k-1}(x,p)).
\]
Introduced the restriction $\mathfrak{R}:=\mathcal{R}|_{\Pi}$, the proof of Corollary \ref{symbolic dynamic} reduces to the following Proposition.
\begin{prop}\label{prop symb dyn}
Under the assumption of Theorem \ref{esistenza di soluzioni periodiche}, the map $\pi$ is continuous and surjective, and the diagram
\[
 \xymatrix{
\Pi_h \ar[r]^{\mathfrak{R}} \ar[d]^\pi & \Pi_h \ar[d]^\pi \\
\mathcal{P}^\Z \ar[r]^{T_r} & \mathcal{P}^\Z,
}
\]
commutes.
\end{prop}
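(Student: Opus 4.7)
The plan is to establish commutativity, continuity, and surjectivity in that order, exploiting the topological stability of the winding numbers and the Levi-Civita regularization of the flow.

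\textbf{Commutativity.} This is essentially tautological from the definitions. For $(x,v)\in \Pi_h$ one computes
\[
\pi(\mathfrak{R}(x,v))_k = \chi\bigl(\mathcal{R}^{k-1}(\mathfrak{R}(x,v))\bigr) = \chi\bigl(\mathcal{R}^k(x,v)\bigr) = \pi(x,v)_{k+1} = \bigl(T_r(\pi(x,v))\bigr)_k,
\]
so $\pi\circ\mathfrak{R}=T_r\circ\pi$. The only thing to be checked is that $\mathfrak{R}$ really maps $\Pi_h$ into itself, which is immediate from the definition of $\Pi_h$ as the intersection of all forward/backward $\mathcal{R}$-iterates of $\mathcal{U}_{h,\bar{R}}^{\mathcal{P}}$.

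\textbf{Continuity.} First I would record that, by the transversality of $X_h$ to the cylinder $\mathcal{U}_{h,\bar{R}}^+$ and by Remark \ref{regolarizzazione} (which asserts that the flow $\varphi^t$ is globally continuous even through the ejection--collision trajectories, via the Levi-Civita lift), the first-return map $\mathcal{R}$ and the hitting times $T_{\min}^\pm$ depend continuously on the base point wherever they are defined on $\mathcal{U}_{h,\bar{R}}^{\mathcal{P}}$. Now fix $(x,v)\in\Pi_h$ and a sequence $(x_n,v_n)\to(x,v)$ in $\Pi_h$. For every $k\in\mathbb{Z}$, the iterate $\mathcal{R}^{k-1}(x_n,v_n)$ converges to $\mathcal{R}^{k-1}(x,v)$, and hence the corresponding inner arcs converge uniformly on the compact time interval $[T_{\min}^-,T_{\min}^+]$ to the arc of $(x,v)$. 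Denote the latter by $\gamma$ and its partition label by $P_j=\chi(\mathcal{R}^{k-1}(x,v))$. Since $\gamma$ is either a self-intersections-free path avoiding the centres (in cases $(i)$, $(ii\text{-}a)$, $(ii\text{-}b)$ of Theorem \ref{teo dinamica interna}) or, when $\alpha=1$, an ejection--collision path with a single collision at $c_j$ (case $(ii\text{-}c)$), the winding numbers modulo $2$ with respect to each centre $c_k$ are a locally constant function of the arc. In the non-collision case this is clear since the arc stays a positive distance from all $c_k$; in the collision case the Levi-Civita picture of Remark \ref{regolarizzazione} shows that the two families of nearby non-colliding arcs (one on each ``branch'' of the regularized flow) both inherit the same partition $P_j=Q_j$ as a limit, since the obstacle $|q|=\sqrt{\rho_n}$ shrinks to a point and the angular variation on it tends to zero (Step 5 in Subsection \ref{assenza di collisioni}). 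Therefore $\chi(\mathcal{R}^{k-1}(x_n,v_n))=P_j$ for $n$ large. This forces $\pi(x_n,v_n)\to\pi(x,v)$ in the metric \eqref{distanza in S^Z}.

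\textbf{Surjectivity.} Given a target $(P_{j_k})_{k\in\mathbb{Z}}\in\mathcal{P}^{\mathbb{Z}}$, I would construct realizing orbits by a diagonal/compactness argument on periodic approximants. For each $n\in\mathbb{N}$ apply Theorem \ref{esistenza di soluzioni periodiche} to the $(2n+1)$-periodic truncation $(P_{j_{-n}},P_{j_{-n+1}},\ldots,P_{j_n})$ to obtain a periodic solution $x_n$ of \eqref{1} at energy $h$. Choose $(x_n^\star,v_n^\star)\in\mathcal{U}_{h,\bar{R}}^{\mathcal{P}}$ to be the state of $x_n$ at the beginning of its ``central'' inner arc, so that by construction $\chi(\mathcal{R}^{k-1}(x_n^\star,v_n^\star))=P_{j_k}$ for every $|k|\le n$. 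The points $(x_n^\star,v_n^\star)$ lie in the compact set $\{|x|=\bar{R}\}\times\{|v|^2=2(V(x)+h)\}\subset \mathcal{U}_h$ (finite because the energy is fixed and $x$ is bounded), so up to a subsequence $(x_n^\star,v_n^\star)\to (x^\star,v^\star)\in\mathcal{U}_{h,\bar{R}}^+$. Using again the continuity of $\mathcal{R}$ and of $\chi$ established above, together with the uniform bounds on the return times from Lemmas \ref{bound per i tempi esterni} and \ref{bound per i tempi}, every finite iterate $\mathcal{R}^{k-1}(x^\star,v^\star)$ exists, lies in $\mathcal{U}_{h,\bar{R}}^{\mathcal{P}}$, and carries the label $P_{j_k}$; symmetrically for negative $k$. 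Hence $(x^\star,v^\star)\in \Pi_h$ and $\pi(x^\star,v^\star)=(P_{j_k})_{k\in\mathbb{Z}}$.

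\textbf{Main obstacle.} The commutativity and surjectivity are relatively soft, and the only genuinely delicate point is the continuity of $\pi$ at those $(x,v)$ whose associated arc is an ejection--collision trajectory (case $\alpha=1$, $(ii\text{-}c)$). There one needs that the partition label remains $Q_j$ for \emph{all} sufficiently small perturbations, including those that unfold the collision into a near-collision on either side of $c_j$. This is exactly what the local Levi-Civita picture in Remark \ref{regolarizzazione} is designed to provide: nearby trajectories lift to a pair of regular arcs in the Levi-Civita space that both project down to the partition $Q_j$, so the discontinuity of $V$ at $c_j$ does not obstruct the continuity of $\chi$.
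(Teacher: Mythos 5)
Your overall strategy coincides with the paper's: commutativity is treated as tautological, continuity is obtained from continuous dependence on initial data together with the Levi--Civita regularization at collisions (using the uniform return-time bounds of Lemmas \ref{bound per i tempi esterni} and \ref{bound per i tempi} to reduce to finitely many passages), and surjectivity is obtained by a compactness/diagonal argument on the periodic approximants furnished by Theorem \ref{esistenza di soluzioni periodiche}. The one structural difference is that you extract convergence only of the single phase point $(x_n^\star,v_n^\star)$ and let the (regularized) flow generate the limit orbit, whereas the paper extracts a convergent diagonal subsequence of the entire bi-infinite family of boundary points $x^n_k\in\partial B_{\bar R}(0)$ and reconstructs the limit trajectory by explicitly regluing outer and inner arcs, then upgrades uniform convergence to $\mathcal C^2$ convergence away from collisions. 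Your variant spares you the verification that the glued limit is a classical solution, which is a genuine simplification.

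However, there is a gap in your surjectivity step. You conclude that every iterate $\mathcal{R}^{k-1}(x^\star,v^\star)$ lies in $\mathcal{U}_{h,\bar R}^{\mathcal P}$ and carries the label $P_{j_k}$ by invoking ``the continuity of $\mathcal{R}$ and of $\chi$ established above,'' but $\chi$ is only defined on $\mathcal{U}_{h,\bar R}^{\mathcal P}$, and that set is defined by requiring the inner arc to be a \emph{self-intersections-free minimizer of $L_h$} in some class $\mathfrak{K}_{P_j}$. It is not closed a priori, so the limit of points of $\mathcal{U}_{h,\bar R}^{\mathcal P}$ need not belong to it; continuity of $\chi$ at $(x^\star,v^\star)$ presupposes exactly the membership you are trying to prove. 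What fills this hole in the paper is Lemma \ref{conv estremi->conv debole} (a uniform limit of minimizers of $M_h$ in $\mathfrak{K}_{P_j}^{x_1^n x_2^n}$ with converging endpoints is again a minimizer in the limit class) together with Remark \ref{LC convergenza} (if that limit has a collision it is necessarily an ejection--collision path, which can only happen for $P_j\in\mathcal P_1$). You should cite these explicitly: the minimality of the limit inner arcs is the nontrivial content of the surjectivity proof, not a consequence of flow continuity. Once that is inserted, your argument closes correctly.
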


We need some preliminary results.
The first step is to obtain uniform bounds, below and above, for the time interval of the pieces of solution found in sections \ref{dinamica esterna} and \ref{dinamica interna}.

\begin{lemma}\label{bound per i tempi problema originale}
There exist $C_1,C_2>0$ such that for every $(x_0,x_1) \in \left(\pa B_{\bar{R}}(0)\right)^2$ such that $|x_1-x_0| < \bar{\d}$, and for every $(x_2,x_3) \in \left(\pa B_R(0)\right)^2$, for every $P_j \in \mathcal{P}$, there holds
\begin{gather*}
C_1 \leq T_{\text{ext}}(x_0,x_1) \leq C_2 \\
C_1 \leq T_{P_j}(x_2,x_3) \leq C_2.
\end{gather*}
\end{lemma}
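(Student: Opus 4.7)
The plan is to reduce the statement directly to Lemmas \ref{bound per i tempi esterni} and \ref{bound per i tempi}, which already provide uniform (in the boundary data) two-sided bounds for the time intervals of the corresponding pieces of solution of the normalized problem \eqref{P}. The scaling performed in Proposition \ref{problema normalizzato} is explicit and, for fixed $h \in (\bar h, 0)$, amounts to multiplying the time variable by the fixed positive constant $(-h)^{-(\alpha+2)/(2\alpha)}$.

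First I would recall, using Corollary \ref{h--epsilon}, that $h \in (\bar h, 0)$ uniquely determines $\eps \in (0, \bar\eps)$ with $h = \zeta(\eps)$, and that the scaling $x(t) \leftrightsquigarrow y(t) = (-h)^{1/\alpha} x((-h)^{-(\alpha+2)/(2\alpha)} t)$ maps $\pa B_{\bar R}(0)$ to $\pa B_R(0)$ (with $R = (-h)^{1/\alpha} \bar R$) and the threshold $\bar\delta$ to $\delta$. Consequently, a pair $(x_0,x_1) \in (\pa B_{\bar R}(0))^2$ with $|x_1-x_0|<\bar\delta$ corresponds to a pair $(p_0,p_1) \in (\pa B_R(0))^2$ with $|p_1-p_0|<\delta$, and $x_{\text{ext}}(\,\cdot\,;x_0,x_1)$ is obtained from $y_{\text{ext}}(\,\cdot\,;p_0,p_1;\eps)$ via the same scaling; analogously for the interior arcs $x_{P_j}$ vs.\ $y_{P_j}$ with endpoints on $\pa B_R(0)$.

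Comparing the time intervals of the two problems I would then read off
\[
T_{\text{ext}}(x_0,x_1) = (-h)^{-\frac{\alpha+2}{2\alpha}}\, T_{\text{ext}}(p_0,p_1;\eps), \qquad T_{P_j}(x_2,x_3) = (-h)^{-\frac{\alpha+2}{2\alpha}}\, T_{P_j}(p_2,p_3;\eps).
\]
Since $h$ is fixed, $(-h)^{-(\alpha+2)/(2\alpha)}$ is a strictly positive constant, so Lemmas \ref{bound per i tempi esterni} and \ref{bound per i tempi} yield constants $C_1', C_2'$ (independent of the boundary data and of $P_j \in \mathcal{P}$) bounding both $T_{\text{ext}}(x_0,x_1)$ and $T_{P_j}(x_2,x_3)$ from above and from below by strictly positive quantities. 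Taking $C_1$ to be the minimum and $C_2$ the maximum of the two pairs of resulting constants completes the proof.

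No genuine obstacle is present, since the difficulty of producing uniform two-sided bounds has already been overcome at the level of the normalized problem; the only point to verify is that the scaling constant is finite and strictly positive, which is obvious for $h \in (\bar h, 0)$.
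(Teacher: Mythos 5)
Your argument is correct and is exactly the paper's proof: the paper simply states that the lemma is a straightforward consequence of Lemmas \ref{bound per i tempi esterni} and \ref{bound per i tempi} together with Proposition \ref{problema normalizzato}, and your write-up just makes the fixed positive time-rescaling factor $(-h)^{-(\alpha+2)/(2\alpha)}$ explicit. No differences of substance.
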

\begin{proof}
It is a straightforward consequence of Lemmas \ref{bound per i tempi esterni} and \ref{bound per i tempi}, and of Proposition \ref{problema normalizzato}.
\end{proof}

It will be useful to prove that, for a sequence of minimizers of $M_h$ which separate the centres according to the same partition $P_j$, the convergence of the ends to $(\bar{x}_1, \bar{x}_2)$ is sufficient for the weak convergence in $H^1$ of the minimizers themselves; the limit path turns out to be minimal for $M_h$ in $\mathfrak{K}_{P_j}^{\bar{x}_1 \bar{x}_2}([0,1])$
\begin{lemma}\label{conv estremi->conv debole}
Let $(x_1^n,x_2^n) \subset \left(\pa B_{\bar{R}}(0)\right)^2$ such that $(x_1^n,x_2^n) \to (\bar{x}_1, \bar{x}_2)$, let $P_j \in \mathcal{P}$; let $\mfu_n$ be a local minimizers of $M_h$ in $\mathfrak{K}_{P_j}^{x_1^n x_2^n}([0,1])$.\\
Then there exists a subsequence $(\mfu_{n_k})$ of $(\mfu_n)$ and a minimizer $\bar{\mfu} \in \mathfrak{K}_{P_j}^{\bar{x}_1 \bar{x}_2}([0,1])$ of $M_h$ such that $\mfu_{n_k} \wc \bar{\mfu}$ in $H^1$.
\end{lemma}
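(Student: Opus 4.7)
The plan is to combine coercivity and weak lower semicontinuity of $M_h$ with a comparison argument that transfers competitors between the classes $\mathfrak{K}_{P_j}^{x_1^n x_2^n}$ and $\mathfrak{K}_{P_j}^{\bar x_1 \bar x_2}$ by gluing short arcs on $\partial B_{\bar R}(0)$. First I would establish a uniform upper bound on $M_h(\mfu_n)$. Fix any smooth collision-free $\mathfrak{w} \in \wh{\mathfrak{K}}_{P_j}^{\bar x_1 \bar x_2}([0,1])$ and, for each $n$, construct a test path $\mathfrak{w}_n \in \mathfrak{K}_{P_j}^{x_1^n x_2^n}$ by concatenating the short boundary arc $\sigma_{\bar R}(\cdot\,;x_1^n,\bar x_1)$, then $\mathfrak{w}$, then $\sigma_{\bar R}(\cdot\,;\bar x_2,x_2^n)$, suitably reparametrized on $[0,1]$. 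Because $\partial B_{\bar R}(0)$ is contained in $\{V>-h\}$ (by the very choice of $\bar R$ in Remark \ref{remark su R}) and stays at positive distance from the centres, a direct computation shows $M_h(\mathfrak{w}_n)\to M_h(\mathfrak{w})$; together with minimality of $\mfu_n$ this gives $\sup_n M_h(\mfu_n)<+\infty$. The same uniform positive lower bound on $\int_0^1 (V(\mfu_n)+h)\,dt$ as in Lemma \ref{lemma 1.4} then forces $\|\dot{\mfu}_n\|_2\leq C$, and of course $\|\mfu_n\|_\infty\leq\bar R$, so $(\mfu_n)$ is bounded in $H^1$.

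Next, up to a subsequence, $\mfu_{n_k}\rightharpoonup\bar{\mfu}$ in $H^1$, hence uniformly on $[0,1]$. The uniform convergence immediately gives $\bar{\mfu}(0)=\bar x_1$, $\bar{\mfu}(1)=\bar x_2$ and $|\bar{\mfu}(t)|\leq\bar R$. To check $\bar{\mfu}\in\mathfrak{K}_{P_j}^{\bar x_1 \bar x_2}([0,1])$, I would use the same argument as in Proposition \ref{K_l debolmente chiuso}: if $\bar{\mfu}$ is collision-free, then for $k$ large $\mfu_{n_k}$ is also collision-free in a tubular neighbourhood of $\bar{\mfu}$, and the partition class with respect to the centres is preserved by uniform convergence (this uses that the centres lie inside $B_\eps(0)$ while $\mfu_{n_k}\to\bar{\mfu}$ uniformly away from any centre); if $\bar{\mfu}$ collides, it lies by construction in the collisional part of $\mathfrak{K}_{P_j}^{\bar x_1\bar x_2}$, which is included in the weak closure.

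Finally, I would prove the minimality of $\bar{\mfu}$ by the mirror comparison. Given any $\mathfrak{v}\in\mathfrak{K}_{P_j}^{\bar x_1\bar x_2}([0,1])$, build $\mathfrak{v}_n\in\mathfrak{K}_{P_j}^{x_1^n x_2^n}$ in the same way (prepending $\sigma_{\bar R}(\cdot\,;x_1^n,\bar x_1)$ and appending $\sigma_{\bar R}(\cdot\,;\bar x_2,x_2^n)$, with arbitrarily small length), and verify $M_h(\mathfrak{v}_n)\to M_h(\mathfrak{v})$. Combining minimality of $\mfu_{n_k}$ with weak lower semicontinuity of $M_h$ yields
\[
M_h(\bar{\mfu})\leq\liminf_{k\to\infty} M_h(\mfu_{n_k})\leq\liminf_{k\to\infty} M_h(\mathfrak{v}_{n_k})=M_h(\mathfrak{v}),
\]
so $\bar{\mfu}$ minimizes $M_h$ on $\mathfrak{K}_{P_j}^{\bar x_1\bar x_2}([0,1])$.

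The main delicate point will be verifying that the gluing procedure genuinely produces competitors in the correct topological class, i.e.\ that $\mathfrak{w}_n$ and $\mathfrak{v}_n$ separate the centres according to $P_j$. Since $|x_i^n-\bar x_i|$ is small and the connecting arcs lie on $\partial B_{\bar R}(0)$, they stay uniformly outside $B_\eps(0)$ and cannot alter the winding numbers with respect to the centres (neither the concrete ones of the path nor those introduced by the artificial closure that defines the classes $\wh{\mathfrak{H}}_l$, and hence $\mathfrak{K}_{P_j}$). This is exactly the same kind of perturbative check used in subsection \ref{main result} and is the only step that requires care; the rest of the argument is a standard direct-method computation.
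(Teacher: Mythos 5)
Your proposal is correct and follows essentially the same strategy as the paper: an $H^1$ bound coming from minimality (the paper simply quotes the bound $\|\dot{\mfu}_n\|_2^2\leq C_8$ already extracted in Lemma \ref{bound per i tempi}), a weakly convergent subsequence, and a transfer of competitors between the classes $\mathfrak{K}_{P_j}^{x_1^n x_2^n}$ and $\mathfrak{K}_{P_j}^{\bar x_1\bar x_2}$ by prepending and appending the short boundary arcs $\sigma_{\bar R}$. The one point where you diverge is that you run the comparison directly on $M_h$ and conclude by weak lower semicontinuity, whereas the paper argues by contradiction through the Jacobi length $L_h$ (via Propositions \ref{minimo M->L} and \ref{minimo L->M}); the paper's choice is not accidental, since $L_h$ is additive and reparametrization-invariant, so the glued path costs exactly $L_h(\mathfrak v)$ plus the lengths of the two arcs, which vanish. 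Working with $M_h$ you must choose the time intervals carrying the connecting arcs with some care (an arc of Euclidean length $\ell_n$ parametrized on an interval of length $\delta_n$ contributes $\ell_n^2/\delta_n$ to the kinetic factor, so one needs, e.g., $\delta_n\sim\ell_n$, and the rescaling of the middle piece multiplies its kinetic integral by $(1-2\delta_n)^{-1}\to 1$); your phrase ``suitably reparametrized'' covers this, but it is the only step where your route is genuinely more delicate than the paper's. Your check that the weak limit lies in $\mathfrak{K}_{P_j}^{\bar x_1\bar x_2}$, including the remark that the artificial closing arcs also converge, is a point the paper leaves implicit.
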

\begin{proof}
In order to prove that, up to subsequence, $(\mfu_n)$ is weakly convergent, it is sufficient to show that $(\mfu_n)$ is bounded in $H^1$. We know that
\[
\|\mfu_n\|_2^2 \leq \bar{R}^2 \qquad \forall n,
\]
hence it remains to check that there exists $C>0$ such that
\[
\|\dot{\mfu}_n\|_2^2 \leq C \qquad \forall n.
\]
In the previous proof we saw that the minimality of $\mfu_n$ implies that this inequality is satisfied with $C=C_8$ (see \eqref{sd7}).
Now let us prove that the limit $\mfu$ is a minimizer of $M_h$. We recall that in step 1) of the proof of Theorem \ref{teorema principale 2}, we proved the continuity of the function which associate to each couple of ends $p_1,p_2 \in \pa B_R(0)$ the length $L$ (in the "normalized" problem) of the minimizers $u_{P_j}(\cdot;p_1,p_2;\eps)$, for every $P_j$. It is straightforward to check that the same property holds true for $L_h$ with $h \neq -1$, and $x_1, x_2 \in \pa B_{\bar{R}}(0)$.
Assume by contradiction that $\mfu$ were not a local minimizer of $M_h$; by Proposition \ref{minimo L->M} it follows that $\mfu$ cannot be a minimizer also of $L_h$, then there exists a path $\mfv \in L_{P_j}^{\bar{x}_1 \bar{x}_2}([0,1])$ such that $L_h(\mfv) \leq L_h(\mfu)$. Let $\sigma_{\bar{R}}(\cdot\,;x_*,x_{**})$ be the shorter (in the Euclidean metric) arc of $\pa B_{\bar{R}}(0)$ connecting $x_*$ with $x_{**}$, parametrized with constant velocity. As $|x_*-x_{**}|$ tends to zero, the length of $\sigma_{\bar{R}}$ tends to $0$; hence there exists $n_0 \in \mathbb{N}$ such that
\[
\wh{\mfv}_n(t):=\begin{cases}
                 \sigma_{\bar{R}}(t;x_1^n,\bar{x}_1) & t \in [0,1/3]\\
                 \mfv(3t-1) & t \in (1/3,2/3]\\
                 \sigma_{\bar{R}}(t;\bar{x}_2,x_2^n) & t \in (2/3,1],
                 \end{cases}
\]
is a path of $\mathfrak{K}_{P_j}^{x_1^n x_2^n}([0,1])$ such that $L_h(\mfv_n)<L_h(\mfu_n)$, a contradiction with the minimality of $\mfu_n$.
\end{proof}

\begin{proof}[Proof of Proposition \ref{prop symb dyn}]
\textbf{Step 1)} We start with surjectivity. Let $(P_{j_n})_{n \in \Z} \subset \mathcal{P}^\Z$. We can consider the finite sequences
\[
(P_{j_0}),  \quad (P_{j_{-1}},P_{j_0},P_{j_1}), \quad \ldots \quad (P_{j_{-n}},\ldots,P_{j_{-1}},P_{j_0},P_{j_1},\ldots,P_{j_n}),\quad \ldots.
\]
To each sequence we associate the corresponding periodic solution of equation \eqref{1} with energy $h$ given by Theorem \ref{esistenza di soluzioni periodiche}, according to the notation
\[
(P_{j_{-n}},\ldots,P_{j_{-1}},P_{j_0},P_{j_1},\ldots,P_{j_n}) \leftrightsquigarrow x^{n}(\cdot).
\]
Up to time translations, we can take initial data $(x^{n}(0),\dot{x}^{n}(0)) \in \Pi_h$, in such a way that the first partition (or collision) determined by the solution $x^{n}(\cdot)$ is $P_{j_0}$, for every $n$. \\
The path parametrized by $x^{n}(\cdot)$ detects a sequence of points $(x^{n}_k)_{k \in \Z}$ of $\pa B_{\bar{R}}(0)$ given by the intersections of the trajectories in $\R^2$ with the circle itself, taken in the temporal order (of course, since $x^n(\cdot)$ is periodic, the sequence will be periodic, too).\\
We get a sequence of sequences:
\[
\left(x^{n}_k\right)_{n \in \mathbb{N}} \quad \forall k \in \mathbb{Z}.
\]
Now, $\left(x^{n}_0\right)_{n}$ stays in the compact set $\pa B_{\bar{R}}(0)$, therefore we can extract a subsequence $\left(x^{n_0}_0\right)_{n_0}$ which converges to $\bar{x}_0$. Analogously, $\left(x^{n_0}_1\right)_{n_0}$ stays in $\pa B_{\bar{R}}(0)$, therefore we can extract a subsequence $\left(x^{n_1}_1\right)_{n_1}$ which converges to $\bar{x}_1$. Proceeding in this way, for every $k \in \Z$ we have a sequence $\left(x^{n_k}_k\right)_{n_k}$ which converges to $\bar{x}_k$. Then we relabel as $(x^n_k)_n$ the diagonal sequence, namely $(x^{n_n}_k)_n$. It results
\beq\label{sd8}
\lim_{n \to \infty} x^{n}_k=\bar{x}_k \qquad \forall k \in \Z.
\eeq
For every $k \in \mathbb{Z}$, we connect the points $\bar{x}_{2k}$, $\bar{x}_{2k+1}$ with the unique external solution of \eqref{1} given by Theorem \ref{teorema 0.1}. Analogously, we connect $\bar{x}_{2k+1}$ and $\bar{x}_{2k+2}$ with the inner solution given by Theorem \ref{teo dinamica interna}. We point out that a collision can occur just if $\a=1$ and $\bar{x}_{2k+1}=\bar{x}_{2k+2}$. We can juxtapose these paths in a continuous manner, following the same gluing procedure already carried on in section \ref{incollamento} to define $\g_{(p_0,\ldots,p_{2n})}$; in this way we obtain a continuous function $\bar{x}(\cdot): \R \to \R^2$. We claim that it is a solution of \eqref{1} (in case $\a=1$, it can be an ejection-collision solution) such that $(\bar{x}(0), \dot{\bar{x}}(0)) \in \Pi_h$ and $\pi((\bar{x}_0, \dot{\bar{x}}(0)))= (P_{j_k})_k$.
\medskip
The first step is to prove that, up to a subsequence, $(x^n(\cdot))$ converges to $\bar{x}(\cdot)$ uniformly  on every compact set of $\R$. If $[a,b] \subset \R$ such that $\bar{x}(a)=\bar{x}_{2k}$ and $\bar{x}(b)=\bar{x}_{2k+1}$, with $k \in \Z$, then the uniform convergence in $[a,b]$ is a straightforward consequence of the continuous dependence of the external solutions by the end points (Theorem \ref{teorema 0.1}). On the other hand, if $[c,d] \subset \R$ with $\bar{x}(c)=\bar{x}_{2k+1}$ and $\bar{x}(d)=\bar{x}_{2k+2}$, then the uniform convergence has been proved in Lemma \ref{conv estremi->conv debole}. From this, it is easy to obtain the uniform convergence for every compact subset of $\R$. Let us observe that since $\bar{x}|_{[c,d]}$ is a uniform limit of minimizers of $L_h$ (and hence, up to reparametrizations, also of $M_h$), if $\bar{x}|_{[c,d]}$ has a collision, necessarily $\bar{x}|_{[c,d]}$ parametrizes an ejection-collision path (see Remark \ref{LC convergenza}). Now,  assume first that $\bar{x}(\cdot)$ has no collisions in $\R$. Let $[a,b] \subset \R$ be compact. In this case there exists $\bar{n} \in \mathbb{N}$ such that $x^n(\cdot)$ is collisions-free in $[a,b]$, as well. The function $V(\bar{x}(\cdot))$ is well defined in $\R$, and by regularity
\beq\label{sd20}
\lim_{n \to \infty} \ddot{x}^n(t)= \lim_{n \to \infty} \n V(x^n(t))= \n V(\bar{x}(t)),
\eeq
with uniform convergence in $[a,b]$. Also, the derivative of $x^n(\cdot)$ is uniformly bounded in $[a,b]$ for the conservation of the energy:
\[
|\dot{x}^n(t)|=\sqrt{2(V(x^n(t))+h)}\leq \sqrt{2\left(C+h\right)} \qquad \forall t \in [a,b], \forall n \geq \bar{n}.
\]
Hence, up to subsequence, there exists a point $\bar{t} \in (a,b)$ such that $\dot{x}_n(t)$ is convergent in $\R^2$. This fact, together with \eqref{sd20}, implies that $(\dot{x}^{n}(\cdot))$ converges in $\mathcal{C}^1([a,b])$, and hence $(x^n(\cdot))$ converges in the $\mathcal{C}^2([a,b])$ to $\bar{x}(\cdot)$, for every compact subset $[a,b] \in\R$. This means that $\bar{x}$ is a $\mathcal{C}^2$ solution of \eqref{1} with energy $h$ on $[a,b]$ and this argument works in every compact subset of $\R$. We point out that the uniform convergence is sufficient to say that, in its $k$-th passage inside $B_{\bar{R}}(0)$, $\bar{x}(\cdot)$ separates the centres according to $P_{j_k}$, namely $\pi(\bar{x}(0),\dot{\bar{x}}(0)))=(P_{j_k})_{k \in \Z}$.
\medskip
Now, we are left to examine what happens if a collision occurs. Let
\[
T_c(\bar{x}):=\{t \in \R: \bar{x}(t) = c_j \text{ for some $j \in \{1,\ldots,N\}$}\}.
\]
The $\mathcal{C}^2$-convergence of $(x^n(\cdot))$ to $\bar{x}(\cdot)$ is still true in every compact subset of $\R \setminus T_c(\bar{x})$, hence we obtain an ejection-collision solution of \eqref{1} with energy $h$ and $\pi(\bar{x}_0,\dot{\bar{x}}(0)))= (P_{j_k})_{k \in \Z}$. \\
We point out that this is possible just for $\a=1$ and $(P_{j_k}) \in \mathcal{P}^\Z$ such that
\begin{itemize}
\item $(P_{j_k})$ is periodic and satisfies the conditions of points ($ii$-$b$) or ($ii$-$c$) of Theorem \ref{esistenza di soluzioni periodiche}.
\item up to a finite number of applications of the right shift, $P_{j_0} \in \mathcal{P}_1$ and the sequence is symmetric, i.e. $P_{j_{-n}}=P_{j_n}$ for every $n$.
\end{itemize}
\textbf{Step 2)} It remains to show that $\pi$ is continuous. Let $(x_0,v_0) \in \Pi_h$. We would like to prove that given $\l>0$ there exists $\varrho>0$ such that for every $(x,v)\in \Pi_h$:
\[
|(x,v)-(x_0,v_0)|<\varrho \Rightarrow \sum_{m \in \Z} \frac{d_1(\pi_m(x,v),\pi_m(x_0,v_0)) }{2^{|m|}}<\l,
\]
where $\pi_m$ is the projection $\pi_m:\Pi_h \to \mathcal{P}$ defined by
\[
\pi_m(x,v):=\chi(\mathfrak{R}^{m-1}(x,v)),\
\]
i.e. $\pi_m$ associate to $(x,v)$ the partition that the corresponding solution induces in its $m$-th passage inside $B_{\bar{R}}(0)$.\\
Let us observe that there exists $m_0 \in \mathbb{N}$ such that
\[
\sum_{|m| > m_0} \frac{1}{2^{|m|}} <\l.
\]
Hence it is sufficient to show that, if we take two initial data sufficiently close, then the corresponding solutions induce the same partitions $P_{j_k}$ of the centres, for $k \in \{-m_0,\ldots,m_0\}$.

\medskip

Thanks to Lemmas \ref{bound per i tempi problema originale} and \ref{bound per i tempi}, we can fix a time interval $[-a,a]$ such that each solution with initial data in $\Pi_h$ passes at least $2m_0+1$-times inside $B_{\bar{R}}(0)$ in $[-a,a]$. If the solution of \eqref{1} with starting point $(x_0,v_0)$ is collisions-free, then there exists $\mu>0$ such that
\[
|x(t;x_0,v_0)-c_j| \geq \mu \qquad \forall t \in [-a,a].
\]
If $(x,v)$ is sufficiently close to $(x_0,v_0)$, then the continuous dependence applies:
\[
\forall \l \in \left(0,\frac{\mu}{2}\right) \ \exists \varrho > 0: |(x,v)-(x_0,v_0)|<\varrho \Rightarrow
|x(t;x,v)-x(t;x_0,v_0)|<\l.
\]
This implies that $x(\cdot\,;x,v)$ is collisions-free and detects the same partitions of $x(\cdot;x_0,v_0)$ in $[-a,a]$. In particular, $\pi_m(x,v)=\pi_m(x,v)$ for every $m \in\{-m_0,\ldots,m_0\}$. This proves the continuity for non-collision initial data. But nothing change if we consider $(x_0,v_0) \in \Pi_h$ such that $x(\cdot;x_0,v_0)$ has a collision: indeed we introduced a regularization trough the Levi-Civita transform (see Remark \ref{regolarizzazione} on the Levi-Civita transform), so that the continuous dependence applies also in this case.
\end{proof}

\begin{thebibliography}{99}
\bibitem{AmCZ} A. Ambrosetti, V. Coti Zelati. Periodic solutions of singular Lagrangian systems, Birkh\"{a}user, 1993.
\bibitem{BaTeVe} V. Barutello, S. Terracini and G. Verzini. Entire parabolic trajectories as minimal phase transitions, preprint (2011), (arXiv:1105.3358)
\bibitem{BaTeVe1}\leavevmode\vrule height 2pt depth -1.6pt width 69pt,
Entire Minimal Parabolic Trajectories: the planar anisotropic Kepler problem, preprint (2011), (arXiv:1109.5504)

\bibitem{BaFeTe} V. Barutello, D.L. Ferrario and S. Terracini. On the singularities of generalized solutions to $n$--body type problem,
 \emph{Int Math Res Notices.} \textbf{2008} (2008), rnn069-78.
\bibitem{Bo1} S. V. Bolotin. Nonintegrability of the $n$-center problem for $n>2$, \emph{Mosc. Univ. Mech. Bull}. \textbf{39} (1984) n.3, 24-28; translation from \emph{Vestnik Mosk. Gos. Univ., Ser I, math. mekh.} \textbf{3} (1984), 65-68.
\bibitem{BoNe01}    S.V. Bolotin, P. Negrini, Regularization and topological entropy for the spatial $n$-center problem,
\emph{Ergodic Theory Dyn. Systems} \textbf{21} (2001), 383–399.
\bibitem{BoNe} \leavevmode\vrule height 2pt depth -1.6pt width 46pt, Chaotic behaviour in the $3$-centre problem, \emph{J. Differential Equations} \textbf{190} (2003), 539-558.
\bibitem{Br} H. Brezis. Analyse fonctionnelle, th\'eorie et applications, Colletion Math\'ematiques Appliqu\'ees por la Ma\^{i}trise (Massons, Paris, 1983).
\bibitem{Castelli} R. Castelli, { On the variational approach to the one and N-centre problem with weak forces}, PhD Thesis, University of Milano--Bicocca, 2009
\bibitem{Chen2}
{K.-C. Chen}, { Existence and
  minimizing properties of retrograde orbits to the three-body problem with
  various choices of masses}, \emph{Ann. of Math.} (2), \textbf{167} (2008), pp.~325--348.

\bibitem{Chen3}
\leavevmode\vrule height 2pt depth -1.6pt width 23pt, {Variational
  constructions for some satellite orbits in periodic gravitational force
  fields}, \emph{Amer. J. Math.}, \textbf{132} (2010), pp.~681--709.

\bibitem{Di} L. Dimare. Chaotic quasi-collision trajectories in the $3$-centre problem, \emph{Celest. Mech Dyn. Astr.} \text{107} (2010) n.4, 427-449.
\bibitem{DC} M. P. Do Carmo. Riemaniann geometry, Series of Mathematics (Birkhauser, Boston 1992)
\bibitem{FeTa}{P. Felmer and K. Tanaka,} {Scattering solutions for planar singular {H}amiltonian systems
              via minimization}, \emph{Adv. Differential Equations} \textbf{5}, (2000), 1519-1544.
\bibitem{Fe} D. L. Ferrario,  Transitive decomposition of symmetry groups for the n-body problem, \emph{Adv. Math.} \textbf{213/2} (2007), 763-784.
\bibitem{FuGrNe} G. Fusco, G. F. Gronchi and P. Negrini, Platonic polyhedra, topological constraints and periodic solutions of the classical N-body problem. \emph{Inv. Math.}\textbf{185} (2011), n. 2,  283-332.
\bibitem{Kn} M. Klein, A. Knauf. Classical planar scattering by Coulombic potentials, Lecture Notes in Physics. Springer, Berlin, 1992.
\bibitem{Kn2002} A. Knauf, The n-centre problem of celestial mechanics for large energies,  \emph{J. Eur. Math. Soc. (JEMS)} \textbf{4} (2002), 1-114.
\bibitem{KnKr} A. Knauf, M. Krapf, The escape rate of a molecule,
\emph{Math. Phys. Anal. Geom.} \textbf{13} (2010), no. 2, 159–189.
\bibitem{KT} A. Knauf, I. A. Taimanov. On the integrability of the $n$-centre problem, \emph{Math. Ann.} \textbf{331} (2005), 631-649.
\bibitem{LeCi} T. Levi-Civita. Sur la r\'egularisation du probl\`eme des trois corps, \emph{Acta Math.} \textbf{42} (1920), 99-144.
\bibitem{Ma} C. Marchal,  How the method of minimization of action avoid singularities, \emph{Cel. Mech. Dyn. Ast.} (2002)\textbf{83} (2002), 325-353.
\bibitem{Moore} C. Moore, Braids in Classical Dynamics, \emph{Phys. Rev. Lett.} \textbf{70} (1993), no. 24, 3675-3679.
\bibitem{Moser} J. Moser, Regularization of Kepler's problem and the averaging method on a manifold, \emph{Comm. Pure. Appl. Math.} \textbf{23} (1970), 609-636.
\bibitem{Seif} H. Seifert,  Periodischer bewegungen mechanischer system, \emph{Math. Zeit} \textbf{51} (1948), 197-216.
\bibitem{TeVe} S. Terracini, A. Venturelli. Symmetric trajectories for the $2N$-body problem with equal masses, \emph{Arch. Ration. Mech. Anal.} \textbf{184} (2007), 465-493.
\bibitem{Ve} A. Venturelli,  Une caract\'{e}risation variationelle des solutions de Lagrange du
probl\`eme plan des trois corps, \emph{Comp. Rend. Acad. Sci. Paris} \textbf{332} S\'erie I (2001), 641-644.
\bibitem{Vethesis} A. Venturelli, Application de la minimisation de l'action au Probl\`eme de N
corps dans le plan e dans l\'{e}space, Thesis, University Paris VII 2002.
\bibitem{Win} A. Wintner, {\it The Analytical Foundations of Celestial Mechanics}, Princeton
University press 1941.
\bibitem{Whit}{E. T. Whittaker,}, {\it A treatise on the analytical dynamics of particles and rigid
              bodies: With an introduction to the problem of three bodies}, {4th ed}, {Cambridge University Press}, {New York}, {1959},  {xiv+456}.

\end{thebibliography}
\end{document}